\documentclass[11pt,a4paper]{article}
\usepackage[T1]{fontenc}
\usepackage[utf8]{inputenc}
\usepackage[english]{babel}
\usepackage{lmodern,microtype,amsmath,amssymb,amsthm,mathtools,mathrsfs}
\usepackage[margin=2.15cm,headheight=14pt]{geometry}
\usepackage{booktabs,longtable,array,enumitem,fancyhdr,xcolor}
\usepackage[colorlinks=true,linkcolor=black,citecolor=black,urlcolor=blue!50!black,hypertexnames=false]{hyperref}
\hypersetup{pdftitle={Small-time Wasserstein action for KPZ and Phi4 3},
 pdfauthor={Obayda Julien Assaad}}
\allowdisplaybreaks[2]
\setlist{itemsep=.1em,topsep=.25em}
\numberwithin{equation}{section}
\newcommand{\E}{\mathbb E}\newcommand{\T}{\mathbb T}
\newcommand{\R}{\mathbb R}
\newcommand{\dd}{\,\mathrm d}\newcommand{\Id}{\mathrm{Id}}
\newcommand{\Tr}{\operatorname{Tr}}
\newcommand{\Law}{\operatorname{Law}}\DeclareMathOperator*{\esslim}{ess\,lim}\newcommand{\cI}{\mathcal I}

\newtheorem{theorem}{Theorem}[section]
\newtheorem{proposition}[theorem]{Proposition}
\newtheorem{lemma}[theorem]{Lemma}
\newtheorem{corollary}[theorem]{Corollary}
\theoremstyle{definition}
\theoremstyle{remark}\newtheorem{remark}[theorem]{Remark}
\title{Small-time Wasserstein action\\for KPZ and $\Phi^4_3$}
\author{Obayda Julien Assaad}
\date{}
\begin{document}
\maketitle

\begin{abstract}
We determine the leading small-time Wasserstein action of the full
spatial laws of centered KPZ and dynamical $\Phi^4_3$, both started from
zero. The leading term is Gaussian, with an explicit heat-trace
constant. A common comparison also determines the ratio of length to
displacement. For KPZ, a first-chaos cancellation yields a quadratic
coupling error. For $\Phi^4_3$, two weighted entropies and an adjoint
estimate give comparison bounds uniform in the spectral cutoff.
Integrating a Gaussian rotation gives a differentiable realization of
the full law; the comparison holds at every positive time.
\end{abstract}

\section{Introduction}\label{se:statement}

How fast does the law of a singular stochastic field leave its initial
state? We study this question in the Wasserstein metric for the centered
KPZ profile and the dynamical $\Phi^4_3$ field, both started from zero.
We study the action of the entire spatial law. Its leading small-time
term equals that of the Gaussian linearization; we bound the nonlinear error.

The main difficulty concerns differentiation. A coupling may show that
two laws are close without comparing their speeds. For example,
$\delta_{n^{-1}\sin(nt)}$ converges uniformly to $\delta_0$, whereas its
squared speed is $\cos^2(nt)$. In a singular equation, direct
differentiation also produces rough products and divergent counterterms.
The proof must preserve their cancellations before estimating them.

\subsection{A comparison at the level of the law}

Let $\rho_t$ be a locally absolutely continuous curve in
$\mathcal P_2(H)$, where $H$ is a Hilbert space. Its action
$\mathcal A_\rho(t)$ is the squared norm of its minimum current.
Thus $\mathcal A_\rho(t)=|\dot\rho_t|_{W_2(H)}^2$ almost everywhere.
Projecting any representing current onto the closure of cylinder
gradients gives the minimum current \cite{BB,AGS,Lisini}.
For the zero-start Ornstein--Uhlenbeck law $\gamma_t$ with generator
$A>0$, that current is $B_tx$, where $B_t=A(e^{2tA}-I)^{-1}$.

Subtract this known current from the derivative of the nonlinear law.
The resulting functional has norm
\[
 e(t)=\sup_{\|\nabla\varphi\|_{L^2(\rho_t;H)}\le1}
 \left|\frac d{dt}\int\varphi\,d\rho_t
       -\int\langle\nabla\varphi,B_tx\rangle_H\,d\rho_t\right|.
\]
The tests are smooth cylinder functions. We call $e(t)$ the
\emph{relative derivative error}. Theorem~\ref{common:principle} gives
\begin{equation}\label{intro:comparison}
 |\sqrt{\mathcal A_\rho(t)}-\sqrt{\mathcal A_0(t)}|
 \le e(t)+\frac{W_2(\rho_t,\gamma_t)}{2t}.
\end{equation}
The proof uses orthogonal projection and $\|B_t\|\le(2t)^{-1}$.
We must therefore bound $W_2(\rho_t,\gamma_t)$ and $e(t)$.

The Gaussian action depends only on the spectrum. Suppose
$N_A(L)\sim C_A L^\alpha$, the squared transport norm weights mode $j$
by $\lambda_j^{-s}$, and the noise amplitude is $\sqrt{2a}$.
Put $\beta=1+\alpha-s\in(1,2)$. If
$W_2(\rho_t,\gamma_t)=O(t^\theta)$ and
$e(t)=O(t^{\theta-1})$, with $\psi=\theta-1+\beta/2>0$, then
\begin{equation}\label{intro:universal}
 t^\beta|\mathcal A_\rho(t)-\mathcal A_0(t)|=O(t^\psi),\qquad
 \esslim_{t\downarrow0}t^\beta\mathcal A_\rho(t)
 =aC_A\alpha2^{-\beta}\Gamma(\beta)[\zeta(\beta)-1].
\end{equation}
The limit is ordinary when the derivative bound holds at every time,
as in both applications below.
The Gaussian action is $a\sum_{n\ge2}\Tr(A^{1-s}e^{-2ntA})$.
Summing the leading heat-trace terms gives $\zeta(\beta)-1$
(Proposition~\ref{common:gaussian}).
The comparison applies to both equations and is stable under $W_2$ convergence.
It also determines the ratio of length to distance
(Corollary~\ref{common:integrated}):
\begin{equation}\label{intro:ratio}
 \lim_{t\downarrow0}
 \frac{\operatorname{Length}(\rho|_{[0,t]})}{W_2(\rho_t,\delta_0)}
 =\sqrt{\frac{(\beta-1)(\zeta(\beta)-1)}{2-\beta}},
\end{equation}
independent of $a$, $C_A$ and of the nonlinearity. It equals
$\sqrt{\zeta(3/2)-1}\approx1.270$ for KPZ and
$\sqrt{3(\zeta(7/4)-1)}\approx1.699$ for $\Phi^4_3$.

\subsection{The two full-field results}

For KPZ on $\mathbb T=\mathbb R/\mathbb Z$, we take the flat-start
Cole--Hopf solution of
\[
 \partial_t h=\nu\partial_x^2h+\tfrac g2:(\partial_xh)^2:+\sigma\xi,
 \qquad h(0)=0,
\]
subtract its spatial average, and use $H=L^2_0(\mathbb T)$.
For $\Phi^4_3$ on $\mathbb T^3$, the cubic coefficient is $g>0$,
the noise amplitude is $\sqrt{2a}$, and $A=\mu-\nu\Delta$.
We use $H=H_A^{-3/4}$ and the spectral scheme with fixed counterterms
in \eqref{eq:model}--\eqref{eq:constants}.

Theorems~\ref{kpz:action-limit} and~\ref{th:velocity} prove
\begin{align}
 \lim_{t\downarrow0}t^{3/2}\mathcal A_{\rm KPZ}(t)
 &=\frac{\sigma^2[\zeta(3/2)-1]}{16\sqrt{2\pi\nu}},
       \label{intro:kpz}\\
 \lim_{t\downarrow0}t^{7/4}\mathcal A_{\Phi^4}(t)
 &=\frac{a\Gamma(7/4)[\zeta(7/4)-1]}
           {2^{15/4}\pi^2\nu^{3/2}}.\label{intro:phi4}
\end{align}
For KPZ, the normalized difference from the Gaussian action is
$O_G(g^2t^{1/2})$, uniformly for $|g|\le G$.
For $\Phi^4_3$, it is $O_\varpi(t^{1/8-\varpi})$ for every
$0<\varpi<1/8$, uniformly in the Galerkin cutoff.
Both full-field bounds hold at every positive time in the stated range.
The $\Phi^4_3$ limit concerns the full field; for $X_N$ it requires
$N\sqrt t\to\infty$.
Corollary~\ref{common:integrated} gives the resulting length and
integrated-action asymptotics.

\subsection{Why two ordinary entropies suffice for the quartic field}
\label{se:strategy}

In KPZ, the logarithmic heat representation gives a coupling expansion.
Its linear term is Gaussian. In the exact action difference, the
Gaussian current pairs only with the first chaos of the remainder.
That component vanishes at first order in the coupling.
A third coupling derivative bounds the remainder, giving the quadratic
error by Proposition~\ref{out:second}.

For $\Phi^4_3$, write $u=X_N-Z_N$. Two singular terms obstruct an
ordinary energy estimate: the additive Wick cube and the quadratic
Gaussian potential. A noise-dependent corrector $q$ removes the first.
Positive forward and backward weights $h,k$ handle the second.
For the corrected state $r=u+q$, set $v=r/h$ and use
\[
 \mathcal E_2=\frac12\int hk\,v^2,
 \qquad \mathcal E_4=\frac14\int hk\,v^4.
\]
The weights are exponentials of heat primitives of the Wick square,
as in the multiplicative transformation of Jagannath and
Perkowski~\cite{JP}.
In the relative entropy identity, the rough potential cancels.
Only the errors in the equations for $h,k$ remain.

The quadratic entropy gives
$v\in L^\infty_tL^2_x\cap L^2_tH^1_x\cap L^4_{t,x}$, with a
positive small-time gain. The quartic entropy gives
$v^2\in L^\infty_tL^2_x\cap L^2_tH^1_x$.
Parabolic Sobolev embedding then gives $u^3\in L^2_{t,x}$.
Heat regularity for the corrected state then supplies the spatial
derivatives needed in the differentiated equation.
Only quadratic and quartic tests are used in the nonlinear estimates.

The spectral projection requires care. Quartic testing produces
$\|P_N(r^3)\|_2^2$, not $\|r\|_6^6$. The needed coercivity follows
from the exact identity
\[
 \|r\|_4^4=\langle P_N(r^3),r\rangle
 \le\|P_N(r^3)\|_2\|r\|_2,
 \qquad r\in E_N.
\]
The quadratic estimate controls $\|r\|_2$; this inequality then absorbs
the lower-order quartic terms.

A bounded rotation of the Gaussian noise makes the endpoint derivative
equal to $tB_tZ_t$, without changing the law.
The remaining derivative comes from $u$.
Testing its equation against the adjoint response $p$ uses the cubic dissipation:
\[
 \left|\int_0^T\!\int u^3p\right|
 \le\|u\|_{L^4_{t,x}}^2\|up\|_{L^2_{t,x}}.
\]
The state estimates bound the first factor; the response energy bounds
the second. This bounds the relative derivative uniformly in $N$.
Integrating the rotation gives a differentiable realization of the full law
(Proposition~\ref{ma:full-differentiation}). The derivative bound therefore
passes to the full field at every positive time.

\subsection{Contribution and relation to existing work}

We identify the leading action of both spatial laws and bound its
nonlinear correction. The comparison uses two bounds: $W_2(\rho_t,\gamma_t)$ and $e(t)$.
For $\Phi^4_3$, an additive corrector, positive weights, two ordinary
entropies and the adjoint estimate bound both errors uniformly in the spectral cutoff.

The exponential weights come from the inverse Cole--Hopf transformation
of Jagannath and Perkowski~\cite{JP}, who reduced the periodic
$\Phi^4_3$ equation to a well-posed random PDE; for linear equations the
transformation appears in \cite{HL,GPkpz}. The renormalized gradient
square and its relation to the sunset counterterm already occur in
\cite[Section~2 and Appendix~A]{JP}. Proposition~\ref{cal:sunset}
records the corresponding exact covariance identity for the present
spectral scheme. The gradient square comes from the spatial chain rule
for the Laplacian.

Here the forward and backward weights have explicit equation defects.
Their simultaneous use in the quadratic and quartic relative entropies
gives the state bounds. The same weights control the adjoint of the
linearized flow while retaining its cubic dissipation. The projection
estimates of Appendix~\ref{cal:projection-section} make these estimates
uniform in the Galerkin cutoff. These bounds supply the derivative of
the full law required by the transport comparison.

Several ingredients are classical. Minimum currents and their gradient
projection belong to Wasserstein geometry \cite{BB,AGS,JKO,Lisini};
Gaussian transport is treated in \cite{Gelbrich,Takatsu}.
The entropy identity used here is generalized relative entropy
\cite[Lemma~1.1]{MMP}; see also \cite{ConstantinGRE}. The positive quotient is related
to Doob's $h$-transform \cite{Doob}. Heat regularity, interpolation and
paraproduct estimates are standard \cite{Bony,BCD}.
Our contribution is the comparison of actions and the cancellation
and projection estimates needed for these two equations.

KPZ and its Gaussian linear reference originate in
\cite{KPZ,EW}. We use the Cole--Hopf interpretation
\cite{HairerKPZ,GPkpz}, Walsh integration \cite{Walsh},
positivity \cite{Mueller,ChenKim}, and inverse moments
\cite{MuellerNualart}. The short-time statistics studied in
\cite{GLRHC} concern a different observable from the transport
action of the full law.

For stochastic quantization \cite{PW}, the proof uses the
Gaussian-remainder decomposition \cite{DPD} and paracontrolled calculus
\cite{GIP}; regularity structures give another construction
\cite{Hairer}. The periodic solution theory is supplied by \cite{CC,MW}.
Related field constructions appear in \cite{GHglobal,GHPDE,AK}, and
approximation results in \cite{HM,ZZ}. Strong Feller properties
\cite{HMat} and densities \cite{GL} address other aspects of law regularity.
Our Gaussian estimates use the diagram methods of \cite{MWX},
chaos and Malliavin calculus \cite{Nelson,Gross,Janson,Nualart},
and Hilbert-space stochastic evolution \cite{DPZ}.
Cameron--Martin translation and Gaussian isoperimetry \cite{CM,Borell}
give moments of the response from its bounds under noise shifts, as in \cite{CLL}.

Section~\ref{common:section} proves the comparison and its weak closure.
Sections~\ref{kpz:application} and~\ref{app:phi4} apply it to KPZ and
$\Phi^4_3$. The appendices establish the product, entropy and projection
estimates used in these applications.

\section{A common principle for the leading transport action}
\label{common:section}

We compare the current of $\rho_t$ with the Gaussian current $B_tx$.
The spectrum of $A$ determines the leading action.

Let $A\ge\lambda_*I$, $\lambda_*>0$, be self-adjoint with compact resolvent on a
separable real Hilbert space, with eigenvalues $(\lambda_j)$ counted
with multiplicity. Write $H_s$ for the completion of its eigenvector
span in the norm $\|x\|_{H_s}=\|A^{-s/2}x\|$, and assume
\begin{equation}\label{common:weyl}
 N_A(L):=\#\{j:\lambda_j\le L\}\sim C_A L^\alpha,
 \quad C_A,\alpha>0,
 \qquad \beta=1+\alpha-s\in(1,2).
\end{equation}
Fix $a>0$. The zero-start Gaussian reference and its linear current are
\begin{equation}\label{common:reference}
 \gamma_t=\mathcal N\!\left(0,aA^{-1}(I-e^{-2tA})\right),
 \qquad B_t=\frac{A}{e^{2tA}-I},\qquad \|B_t\|\le\frac1{2t}.
\end{equation}
The covariance is written in the original eigenvector coordinates;
as a covariance on $H_s$ its eigenvalues are
$a\lambda_j^{-1-s}(1-e^{-2t\lambda_j})$.
Since $1+s>\alpha$, these define laws in $\mathcal P_2(H_s)$.
All Wasserstein distances, gradients and vector norms below use $H_s$.

For $\rho\in\mathcal P_2(H_s)$, let $T_\rho$ be the $L^2(\rho;H_s)$ closure of smooth bounded
cylinder gradients. The minimum current $v_t\in T_{\rho_t}$ satisfies
\[
 \frac d{dt}\int\varphi\,d\rho_t
 =\int\langle v_t,\nabla\varphi\rangle\,d\rho_t,
 \qquad \mathcal A_\rho(t)=\|v_t\|_{L^2(\rho_t)}^2.
\]
It exists whenever the derivative functional is bounded in
$\|\nabla\varphi\|_{L^2(\rho_t)}$. Any other representing current differs
from $v_t$ by an element of $T_{\rho_t}^\perp$.
For locally $AC^2$ curves, $\mathcal A_\rho(t)=|\dot\rho_t|_{W_2}^2$
almost everywhere \cite[Sections~8.3--8.4]{AGS}.

\begin{proposition}[The Gaussian action]\label{common:gaussian}
Under \eqref{common:weyl}, the realization
\begin{equation}\label{common:realization}
 Z_t=\sum_j\sqrt{a\lambda_j^{-1}(1-e^{-2t\lambda_j})}\,\xi_j e_j,
\end{equation}
with independent standard real Gaussians $\xi_j$, is locally $C^1$ in
$L^2(\Omega;H_s)$ on $(0,\infty)$ and satisfies $\partial_tZ_t=B_tZ_t$.
Hence $B_tx$ is the minimum current of $\gamma_t$, and its action is a
sum of heat traces at the times $2nt$, $n\ge2$:
\begin{equation}\label{common:gaussian-sum}
 \mathcal A_0(t)
 =a\sum_j\lambda_j^{1-s}
              \frac{e^{-4t\lambda_j}}{1-e^{-2t\lambda_j}}
 =a\sum_{n\ge2}\Tr\bigl(A^{1-s}e^{-2ntA}\bigr).
\end{equation}
Moreover
\begin{equation}\label{common:gaussian-limit}
 \mathcal A_0(t)\le Ct^{-\beta},\qquad
 t^\beta\mathcal A_0(t)\longrightarrow
 K:=aC_A\alpha\,2^{-\beta}\Gamma(\beta)
                              \bigl(\zeta(\beta)-1\bigr).
\end{equation}
For a finite spectral compression with action $\mathcal A_{0,N}$, the
same limit holds along $t\downarrow0$ if the retained spectrum contains
every $\lambda_j\le L_N$ and $tL_N\to\infty$.
\end{proposition}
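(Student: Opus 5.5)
I would work coordinatewise in the eigenbasis $(e_j)$. Put $\sigma_j(t)^2=a\lambda_j^{-1}(1-e^{-2t\lambda_j})$. Differentiating gives
\[
 \frac{d}{dt}\sigma_j=\frac{a e^{-2t\lambda_j}}{\sigma_j}=\sigma_j\,\frac{\lambda_j e^{-2t\lambda_j}}{1-e^{-2t\lambda_j}}=\sigma_j\,\frac{\lambda_j}{e^{2t\lambda_j}-1},
\]
which is exactly $\sigma_j$ times the $j$-th eigenvalue of $B_t$. Hence, termwise, $\partial_t(\sigma_j\xi_je_j)=B_t\,\sigma_j\xi_je_j$.

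To pass from termwise to $L^2(\Omega;H_s)$ differentiability on a compact set $[t_0,t_1]\subset(0,\infty)$, I would use dominated convergence. The squared $H_s$-norm of the $j$-th derivative term is
\[
 \lambda_j^{-s}\,a^2e^{-4t\lambda_j}\sigma_j^{-2}\le C\lambda_j^{1-s}e^{-4t_0\lambda_j}/(1-e^{-2t_0\lambda_*}).
\]
This is summable by the Weyl law \eqref{common:weyl}, since it has at most polynomial growth against exponential decay. The same domination gives continuity of the derivative, so $Z_t$ is locally $C^1$ and $\partial_tZ_t=B_tZ_t$.

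For every smooth cylinder $\varphi$, this yields $\frac{d}{dt}\int\varphi\,d\gamma_t=\int\langle\nabla\varphi,B_tx\rangle\,d\gamma_t$, so $B_tx$ is a representing current. To see that it is the minimum current, I would show $B_tx\in T_{\gamma_t}$. Its truncations $P_nB_tx=\nabla\bigl(\tfrac12\langle A^{s}B_tP_nx,P_nx\rangle\bigr)$, where the $A^{s}$ compensates for the $H_s$ inner product, are cylinder gradients of quadratics. Cutting off the quadratic smoothly and boundedly and letting the cutoff radius grow gives approximation in $L^2(\gamma_t)$ by Gaussian moment bounds. Letting $n\to\infty$ then puts $B_tx$ in the closure.

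The action is
\[
 \mathcal A_0(t)=\E\|B_tZ_t\|_{H_s}^2=\sum_j\lambda_j^{-s}\Bigl(\frac{\lambda_j}{e^{2t\lambda_j}-1}\Bigr)^2a\lambda_j^{-1}(1-e^{-2t\lambda_j}).
\]
This simplifies to $a\sum_j\lambda_j^{1-s}e^{-4t\lambda_j}/(1-e^{-2t\lambda_j})$, and expanding the geometric series gives the heat-trace sum with $n\ge2$.

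For the asymptotics I would write each term as a Stieltjes integral against $N_A$. For each $n$,
\[
 \Tr(A^{1-s}e^{-2ntA})=\int L^{1-s}e^{-2ntL}\,dN_A(L).
\]
By the Karamata/Abelian theorem with $N_A\sim C_AL^\alpha$, this is asymptotic to $C_A\alpha\,\Gamma(\beta)(2nt)^{-\beta}$. Summing over $n\ge2$ produces $2^{-\beta}(\zeta(\beta)-1)$.

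The main obstacle is interchanging the limit with the $n$-sum. For this I need a uniform bound $t^\beta\Tr(A^{1-s}e^{-2ntA})\le Cn^{-\beta}$ for $t\le1$, $n\ge2$. I would get it by integrating by parts, using $N_A(L)\le C L^\alpha$ for $L\ge\lambda_*$. Note that $1-s=\beta-\alpha$ may be negative, but $L\ge\lambda_*>0$ handles this. The bound is summable since $\beta>1$, so dominated convergence over $n$ gives the limit, and the same bound gives $\mathcal A_0(t)\le Ct^{-\beta}$ for small $t$. For larger $t$ the estimate $\mathcal A_0(t)\le Ct^{-\beta}$ holds trivially by exponential decay.

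For the compression, $\mathcal A_{0,N}$ is the same sum restricted to retained indices. Its difference from $\mathcal A_0$ is bounded by the tail
\[
 a\sum_{\lambda_j>L_N}\lambda_j^{1-s}e^{-4t\lambda_j}/(1-e^{-4t\lambda_j}).
\]
Since $t L_N\to\infty$, this tail, multiplied by $t^\beta$, is $\int_{tL_N}^\infty(\dots)\to0$ by the same Stieltjes bound. The limit therefore persists.
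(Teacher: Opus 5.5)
Your proof is correct. The first half is the same as the paper's: the coordinatewise identity $\sigma_j'=b_j\sigma_j$, domination on $[t_0,\infty)$, and tangency via cut-off cylinder quadratics. Two harmless slips there. Since $\langle x,y\rangle_{H_s}=\langle A^{-s}x,y\rangle$, the quadratic whose $H_s$-gradient is $B_tP_nx$ is $\tfrac12\langle A^{-s}B_tP_nx,P_nx\rangle$ (or simply $\tfrac12\langle B_tP_nx,P_nx\rangle_{H_s}$), not the one with $A^{s}$. Also, the denominator in your compression tail should be $1-e^{-2t\lambda_j}$.

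For the small-time limit you take a genuinely different route from the paper's main proof.

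\emph{The paper's route.} It rescales the spectrum into the measures $m_t=(2t)^\alpha\sum_j\delta_{2t\lambda_j}$ and derives vague convergence to $C_A\alpha x^{\alpha-1}\,dx$ from the Weyl law. It then integrates the single function $f(x)=x^{1-s}e^{-2x}/(1-e^{-x})$, with uniform integrability at both spectral ends supplied by the dyadic bounds \eqref{common:small-tail}--\eqref{common:large-tail}. The series $\zeta(\beta)-1$ appears only after the limit is taken.

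\emph{Your route.} You expand into heat traces first and apply the Abelian (Karamata) theorem to each $\Tr(A^{1-s}e^{-2ntA})$. You justify the interchange of limit and sum with the uniform bound $\Tr(A^{1-s}e^{-\tau A})\le C\tau^{-\beta}$ for all $\tau>0$, obtained by integrating by parts against $N_A(L)\le CL^\alpha$. This is the ``second proof'' the paper only sketches in Remark~\ref{common:replica}, now with the domination that the remark omits. The hypothesis $\beta>1$ enters your argument as summability of $\sum_n n^{-\beta}$. In the paper it enters as integrability of $x^{\beta-2}$ at the scaled spectral origin; these are the same condition.

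\emph{What each buys.} Your version makes the Dirichlet-series structure and the per-trace constant $C_A\alpha\Gamma(\beta)(2nt)^{-\beta}$ transparent, and it needs only a classical Abelian theorem. The paper's version produces explicit tail estimates that it reuses elsewhere. One is the compression bound $CR^\beta e^{-2R}$. Another is the computation of $W_2(\gamma_t,\delta_0)$ in Corollary~\ref{common:integrated}, whose integrand $x^{-1-s}(1-e^{-x})$ is not directly a series of heat traces.
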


\begin{proof}
\emph{The Gaussian current.}
Put $q_j=a\lambda_j^{-1}(1-e^{-2t\lambda_j})$ and
$b_j=\lambda_j/(e^{2t\lambda_j}-1)$. Then
\[
 q_j'=2b_jq_j,\qquad (\sqrt{q_j})'=b_j\sqrt{q_j},\qquad
 \sup_{t\ge\tau}\sum_j\lambda_j^{-s}b_j^2q_j
 \le C_\tau\sum_j\lambda_j^{1-s}e^{-2\tau\lambda_j}<\infty.
\]
Thus $\partial_tZ_t=B_tZ_t$ in $L^2(\Omega;H_s)$, locally continuously.
This current is tangent by the approximation below, hence minimal.
Its squared norm is the first sum in \eqref{common:gaussian-sum};
Tonelli and $e^{-4x}/(1-e^{-2x})=\sum_{n\ge2}e^{-2nx}$ give the second.

\emph{The leading term.}
Set
\[
 m_t=(2t)^\alpha\sum_j\delta_{2t\lambda_j},\qquad
 f(x)=x^{1-s}\frac{e^{-2x}}{1-e^{-x}}.
\]
The Weyl law gives
$m_t((0,x])=(2t)^\alpha N_A(x/(2t))\to C_Ax^\alpha$ and
$m_t((0,x])\le Cx^\alpha$ uniformly. Thus
$m_t\rightharpoonup C_A\alpha x^{\alpha-1}\,dx$ vaguely, and
$(2t)^\beta\mathcal A_0(t)/a=\int f\,dm_t$.
For $0<\delta\le1$ and $R\ge1$, dyadic decomposition gives
\begin{align}
 \sup_{t>0}\int_{(0,\delta]} f\,dm_t
 &\le C\sum_{j\ge0}(2^{-j}\delta)^{\alpha-s}
 \le C\delta^{\beta-1},\label{common:small-tail}\\
 \sup_{t>0}\int_{[R,\infty)} f\,dm_t
 &\le C\sum_{j\ge0}(2^jR)^\beta e^{-2^{j+1}R}
 \le CR^\beta e^{-2R}.\label{common:large-tail}
\end{align}
Here $f(x)\lesssim x^{-s}$ near zero; on each dyadic annulus its
power factor varies by a fixed constant, for either sign of $s$.
These bounds prove uniform integrability at both spectral ends and
the bound in \eqref{common:gaussian-limit}. Vague convergence yields
\[
 \lim_{t\downarrow0}t^\beta\mathcal A_0(t)
 =aC_A\alpha2^{-\beta}
   \sum_{n\ge2}\frac{\Gamma(\beta)}{n^\beta}=K.
\]
Thus $\beta>1$ controls the scaled spectral origin, whereas $\beta<2$
ensures that the Gaussian reference is an $H_s$-valued law.
For a spectral compression, with $R=2tL_N\ge1$,
$0\le t^\beta(\mathcal A_0-\mathcal A_{0,N})\le CR^\beta e^{-2R}\to0$
by \eqref{common:large-tail}.
\end{proof}

\begin{remark}[Reading the constant]\label{common:replica}
By \eqref{common:gaussian-sum}, $\zeta(\beta)-1=\sum_{n\ge2}n^{-\beta}$
is the Dirichlet series of the heat-trace times $2nt$; the subtracted
term is the absent time $n=1$. Each trace contributes
$C_A\alpha\Gamma(\beta)(2nt)^{-\beta}$ by Karamata's theorem, which
is a second proof of \eqref{common:gaussian-limit}.
\end{remark}

\begin{theorem}[Relative derivative and leading action]
\label{common:principle}
Assume \eqref{common:weyl}. Let
$\rho\in AC^2_{\rm loc}((0,t_0];\mathcal P_2(H_s))$, $t_0\le1$.
Suppose that, almost everywhere in time and on a countable cylinder
test core,
\begin{equation}\label{common:dual-defect}
 \left|\frac{d}{dt}\int\varphi\,d\rho_t
       -\int\langle\nabla\varphi,B_tx\rangle\,d\rho_t\right|
 \le e(t)\|\nabla\varphi\|_{L^2(\rho_t)}.
\end{equation}
At each time satisfying \eqref{common:dual-defect}, the minimum current is
$v_t(x)=B_tx+r_t(x)$, where
$r_t\in T_{\rho_t}$ and $\|r_t\|_{L^2(\rho_t)}\le e(t)$, and
\begin{equation}\label{common:speed-comparison}
 \left|\sqrt{\mathcal A_\rho(t)}-\sqrt{\mathcal A_0(t)}\right|
 \le e(t)+\frac{W_2(\rho_t,\gamma_t)}{2t}.
\end{equation}
In particular, if
\begin{equation}\label{common:inputs}
 W_2(\rho_t,\gamma_t)\le Ct^\theta,\qquad
 e(t)\le Ct^{\theta-1},\qquad
 \psi:=\theta-1+\frac\beta2>0,
\end{equation}
then
\begin{equation}\label{common:calibration}
 t^\beta|\mathcal A_\rho(t)-\mathcal A_0(t)|
 \le C(t^\psi+t^{2\psi}),\qquad
 \esslim_{t\downarrow0}t^\beta\mathcal A_\rho(t)=K.
\end{equation}
If the derivative pairings and \eqref{common:inputs} hold at every time,
the limit in \eqref{common:calibration} is ordinary.
For spectral projections as in Proposition~\ref{common:gaussian}, replace
$\gamma_t,\mathcal A_0$ by $\gamma_{N,t},\mathcal A_{0,N}$.
If $C$ in \eqref{common:inputs} is independent of $N$, so are the comparison constants.
The limit is $K$ when $tL_N\to\infty$.
\end{theorem}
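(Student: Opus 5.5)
The plan is to argue entirely inside the Hilbert space $L^2(\rho_t;H_s)$ at a fixed time $t$ where \eqref{common:dual-defect} holds.

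\emph{Step 1: the minimum current.} The map $\nabla\varphi\mapsto \frac d{dt}\int\varphi\,d\rho_t-\int\langle\nabla\varphi,B_tx\rangle\,d\rho_t$ is defined on the countable core. By \eqref{common:dual-defect} it is bounded in $\|\nabla\varphi\|_{L^2(\rho_t)}$, with norm at most $e(t)$, and it extends to $T_{\rho_t}$. Riesz representation then gives a unique $r_t\in T_{\rho_t}$ with $\|r_t\|\le e(t)$ representing it. Consequently $B_tx+r_t$ represents the derivative of $\rho_t$. The linear field $B_tx$ lies in $L^2(\rho_t)$ because $\|B_t\|\le(2t)^{-1}$ and $\rho_t\in\mathcal P_2$.

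The minimum current is the orthogonal projection of any representing current onto $T_{\rho_t}$, so $v_t=\Pi_t(B_tx)+r_t$. I must show $B_tx\in T_{\rho_t}$. Since $B_tx=\nabla\bigl(\tfrac12\langle B_tx,x\rangle\bigr)$, I would approximate it in $L^2(\rho_t)$ by gradients of truncated finite-dimensional quadratics $\tfrac12\chi_R(\langle P_MB_tP_Mx,x\rangle)$. Dominated convergence suffices here, because $|B_tx|\le|x|/(2t)$ is square-integrable. This gives $v_t=B_tx+r_t$ as claimed. The same approximation also gives the tangency in Proposition~\ref{common:gaussian}.

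\emph{Step 2: comparison of speeds.} From Step 1, $\sqrt{\mathcal A_\rho(t)}=\|B_tx+r_t\|_{L^2(\rho_t)}$, and the triangle inequality gives
\[
 \Bigl|\sqrt{\mathcal A_\rho(t)}-\|B_tx\|_{L^2(\rho_t)}\Bigr|\le e(t).
\]
Next compare $\|B_tx\|_{L^2(\rho_t)}$ with $\|B_tx\|_{L^2(\gamma_t)}=\sqrt{\mathcal A_0(t)}$. Take an optimal coupling $(X,Y)$ of $\rho_t$ and $\gamma_t$. Minkowski's inequality in $L^2(\Omega;H_s)$ then gives
\[
 \bigl|\|B_tX\|-\|B_tY\|\bigr|\le\|B_t\|\,\|X-Y\|_{L^2}\le W_2(\rho_t,\gamma_t)/(2t).
\]
One technical point needs checking: $B_t$ must be bounded on $H_s$ with the same norm. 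This holds because $B_t$ commutes with $A$, so its norm is $\sup_j b_j\le(2t)^{-1}$, using $e^x-1\ge x$. Together these give \eqref{common:speed-comparison}.

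\emph{Step 3: the rates.} Under \eqref{common:inputs}, the right side of \eqref{common:speed-comparison} is at most $Ct^{\theta-1}$. Write $a=\sqrt{\mathcal A_\rho}$ and $b=\sqrt{\mathcal A_0}$. Then
\[
 |a^2-b^2|\le|a-b|(2b+|a-b|).
\]
Proposition~\ref{common:gaussian} gives $b\le Ct^{-\beta/2}$. Hence
\[
 t^\beta|a^2-b^2|\le C\bigl(t^{\theta-1+\beta/2}+t^{2\theta-2+\beta}\bigr)=C(t^\psi+t^{2\psi}).
\]
Because $\psi>0$, combining this with $t^\beta\mathcal A_0\to K$ yields the essential limit. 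It is an ordinary limit whenever the hypotheses hold at every $t$, since then the identity $\mathcal A_\rho=\|v_t\|^2$ is available pointwise.

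For spectral projections, the same argument runs with $A$ replaced by its compression. Every constant used is $\|B_{N,t}\|\le(2t)^{-1}$, the bound $\mathcal A_{0,N}\le\mathcal A_0\le Ct^{-\beta}$, and $C$ itself, so none depends on $N$. The limit $K$ then follows from the last clause of Proposition~\ref{common:gaussian}.

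\emph{Main obstacle.} The only delicate point is the tangency $B_tx\in T_{\rho_t}$ for a general, non-Gaussian $\rho_t$, which is what justifies $r_t\in T_{\rho_t}$ and the minimality. I expect the truncated finite-rank quadratic approximation to handle it, using only second moments of $\rho_t$.
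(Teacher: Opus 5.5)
Your proposal is correct and follows the paper's proof essentially step for step. You prove tangency of $B_tx$ by truncated finite-rank quadratics, obtain $r_t$ by Riesz representation with $\|r_t\|\le e(t)$, compare speeds using the triangle inequality, an optimal coupling and $\|B_t\|\le(2t)^{-1}$, and derive the rate from $|a^2-b^2|\le|a-b|(2b+|a-b|)$ together with $\mathcal A_0\le Ct^{-\beta}$. Composing a truncation $\chi_R$ with the quadratic, rather than using the paper's multiplicative cutoff $\chi(\|P_mx\|^2/R^2)$, is a harmless variation, provided $\chi_R$ is a bounded truncation of the identity whose derivative has compact support. Your remark that $\mathcal A_{0,N}\le\mathcal A_0$ makes the $N$-uniformity of the constants explicit.
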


\begin{proof}
\emph{The relative current.}
Let $P_m\to I$ be finite-rank orthogonal projections and
$\chi\in C_c^\infty(\R)$ equal one near zero. For every second-moment law,
\[
 \varphi_{m,R}(x)=\tfrac12\chi(\|P_mx\|^2/R^2)
                    \langle B_tP_mx,P_mx\rangle,
 \qquad
 \nabla\varphi_{m,R}\xrightarrow[R\to\infty]{L^2(\rho)}P_mB_tP_mx
 \xrightarrow[m\to\infty]{L^2(\rho)}B_tx,
\]
since $|\nabla\varphi_{m,R}(x)|\le C\|B_t\|\|x\|$.
Hence $B_tx\in T_\rho$. Riesz applied to \eqref{common:dual-defect} gives
$r_t\in T_{\rho_t}$, $\|r_t\|_2\le e(t)$, and
\[
 \frac d{dt}\int\varphi\,d\rho_t
 =\int\langle B_tx+r_t,\nabla\varphi\rangle\,d\rho_t.
\]
Thus $B_tx+r_t$ is admissible and tangent, hence minimal. The exact
relative dual norm is $\|r_t\|_2$; $e(t)$ is an upper bound.

\emph{Comparison of the actions.}
By Proposition~\ref{common:gaussian}, $B_tx$ is the Gaussian minimum
current. For every coupling $(X,Z)$ of $(\rho_t,\gamma_t)$, the
triangle inequality gives
\begin{equation}\label{common:coupled-comparison}
 \left|\sqrt{\mathcal A_\rho(t)}-\sqrt{\mathcal A_0(t)}\right|
 \le\|r_t\|_{L^2(\rho_t)}+\|B_t(X-Z)\|_{L^2(\Omega;H_s)}.
\end{equation}
Taking an optimal coupling and $\|B_t\|\le(2t)^{-1}$ proves
\eqref{common:speed-comparison}.

\emph{Conclusion.}
By \eqref{common:speed-comparison}, \eqref{common:inputs} and
\eqref{common:gaussian-limit},
\[
 t^\beta|\mathcal A_\rho-\mathcal A_0|
 \le Ct^\psi\bigl(2\sqrt{t^\beta\mathcal A_0}+Ct^\psi\bigr)
 \le C(t^\psi+t^{2\psi}).
\]
\end{proof}

The two applications use the following exponents:
\[
\begin{array}{c|ccccc}
 &\alpha&s&\beta&\theta&\psi\\\hline
 \mathrm{KPZ}\text{ in }L^2_0(\mathbb T)
 &1/2&0&3/2&1/2&1/4\\
 \Phi^4_3\text{ in }H^{-3/4}(\mathbb T^3)
 &3/2&3/4&7/4&1/4-\varpi&1/8-\varpi
\end{array}
\]
Here $0<\varpi<1/8$. For KPZ, the second-order identity below improves
the gain from $1/4$ to $1/2$ (Theorem~\ref{kpz:action-limit}).

\subsection{Discarding variations that leave the law fixed}\label{common:ward-section}

A realization can change while its law stays fixed. Estimating its full
derivative may therefore retain irrelevant directions.
For a realization $Y$ and a
square-integrable variation $V$ define
\[
 \|V\|_{\mathrm{obs},Y}
 =\sup_{\|\nabla\varphi(Y)\|_2\le1}
       |\E\langle V,\nabla\varphi(Y)\rangle|.
\]
This seminorm is the norm of the tangent projection \cite{AGS}.
It vanishes exactly when $V$ leaves the law unchanged to first order.
At a spectral cutoff, let $v'=Lv+F$, $v(0)=a$, and subtract a terminal
corrector $c$. The observed variation is the complete balance
\[
 v(T)-c=J(T,0)a+\int_0^TJ(T,s)F_s\dd s-c.
\]
For any differentiable path $b$, variation of constants gives
$J(T,0)b(0)+\int_0^TJ(T,s)(b'-Lb)\dd s-b(T)=0$.
Thus the initial, source and terminal contributions of a coordinate
change cancel together. The second cancellation comes from an
orthogonal rotation of Gaussian noise.

Let $W$ be isonormal over a real Hilbert space $\mathcal K$.
For orthogonal $O$, the map $\Gamma(O)I_n(f)=I_n(O^{\otimes n}f)$
preserves joint laws and products. For bounded skew-adjoint $\mathsf A$,
write $\mathscr W_{\mathsf A}F=\partial_\varepsilon
\Gamma(e^{\varepsilon\mathsf A})F|_{\varepsilon=0}$ when this derivative
exists strongly in $L^2$.

\begin{lemma}[Gaussian invariance]\label{common:ward}
Let $Y_t$ be a locally $C^1$ realization in $L^2(\Omega;H_s)$ of $\rho_t$,
and suppose $\mathscr W_{\mathsf A_t}Y_t$ exists for deterministic bounded
skew-adjoint $\mathsf A_t$. Then \eqref{common:dual-defect} holds with
\begin{equation}\label{common:ward-defect}
 D_t=t\partial_tY_t-tB_tY_t+\mathscr W_{\mathsf A_t}Y_t,
 \qquad e(t)=t^{-1}\|D_t\|_{\mathrm{obs},Y_t}\le t^{-1}\|D_t\|_2.
\end{equation}
\end{lemma}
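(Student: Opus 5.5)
The plan is to differentiate $\int\varphi\,d\rho_t=\E\varphi(Y_t)$ along the realization, and then show that the Gaussian rotation term contributes nothing in expectation. Fix a smooth bounded cylinder test $\varphi$ from the countable core. Since $Y$ is locally $C^1$ in $L^2(\Omega;H_s)$ and $\nabla\varphi$ is bounded and Lipschitz, the chain rule in $L^2$ gives
\[
 \frac d{dt}\int\varphi\,d\rho_t=\E\langle\partial_tY_t,\nabla\varphi(Y_t)\rangle
\]
at every time. The difference-quotient remainder is bounded by $\|\nabla^2\varphi\|_\infty$ times the squared increment, which is $o(h)$ in $L^1$.

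\emph{Step 1 (rotation invariance).} Since $\Gamma(e^{\varepsilon\mathsf A_t})$ preserves joint laws, the map $\varepsilon\mapsto\E\varphi(\Gamma(e^{\varepsilon\mathsf A_t})Y_t)$ is constant. We must check that $\Gamma(O)$ commutes with $\varphi$, i.e.\ $\Gamma(O)\varphi(Y)=\varphi(\Gamma(O)Y)$. This holds because $\Gamma(O)$ is induced by the measure-preserving pushforward $W\mapsto W\circ O^{*}$ on the isonormal process, so it acts on all measurable functionals as composition. Differentiating at $\varepsilon=0$, using the assumed strong $L^2$ derivative $\mathscr W_{\mathsf A_t}Y_t$ and the same chain-rule argument as above, gives
\[
 \E\langle\mathscr W_{\mathsf A_t}Y_t,\nabla\varphi(Y_t)\rangle=0.
\]
This identity is the only genuinely new point. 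The likely obstacle is justifying that $\Gamma(O)$ acts on nonlinear functionals of $Y_t$ (not just on chaos-level components) by composition. I would handle it by approximating $Y_t$ in $L^2$ by polynomial functionals of finitely many $W(h_i)$. For such functionals, $\Gamma(O)$ acts by substitution $W(h_i)\mapsto W(Oh_i)$, since it preserves products and first chaos. The identity then passes to the $L^2$ limit because $\varphi$ is Lipschitz.

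\emph{Step 2 (assembling the defect).} Multiply the derivative identity by $t$ and add the vanishing rotation term. This gives
\[
 t\Bigl(\frac d{dt}\int\varphi\,d\rho_t-\int\langle\nabla\varphi,B_tx\rangle\,d\rho_t\Bigr)
 =\E\langle D_t,\nabla\varphi(Y_t)\rangle,
\]
using $\int\langle\nabla\varphi,B_tx\rangle\,d\rho_t=\E\langle B_tY_t,\nabla\varphi(Y_t)\rangle$, since $Y_t\sim\rho_t$. By definition of $\|\cdot\|_{\mathrm{obs},Y_t}$, the absolute value of the right side is at most $\|D_t\|_{\mathrm{obs},Y_t}\,\|\nabla\varphi(Y_t)\|_2$. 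Moreover $\|\nabla\varphi(Y_t)\|_2=\|\nabla\varphi\|_{L^2(\rho_t)}$. Dividing by $t$ yields \eqref{common:dual-defect} with $e(t)=t^{-1}\|D_t\|_{\mathrm{obs},Y_t}$. The bound $\|D_t\|_{\mathrm{obs},Y_t}\le\|D_t\|_2$ follows from Cauchy--Schwarz.

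Since all the steps hold at every $t$ where $Y$ is differentiable and the rotation derivative exists, the estimate holds at every such time, not merely almost everywhere. This is what later permits the ordinary limit in Theorem~\ref{common:principle}.
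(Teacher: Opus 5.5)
Your proposal is correct and matches the paper's proof: invariance of the law under $\Gamma(e^{\varepsilon\mathsf A_t})$ together with the chain rule gives $\E\langle\mathscr W_{\mathsf A_t}Y_t,\nabla\varphi(Y_t)\rangle=0$, and adding this to $t\partial_t\E\varphi(Y_t)-t\E\langle B_tY_t,\nabla\varphi(Y_t)\rangle$ yields $\E\langle D_t,\nabla\varphi(Y_t)\rangle$. Your extra details (the second-order remainder in the $L^2$ chain rule, and the substitution action of $\Gamma(O)$) fill in what the paper leaves implicit; for the latter, equality of the laws of $\Gamma(O)Y_t$ and $Y_t$ already suffices.
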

\begin{proof}
For a smooth bounded cylinder test, invariance and the chain rule give
\[
 0=\left.\partial_\varepsilon\mathbb E
    \varphi(\Gamma(e^{\varepsilon\mathsf A_t})Y_t)\right|_0
   =\mathbb E\langle\nabla\varphi(Y_t),\mathscr W_{\mathsf A_t}Y_t\rangle.
\]
Consequently
\[
 t\partial_t\E\varphi(Y_t)-t\E\langle B_tY_t,\nabla\varphi(Y_t)\rangle
 =\E\langle D_t,\nabla\varphi(Y_t)\rangle,
 \qquad \|D_t\|_{\mathrm{obs},Y_t}\le\|D_t\|_2.
\]
\end{proof}

For the Gaussian number operator $\mathcal NI_n=nI_n$, one has
\begin{equation}\label{common:ward-number}
 F\in\operatorname{Dom}(\mathcal N)\ \Longrightarrow\
 \mathscr W_{\mathsf A}F=d\Gamma(\mathsf A)F,
 \qquad\|d\Gamma(\mathsf A)F\|_2\le\|\mathsf A\|\,\|\mathcal NF\|_2.
\end{equation}
Indeed, for $F=\sum_{n\ge0}I_n(f_n)$,
\[
 d\Gamma(\mathsf A)I_n(f_n)=I_n\Bigl(\sum_{k=1}^n\mathsf A^{(k)}f_n\Bigr),
 \qquad
 \|d\Gamma(\mathsf A)F\|_2^2
 \le\|\mathsf A\|^2\sum_{n\ge0}n^2n!\|f_n\|^2.
\]
Finite-chaos truncation proves \eqref{common:ward-number}.
Appendix~\ref{ma:section} constructs $\mathsf A_t$ and differentiates
the finite-cutoff solution along its rotations.

\paragraph{An exact second-order comparison.}
Since $B_tZ_t$ lies in the first chaos, the linear error depends only
on $u_t^{(1)}$ and $D_t^{(1)}$.

\begin{proposition}[Second-order comparison]\label{out:second}
Let $Y_t=Z_t+u_t$ be a realization as in Lemma~\ref{common:ward}, where
$Z_t$ is the first-chaos realization of $\gamma_t$ and the rotation
$\mathsf A_t$ satisfies $(t\partial_t+\mathscr W_{\mathsf A_t})Z_t=tB_tZ_t$.
Let $D_t$ be the defect \eqref{common:ward-defect}, $r_t$ the residual
current of Theorem~\ref{common:principle}, and write $F^{(1)}$ for the
first-chaos component of $F$. Then
\begin{equation}\label{out:second-identity}
\begin{split}
 \mathcal A_\rho(t)-\mathcal A_0(t)
 ={}&2\E\bigl\langle B_tZ_t,\,B_tu_t^{(1)}+t^{-1}D_t^{(1)}\bigr\rangle\\
 &+\E\|B_tu_t\|^2+2t^{-1}\E\langle B_tu_t,D_t\rangle
 +\|r_t\|_{L^2(\rho_t)}^2,
\end{split}
\end{equation}
with $\|r_t\|_{L^2(\rho_t)}\le t^{-1}\|D_t\|_{L^2}$.
\end{proposition}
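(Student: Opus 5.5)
The plan is to expand the action of the minimum current from Theorem~\ref{common:principle} around the Gaussian current. Then the cross term with the residual current is rewritten through the defect $D_t$ of Lemma~\ref{common:ward}, using that $B_tx$ is itself a tangent vector. Fix a time at which the hypotheses hold. By Lemma~\ref{common:ward} and Theorem~\ref{common:principle}, the minimum current is $v_t=B_tx+r_t$ with $r_t\in T_{\rho_t}$. Since $Y_t$ realizes $\rho_t$,
\[
 \mathcal A_\rho(t)=\E\|B_tY_t+r_t(Y_t)\|^2
 =\E\|B_tY_t\|^2+2\E\langle B_tY_t,r_t(Y_t)\rangle+\|r_t\|_{L^2(\rho_t)}^2 .
\]

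\emph{The cross term with $r_t$.} For each smooth bounded cylinder test $\varphi$, the representation in Theorem~\ref{common:principle} together with the proof of Lemma~\ref{common:ward} gives
\[
 \E\langle r_t(Y_t),\nabla\varphi(Y_t)\rangle
 =\frac d{dt}\int\varphi\,d\rho_t-\E\langle B_tY_t,\nabla\varphi(Y_t)\rangle
 =t^{-1}\E\langle D_t,\nabla\varphi(Y_t)\rangle .
\]
Both sides are continuous in $\nabla\varphi(Y_t)$ in $L^2(\Omega;H_s)$, because $r_t(Y_t)$ and $D_t$ are square integrable. The identity therefore extends from the countable core to all cylinder tests, and then to the closure $T_{\rho_t}$ composed with $Y_t$.

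The proof of Theorem~\ref{common:principle} exhibits the cylinder gradients $\nabla\varphi_{m,R}$ with $\nabla\varphi_{m,R}\to B_tx$ in $L^2(\rho)$, so $B_tx\in T_{\rho_t}$. Passing to the limit along these gradients yields
\[
 \E\langle B_tY_t,r_t(Y_t)\rangle=t^{-1}\E\langle D_t,B_tY_t\rangle .
\]
This limit passage is the only genuinely delicate point. It uses $D_t\in L^2$, which is part of the hypotheses of Lemma~\ref{common:ward}, and the domination $|\nabla\varphi_{m,R}|\le C\|B_t\|\|x\|$.

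\emph{Expanding in $u_t$.} Substitute $Y_t=Z_t+u_t$ and use $\mathcal A_0(t)=\E\|B_tZ_t\|^2$, which is Proposition~\ref{common:gaussian} since $Z_t$ realizes $\gamma_t$. This gives
\[
 \mathcal A_\rho-\mathcal A_0=2\E\langle B_tZ_t,B_tu_t\rangle+\E\|B_tu_t\|^2
 +2t^{-1}\E\langle B_tZ_t,D_t\rangle+2t^{-1}\E\langle B_tu_t,D_t\rangle+\|r_t\|^2 .
\]
Now $B_tZ_t=\sum_j b_j\sqrt{q_j}\,\xi_je_j$ is an $H_s$-valued first-chaos variable, with $B_t$ deterministic and bounded. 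By orthogonality of Wiener chaoses, $\E\langle B_tZ_t,F\rangle=\E\langle B_tZ_t,F^{(1)}\rangle$ for every $F\in L^2(\Omega;H_s)$. This applies componentwise, after checking summability of the series in $H_s$, which follows from $\sum_j\lambda_j^{-s}b_j^2q_j<\infty$ as in Proposition~\ref{common:gaussian}.

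Applying this projection to $F=B_tu_t$ and to $F=D_t$ gives the first line of \eqref{out:second-identity}, since $(B_tu_t)^{(1)}=B_tu_t^{(1)}$. The remaining terms form the second line.

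\emph{The bound on $r_t$.} It is the one already stated in \eqref{common:ward-defect}:
\[
 \|r_t\|_{L^2(\rho_t)}\le e(t)=t^{-1}\|D_t\|_{\mathrm{obs},Y_t}\le t^{-1}\|D_t\|_2 .
\]
The rotation hypothesis $(t\partial_t+\mathscr W_{\mathsf A_t})Z_t=tB_tZ_t$ is not used in the identity itself. It only ensures that $D_t=t\partial_tu_t+\mathscr W_{\mathsf A_t}u_t-tB_tu_t$ involves $u_t$ alone, which is what makes the identity useful afterwards.
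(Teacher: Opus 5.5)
Your proposal is correct and follows essentially the same route as the paper: test the residual current against $B_tx\in T_{\rho_t}$ to obtain $\E\langle B_tY_t,r_t(Y_t)\rangle=t^{-1}\E\langle B_tY_t,D_t\rangle$, substitute $Y_t=Z_t+u_t$, and use that $B_tZ_t$ lies in the first chaos. The paper groups the Gaussian terms as $\E\langle B_t(Y_t-Z_t),B_t(Y_t+Z_t)\rangle$ rather than expanding $\E\|B_tY_t\|^2$ directly, which is the same computation.
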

\begin{proof}
Testing the relative current against $B_tx\in T_{\rho_t}$ gives
$\E\langle B_tY_t,r_t(Y_t)\rangle=t^{-1}\E\langle B_tY_t,D_t\rangle$.
Hence
\[
 \mathcal A_\rho-\mathcal A_0
 =\E\langle B_t(Y_t-Z_t),B_t(Y_t+Z_t)\rangle
   +2t^{-1}\E\langle B_tY_t,D_t\rangle+\|r_t\|_2^2.
\]
Substitute $Y_t=Z_t+u_t$ and use
$\E\langle B_tZ_t,F\rangle=\E\langle B_tZ_t,F^{(1)}\rangle$.
This is \eqref{out:second-identity}.
\end{proof}

Since $\|B_t\|\le(2t)^{-1}$, Cauchy--Schwarz in
\eqref{out:second-identity} gives
\begin{equation}\label{out:second-bound}
\begin{split}
 |\mathcal A_\rho(t)-\mathcal A_0(t)|
 \le{}&\frac{C\sqrt{\mathcal A_0(t)}}{t}
       \bigl(\|u_t^{(1)}\|_2+\|D_t^{(1)}\|_2\bigr)\\
 &+\frac{C}{t^2}\bigl(\|u_t\|_2+\|D_t\|_2\bigr)^2,
\end{split}
\end{equation}
where the norms are in $L^2(\Omega;H_s)$.

\begin{corollary}[Length, action and excess length]\label{common:integrated}
Under the hypotheses of Theorem~\ref{common:principle}, including
\eqref{common:inputs}, if
$\rho_t\to\delta_0$ in $W_2$ as $t\downarrow0$, then the curve extends
to $AC^1$ at zero and, for $c>1$,
\begin{equation}\label{common:length}
 \operatorname{Length}(\rho|_{[0,t]})
 \sim\frac{\sqrt K}{1-\beta/2}t^{1-\beta/2},\qquad
 t^{\beta-1}\int_t^{ct}\mathcal A_\rho(u)\,du
 \longrightarrow K\frac{1-c^{1-\beta}}{\beta-1}.
\end{equation}
Moreover
\begin{equation}\label{common:distance}
 W_2(\rho_t,\delta_0)^2\sim
 \frac{aC_A\alpha\,\Gamma(\beta-1)}{2-\beta}(2t)^{2-\beta},
 \qquad
 \lim_{t\downarrow0}
 \frac{\operatorname{Length}(\rho|_{[0,t]})}{W_2(\rho_t,\delta_0)}
 =\sqrt{\frac{(\beta-1)(\zeta(\beta)-1)}{2-\beta}}.
\end{equation}
\end{corollary}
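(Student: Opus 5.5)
The plan is to integrate the metric speed, whose small-time behaviour comes from Theorem~\ref{common:principle}, and to compute the distance to $\delta_0$ separately through the Gaussian reference.

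\emph{Length and integrated action.} By \eqref{common:calibration}, $\mathcal A_\rho(u)=Ku^{-\beta}(1+o(1))$ for almost every small $u$, with an error bounded by $C(u^{\psi-\beta}+u^{2\psi-\beta})$. Since $\beta<2$, the speed $\sqrt{\mathcal A_\rho(u)}\sim\sqrt K u^{-\beta/2}$ is integrable at $0$; indeed $\sqrt{\mathcal A_\rho}\le\sqrt{\mathcal A_0}+e+W_2/(2u)\le Cu^{-\beta/2}+Cu^{\theta-1}$ by \eqref{common:speed-comparison} and \eqref{common:gaussian-limit}, and $\theta-1>-\beta/2$. For $0<s<t\le t_0$ we have $W_2(\rho_s,\rho_t)\le\int_s^t|\dot\rho_u|\,du$, and $\rho_s\to\delta_0$. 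Hence setting $\rho_0=\delta_0$ gives an $AC^1$ curve on $[0,t_0]$ whose metric derivative coincides almost everywhere with $\sqrt{\mathcal A_\rho}$ (\cite[Sections~8.3--8.4]{AGS}). Therefore
\[
\operatorname{Length}(\rho|_{[0,t]})=\int_0^t\sqrt{\mathcal A_\rho(u)}\,du\sim\frac{\sqrt K}{1-\beta/2}\,t^{1-\beta/2}.
\]
This asymptotic follows by dominated convergence after the substitution $u=tx$, or simply by integrating an a.e.\ asymptotic equivalence of power type. The substitution $u=tx$ applied to $\int_t^{ct}\mathcal A_\rho$ gives $t^{1-\beta}\int_1^c K x^{-\beta}\,dx\,(1+o(1))$, which is the second formula in \eqref{common:length}. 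Here the error term $C(u^{\psi-\beta}+u^{2\psi-\beta})$ is absorbed because $\psi>0$.

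\emph{Distance.} By the triangle inequality, $|W_2(\rho_t,\delta_0)-W_2(\gamma_t,\delta_0)|\le W_2(\rho_t,\gamma_t)\le Ct^\theta$. Also $W_2(\gamma_t,\delta_0)^2=\E\|Z_t\|_{H_s}^2=a\sum_j\lambda_j^{-1-s}(1-e^{-2t\lambda_j})$. I would evaluate this with the scaled measure $m_t$ from the proof of Proposition~\ref{common:gaussian}: the sum equals $a(2t)^{2-\beta}\int g\,dm_t$ with $g(x)=x^{-1-s}(1-e^{-x})$. Since $1+s>\alpha$, i.e.\ $\beta<2$, $g$ is integrable against $x^{\alpha-1}$ at infinity, and since $g\sim x^{-s}$ near $0$ with $\alpha-s>0$, it is integrable there as well. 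Dyadic tail bounds analogous to \eqref{common:small-tail}--\eqref{common:large-tail} supply the uniform integrability; the large-$x$ tail is now only polynomially small, of order $R^{\beta-2}$, which still suffices. Vague convergence then yields
\[
\int g\,dm_t\to C_A\alpha\int_0^\infty x^{\alpha-2-s}(1-e^{-x})\,dx=C_A\alpha\,\frac{\Gamma(\beta-1)}{2-\beta},
\]
where the last integral is evaluated by integration by parts: $\int_0^\infty x^{\beta-3}(1-e^{-x})\,dx=\Gamma(\beta-1)/(2-\beta)$. Because $\theta>1-\beta/2$ (equivalent to $\psi>0$), the error $Ct^\theta$ is $o(t^{1-\beta/2})$, so the asymptotics transfer from $\gamma_t$ to $\rho_t$, giving the first formula in \eqref{common:distance}.

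\emph{Ratio.} Dividing the two asymptotics, the ratio squared is
\[
\frac{K}{(1-\beta/2)^2}\cdot\frac{2-\beta}{aC_A\alpha\,\Gamma(\beta-1)\,2^{2-\beta}}.
\]
Inserting the value of $K$ from \eqref{common:gaussian-limit} and using $\Gamma(\beta)=(\beta-1)\Gamma(\beta-1)$, the factors $2^{-\beta}\cdot 4/2^{2-\beta}=1$ cancel and the expression simplifies to $(\beta-1)(\zeta(\beta)-1)/(2-\beta)$. The main delicate step is the uniform integrability at the large-$x$ end for $g$, which is no longer exponentially damped, together with the justification that the metric derivative equals $\sqrt{\mathcal A_\rho}$ up to $t=0$. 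Both reduce to $\beta<2$ and to the speed bound above.
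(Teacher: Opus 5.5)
Your proposal is correct and follows the paper's proof essentially step by step. The bound $\sqrt{\mathcal A_\rho(u)}\le Cu^{-\beta/2}$ gives the $AC^1$ extension, and the substitution $u=tv$ with dominated convergence gives both relations in \eqref{common:length}. The distance comes from $\E\|Z_t\|_{H_s}^2$ via the measures $m_t$, the $R^{\beta-2}$ tail bound, integration by parts, and the triangle inequality with $t^\theta=o(t^{1-\beta/2})$, followed by the same final algebra using $\Gamma(\beta)=(\beta-1)\Gamma(\beta-1)$.
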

\begin{proof}
Since $\sqrt{\mathcal A_\rho(t)}\le Ct^{-\beta/2}\in L^1(0,t_0)$,
the curve extends to $AC^1$. Substitution $u=tv$ and dominated convergence give
\[
 \frac{\operatorname{Length}(\rho|_{[0,t]})}{t^{1-\beta/2}}
 =\int_0^1\sqrt{t^\beta\mathcal A_\rho(tv)}\,dv
 \longrightarrow\sqrt K\int_0^1v^{-\beta/2}\,dv,
 \qquad
 t^{\beta-1}\int_t^{ct}\mathcal A_\rho(u)\,du
 \longrightarrow K\int_1^cv^{-\beta}\,dv.
\]

For the distance, $W_2(\gamma_t,\delta_0)^2=\E\|Z_t\|_{H_s}^2$. With
the measures $m_t$ of Proposition~\ref{common:gaussian},
\[
 \E\|Z_t\|_{H_s}^2
 =a\sum_j\lambda_j^{-1-s}(1-e^{-2t\lambda_j})
 =a(2t)^{2-\beta}\int x^{-1-s}(1-e^{-x})\,dm_t(x).
\]
The small-$x$ bound is \eqref{common:small-tail}; the large-$x$ tail is
bounded by $C\sum_{j\ge0}(2^jR)^{\beta-2}\le CR^{\beta-2}$.
Vague convergence and integration by parts give
\[
 \int_0^\infty x^{\beta-3}(1-e^{-x})\,dx
 =\frac1{2-\beta}\int_0^\infty x^{\beta-2}e^{-x}\,dx
 =\frac{\Gamma(\beta-1)}{2-\beta},
\]
which is the first relation in \eqref{common:distance} for $\gamma_t$.
The triangle inequality gives
$|W_2(\rho_t,\delta_0)-W_2(\gamma_t,\delta_0)|\le Ct^\theta$, and
$t^\theta=o(t^{1-\beta/2})$ because $\psi>0$.
Dividing the two asymptotics and using $\Gamma(\beta)=(\beta-1)\Gamma(\beta-1)$,
\[
 \frac{K}{(1-\beta/2)^2}\cdot
 \frac{2-\beta}{aC_A\alpha\,\Gamma(\beta-1)2^{2-\beta}}
 =\frac{(\beta-1)(\zeta(\beta)-1)}{2-\beta}.
\]
This proves \eqref{common:distance}.
\end{proof}

\begin{remark}[Excess length]\label{common:excess}
The ratio in \eqref{common:distance} depends only on $\beta$. It tends
to $1$ as $\beta\downarrow1$ and to infinity as $\beta\uparrow2$.
For every fixed $1<\beta<2$, it is strictly greater than one, since
$\zeta(\beta)>\int_1^\infty x^{-\beta}\dd x=(\beta-1)^{-1}$.
Its values are $\sqrt{\zeta(3/2)-1}\approx1.270$ for KPZ and
$\sqrt{3(\zeta(7/4)-1)}\approx1.699$ for $\Phi^4_3$ in $H^{-3/4}$.
The nonlinearity does not change it, because both relations in
\eqref{common:distance} are governed by the Gaussian reference at
leading order.
\end{remark}
\subsection{Closure of relative currents}

\begin{lemma}[Closure of relative currents]
\label{full:closure}
Let $H$ be a separable Hilbert space, $I\Subset(0,\infty)$ a compact
interval, and $B_t$ a norm-continuous family of bounded self-adjoint
operators on $H$. Suppose $\mu_n\to\mu$ in $C(I;\mathcal P_2(H))$.
Assume that $\mu_n$ is locally absolutely continuous and has an
admissible current $B_tx+r_n(t,x)$ satisfying
\[
 \|r_n(t)\|_{L^2(\mu_n(t);H)}\le a(t)
 \quad\text{for a.e. }t\in I,\qquad a\in L^2(I).
\]
Then $\mu\in AC^2(I;\mathcal P_2(H))$ and its minimum current is
\begin{equation}\label{full:relative-current}
 v_t(x)=B_tx+r_t(x),\qquad r_t\in T_{\mu_t},\qquad
 \|r_t\|_{L^2(\mu_t;H)}\le a(t)\quad\text{a.e.},
\end{equation}
where $T_{\mu_t}$ is the $L^2(\mu_t;H)$ closure of smooth bounded
cylinder gradients. In particular $|\dot\mu_t|^2=\|v_t\|_{L^2(\mu_t)}^2$
for almost every $t$.
\end{lemma}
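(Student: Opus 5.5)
We pass to the limit in the weak continuity equation, treating the full current as a vector measure on $I\times H$ with uniformly bounded energy and extracting a weak limit. For each $n$ define the vector measure $\mathbf J_n=(B_tx+r_n(t,x))\,\mu_n(t)(dx)\,dt$ on $I\times H$. Its energy satisfies
\[
\int_I\|B_tx+r_n\|_{L^2(\mu_n(t))}^2dt\le 2\int_I\bigl(\|B_t\|^2M_2(\mu_n(t))+a(t)^2\bigr)dt .
\]
This is bounded uniformly in $n$, because convergence in $C(I;\mathcal P_2)$ bounds second moments uniformly and $B$ is norm-continuous on the compact interval $I$.

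Rather than work with $\mathbf J_n$ directly, I would isolate the remainder part $\mathbf R_n=r_n\,\mu_n(t)\,dt$. The term $B_tx\,\mu_n(t)\,dt$ converges to $B_tx\,\mu(t)\,dt$ when tested against bounded continuous cylinder vector fields of at most linear growth, since $\mu_n\to\mu$ in $W_2$ uniformly in $t$. For the remainder, I would use the standard lower semicontinuity of the Benamou–Brenier type functional $(\nu,\mathbf R)\mapsto\int\|d\mathbf R/d\nu\|^2d\nu$ (Ambrosio–Gigli–Savaré, Section 8.1 and Lemma 8.1.10/Theorem 5.4.4), applied with a weight: for any nonnegative $\eta\in C(I)$,
\[
\int\eta\,\|d\mathbf R/d\nu\|^2\,d\nu\le\liminf_n\int_I\eta(t)a(t)^2\,dt .
\]
In a separable Hilbert space I would test against finite-dimensional projections $P_m$ and use weak compactness of bounded sets in $L^2(\nu;P_mH)$. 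This gives, along a subsequence, a limit $\mathbf R=r\,\mu(t)\,dt$ with
\[
\int_I\eta\,\|r_t\|^2_{L^2(\mu_t)}\,dt\le\int_I\eta\,a^2\,dt \quad\text{for all }\eta\ge0 .
\]
Hence $\|r_t\|_{L^2(\mu_t)}\le a(t)$ for almost every $t$.

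The continuity equation then passes to the limit. For a cylinder test $\varphi$ and $\zeta\in C_c^\infty(\mathrm{int}\,I)$,
\[
-\int\zeta'\int\varphi\,d\mu_n\,dt=\int\zeta\int\langle B_tx+r_n,\nabla\varphi\rangle\,d\mu_n\,dt ,
\]
and both sides converge. On the right, $\nabla\varphi$ is bounded and continuous and depends on finitely many coordinates, which is the type of test covered by the weak convergence above. So $B_tx+r_t$ is an admissible current for $\mu$ with $L^2(dt;L^2(\mu_t))$ norm finite, and AGS Theorem 8.3.1 gives $\mu\in AC^2(I;\mathcal P_2(H))$.

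Finally, I would replace $r_t$ by its orthogonal projection onto $T_{\mu_t}$. The continuity equation only sees pairings with gradients, so this keeps the current admissible and does not increase the norm. Since $B_tx\in T_{\mu_t}$ by the cutoff construction in the proof of Theorem~\ref{common:principle} (valid for any second-moment law and bounded self-adjoint $B_t$), the sum $B_tx+r_t$ is tangent and hence the minimum current. The identity $|\dot\mu_t|^2=\|v_t\|^2$ then follows from AGS Proposition 8.4.5.

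I expect the main obstacle to be the compactness step in infinite dimensions. The measures $\mathbf R_n$ must be identified as absolutely continuous with respect to $\mu(t)\,dt$, and the pointwise-in-time bound must come out, not only the integrated one. I would first try to handle this by the joint convexity and lower semicontinuity of $(\nu,\mathbf R)\mapsto\int\|d\mathbf R/d\nu\|^2d\nu$ under narrow convergence, working on each $P_mH$ and taking the supremum over $m$.
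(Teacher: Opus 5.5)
Your argument is correct, but it takes a different route from the paper's.

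\textbf{Your route.} You get the limit remainder by compactness. You take weak convergence of $r_n\,\mu_n(t)\,dt$ along a subsequence and obtain the pointwise bound from weighted lower semicontinuity. You then pass to the limit in the continuity equation, invoke AGS to get $AC^2$, and finally project onto $T_{\mu_t}$. One caveat on the step you flagged: weak compactness in a fixed $L^2(\nu;P_mH)$ is not literally available, because the measures $\mu_n(t)\,dt$ vary with $n$. That step has to be done with the varying-measure compactness you cite, for instance via plans on $I\times H\times H$ (last factor with its weak topology) and barycenters. With that reading the argument closes.

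\textbf{The paper's route.} The paper never lets the $r_n$ converge. It passes only the scalar defect inequality
$|\int\!\!\int[\partial_t\Phi+\langle B_tx,\nabla\Phi\rangle]\,d\mu_n\,dt|\le\int a\|\nabla\Phi\|_{L^2(\mu_n)}\,dt$
to the limit, using optimal couplings and tests with bounded second derivatives. It reads the limit as a bounded functional on the closure $\mathscr G$ of gradients in the fixed space $L^2(dt\,d\mu_t;H)$ and takes its Riesz representative $r\in\mathscr G$. Then:
\begin{itemize}
\item tangency of $r_t$ is automatic, since $r$ is approximated by gradients;
\item the bound $\|r_t\|\le a(t)$ follows by testing with $\chi r$;
\item $AC^2$ comes directly from $W_2(\mu_n(s),\mu_n(t))\le\int_s^t(b_n+a)$ and uniform convergence of $b_n$.
\end{itemize}

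\textbf{What each buys.} The duality argument removes exactly the infinite-dimensional compactness step you identified as the main obstacle. It also needs no subsequence and no final projection. Your argument is the standard stability proof for the continuity equation. It gives, as a by-product, weak convergence of the remainder currents along a subsequence, which the lemma does not need.
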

\begin{proof}
For a smooth cylinder test $\Phi(t,x)$ compactly supported in the
interior of $I$, the continuity equation gives
\[
 \left|\int_I\!\int_H
 [\partial_t\Phi+\langle B_tx,\nabla\Phi\rangle]\,d\mu_n(t)\,dt\right|
 \le\int_I a(t)\|\nabla\Phi(t)\|_{L^2(\mu_n(t))}\,dt.
\]
For tests with bounded first and second spatial derivatives, an optimal
coupling $(X_n,X)$ gives, uniformly in $t\in I$,
\[
 \begin{split}
 \big|\E\langle B_tX_n,\nabla\Phi(X_n)\rangle
       -\E\langle B_tX,\nabla\Phi(X)\rangle\big|
 &\le C(1+\|X\|_2)W_2(\mu_n,\mu),\\
 \big|\|\nabla\Phi\|_{L^2(\mu_n)}-\|\nabla\Phi\|_{L^2(\mu)}\big|
 &\le\|\nabla^2\Phi\|_\infty W_2(\mu_n,\mu).
 \end{split}
\]
Thus the negative limiting integral defines a functional $F$ on the
closure $\mathscr G$ of these gradients in $L^2(dt\,d\mu_t;H)$, with
\begin{equation}\label{full:dual-bound}
 |F(g)|\le\int_I a(t)\|g_t\|_{L^2(\mu_t)}\,dt.
\end{equation}
Riesz gives a unique $r\in\mathscr G$ representing $F$.
Choose gradients $g_k$ with $\|g_k-r\|_{L^2(dt\,d\mu_t)}^2\le2^{-k}$.
Then $\sum_k\|g_k(t)-r_t\|_{L^2(\mu_t)}^2<\infty$ a.e., so
$r_t\in T_{\mu_t}$.
The space $\mathscr G$ is stable under smooth time multipliers.
Apply \eqref{full:dual-bound}, by density, to $g=\chi r$ with
$\chi\ge0$ smooth and compactly supported. It gives
\[
 \int_I\chi(t)\|r_t\|_{L^2(\mu_t)}^2\,dt
 \le\int_I\chi(t)a(t)\|r_t\|_{L^2(\mu_t)}\,dt,
\]
which proves the pointwise bound in \eqref{full:relative-current}.
The defining identity for $F$ is the limiting continuity equation;
time localization gives its derivative pairings on a countable cylinder
core for almost every $t$.

Put $b_n(t)=\|B_tx\|_{L^2(\mu_n(t))}$ and
$b(t)=\|B_tx\|_{L^2(\mu_t)}$. The inequality
$|b_n-b|\le\|B_t\|W_2(\mu_n(t),\mu_t)$ gives uniform convergence.
The admissible-current bound implies
$W_2(\mu_n(s),\mu_n(t))\le\int_s^t(b_n+a)$.
Passing to the limit proves $\mu\in AC^2$.
The quadratic approximation in Theorem~\ref{common:principle} gives
$B_tx\in T_{\mu_t}$. Thus $B_tx+r_t$ is tangent and minimal, and
$|\dot\mu_t|^2=\|B_tx+r_t\|_{L^2(\mu_t)}^2$ a.e. \cite{AGS}.
\end{proof}

\section{The centered KPZ profile}\label{kpz:application}

Let $\mathbb T=\mathbb R/\mathbb Z$, $H=L^2_0(\mathbb T;\mathbb R)$,
and $\Pi_\circ f=f-\int_{\mathbb T}f$. Fix $\nu>0$ and $\sigma\in\mathbb R\setminus\{0\}$.
For $g\ne0$, put $\kappa=g/(2\nu)$ and define the flat-start Cole--Hopf solution by
\[
 dZ_\kappa=\nu\partial_x^2Z_\kappa\,dt+\kappa\sigma Z_\kappa\,dW,
 \qquad Z_\kappa(0)=1,\qquad h_g=\kappa^{-1}\log Z_\kappa.
\]
Thus $h_g$ solves the renormalized KPZ equation with nonlinear coefficient
$g/2$. Its spatial centering removes the additive renormalization constant.
Write $\mu_{g,t}=\Law(\Pi_\circ h_g(t))$; at $g=0$ use the additive
stochastic heat equation. Subtracting the linear term of $\log Z_\kappa$ gives the nonlinear
remainder. Its quadratic term has no first chaos, so its pairing with
the Gaussian current vanishes in Proposition~\ref{out:second}.

\begin{theorem}\label{kpz:action-limit}
For every $G<\infty$ there are $C_G,t_0>0$ such that the curves
$\mu_{g,t}$, $|g|\le G$, are locally $AC^2$ on $(0,t_0]$.
Their actions satisfy, for every $0<t\le t_0$,
\[
 t^{3/2}|\mathcal A_g(t)-\mathcal A_0(t)|
 \le C_Gg^2t^{1/2},\qquad
 \lim_{t\downarrow0}t^{3/2}\mathcal A_g(t)
 =\frac{\sigma^2[\zeta(3/2)-1]}{16\sqrt{2\pi\nu}}.
\]
\end{theorem}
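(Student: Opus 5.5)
The plan is to apply Proposition~\ref{out:second} with the bound \eqref{out:second-bound}, writing the centered profile as $Y_t=Z_t+u_t$. Here $Z_t=\sigma\Pi_\circ\int_0^te^{(t-r)\nu\partial_x^2}dW_r$ is the Edwards--Wilkinson field, that is, the $g=0$ curve. In $H=L^2_0(\mathbb T)$ with $A=-\nu\partial_x^2$, $s=0$, $a=\sigma^2/2$, the eigenvalues are $4\pi^2\nu k^2$ with multiplicity two. So $N_A(L)\sim(\pi^2\nu)^{-1/2}L^{1/2}$, $\alpha=1/2$ and $\beta=3/2$. Proposition~\ref{common:gaussian} then gives $t^{3/2}\mathcal A_0(t)\to K$. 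Evaluating $K=\tfrac{\sigma^2}{2}\cdot\tfrac{1}{2\pi\sqrt\nu}\cdot2^{-3/2}\Gamma(3/2)(\zeta(3/2)-1)$ with $\Gamma(3/2)=\sqrt\pi/2$ gives the stated constant $\sigma^2[\zeta(3/2)-1]/(16\sqrt{2\pi\nu})$. The limit statement therefore follows from the quantitative bound. By \eqref{out:second-bound} and $\sqrt{\mathcal A_0}\lesssim t^{-3/4}$, the bound reduces to four estimates, uniform in $|g|\le G$ and $t\le t_0$:
\[
 \|u_t^{(1)}\|_2+\|D_t^{(1)}\|_2\le C_Gg^2t^{3/4},\qquad
 \|u_t\|_2+\|D_t\|_2\le C_G|g|\,t^{3/4}.
\]
These give $t^{3/2}|\mathcal A_g-\mathcal A_0|\lesssim t^{3/2}\bigl(t^{-7/4}g^2t^{3/4}+t^{-2}g^2t^{3/2}\bigr)=O(g^2t^{1/2})$. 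The exponent $3/4$ is $\theta+1/4$, which is natural for the nonlinear remainder, since $u$ scales like $g$ times a Wick square of a field of size $t^{1/4}$.

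\textbf{Realization and rotation.} I would realize $\mu_{g,t}$ via $h_g=\kappa^{-1}\log Z_\kappa$. I would expand $Z_\kappa$ in Wiener chaos (the Duhamel/chaos series of the multiplicative SHE), so that $\Pi_\circ h_g=Z_t+u_t$ with $u_t=\kappa^{-1}\Pi_\circ(\log Z_\kappa-\kappa\sigma\Pi\text{-linear term})$. The key structural point is that $\partial_\kappa\log Z_\kappa|_{\kappa=0}$ is the first-chaos field $Z_t/\sigma$ up to centering. In addition, $\partial_\kappa^2\log Z_\kappa|_{0}$ is a renormalized square with no first chaos, because Wick squares lie in the zeroth and second chaoses. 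Taylor's formula in $\kappa$ with integral remainder then gives $u_t^{(1)}=O(\kappa^2)$ times a third $\kappa$-derivative, and $u_t=O(\kappa)$. Controlling that derivative needs moments of $\log Z_\kappa$ and its derivatives. I would get these from positivity (Mueller) and negative moments of $Z_\kappa$ (Mueller--Nualart), uniformly for $|\kappa|\le G/(2\nu)$. The $t^{3/4}$ scaling comes from the short-time Gaussian size $\|Z_t\|_{L^\infty_x}$ moments $\sim t^{1/4}$ up to logs. I would avoid the logs by working with $L^2_x$ norms and chaos isometries. For the time derivative I would use the rotation $\mathsf A_t$ from Appendix~\ref{ma:section}, chosen so that $(t\partial_t+\mathscr W_{\mathsf A_t})Z_t=tB_tZ_t$. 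Then $D_t=(t\partial_t+\mathscr W_{\mathsf A_t})u_t-tB_tu_t$. This operator commutes with the $\kappa$-expansion and preserves chaos degree, by \eqref{common:ward-number}. Hence $D_t^{(1)}$ is again the first chaos of a quantity vanishing to second order in $\kappa$, and $D_t$ is $O(\kappa)$.

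\textbf{Main obstacle.} The hard step is bounding $(t\partial_t+d\Gamma(\mathsf A_t))$ applied to the third $\kappa$-derivative of $\log Z_\kappa$, with the scaling $t^{3/4}$. This involves time derivatives of a singular multiplicative SHE at small $t$. I would handle it through scale invariance. Under $x\mapsto x/\sqrt t$ and $s\mapsto s/t$, KPZ on $\mathbb T$ at time $t$ is KPZ at time $1$ on a torus of size $t^{-1/2}$ with coupling $\kappa t^{1/4}$. In this picture the operator $t\partial_t+\mathscr W$ becomes a generator of rescaling that acts boundedly on each chaos via $\mathcal N$, so its cost is polynomial in chaos degree. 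Moments of the rescaled logarithm stay bounded uniformly in the torus size for bounded coupling. Finally, local $AC^2$ follows because the realization is $C^1$ in $L^2(\Omega;H)$ on $(0,t_0]$, and Lemma~\ref{common:ward} supplies \eqref{common:dual-defect} at every time, so the limit is ordinary.
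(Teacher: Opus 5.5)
Your architecture is the paper's: Proposition~\ref{out:second} with \eqref{out:second-bound}, Taylor expansion of $F_\kappa=\log\mathcal Z$ in the coupling ($\mathcal Z$ is the heat solution in normalized time, your $Z_\kappa$), the vanishing first chaos of $\partial_\kappa^2F_\kappa|_{\kappa=0}$, inverse moments, and the rotation of Appendix~\ref{ma:section}. Your Weyl count and constant are also right. Two things go wrong, however.

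First, one of your four reduced estimates is false: $\|u_t\|_2+\|D_t\|_2$ is not $O(|g|t^{3/4})$. We have $u_t=\kappa\Pi_\circ\int_0^1(1-\theta)\partial_b^2F_b|_{b=\theta\kappa}\,d\theta$, and the second coupling derivative has size $t^{2/4}$. Your own heuristic says the same thing: $g$ times a Wick square of a field of size $t^{1/4}$ is $gt^{1/2}$, which is the exponent $\theta=1/2$ listed for KPZ in the table of Section~\ref{common:section}. So the leading term $\tfrac{\kappa\sigma^2}{2}\Pi_\circ I_2(2f_2-f_1\otimes f_1)$ has $L^2(\Omega;H)$ norm of exact order $|g|t^{1/2}$ as $t\downarrow0$, and the $t^{3/4}$ bound cannot be proved. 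This does not hurt the conclusion. Indeed $t^{-2}(|g|t^{1/2})^2=g^2t^{-1}$ has the same order as the cross term $t^{-7/4}\cdot g^2t^{3/4}$. The correct bound $C_G|g|t^{1/2}$ of \eqref{kpz:required-defect} is therefore exactly what is needed, with no room to spare.

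Second, and this is the real gap, your proposal does not close the step you call the main obstacle. Polynomial cost in the chaos degree for $\mathsf T=t\partial_t$ and $d\Gamma(\mathsf A_t)$ is useful for $\mathcal Z$, whose chaos kernels are explicit. It is not useful for $\log\mathcal Z$: bounded moments of the logarithm give no control of $\sum_n n^2\|(\log\mathcal Z)^{(n)}\|_2^2$. Likewise, moments bounded uniformly in the torus size $t^{-1/2}$ do not control derivatives with respect to that size. Rescaling space also changes the Gaussian space on which $t\partial_t$ is supposed to act.

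The paper supplies the missing transfer from $\mathcal Z$ to its logarithm, in two pieces.
\begin{enumerate}
\item For the full defect it uses $\|d\Gamma(\mathsf A)F\|_2\le\|\mathsf A\|\,\|\mathcal NF\|_2$. It combines this with $\mathcal N\mathcal Z=\kappa\partial_\kappa\mathcal Z$ (the $n$th chaos carries $\kappa^n$) and the generator chain rule, which give \eqref{kpz:number-identity}: $\mathcal NF_\kappa=\kappa\partial_\kappa F_\kappa+\|DF_\kappa\|_{\mathcal K}^2$. With $\|DF_\kappa\|_{L^p(\Omega;\mathcal K)}\lesssim|\kappa|t^{1/4}$ this yields $\|\mathcal NR_{g,t}\|_2\lesssim|g|t^{1/2}$. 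Alternatively, $\mathscr W_{\mathsf A}$ is a derivation, so $\mathscr W_{\mathsf A}\log\mathcal Z=\mathscr W_{\mathsf A}\mathcal Z/\mathcal Z$.
\item For the first-chaos defect the rotation costs only $\|\mathsf A_t\|\le1/2$, since $[P_1,d\Gamma(\mathsf A_t)]=0$. The only real input is $\|\mathsf T\partial_\kappa^3F_\kappa\|_p\lesssim t^{3/4}$. The paper proves it on the fixed torus, normalizing only time. Then $\mathsf Tf_n$ obeys the same kernel bound as $f_n$, because $\|s\partial_sP_s\|_2^2\lesssim s^{-1/2}$. The quotient rule $\mathsf T(\mathcal Z^{-b}A)=\mathcal Z^{-b}\mathsf TA-b\mathcal Z^{-b-1}(\mathsf T\mathcal Z)A$, together with Lemma~\ref{kpz:inverse}, transfers the bound to the logarithm.
\end{enumerate}
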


\begin{proof}
Appendix~\ref{kpz:analytic} constructs locally $C^1$ realizations
$Y_{g,t}=X_t^0+R_{g,t}$ on one Gaussian space, with $X_t^0$ Gaussian.
Lemma~\ref{common:ward} gives a current at every positive time and local
absolute continuity. Let $D_{g,t}$ be the defect in
\eqref{common:ward-defect}, and $F^{(1)}$ the first-chaos component of $F$.
Equations~\eqref{kpz:required-defect} and~\eqref{kpz:first-chaos-defect} give
\[
 \|R_{g,t}\|_2+\|D_{g,t}\|_2\le C_G|g|t^{1/2},\qquad
 \|R_{g,t}^{(1)}\|_2+\|D_{g,t}^{(1)}\|_2\le C_Gg^2t^{3/4}.
\]
Here $B_t=A(e^{2tA}-I)^{-1}$, $A=-\nu\partial_x^2$ on $H$, so
$\|B_t\|\le(2t)^{-1}$. Proposition~\ref{common:gaussian} gives
$\|B_tX_t^0\|_2\le Ct^{-3/4}$. Hence \eqref{out:second-bound} gives
\begin{align*}
 |\mathcal A_g(t)-\mathcal A_0(t)|
 &\le C t^{-7/4}\bigl(\|R_{g,t}^{(1)}\|_2+\|D_{g,t}^{(1)}\|_2\bigr)
       +C t^{-2}\bigl(\|R_{g,t}\|_2+\|D_{g,t}\|_2\bigr)^2\\
 &\le C_Gg^2t^{-1}.
\end{align*}
Multiplication by $t^{3/2}$ proves the comparison.

Counting both real modes at each positive frequency gives
$N_A(L)\sim L^{1/2}/(\pi\sqrt\nu)$. Formula~\eqref{common:gaussian-limit},
with $s=0$ and $a=\sigma^2/2$, gives the stated constant.
\end{proof}

\begin{remark}[Parity in the coupling]\label{kpz:parity}
The action is even in $g$. Indeed $Z_{-\kappa}(W)=Z_\kappa(-W)$, so
$h_{-g}(W)=-h_g(-W)$. Since $-W$ has the law of $W$ and $x\mapsto-x$ is
an isometry of $H$, the curves $\mu_{-g,t}$ and $\mu_{g,t}$ are
isometric images of each other, and $\mathcal A_{-g}=\mathcal A_g$.
\end{remark}

\begin{remark}[The Gaussian action to all orders]\label{kpz:gaussian-expansion}
For KPZ the normalized Gaussian action has an expansion with a single correction:
\begin{equation}\label{kpz:two-term}
 t^{3/2}\mathcal A_0(t)
 =\frac{\sigma^2[\zeta(3/2)-1]}{16\sqrt{2\pi\nu}}
 -\frac{\sigma^2}{4}t^{1/2}+O(t^M)\qquad\text{for every }M.
\end{equation}
Put $a=\sigma^2/2$, $\phi(z)=ze^{-4z}/(1-e^{-2z})$, and
$f(x)=\phi(4\pi^2\nu x^2)$. The function $f$ is even and Schwartz,
with $f(0)=1/2$. The spectral formula gives
$t^{3/2}\mathcal A_0(t)=a\sqrt t\sum_{k\ne0}f(\sqrt t\,k)$.
Poisson summation gives
\[
 \sum_{k\ne0}f(\sqrt t\,k)
 =t^{-1/2}\widehat f(0)-\tfrac12
   +t^{-1/2}\sum_{m\ne0}\widehat f(m/\sqrt t),\qquad
 t^{-1/2}\sum_{m\ne0}|\widehat f(m/\sqrt t)|
 \le C_Lt^{(L-1)/2}\sum_{m\ne0}|m|^{-L}.
\]
Take $L>1$ arbitrarily large.
Multiply by $a\sqrt t$ and use Proposition~\ref{common:gaussian}.
The correction $-\sigma^2\sqrt t/4$ is the removed zero mode.

The nonlinear bound $O_G(g^2t^{1/2})$ has the same time order as this
Gaussian correction. Its limiting coefficient remains open
(Section~\ref{se:outlook}).
\end{remark}

\section{The dynamical \texorpdfstring{$\Phi^4_3$}{Phi4 3} model}
\label{app:phi4}

On the unit torus $\mathbb T^3$, fix $a,\mu,\nu,g>0$ and let
$A=\mu-\nu\Delta$, $\lambda_k=\mu+4\pi^2\nu|k|^2$.
Let $P_N$ project onto $\Lambda_N=\{|k|_\infty\le N\}$, put
$E_N=\operatorname{Ran}P_N$ as a real vector space, and let
$W_N=P_NW$ for one cylindrical Brownian motion. The approximations are
\begin{equation}\label{eq:model}
 \begin{aligned}
 dX_N&=-(AX_N+F_N(X_N))\,dt+\sqrt{2a}\,dW_N,\\
 dZ_N&=-AZ_N\,dt+\sqrt{2a}\,dW_N,
 \end{aligned}
 \qquad X_N(0)=Z_N(0)=0,
\end{equation}
where $F_N(x)=gP_Nx^3+(-3gC_{1,N}+9g^2C_{2,N})x$ and
\begin{equation}\label{eq:constants}
 C_{1,N}=a\sum_{k\in\Lambda_N}\lambda_k^{-1},\qquad
 C_{2,N}=2a^2\!
 \sum_{\substack{p,q,p+q\in\Lambda_N}}
 \frac1{\lambda_p\lambda_q(\lambda_p+\lambda_q+\lambda_{p+q})}.
\end{equation}
Write $u_N=X_N-Z_N$, $\rho_{g,N,t}=\operatorname{Law}(X_N(t))$ and
$\rho_{0,N,t}=\operatorname{Law}(Z_N(t))$.
We use $H=H_A^{-3/4}$, with
$\|f\|_{H_A^r}^2=\sum_k\lambda_k^r|\widehat f(k)|^2$.

Write $\mathcal A_{g,N}$ for the action of $\rho_{g,N}$, and let $e_N(t)$
be the smallest constant in \eqref{common:dual-defect} for this curve.

Let $X=\lim_{N\to\infty}X_N$ as in Proposition~\ref{full:identification},
and set $\rho_{g,t}=\operatorname{Law}(X(t))$.

\begin{theorem}[$\Phi^4_3$ action]\label{th:velocity}
For every $0<\varpi<1/8$, there are $t_0,C>0$, depending only on
$a,\mu,\nu,g,\varpi$, such that
\begin{align}
 \sup_N e_N(t)^2&\le Ct^{-3/2-2\varpi},\label{eq:pointrelative}\\
 \sup_N t^{7/4}|\mathcal A_{g,N}(t)-\mathcal A_{0,N}(t)|
 &\le Ct^{1/8-\varpi}\qquad(0<t\le t_0).\label{eq:speedcomparison}
\end{align}
The full law curve lies in $AC^1([0,t_0];\mathcal P_2(H))$ and in
$AC^2_{\mathrm{loc}}((0,t_0];\mathcal P_2(H))$. Its action satisfies
\begin{equation}\label{eq:full-comparison}
 t^{7/4}|\mathcal A_g(t)-\mathcal A_0(t)|\le Ct^{1/8-\varpi}
 \qquad(0<t\le t_0).
\end{equation}
Consequently
\begin{equation}\label{eq:full-limit}
 \lim_{t\downarrow0}
 t^{7/4}\mathcal A_g(t)
 =K(a,\nu):=
 \frac{a\Gamma(7/4)[\zeta(7/4)-1]}
 {2^{15/4}\pi^2\nu^{3/2}}.
\end{equation}
Whenever $N(t)\sqrt t\to\infty$,
\begin{equation}\label{eq:universal}
 t^{7/4}\mathcal A_{g,N(t)}(t)\longrightarrow K(a,\nu).
\end{equation}
For each fixed $N$, the normalized action instead satisfies
$t^{7/4}\mathcal A_{g,N}(t)\to0$ as $t\downarrow0$.
\end{theorem}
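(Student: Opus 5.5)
The plan is to feed Theorem~\ref{common:principle} at each finite cutoff, with constants uniform in $N$, and then pass to $N\to\infty$ with Lemma~\ref{full:closure}. The exponents are those of the table: $\beta=7/4$, $\theta=1/4-\varpi$, $\psi=1/8-\varpi$. Two uniform inputs are required.

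\emph{Uniform Wasserstein bound.} Couple $X_N(t)$ and $Z_N(t)$ through the same noise, so that $W_2(\rho_{g,N,t},\rho_{0,N,t})\le\|u_N(t)\|_{L^2(\Omega;H^{-3/4}_A)}$. Write $u=r-q$, where $q$ is the noise-dependent corrector removing the additive Wick cube. Its size is of order $t^{1/4-\varpi}$ in $H^{-3/4}$, by the second-moment diagram bounds for the Wick cube after one heat integration. For $r$, I will use the quadratic weighted entropy $\mathcal E_2=\frac12\int hk\,v^2$ with $v=r/h$. The weights $h,k$ are exponentials of heat primitives of the Wick square, as in \cite{JP}. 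In the relative entropy identity \cite[Lemma~1.1]{MMP} the rough potential $-3g:Z^2:$ cancels; only the defects of the equations for $h,k$ remain, together with the sunset renormalization $9g^2C_{2,N}$ (Proposition~\ref{cal:sunset}). The quartic entropy $\mathcal E_4$ then controls $v^2$, and the projection identity $\|r\|_4^4=\langle P_N(r^3),r\rangle$ absorbs the lower-order quartic terms uniformly in $N$. Gaussian moment bounds on $h^{\pm1},k^{\pm1}$ turn these into $L^2(\Omega)$ bounds. This gives $\|u_N(t)\|_2\le Ct^{1/4-\varpi}$.

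\emph{Uniform derivative bound.} For \eqref{eq:pointrelative} I will apply Lemma~\ref{common:ward} with the rotation $\mathsf A_t$ of Appendix~\ref{ma:section}, chosen so that $(t\partial_t+\mathscr W_{\mathsf A_t})Z_N=tB_tZ_N$. Then $D_t=(t\partial_t+\mathscr W_{\mathsf A_t})u_N(t)-tB_tu_N(t)$. The last term is bounded by $\frac12\|u_N\|_2$. The first term solves the linearized equation
\[
 \partial_s w=-Aw-3gP_N(X_N^2w)+\dots,
\]
with sources built from $u_N^3$, the Wick terms and the rotation. I will estimate $\langle D_t,\eta\rangle$ by duality against the adjoint response $p$ started at time $t$ from $\eta\in H$. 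The key step is
\[
 \Bigl|\int_0^t\!\!\int u^3p\Bigr|\le\|u\|_{L^4_{t,x}}^2\|up\|_{L^2_{t,x}}.
\]
The state entropies bound $\|u\|_{L^4}$. The weighted energy of $p$ uses the cubic dissipation $3gX_N^2p^2\ge0$, corrected by the same weights $h,k$, to bound $\|X_Np\|_{L^2}$ and hence $\|up\|_{L^2}$. Moments of $p$ come from Cameron--Martin shifts and Borell's inequality \cite{CM,Borell,CLL}. The result is $\|D_t\|_2\le Ct^{1/4-\varpi}$, that is, $e_N(t)^2\le Ct^{-3/2-2\varpi}$. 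I expect this to be the main obstacle: the adjoint estimate must be uniform in $N$ while the rough potential appears only through the weights, and the Wick-cube source must be handled inside the corrector.

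\emph{Conclusions.} Theorem~\ref{common:principle}, applied to the projected references $\gamma_{N,t}$, gives \eqref{eq:speedcomparison} with $N$-independent constants.

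\begin{itemize}
\item \emph{Passage to the full law.} $X_N\to X$ in $C(I;\mathcal P_2(H))$ by Proposition~\ref{full:identification}. Lemma~\ref{full:closure} then gives $AC^2_{\rm loc}$ and a relative current with $\|r_t\|\le Ct^{-3/4-\varpi}$. Proposition~\ref{ma:full-differentiation} supplies the pairings at every time, so the limits are ordinary. The bound $W_2(\rho_t,\gamma_t)\le Ct^{1/4-\varpi}$ passes to the limit by lower semicontinuity. Theorem~\ref{common:principle} then yields \eqref{eq:full-comparison}.
\item \emph{Full-field limit and $AC^1$.} Proposition~\ref{common:gaussian} gives \eqref{eq:full-limit}, with $C_A=\frac{4\pi}{3}(4\pi^2\nu)^{-3/2}$, $\alpha=3/2$, $s=3/4$. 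Corollary~\ref{common:integrated} gives $AC^1$ at zero.
\item \emph{Moving cutoff.} For \eqref{eq:universal}, $L_N\asymp N^2$, so $tL_N\to\infty$ whenever $N\sqrt t\to\infty$.
\item \emph{Fixed cutoff.} For fixed $N$ the space is finite-dimensional. Then $\mathcal A_{0,N}(t)\sim a\sum_{\Lambda_N}\lambda_k^{1/4}/(2t)=O(t^{-1})$, and the comparison shows $\mathcal A_{g,N}=O(t^{-1})$ as well. Hence $t^{7/4}\mathcal A_{g,N}\to0$.
\end{itemize}
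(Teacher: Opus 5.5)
Your outline follows the paper's architecture, but the step meant to deliver $\|up\|_{L^2}$ fails as written. The term $3g\int X_N^2p^2$ is not a usable dissipation. Its mean part is $3gc_1\|p\|_2^2$ with $c_1\asymp N$, and this is exactly what the counterterm $-3gC_{1,N}$ cancels, so $\|X_Np\|_{L^2_{t,x}}$ has no $N$-uniform bound. Even a bound on it would not give $\|up\|_{L^2}$, because $\|Z_Np\|_{L^2}$ diverges as well. Write $X_N^2=W+c_1+2Z_Nu+u^2$. The weights remove $W$, and $3g(c_1-C_{1,N})$ is an integrable $s^{-1/2}$ layer. The indefinite cross term $6gZ_Nu$ must then be estimated as a coefficient in $C^{-\alpha}$. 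That needs pathwise control of $u$ in $C^\varpi$ and in weighted $C^{1+\epsilon_0}$, which the local solution map provides only on $\{M\le R\}$ and for a short deterministic time (Lemma~\ref{po:local}, \eqref{cal:reference-input}). This is why the paper proves the response bound only on a reference event of probability at least $1/2$. It does so for the linearization along $X_N(\eta)+y$, where $y$ is the difference created by a Cameron--Martin shift $\mathfrak h$ (Proposition~\ref{cal:control}). There $6gZ_Nu+3gu^2$ of the reference path is a bounded rough coefficient, the dissipation actually retained is $\|yp\|_2^2$, and the growth is $\exp(C(1+\|\mathfrak h\|^{4/3}))$. The identity $u(\eta+\mathfrak h)=y+u(\eta)-Z(\mathfrak h)$ turns this into control of $\|up\|_{L^2}$, and Borell's inequality then gives the moments (Proposition~\ref{co:prop:backwardenergy}). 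Your appeal to shifts and Borell needs exactly this shift-uniform pathwise bound, and your dissipation argument does not provide it.

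The second gap is the source you test against $p$. Differentiating the rescaled equation gives $\mathfrak D_tu$ the source $-Au-F_N(X_N)-DF_N(X_N)\mathfrak D_tZ_N$. This contains $gP_NW_3$ and its rotated derivative, of regularity $-3/2-$. It also contains $Au$, where $u$ is not bounded in $L^2_tH^{1+}$ uniformly in $N$ because of its $-g\Psi$ component. None of these pairs with $p\in L^\infty_tL^2\cap L^2_tH^{r_*}$, $r_*<1$; they recombine only in a corrected coordinate. The paper differentiates $w=\mathcal Tu+q_0$, with $\mathcal T=I+K$, $Kf=\eta P(f\prec H_JY)$ and $q_0=g\Psi+\eta g\Omega_J$; the equation for $w$ contains no Wick cube. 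It tests $F_0-Aw$ against $\mathcal T^{-*}p$ (Lemma~\ref{co:source-lemma}) and reconstructs $\dot u=\mathcal T^{-1}(z-\dot{\mathcal T}u-\dot q_0)$ using \eqref{ma:corrector-jet}. The pairing with $Aw$ needs $\|w\|_{L^2H^{1+\epsilon_0}}\le CM^Ct^\gamma$. This is where the quartic entropy is used, through $u^3\in L^2_{t,x}$ and heat regularity (Proposition~\ref{po:position}); the $W_2$ bound already follows from the quadratic entropy alone.

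The same coordinate is needed in your state estimate. If $q$ is only the heat integral of the Wick cube, the equation for $r$ keeps the source $\eta P(Wq)$. Its paraproduct $q\prec W\in C^{-1-}$ cannot be absorbed by the quadratic entropy. The weights also need a random threshold: $h=e^{-\eta H_JY_+}$ and $k=e^{-\eta H_JY_-}$ with $2^J\le CM^C$, so that $k/h$ is uniformly close to one. This closeness is what gives $\langle P((k/h)r),r^3\rangle\ge\frac12\|r\|_4^4$. A minor point: at fixed $N$, $\mathcal A_{0,N}\approx a\sum_{\Lambda_N}\lambda_k^{-3/4}/(2t)$. Then \eqref{eq:speedcomparison} yields $t^{7/4}\mathcal A_{g,N}\to0$ directly, but it does not show $\mathcal A_{g,N}=O(t^{-1})$.
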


The proof establishes the two bounds in Theorem~\ref{common:principle}:
\[
 \|u_N(t)\|_{L^2(\Omega;H)}\le Ct^\theta,\qquad
 e_N(t)\le Ct^{\theta-1},\qquad \theta=\tfrac14-\varpi.
\]
\paragraph{Conventions for the quartic estimates.}

Constants are independent of $N$ unless otherwise stated. A polynomial
bound in a model norm $M\ge1$ means $C(1+M)^b$ for fixed finite $C,b$.
We distinguish $L^p(\Omega;H)$ from time-space norms such as $L^p(0,T;H)$.
The spaces $\mathcal C^\alpha$ mean $B^\alpha_{\infty,\infty}$.
For a fixed smooth dyadic decomposition, $f\prec h$ denotes the low--high
paraproduct, $f\circ h$ the resonant product, and
$fh=f\prec h+h\prec f+f\circ h$. The high-frequency operator $H_J$
retains dyadic blocks with index at least $J$; $L_J=I-H_J$.
The analytic estimates below use only finite-$p$ bounds for $P_N$.
Fix the desired loss $\varpi\in(0,1/8)$. Rough coefficients require
regularity just below $-1/2$; their tests require regularity just above
$1/2$. Choose a small $\epsilon_0>0$ and a large finite time exponent $q$:

\begin{equation}\label{eq:indices}
\begin{gathered}
 0<\epsilon_0<\varpi/8,\qquad q^{-1}<\varpi/4,\qquad
 \alpha=\tfrac12+\epsilon_0,\quad \sigma=\tfrac12+2\epsilon_0,\\
 \beta=1-\epsilon_0/2,\quad r_*=1-\epsilon_0,\quad
 \gamma=\tfrac14-2\epsilon_0,\quad \theta=\tfrac14-\varpi,\quad
 \omega=\tfrac14+\varpi.
\end{gathered}
\end{equation}
Coefficients have regularity $-\alpha$, tests $\sigma$, and weights
$\beta$. The response has regularity $r_*<\beta$; the state gains
$\gamma>\theta$. The exponent $\omega$ is used in the local heat estimate.
These choices give $\varpi+2\theta<1/2$, $\theta>1/8$ and
$q>32>4/[\sigma(1-\sigma)]$.

In this section and Appendices~\ref{ga:section}--\ref{cal:projection-section},
$\alpha,\beta,\sigma$ always denote the regularity indices in
\eqref{eq:indices}. We write the spectral exponents as numbers:
Weyl exponent $3/2$, transport weight $3/4$, action exponent $7/4$.

The coordinate removes the cubic Gaussian source. The weights cancel
the rough potential in the energy identity. These two cancellations
control the state and its derivative. Appendix~\ref{po:section}
identifies the limiting field; the notation index is at the end.

\subsection{One entropy identity, two estimates}\label{le:section}

The rough potential is the obstruction common to the state and its
response. We remove it from the energy identity before estimating
the remaining terms. This is generalized relative entropy
\cite{MMP,ConstantinGRE}, used with approximate positive weights.
Write $P=P_N$ and $Q=I-P$.

Let $L=\nu\Delta-V$ and let $h,k>0$ satisfy
\begin{equation}\label{cal:gauges}
 h'=Lh+a_+h,\qquad -k'=Lk+a_-k.
\end{equation}
The defects $a_\pm$ measure the error in solving the forward and
backward equations. Their sum, rather than the potential $V$, enters
the quadratic energy.

\begin{lemma}[Relative entropy with defects]\label{cal:energy}
If $f'=Lf+F$, $v=f/h$, and $\Phi\in C^2(\mathbb R)$, then
\begin{equation}\label{cal:general-entropy}
\begin{split}
 \frac d{ds}\int hk\Phi(v)
 +\nu\int hk\Phi''(v)|\nabla v|^2
 ={}&\langle F,k\Phi'(v)\rangle\\
 &+\int hk\{(a_+-a_-)\Phi(v)-a_+v\Phi'(v)\}.
\end{split}
\end{equation}
In particular, with $m=k/h$,
\begin{equation}\label{cal:local-energy}
 \frac12\frac d{ds}\int mf^2+\nu\int hk|\nabla v|^2
 =\langle F,kv\rangle-\frac12\langle a_++a_-,hk v^2\rangle.
\end{equation}
For $\Phi(v)=v^4/4$, the dissipation is
$3\nu\int hk v^2|\nabla v|^2$ and the defect is
$-\langle3a_++a_-,hk v^4\rangle/4$.
If $f\in E_N$ solves $f'=\nu\Delta f-P(Vf)+P\mathcal F$, then
\begin{equation}\label{cal:projected-energy}
 \frac12\frac d{ds}\int mf^2+\nu\int hk|\nabla(f/h)|^2
 =\langle Q(mf),Vf\rangle+\langle P(mf),\mathcal F\rangle
 -\frac12\langle a_++a_-,hk(f/h)^2\rangle.
\end{equation}
The backward identity interchanges $h,k$.
\end{lemma}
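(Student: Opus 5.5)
The plan is a direct computation: differentiate $\int hk\Phi(v)$ in $s$ and integrate by parts on $\mathbb T^3$. The point to keep in view is that $V$ drops out twice, once from the equation for $v$ and once from $(hk)'$. I would first take $h,k$ positive, $C^1$ in time and $C^2$ in space, and $f$ smooth. At finite $N$ this holds for $f\in E_N$; for rough weights I would mollify $V$ and $a_\pm$ and pass to the limit in the integrated form of the identities, and I would only check that the limit is legitimate where the lemma is applied.

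\emph{Step 1: the equation for $v$.} From $v=f/h$ we get $v'=(f'-vh')/h$. Insert $f'=Lf+F$ and $h'=Lh+a_+h$, and note that
\[
 Lf-vLh=\nu(\Delta(vh)-v\Delta h)=\nu(h\Delta v+2\nabla h\cdot\nabla v),
\]
because the $Vf$ and $vVh$ terms cancel. Hence
\[
 v'=\nu\Bigl(\Delta v+2\tfrac{\nabla h}{h}\cdot\nabla v\Bigr)+\tfrac Fh-a_+v.
\]
Similarly
\[
 (hk)'=k(Lh+a_+h)-h(Lk+a_-k)=\nu(k\Delta h-h\Delta k)+(a_+-a_-)hk,
\]
with $V$ again cancelling.

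\emph{Step 2: collecting terms.} We have $\frac d{ds}\int hk\Phi(v)=\int(hk)'\Phi(v)+\int hk\Phi'(v)v'$. The $F$ term gives $\langle F,k\Phi'(v)\rangle$, and the $a_\pm$ terms give $\int hk\{(a_+-a_-)\Phi-a_+v\Phi'\}$. For the $\nu$ terms, integrate by parts:
\[
 \int hk\Phi'(v)\Delta v=-\int hk\Phi''|\nabla v|^2-\int\Phi'(v)\nabla(hk)\cdot\nabla v .
\]
Combined with $2\int k\Phi'\nabla h\cdot\nabla v$, this leaves
\[
 -\nu\int hk\Phi''|\nabla v|^2+\nu\int\nabla\Phi(v)\cdot(k\nabla h-h\nabla k).
\]
A second integration by parts turns the last term into $-\nu\int\Phi(v)(k\Delta h-h\Delta k)$, which cancels the $\nu$ part of $(hk)'$. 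This proves \eqref{cal:general-entropy}. I expect this Step 2 bookkeeping to be the only delicate part: it is where one sees that only the defects $a_\pm$, and never $V$ or derivatives of the weights, survive.

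\emph{Step 3: specializations.} For $\Phi=v^2/2$ we have $\int hk v^2/2=\frac12\int mf^2$ and $k\Phi'(v)=kv=mf$. The defect is $\frac12(a_+-a_-)-a_+=-\frac12(a_++a_-)$, which gives \eqref{cal:local-energy}. For $\Phi=v^4/4$ we have $\Phi''=3v^2$, and the defect coefficient is $\frac14(a_+-a_-)-a_+=-\frac14(3a_++a_-)$.

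For the projected equation, rewrite it as $f'=Lf+\{Vf-P(Vf)+P\mathcal F\}=Lf+QVf+P\mathcal F$ and apply \eqref{cal:local-energy}. Self-adjointness of $P$ and $Q$ gives $\langle QVf,mf\rangle=\langle Vf,Q(mf)\rangle$ and $\langle P\mathcal F,mf\rangle=\langle\mathcal F,P(mf)\rangle$, which is \eqref{cal:projected-energy}.

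The backward identity follows by the same computation with time reversed, so that $k$ plays the role of the forward weight and $h$ that of the backward one.
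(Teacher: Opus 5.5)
Your proposal is correct and follows the paper's proof. You use the same quotient equation for $v$ and the same formula for $(hk)'$, in which $V$ cancels. The weight-gradient terms cancel after integration by parts: the paper records this as the vanishing coefficient of $\nu\Phi'(v)\nabla v$, while you use a second integration by parts. The projected identity comes, as in the paper, from $\nu\Delta f-P(Vf)=Lf+Q(Vf)$ and self-adjointness of $P$ and $Q$.
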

\begin{proof}
The quotient satisfies
$v'=\nu\Delta v+2\nu\nabla\log h\cdot\nabla v-a_+v+F/h$.
Also
$(hk)'=\nu\operatorname{div}(k\nabla h-h\nabla k)
 +(a_+-a_-)hk$.
Integration by parts gives \eqref{cal:general-entropy}, since the
coefficient of $\nu\Phi'(v)\nabla v$ is
\[
 -(k\nabla h+h\nabla k)+2k\nabla h-(k\nabla h-h\nabla k)=0.
\]
For the projected equation,
$\nu\Delta f-P(Vf)=Lf+Q(Vf)$ and
$\langle Q(Vf),mf\rangle=\langle Vf,Q(mf)\rangle$.
\end{proof}

Put $\eta=3g$ and $W=Z_N^2-\E Z_N^2$.
The weights will be $h=e^{-\eta H_JY_+}$ and $k=e^{-\eta H_JY_-}$, with
$Y_\pm$ forward and backward heat primitives of the Wick square
(Appendix~\ref{ac:reference-section}). The chain rule gives
\[
 \frac{(\partial_t-\nu\Delta)h}{h}
 =-\eta(\partial_t-\nu\Delta)H_JY_+-\eta^2\nu|\nabla H_JY_+|^2 .
\]
The first term cancels the rough potential $\eta W$ at high frequency.
The second is a positive gradient square. Its mean is the divergent
second-order counterterm, up to bounded terms:

\begin{proposition}[The sunset counterterm is a chain-rule correction]
\label{cal:sunset}
Let $Z$ be the stationary Ornstein--Uhlenbeck field with noise cutoff
$\Lambda_N$, $W=Z^2-\E Z^2$, and
$Y_{\rm stat}(t)=\int_{-\infty}^te^{-(t-r)A}W(r)\dd r$. Put
$d_{pq}=\lambda_p+\lambda_q+\lambda_{p+q}$ and
\[
 C_{2,N}^{\rm full}=2a^2\sum_{p,q\in\Lambda_N}
 \frac1{\lambda_p\lambda_q d_{pq}}.
\]
Then
\begin{equation}\label{cal:sunset-identity}
 \nu\,\E|\nabla Y_{\rm stat}|^2
 =C_{2,N}^{\rm full}-\mu\,\E|Y_{\rm stat}|^2,
 \qquad
 \sup_N\bigl(\E|Y_{\rm stat}|^2+|C_{2,N}^{\rm full}-C_{2,N}|\bigr)<\infty.
\end{equation}
\end{proposition}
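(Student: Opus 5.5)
The plan is to compute everything in Fourier modes, using the explicit stationary covariance of the Ornstein--Uhlenbeck field and a two-time integral. With noise amplitude $\sqrt{2a}$ and generator $A$, the stationary field $Z=\sum_{k\in\Lambda_N}\widehat Z_k e_k$ has complex Gaussian modes with $\overline{\widehat Z_k}=\widehat Z_{-k}$ and
\[
 \E\,\widehat Z_p(r)\widehat Z_{q}(r')=\delta_{p+q=0}\,\frac a{\lambda_p}e^{-\lambda_p|r-r'|}.
\]
The Wick square has modes $\widehat W_m=\sum_{p+q=m,\,p,q\in\Lambda_N}(\widehat Z_p\widehat Z_q-\E\,\widehat Z_p\widehat Z_q)$. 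By Wick's theorem (two pairings),
\[
 \E\,\widehat W_m(r)\overline{\widehat W_m(r')}
 =2a^2\sum_{p+q=m}\frac{e^{-(\lambda_p+\lambda_q)|r-r'|}}{\lambda_p\lambda_q}.
\]
Different $m$ are uncorrelated. Since the field is stationary in space, $\E|\nabla Y_{\rm stat}(x)|^2$ and $\E|Y_{\rm stat}(x)|^2$ do not depend on $x$. On the unit torus they equal $\sum_m4\pi^2|m|^2\E|\widehat Y_m|^2$ and $\sum_m\E|\widehat Y_m|^2$, where $m$ ranges over all of $\mathbb Z^3$; only $m\in\Lambda_N+\Lambda_N$ contribute.

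The key computation is the elementary integral: for $\lambda,\kappa>0$,
\[
 \int_0^\infty\!\!\int_0^\infty e^{-\lambda(u+u')-\kappa|u-u'|}\dd u\dd u'
 =2\int_0^\infty\frac{e^{-2\lambda u}}{\lambda+\kappa}\dd u=\frac1{\lambda(\lambda+\kappa)}.
\]
Since $\widehat Y_m(t)=\int_0^\infty e^{-\lambda_m u}\widehat W_m(t-u)\dd u$, this gives
\[
 \E|\widehat Y_m|^2=2a^2\sum_{p+q=m}\frac1{\lambda_p\lambda_q\lambda_m d_{pq}}.
\]
The relation $\lambda_m=\mu+4\pi^2\nu|m|^2$ then yields
\[
 \nu\E|\nabla Y_{\rm stat}|^2+\mu\E|Y_{\rm stat}|^2=\sum_m\lambda_m\E|\widehat Y_m|^2
 =2a^2\sum_{p,q\in\Lambda_N}\frac1{\lambda_p\lambda_qd_{pq}}=C^{\rm full}_{2,N}.
\]
Here the sum over $m$ together with $p+q=m$ unfolds into an unrestricted sum over $p,q\in\Lambda_N$. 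This is the identity in \eqref{cal:sunset-identity}. Exchanging expectations and the infinite time integral is justified by Tonelli, since everything is finite at fixed $N$.

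For the uniform bounds I use $\lambda_k\gtrsim1+|k|^2$ (because $\mu>0$) and $d_{pq}\gtrsim1+|p|^2+|q|^2$.

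\emph{Bound on $\E|Y_{\rm stat}|^2$.} It is dominated by
\[
 \sum_{p,q\in\mathbb Z^3}\langle p\rangle^{-2}\langle q\rangle^{-2}\langle p+q\rangle^{-2}\bigl(\langle p\rangle^2+\langle q\rangle^2\bigr)^{-1}.
\]
This is a sum of homogeneity $-8$ over six dimensions. I check convergence by summing first in $q$. For $|q|\le|p|$, the sum $\sum_q\langle q\rangle^{-2}\langle p+q\rangle^{-2}$ is $\lesssim\langle p\rangle^{-1}$, and the region $|q|>|p|$ contributes less. This leaves $\sum_p\langle p\rangle^{-5}<\infty$.

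\emph{Bound on $C^{\rm full}_{2,N}-C_{2,N}$.} The difference is the sum over $p,q\in\Lambda_N$ with $p+q\notin\Lambda_N$. Such pairs force $\max(|p|_\infty,|q|_\infty)>N/2$. By symmetry I take $|p|_\infty>N/2$. The inner sum $\sum_q\langle q\rangle^{-2}(\langle p\rangle^2+\langle q\rangle^2)^{-1}\lesssim\langle p\rangle^{-1}$ leaves $\sum_{N/2<|p|_\infty\le N}\langle p\rangle^{-3}$. This is $O(N^3\cdot N^{-3})=O(1)$ uniformly in $N$. It does not vanish, which is why the statement asserts only boundedness.

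The main obstacle is bookkeeping rather than analysis: the factor $2$ from the two Wick pairings and the conventions for complex Fourier modes on a real field. I would fix these by checking the normalization of $C_{1,N}=\E Z^2=a\sum_{k\in\Lambda_N}\lambda_k^{-1}$ against the same conventions before computing $\E|\widehat W_m|^2$.
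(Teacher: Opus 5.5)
Your proposal is correct and is essentially the paper's proof. Both arguments use the same steps:
\begin{itemize}
\item the two-pairing Wick covariance of $\widehat W$;
\item the double heat integral giving $\E|\widehat Y_{\rm stat}(k)|^2=2a^2\sum_{p+q=k}(\lambda_p\lambda_q\lambda_k d_{pq})^{-1}$;
\item summation against $\lambda_k=\mu+4\pi^2\nu|k|^2$ for the identity;
\item the same counting for the two uniform bounds.
\end{itemize}
The only difference is that for $\E|Y_{\rm stat}|^2$ the paper fixes the output frequency $k$ to obtain $\langle k\rangle^{-5}$, whereas you fix $p$. This does not matter.
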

\begin{proof}
With $k=p+q$, the Wick covariance and the heat integrals give
\[
\begin{aligned}
 \E[\widehat W(t,k)\overline{\widehat W(s,k)}]
 &=2a^2\sum_{\substack{p+q=k\\p,q\in\Lambda_N}}
       \frac{e^{-(\lambda_p+\lambda_q)|t-s|}}{\lambda_p\lambda_q},\\
 \E|\widehat Y_{\rm stat}(k)|^2
 &=2a^2\sum_{\substack{p+q=k\\p,q\in\Lambda_N}}
 \frac{\int_0^\infty\!\int_0^\infty
 e^{-\lambda_k(r+s)- (\lambda_p+\lambda_q)|r-s|}\dd r\dd s}
      {\lambda_p\lambda_q}\\
 &=2a^2\sum_{\substack{p+q=k\\p,q\in\Lambda_N}}
 \frac1{\lambda_p\lambda_q\lambda_kd_{pq}}.
\end{aligned}
\]
Summation against $\lambda_k-\mu=4\pi^2\nu|k|^2$ proves the identity.
Moreover, \eqref{gm:heat} and \eqref{ga:full-correction} give
\[
 \E|Y_{\rm stat}|^2\lesssim\sum_k\langle k\rangle^{-5}<\infty,
 \qquad
 |C_{2,N}^{\rm full}-C_{2,N}|
 \lesssim\sum_{\substack{p,q\in\Lambda_N\\
              |p|_\infty\vee|q|_\infty>N/2}}
       \frac1{\lambda_p\lambda_qd_{pq}}\lesssim1.
\]
\end{proof}

\begin{remark}[The same mechanism in KPZ]\label{cal:sunset-kpz}
For KPZ, $h=\kappa^{-1}\log Z_\kappa$ and It\^o's formula for the
logarithm produce the Wick renormalization of $(\partial_xh)^2$. In
both models the counterterm is the correction generated by an
exponential of a heat primitive: of the solution itself for KPZ, of the
Gaussian potential for $\Phi^4_3$. The relative entropy of
Lemma~\ref{cal:energy} uses this correction to cancel the rough
potential before any estimate is made.
\end{remark}

\subsection{The state: a corrector, two entropies, and heat regularity}
\label{co:state-roadmap}

Write $u=X_N-Z_N$. Its equation contains a cubic Gaussian source and
a rough quadratic potential. The source must be removed before an
energy estimate can close. Appendix~\ref{po:section} constructs a
noise-dependent corrector $q$ and an invertible smoothing perturbation
$\mathcal T=I+K$ such that
\begin{equation}\label{co:corrected}
 r=u+q,\qquad w=\mathcal Tr=\mathcal Tu+q_0,\qquad
 (\partial_s+A)w=\mathcal R_s(w),\qquad w(0)=0.
\end{equation}
The leading part of $q$ is the heat integral of the Wick cubic.
The correction cancels that source exactly, including its prescribed
counterterm; see \eqref{state:additive-equation}.

Set $v=r/h$. The first entropy controls
\[
 E_2(T)=\sup_{s\le T}\|v(s)\|_2^2+
       \int_0^T\bigl(\|v\|_{H^1}^2+\|v\|_4^4\bigr)\dd s.
\]
Its absorption mechanism is one comparison. The hardest quadratic
pairing, after factoring out its coefficient, costs
$\|v\|_4^{3-\sigma}\|v\|_{H^1}^{\sigma}$. Its degree relative to the
two dissipations is
\[
 \frac{3-\sigma}{4}+\frac{\sigma}{2}
 =\frac{3+\sigma}{4}<1.
\]
Young's inequality absorbs it. The coefficient remainder is integrable
by the Gaussian time norms. The initial layer determines the rate:
\[
 \int_0^T s^{-1/2}\|v(s)\|_{H^\sigma}\dd s
 \lesssim T^{(1-\sigma)/2}E_2(T)^{1/2}.
\]
A second application of Young gives
$E_2(T)\le CM^CT^{1-\sigma}$; Lemma~\ref{state:quadratic} proves the
projection and coefficient bounds used here.

The second entropy controls $v^2$ in $L^\infty L^2\cap L^2H^1$
(Lemma~\ref{state:quartic}). Thus $v\in L^{20/3}$ and $u^3\in L^2$
in space-time. The equation for $w$ now has an $L^2H^{-\alpha}$ source.
Linear heat regularity and interpolation give
\begin{equation}\label{co:position-energy}
 \sup_{s\le t}\|w(s)\|_{H^{\epsilon_0}}^2+
 \int_0^t\bigl(\|w\|_{H^{1+\epsilon_0}}^2+\|u\|_4^4+\|w\|_4^4\bigr)\dd s
 \le CM^Ct^{2\gamma},\qquad t\le T_g.
\end{equation}
Here $M$ has all finite moments uniformly in the cutoff and horizon,
and Proposition~\ref{po:position} gives a deterministic $T_g>0$. Since $\|q(t)\|_2\le CM^Ct^\theta$ and
$\gamma>\theta$, reconstruction gives
\begin{equation}\label{eq:positionneeded}
 \|u(t)\|_2\le CM^C(t^\gamma+t^\theta)\le CM^Ct^\theta.
\end{equation}

\subsection{The response: retain the cubic dissipation}

Fix a reference path $X=X_N$ and write $P=P_N$, $Q=I-P$. Its physical
linearization is
\[
 L_Xf=\nu\Delta f-P(Vf),\qquad
 V=\mu+3gX^2-3gC_{1,N}+9g^2C_{2,N}.
\]
We use the same explicit weights, now with defects relative to this $V$.
Proposition~\ref{cal:construction} gives deterministic $R,T_*>0$ and
a reference event of probability at least $1/2$ on which
\begin{equation}\label{cal:reference-bounds}
 R^{-1}\le h,k\le R,\qquad
 \|h\|_{CC^\beta}+\|k\|_{CC^\beta}
 +\|X\|_{CC^{-\alpha}}+\|u\|_{CC^\varpi}\le R,
 \qquad \|k/h-1\|_\infty\le\delta_0,
\end{equation}
where $\delta_0<(2\sup_N\|P_N\|_{L^4\to L^4})^{-1}$.
This event is independent of any control introduced below.

For $m_+=k/h$, $m_-=h/k$, put $z_+=f/h$, $z_-=f/k$.
Lemma~\ref{cal:projection-form} and the defect bounds give, for every
$\varepsilon>0$,
\begin{equation}\label{cal:reference-form}
\begin{split}
 |\langle Q(m_\pm f),Vf\rangle|
 +\tfrac12|\langle a_++a_-,hkz_\pm^2\rangle|
 &\le\varepsilon\|z_\pm\|_{H^1}^2+B_\varepsilon(s)\|z_\pm\|_2^2,\\
 \int_0^T B_\varepsilon(s)\dd s&\le C_{R,\varepsilon}.
\end{split}
\end{equation}
Indeed,
\[
 |\langle a,z^2\rangle|
 \lesssim\|a\|_{C^{-\alpha}}\|z\|_2^{2-\alpha}\|z\|_{H^1}^{\alpha}
 \le\varepsilon\|z\|_{H^1}^2
     +C_\varepsilon\|a\|_{C^{-\alpha}}^{2/(2-\alpha)}\|z\|_2^2.
\]
The construction bounds this coefficient in $L^1(0,T)$, as well as
the spatially constant defects.

\begin{proposition}[Controlled states and response]\label{cal:control}
Work on the reference event of \eqref{cal:reference-bounds}--\eqref{cal:reference-form}. Let
$\mathfrak h\in L^2(0,T;L^2)$, put $H=\|\mathfrak h\|_{L^2L^2}$, and let
\begin{equation}\label{co:eq:controlled-difference}
 y'=L_Xy-3gP(Xy^2)-gPy^3+P\mathfrak h,\qquad y(0)=0.
\end{equation}
With $v=y/h$,
\begin{equation}\label{cal:state}
 \|v\|_{CL^2}^2+\int_0^T(\|v\|_{H^1}^2+\|v\|_4^4)\dd s
       \le C_R(1+H^{4/3}).
\end{equation}
Set
\[
 \mathcal E_I(f;y)=\|f\|_{C_IL^2}^2
       +\|f\|_{L^2_IH^{r_*}}^2+\|yf\|_{L^2_IL^2}^2.
\]
If $f'=L_Xf-6gP(Xyf)-3gP(y^2f)$, then, for every restart time $s$,
\begin{equation}\label{cal:response}
 \mathcal E_{[s,T]}(f;y)
       \le C_Re^{C_R(1+H^{4/3})}\|f(s)\|_2^2.
\end{equation}
Let $J_y$ denote this evolution. Its physical adjoint satisfies
\begin{equation}\label{cal:adjoint}
 \mathcal E_{[s,T]}(J_y(T,\cdot)^*\varphi;y)
       \le C_Re^{C_R(1+H^{4/3})}\|\varphi\|_2^2.
\end{equation}
The constants are uniform in $N,T\le\min(T_*,1)$ and $s$.
\end{proposition}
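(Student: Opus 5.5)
The plan is to prove the three bounds in order: first the state bound \eqref{cal:state}, then the forward response bound \eqref{cal:response}, and finally the adjoint bound \eqref{cal:adjoint} by duality. Each uses Lemma~\ref{cal:energy} with the weights $h,k$ of the reference event, so the rough potential $V$ never appears explicitly. Only the projection term $\langle Q(m f),Vf\rangle$ and the defects $a_\pm$ remain, and both are controlled by \eqref{cal:reference-form}.

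\emph{State bound.} I apply \eqref{cal:projected-energy} to $y$ with $\mathcal F=-3gXy^2-gy^3+\mathfrak h$ and $v=y/h$.

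The cubic term gives $-g\langle P(my),y^3\rangle$. Writing $P(my)=my-Q(my)$ gives $-g\int hk\,v^4\cdot h^{2}$-type coercivity, since $my\,y^3=k h^3 v^4\ge R^{-4}v^4$. The error $g\langle Q(my),y^3\rangle$ is absorbed because $\|k/h-1\|_\infty\le\delta_0$: I use $Q(my)=Q((m-1)y)$ for $y\in E_N$, together with the $L^4$ bound on $P_N$ and the choice of $\delta_0$. This yields half of $\|y\|_4^4$.

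The quadratic term $3g\langle P(my),Xy^2\rangle$ is the hardest pairing. I estimate it by duality of $X\in C^{-\alpha}$ against $my^3\in W^{\sigma,1}$, obtaining $\|v\|_4^{3-\sigma}\|v\|_{H^1}^\sigma$. Since $(3+\sigma)/4<1$, Young's inequality absorbs it into the dissipations with an $R$-dependent constant.

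The source gives $|\langle P(my),\mathfrak h\rangle|\le\varepsilon\|v\|_4^4+C\|\mathfrak h\|_{4/3}^{4/3}$. I would rather use $\|\mathfrak h\|_2\|v\|_2$ and $\|v\|_2\lesssim\|v\|_4$, which gives $\|\mathfrak h\|_2^{4/3}$; integrated in time this is $\lesssim H^{4/3}T^{1/3}$.

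The terms from \eqref{cal:reference-form} are absorbed by Gronwall using $\int B_\varepsilon\le C_{R,\varepsilon}$. This gives \eqref{cal:state}.

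\emph{Forward response.} Apply \eqref{cal:projected-energy} to $f$ with $\mathcal F=-6gXyf-3gy^2f$. The key point is that $-3g\langle P(mf),y^2f\rangle$ has a sign. It equals $-3g\int m y^2f^2$ up to a $Q$-error, and $\int my^2f^2\ge R^{-2}\|yf\|_2^2$. This supplies the $\|yf\|^2_{L^2L^2}$ component of $\mathcal E$, and the $Q$-error is again handled via $m-1$ small.

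The cross term $6g\langle P(mf),Xyf\rangle$ is bounded by $\|X\|_{C^{-\alpha}}\|myf\cdot f\|_{B^{\alpha}_{1,1}}$. Via $\|yf\|_2\|f\|_{H^{\sigma}}+\|y\|_{H^\sigma}$-type product bounds this becomes $\varepsilon(\|yf\|_2^2+\|z\|_{H^1}^2)+C(1+\|v\|_{H^1}^2+\|v\|_4^4)\|z\|_2^2$. Gronwall with the time-integrable coefficient from \eqref{cal:state} gives $e^{C_R(1+H^{4/3})}$. The $H^{r_*}$ part follows since $r_*<1$ and $h\in C^\beta$ with $\beta>r_*$ transfer $\|z\|_{H^1}$ to $\|f\|_{H^{r_*}}$. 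Restarting at time $s$ changes nothing in the argument.

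\emph{Adjoint.} The physical adjoint $p(s)=J_y(T,s)^*\varphi$ solves the backward equation $-p'=L_X^*p-6gP(Xyp)-3gP(y^2p)$, and $L_X^*$ is formally symmetric. I apply the backward version of Lemma~\ref{cal:energy}, which interchanges $h$ and $k$ and uses $z_-=p/k$. The same estimates then give \eqref{cal:adjoint}.

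\emph{Main obstacle.} I expect the hardest step to be the uniform-in-$N$ control of the projection errors $Q(\cdot)$ in the cubic and $y^2f$ terms: coercivity survives only through $\|m-1\|_\infty\le\delta_0$ and the $L^4$ boundedness of $P_N$. A second difficulty is the sharp product estimate for the $X$-pairings at regularity $\alpha$ just above $1/2$. For the product estimate I would first try paraproduct splitting with $\sigma=\tfrac12+2\epsilon_0$.
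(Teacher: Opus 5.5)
Your architecture matches the paper's: the projected entropy identity with the forward weights, coercivity of the state cube from $\|k/h-1\|_\infty\le\delta_0$, the degree count $(3+\sigma)/4<1$, the source bound $\|\mathfrak h\|_2\|v\|_4\le\varepsilon\|v\|_4^4+C\|\mathfrak h\|_2^{4/3}$, Gronwall for the response with a coefficient built from $\|v\|_4^4+\|v\|_{H^1}^2$, and the backward identity with $h,k$ interchanged.

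\emph{The gap} is exactly the step you flagged as the main obstacle. The projection error $\langle Q(mf),y^2f\rangle$ of the response's cubic dissipation cannot be controlled by smallness of $m-1$. Write $B=\|v\|_4$, $F=\|z\|_2$, $Z=\|z\|_{H^1}$. Using $Q(mf)=Q((m-1)f)$, $L^4$-boundedness of $Q$ and $\|f\|_4\lesssim_R\|z\|_{H^{3/4}}$, the best you get is
\[
 |\langle Q((m-1)f),y^2f\rangle|
 \le C\delta_0\|f\|_4\|y\|_4\|yf\|_2
 \le\varepsilon\|yf\|_2^2+C_{R,\varepsilon}B^2F^{1/2}Z^{3/2}.
\]
Absorbing $Z^2$ then leaves the Gronwall coefficient $B^8$. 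But \eqref{cal:state} only controls $\int B^4$ and $\int\|v\|_{H^1}^2$. The size of $\delta_0$ is irrelevant, because the problem is time integrability, not the constant. Other H\"older splits $\|Q((m-1)f)\|_p\|y\|_q\|yf\|_2$ leave the coefficient $\|y\|_q^{2q/(q-3)}$, which \eqref{cal:state} does not control for any $q>3$. Nor can you absorb the term into $\|yf\|_2^2$: $Q$ is nonlocal and has no bound on $L^2(y^2\,dx)$ when $y$ vanishes where $f$ concentrates. The same obstruction recurs in the adjoint.

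\emph{The missing ingredient} is Lemma~\ref{cal:boundary}, $\|Q(mf)\|_4\le C_RF^{1/2}Z^{1/2}$. It gains a quarter derivative over Sobolev from the geometry of the cube, not from smallness. For a block $m_j$ of $m$ at frequency $L\le cN$, $Q(m_jf)$ lives in a strip of at most $CN^2L$ frequencies outside $\Lambda_N$. Only $f_{\ge c'N}$ contributes, and $\|f_{\ge c'N}\|_2\lesssim_R\min\{F,N^{-1}Z+N^{-\beta}F\}$. The blocks with $L\ge cN$ use $\|m_{\ge cN}\|_\infty\lesssim_RN^{-\beta}$, Bernstein on $E_N$, and $\beta>3/4$. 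With this lemma the error is at most
\[
 \varepsilon\|yf\|_2^2+C_RB^2FZ\le\varepsilon\|yf\|_2^2+\varepsilon'Z^2+C_RB^4F^2,
\]
which closes by \eqref{cal:state}. Interchanging $h,k$ gives the adjoint bound.

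Two smaller points. First, in the state estimate the rough test function is $y^2P(my)$, not $my^3$. Keep $P$ inside the norm (it commutes with translations and is bounded on $L^4$ and $H^\sigma$); otherwise you create a second projection term you have not estimated. Second, in the rough response term the $B^\alpha_{1,1}$ norm also places $\alpha$ derivatives on the factor $yf$. Your sketch via $\|yf\|_2\|f\|_{H^\sigma}$ plus a term with derivatives on $y$ must therefore be made explicit, with a degree count showing the leftover coefficient is $\lesssim1+B^4+\|v\|_{H^1}^2$. The paper uses the split $\|v\|_{B^\sigma_{4/(1+\sigma),\infty}}\|z\|^2_{8/(3-\sigma)}+B\|z\|_{H^\sigma}\|z\|_4$, and that split needs $\sigma<3/5$.
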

\begin{proof}
\emph{The controlled state.}
Put $B=\|v\|_4$ and $Y=\|v\|_{H^1}$. By
\eqref{cal:reference-form},
\begin{equation}\label{cal:reference-absorb}
 |\text{linear errors}|\le\varepsilon\|z\|_{H^1}^2
       +B_\varepsilon\|z\|_2^2,
 \qquad\int_0^T B_\varepsilon\le C_{R,\varepsilon}.
\end{equation}
The weights preserve $H^r$ for $r<\beta$, and
\[
 \langle P(my),y^3\rangle
 =\|y\|_4^4+\langle P((m-1)y),y^3\rangle
 \ge\tfrac12\|y\|_4^4\ge c_RB^4.
\]
Since $\alpha<\sigma<\beta$,
\[
\begin{aligned}
 |\langle P(my),Xy^2\rangle|
 &\le C_R\|v^2P(kv)\|_{B^\alpha_{1,1}}
 \le C_RB^2\|v\|_{H^\sigma}
 \le C_RB^{3-\sigma}Y^\sigma\\
 &\le\varepsilon(Y^2+B^4)+C_{R,\varepsilon},
 \qquad \frac{3-\sigma}{4}+\frac\sigma2<1.
\end{aligned}
\]
Also,
\[
 |\langle P(my),\mathfrak h\rangle|
 \le C_R\|\mathfrak h\|_2B
 \le\varepsilon B^4+C_{R,\varepsilon}\|\mathfrak h\|_2^{4/3}.
\]
Thus \eqref{cal:projected-energy} yields
\[
 \frac d{ds}\int hkv^2+c_R(Y^2+B^4)
 \le C_RB_\varepsilon\int hkv^2
      +C_R(1+\|\mathfrak h\|_2^{4/3}).
\]
Since $\int B_\varepsilon\le C_R$ and
$\int\|\mathfrak h\|_2^{4/3}\le T^{1/3}H^{4/3}$,
Gronwall gives \eqref{cal:state}.

\emph{The response.}
Put $z=f/h$, $F=\|z\|_2$ and $Z=\|z\|_{H^1}$.
The cubic term gives $\|yf\|_2^2$, with boundary error
\[
 |\langle Q(mf),y^2f\rangle|
 \le\varepsilon\|yf\|_2^2+C_{R,\varepsilon}B^2FZ
 \le\varepsilon\|yf\|_2^2+\varepsilon'Z^2
                       +C_{R,\varepsilon,\varepsilon'}B^4F^2,
\]
by \eqref{cal:boundary-four}. For the rough term,
\[
\begin{aligned}
 |\langle P(mf),Xyf\rangle|
 &\le C_R\|vzP(kz)\|_{B^\alpha_{1,1}}\\
 &\le C_R\bigl(\|v\|_{B^\sigma_{4/(1+\sigma),\infty}}
          \|z\|_{8/(3-\sigma)}^2+B\|z\|_{H^\sigma}\|z\|_4\bigr)\\
 &\le C_R\bigl(B^{1-\sigma}Y^\sigma F^{(5-3\sigma)/4}Z^{3(1+\sigma)/4}
       +BF^{5/4-\sigma}Z^{3/4+\sigma}\bigr).
\end{aligned}
\]
Young absorbs $Z^2$ and leaves $F^2$ times
\[
 B^{8(1-\sigma)/(5-3\sigma)}Y^{8\sigma/(5-3\sigma)}
       +B^{8/(5-4\sigma)}
 \le C(1+B^4+Y^2).
\]
Indeed the first term has degree $(2+2\sigma)/(5-3\sigma)<1$
relative to $B^4+Y^2$, and $8/(5-4\sigma)<4$.
Both follow from $\sigma<3/5$, which holds in \eqref{eq:indices}.
Consequently,
\[
 \frac d{ds}\int mf^2+c_R(Z^2+\|yf\|_2^2)
 \le C_R(1+B_\varepsilon+B^4+Y^2)\int mf^2.
\]
The coefficient has integral $\le C_R(1+H^{4/3})$ by
\eqref{cal:state}. Gronwall and $f=hz$, $r_*<\beta$, give
\eqref{cal:response}.

\emph{The adjoint.}
For $p_s=J_y(T,s)^*\varphi$, $-p'=L_{X+y}p$ and $p_T=\varphi$.
With $z=p/k$, the rough pairing is $\langle Xhk,vzP(hz)\rangle$.
The preceding bounds with exchanged weights give
\[
 -\frac d{ds}\int(h/k)p^2+c_R(\|z\|_{H^1}^2+\|yp\|_2^2)
 \le C_R(1+B_\varepsilon+B^4+Y^2)\int(h/k)p^2.
\]
Integrating from $s$ to $T$ gives \eqref{cal:adjoint}.
\end{proof}

\subsection{From state energy to the relative derivative}\label{co:section}

\subsubsection{Test the differentiated source against the response}
\label{co:current}

The adjoint estimate \eqref{cal:adjoint} controls $up$ in $L^2$.
This controls the pairing of the cubic source $u^3$ with $p$.

Use the OU realization of Appendix~\ref{ma:section} on $r\in[0,1]$.
Dots denote $\mathfrak D_t$, with $r$ and the frequency thresholds fixed.
Since $\mathfrak D_tZ_t(1)=tB_tZ_t(1)$, Lemma~\ref{common:ward} gives
\begin{equation}\label{co:reduced-defect}
 D_N(t)=\dot u_t(1)-tB_tu_t(1).
\end{equation}
Write $\mathcal R_{t,r}$ for the source at physical time $tr$. Differentiate
$\partial_rw_t=t[-Aw_t+\mathcal R_{t,r}(w_t)]$, then set $s=tr$:
\begin{equation}\label{co:source-equation}
 z'=(-A+D_w\mathcal R)z+F_0-Aw,\quad z=\dot w,\quad z(0)=0,
 \qquad F_0=\mathcal R+\dot{\mathcal R}|_w.
\end{equation}
Here primes denote $\partial_s$, $\mathcal R=\mathcal R_{t,s/t}$,
and $\dot{\mathcal R}|_w$ also holds $w$ fixed.

\begin{lemma}[Source tested in the dissipative norm]
\label{co:source-lemma}
For a smooth $E_N$-valued path $p$, put
\[
 \mathcal E_{u,t}(p)=\|p\|_{L^\infty(0,t;L^2)}
       +\|p\|_{L^2(0,t;H^{r_*})}
       +\|up\|_{L^2((0,t)\times\T^3)}.
\]
Uniformly in the cutoff, for $t\le T_g$,
\begin{equation}\label{co:source-spaces}
 \left|\int_0^t\langle F_0-Aw,\mathcal T_\tau^{-*}p_\tau\rangle\dd\tau\right|
       \le CM^Ct^\gamma\mathcal E_{u,t}(p).
\end{equation}
\end{lemma}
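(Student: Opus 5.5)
The plan is to split $F_0-Aw$ into its constituent pieces, following the structure of the source $\mathcal R$ from \eqref{co:corrected}, and to pair each with $\tilde p=\mathcal T^{-*}p$. Because $\mathcal T=I+K$ is an invertible smoothing perturbation, $\mathcal T^{-*}$ is bounded on $L^2$ and on $H^{r_*}$ uniformly in $N$. The harder point is the weighted norm $\|u\tilde p\|_{L^2}$. I will write $\tilde p=p-K^*\mathcal T^{-*}p$ and bound $u$ times the smoothed part using $\|u\|_{L^4_{t,x}}$ from \eqref{co:position-energy} together with Sobolev embedding of the smoothed factor. This gives $\mathcal E_{u,t}(\tilde p)\lesssim CM^C\mathcal E_{u,t}(p)$, so it is enough to prove \eqref{co:source-spaces} with $\tilde p$ in place of $\mathcal T^{-*}p$.

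\emph{The term $Aw$.} I will integrate by parts in space and split the regularity: $|\langle Aw,\tilde p\rangle|\le\|w\|_{H^{2-r_*}}\|\tilde p\|_{H^{r_*}}$. Since $2-r_*=1+\epsilon_0$, Cauchy--Schwarz in time together with \eqref{co:position-energy} bounds $\int_0^t\|w\|_{H^{1+\epsilon_0}}^2$ by $CM^Ct^{2\gamma}$, which produces $CM^Ct^\gamma\|\tilde p\|_{L^2H^{r_*}}$.

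\emph{The source $\mathcal R+\dot{\mathcal R}|_w$.} The source has three kinds of terms.
\begin{enumerate}
\item \emph{Cubic term $-gP u^3$ and its derivative.} For the cubic term I use the estimate from the strategy section, $|\int u^3\tilde p|\le\|u\|_{L^4_{t,x}}^2\|u\tilde p\|_{L^2_{t,x}}$. By \eqref{co:position-energy}, $\|u\|_{L^4}^2\le CM^Ct^\gamma$. I need to check that $\dot{\mathcal R}|_w$ contains no cubic term in $u$ with $w$ fixed. The $\mathfrak D_t$ derivative acts only on the Gaussian coefficients; the cubic terms with Gaussian factors are handled in item 2.
\item \emph{Rough terms linear or quadratic in $u$ with Gaussian coefficients.} These are $X$-type terms $Z u^2$ and $(Z^2-\E Z^2)u$, renormalised, together with their $\mathfrak D_t$-derivatives. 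The derivatives are again Gaussian objects of the same regularity, since $\mathfrak D_t$ acts on chaos by the bounded generator \eqref{common:ward-number}. I will bound them by duality in $B^{-\alpha}\times B^{\alpha}_{1,1}$, exactly as in the proof of Proposition~\ref{cal:control}, using the products $u\tilde p$, $u^2\tilde p$ with $\sigma>\alpha$. The degree counting there gives $\|u\|_4^{2-\sigma}\|u\|_{H^\sigma}^{\sigma}$-type bounds times $\|\tilde p\|_{H^{r_*}}$ or $\|u\tilde p\|_2$. The small-time factor $t^\gamma$ again comes from \eqref{co:position-energy}, and $H^\sigma$ is interpolated between $H^{\epsilon_0}$ and $H^{1+\epsilon_0}$ of $w$ plus $q$.
\item \emph{Purely Gaussian leftover and corrector contributions.} This is $q$ and the commutator terms. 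I will bound these by their Gaussian time norms, gaining $t^\theta$, in fact $t^\gamma$ after the corrector cancellation, through $\|\tilde p\|_{L^\infty L^2}$.
\end{enumerate}

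\emph{Main obstacle.} I expect the main difficulty in item 2 to be the term quadratic in $u$ with the coefficient $\mathfrak D_t Z$. Its pairing with $\tilde p$ needs $u$ in $L^4$ and $\tilde p$ weighted by $u$ simultaneously, at regularity $-\alpha$. The first thing I will try is to mirror the rough-pairing bound of the response estimate. I will write $u\tilde p$ in $B^{\sigma}$ via a paraproduct split: the term $u\prec\tilde p$ is controlled through $\|u\|_{L^\infty}$-free bounds using $\|u\tilde p\|_2$ and $\|\tilde p\|_{H^{r_*}}$, and the term $\tilde p\prec u$ is controlled by $\|w\|_{H^{1+\epsilon_0}}$. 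Finally, I will collect all powers of $M$ into $CM^C$ and keep the constants independent of $N$, using the finite-$p$ projection bounds of Appendix~\ref{cal:projection-section}.
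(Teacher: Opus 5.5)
Your treatment of $Aw$ and of the undifferentiated cube is fine. For the cube, the source term is $-g\mathcal TP(u^3)$, so $\langle\mathcal TP(u^3),\mathcal T^{-*}p\rangle=\langle u^3,p\rangle$ and no transfer of $\mathcal E_{u,t}$ to $\mathcal T^{-*}p$ is needed; your transfer via $K^*:L^2\to H^{3/4}\subset L^4$ is nonetheless correct.

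The gap is in items 2 and 3, which rest on a wrong picture of the source. By \eqref{po:A8}--\eqref{up:polynomial-def},
\[
 \mathcal R_s(w)=-g\mathcal TP\bigl(u^3+3Q_\zeta(r_0)\bigr)+\mathcal Lw+f_0+b(s)(w+\ell).
\]
There is no raw term $(Z^2-\E Z^2)u$. The Wick square has regularity $-1-$, so it cannot be paired in $B^{-\alpha}\times B^\alpha_{1,1}$ at all. It survives only inside $\mathcal Lw$, which \eqref{up:linear-bound} controls by $\|\mathcal Lw\|_{H^{-2\epsilon_0}}\lesssim h_0\|w\|_{H^{1+\epsilon_0}}$, that is, by the heat regularity of $w$, not of $u$.

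Likewise, you cannot pair $Zu^2$ by placing $u$ in $H^\sigma$ with $\sigma>1/2$. Here $u=r_0-q_0$ with $q_0\approx g\Psi\in C^{1/2-}$, so $u\notin H^\sigma$ uniformly in $N$, and $Zu$ is not a classically defined product. The paper instead writes $Zu^2=Q_\zeta(r_0)=\zeta_0r_0^2-2\zeta_1r_0+\zeta_2$. The coefficients $\zeta_1=Zq_0$ and $\zeta_2=Zq_0^2$ are renormalized and built from $S=Z\circ\Psi$, and $r_0=w-Ku$ is the regular variable. Time H\"older with $h_0\in L^q$ then finishes the bound.

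Your ``main obstacle'' paraproduct (bounding $u\prec\tilde p$ by $\|u\tilde p\|_2$) does not work as stated, because the weighted norm controls the full product, not a paraproduct piece. In the paper this term needs nothing new: $\mathfrak D_t\zeta$ has the regularity of $\zeta$ by Lemma~\ref{ga:mark-stability}, and $\chi_0=\dot r_0|_w$ obeys the bounds of $r_0$.

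Two further pieces are missing.

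First, your claim that $\dot{\mathcal R}|_w$ only differentiates Gaussian coefficients is false. At fixed $w$, $u=\mathcal T^{-1}(w-q_0)$ still moves, with $\chi=\dot u|_w=-\mathcal T^{-1}(\dot Ku+\dot q_0)$. The differentiated cube therefore contains $3u^2\chi$ and $\dot KP(u^3)$. The cubic mechanism handles these only after you prove $\|\chi\|_{L^4L^4}\le CM^Ct^{\gamma/2}$ (via \eqref{ma:corrector-jet}) and use $\dot K^*:L^2\to H^{1-\epsilon_0}\subset L^4$.

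Second, you omit the initial layer $b(w+\ell)$, where $|b|+|\dot b|\lesssim\tau^{-1/2}$ and $\ell$ lies only in $H^{-\sigma}$. Neither it nor the additive source $f_0\in C^{-\alpha}$ can be bounded through $\|\tilde p\|_{L^\infty L^2}$ uniformly in $N$. They need the $L^2H^{r_*}$ part of $\mathcal E_{u,t}$, through the interpolation $\|p\|_{L^{2/\vartheta}H^{r_*\vartheta}}\le C\mathcal E_{u,t}(p)$, which gives $\int_0^t\tau^{-1/2}\|p_\tau\|_{H^\sigma}\dd\tau\le Ct^{1/2-\sigma/(2r_*)}\mathcal E_{u,t}(p)$. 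This term fixes the rate: $1/2-\sigma/(2r_*)-\gamma=\epsilon_0(3/4-2\epsilon_0)/(1-\epsilon_0)$ is only barely positive, so it is the index choice \eqref{eq:indices} that makes it close. The gain does not come ``from the corrector cancellation''.
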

\begin{proof}
\emph{Interpolation.}
Set $v=\mathcal T^{-*}p$; $\mathcal T^{-*}$ is bounded on the test spaces.
By \eqref{co:position-energy},
\[
 \|w\|_{L^\infty H^{\epsilon_0}}+
 \|w\|_{L^2H^{1+\epsilon_0}}\le CM^Ct^\gamma,
 \qquad
 \|u\|_{L^4L^4}+\|w\|_{L^4L^4}\le CM^Ct^{\gamma/2}.
\]
Interpolating these norms, for $0\le\vartheta\le1$, gives
\begin{equation}\label{co:source-interpolation}
 \|w\|_{L^{2/\vartheta}H^{\epsilon_0+\vartheta}}
       \le CM^Ct^\gamma,
 \qquad
 \|p\|_{L^{2/\vartheta}H^{r_*\vartheta}}
       \le C\mathcal E_{u,t}(p).
\end{equation}
Also $\|h_0\|_{L^q}\le M$ by \eqref{up:integrated-model}.

\emph{The cubic uses the adjoint dissipation.}
Since $\mathcal T^*v=p$,
\[
 \left|\int_0^t\langle\mathcal TP(u^3),v\rangle\dd\tau\right|
 \le\|u\|_{L^4_{\tau,x}}^2\|up\|_{L^2_{\tau,x}}
 \le CM^Ct^\gamma\mathcal E_{u,t}(p).
\]
At fixed $w$, differentiating $u=\mathcal T^{-1}(w-q_0)$ gives
\[
 \chi:=\dot u|_w=-\mathcal T^{-1}(\dot Ku+\dot q_0),
 \qquad \|\chi\|_{L^4L^4}\le CM^Ct^{\gamma/2}.
\]
Indeed, coordinate smoothing and \eqref{ma:corrector-jet} give
$\|\chi\|_{L^4L^4}\le CM^C(t^{\gamma/2}+t^{1/4+\theta})$.
Since $\dot K^*:L^2\to H^{1-\epsilon_0}\subset L^4$,
\[
\begin{aligned}
 \left|\int_0^t\!\langle u^2\chi,p\rangle\dd\tau\right|
 &\le\|u\|_{L^4L^4}\|\chi\|_{L^4L^4}\|up\|_{L^2L^2}
 \le CM^Ct^\gamma\mathcal E_{u,t}(p),\\
 \left|\int_0^t\!\langle\dot KP(u^3),v\rangle\dd\tau\right|
 &\le Ct^{1/4}\|u\|_{L^4L^4}^3\|\dot K^*v\|_{L^\infty L^4}
 \le CM^Ct^{1/4+3\gamma/2}\mathcal E_{u,t}(p).
\end{aligned}
\]

\emph{The rough quadratic uses one spatial derivative.}
Write $r_0=w-Ku$ and $A_0=1+\|r_0\|_4$.
The product bound $L^4\cdot L^4\cdot L^2\subset L^1$ gives
\[
 |\langle Q_\zeta(r_0),p\rangle|
 \le Ch_0\bigl(A_0^2\|p\|_{B^\alpha_{2,1}}
       +A_0\|r_0\|_{B^\alpha_{2,1}}\|p\|_4\bigr).
\]
Since $\|A_0\|_{L^4}\le CM^Ct^{\gamma/2}$,
$H^\sigma\subset B^\alpha_{2,1}$ and $H^{3/4}\subset L^4$,
time H\"older gives
\[
\begin{aligned}
 \int_0^t h_0A_0^2\|p\|_{H^\sigma}\dd\tau
 &\le CM^Ct^{\gamma+1/2-1/q-\sigma/(2r_*)}\mathcal E_{u,t}(p),\\
 \int_0^t h_0A_0\|w\|_{H^\sigma}\|p\|_{H^{3/4}}\dd\tau
 &\le CM^Ct^{3\gamma/2+3/4-1/q-(\sigma-\epsilon_0)/2-3/(8r_*)}
          \mathcal E_{u,t}(p),\\
 \int_0^t h_0A_0\|Ku\|_{B^\alpha_{2,1}}\|p\|_{H^{3/4}}\dd\tau
 &\le CM^Ct^{\gamma+1/2-1/q-3/(8r_*)}\mathcal E_{u,t}(p).
\end{aligned}
\]
Each exponent exceeds $\gamma$ by \eqref{eq:indices}. Moreover,
\[
 \chi_0:=\dot r_0|_w=-\dot Ku-K\chi,\qquad
 \|Ku\|_{L^4B^\alpha_{2,1}}+
 \|\chi_0\|_{L^4(L^4\cap B^\alpha_{2,1})}
 \le CM^Ct^{\gamma/2}.
\]
by $K,\dot K:L^4\to B^{1-\epsilon_0}_{4,\infty}$ and
Lemma~\ref{ga:mark-stability}. Thus
\[
 \dot Q_\zeta(r_0)|_w
 =\dot\zeta_0r_0^2+2\zeta_0r_0\chi_0
       -2\dot\zeta_1r_0-2\zeta_1\chi_0+\dot\zeta_2
\]
obeys the same bounds; a derivative on the outer coordinate replaces
$p$ by a smoother test.

\emph{Linear and additive terms.}
By \eqref{up:linear-bound} and \eqref{co:source-interpolation},
\[
\begin{aligned}
 \int_0^t h_0\|w\|_{H^{1+\epsilon_0}}\|v\|_{H^{2\epsilon_0}}\dd\tau
 &\le CM^Ct^{\gamma+1/2-1/q-\epsilon_0/r_*}\mathcal E_{u,t}(p),\\
 \int_0^t|\langle f_0,v\rangle|\dd\tau
 &\le CM^Ct^{1-1/q-\sigma/(2r_*)}\mathcal E_{u,t}(p).
\end{aligned}
\]
Both hold after one coefficient derivative.

\emph{The initial layer is integrable against the response.}
The scalar terms are $(b+\dot b)(w+\ell)+b\dot\ell$.
Here $\ell,\dot\ell$ are bounded in $H^{-\sigma}$ and
$|b(\tau)|+|\dot b(\tau)|\lesssim\tau^{-1/2}$.
Since $\sigma<r_*$, \eqref{co:source-interpolation} gives
\[
 \int_0^t\tau^{-1/2}\|p_\tau\|_{H^\sigma}\dd\tau
 \le Ct^{1/2-\sigma/(2r_*)}\mathcal E_{u,t}(p),
 \qquad
 \frac12-\frac{\sigma}{2r_*}-\gamma
 =\frac{\epsilon_0(3/4-2\epsilon_0)}{1-\epsilon_0}>0.
\]
The additive-forcing gain is larger still. Terms containing $w$
have its additional factor $t^\gamma$.
Finally, $Aw\in L^2H^{-1+\epsilon_0}=L^2H^{-r_*}$, so
\[
 \left|\int_0^t\langle Aw,v\rangle\dd\tau\right|
 \le CM^C\|w\|_{L^2H^{1+\epsilon_0}}\|p\|_{L^2H^{r_*}}
 \le CM^Ct^\gamma\mathcal E_{u,t}(p).
\]
Summing gives \eqref{co:source-spaces}.
\end{proof}

\subsubsection{Moments of the linearized flow}

Let $J_N(t,s)$ be the linearized flow along $X_N$. Set
\begin{equation}\label{co:eq:backward-K}
 \mathcal K_{N,T}=\sup_{\|\varphi\|_2\le1}
             \mathcal E_{u,T}\bigl(J_N(T,\cdot)^*\varphi\bigr).
\end{equation}
\begin{proposition}\label{co:prop:backwardenergy}
For a deterministic $T_0>0$ and every finite $p>0$,
\begin{equation}\label{co:eq:backward-moments}
 \sup_N\sup_{0<T\le T_0}\E\mathcal K_{N,T}^p<\infty.
\end{equation}
\end{proposition}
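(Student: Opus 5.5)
The plan is to combine the deterministic bounds of Proposition~\ref{cal:control}, valid on the reference event of \eqref{cal:reference-bounds}, with a Cameron--Martin translation and Gaussian isoperimetry \cite{CM,Borell,CLL}. Set $T_0=\min(T_*,1)$, possibly reduced further to $T_g$. Fix $N$ and $T\le T_0$, and let $\mathcal G$ denote the reference event, which has probability at least $1/2$. For a noise realization $\omega$, the key claim is
\[
 \mathcal K_{N,T}(\omega)\le C_R(1+H)^{C}e^{C_R(1+H^{4/3})}
\]
whenever $\omega=\omega_0+\mathfrak k$ with $\omega_0\in\mathcal G$ and $\mathfrak k$ a Cameron--Martin direction of norm $\|\mathfrak k\|_{L^2L^2}\le H$ (after the factor $\sqrt{2a}$).

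\emph{Translation.} Write $X=X_N(\omega_0)$, and let $z_{\mathfrak k}=\int_0^\cdot e^{-(\cdot-r)A}P\mathfrak k\dd r$ be the response of $Z_N$ to the shift. Then $X_N(\omega)=X+y$. Expanding $(X+y)^3=X^3+3X^2y+3Xy^2+y^3$ shows that $y$ solves exactly \eqref{co:eq:controlled-difference} with $\mathfrak h=\sqrt{2a}\,\mathfrak k$; the counterterms are linear and are absorbed in $V$. Hence \eqref{cal:state} controls $y$ with $H=\|\mathfrak h\|_{L^2L^2}$.

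\emph{Adjoint.} The linearized flow along $X_N(\omega)$ is the evolution $f'=L_{X+y}f=L_Xf-6gP(Xyf)-3gP(y^2f)$, i.e.\ $J_N(\omega)=J_y$. So \eqref{cal:adjoint} bounds $\|p\|_{CL^2}$, $\|p\|_{L^2H^{r_*}}$ and $\|yp\|_{L^2L^2}$ for $p=J_N(T,\cdot)^*\varphi$ by $C_Re^{C_R(1+H^{4/3})}\|\varphi\|_2^2$.

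\emph{Conversion to $up$.} The norm $\mathcal E_{u,T}$ involves $u(\omega)p$ rather than $yp$. We decompose
\[
 u(\omega)=X+y-Z_N(\omega_0)-z_{\mathfrak k}=u(\omega_0)+y-z_{\mathfrak k}.
\]
On $\mathcal G$, $\|u(\omega_0)\|_{CC^\varpi}\le R$, so $\|u(\omega_0)p\|_{L^2L^2}\le R\|p\|_{L^2L^2}$. Heat regularity gives $\|z_{\mathfrak k}\|_{L^\infty H^1}\lesssim H$, hence $z_{\mathfrak k}\in L^\infty L^6$. Since $r_*>1/2$, we have $p\in L^2H^{r_*}\subset L^2L^3$, so
\[
 \|z_{\mathfrak k}p\|_{L^2L^2}\lesssim H\,\|p\|_{L^2H^{r_*}}.
\]
Summing these three contributions yields the key claim, uniformly in $N$, in $T\le T_0$ and in the unit-norm $\varphi$.

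\emph{Isoperimetry.} Let $d(\omega)=\inf\{\|\mathfrak k\|:\omega-\mathfrak k\in\mathcal G\}$. Borell's inequality with $\mathbb P(\mathcal G)\ge1/2$ gives $\mathbb P(d>r)\le e^{-r^2/2}$. Hence $\E\exp(pC_R(1+d^{4/3}))<\infty$ for every $p$, precisely because $4/3<2$. This gives \eqref{co:eq:backward-moments}, with bounds independent of $N$ and $T$ since $R$ and $T_*$ are deterministic and $N$-independent.

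The main obstacle I expect is making the translation step rigorous at the level of the event: $\mathcal G$ must be measurable with the bounds holding pathwise, and the infimum in $d$ must be handled. I would first try replacing $d$ by $d+\varepsilon$ and choosing a near-minimizing $\mathfrak k$, so that no measurable selection is needed; the bound above depends on $\mathfrak k$ only through $H$. A second check is that the shift enters only through $P_N\mathfrak k$, so the finite-cutoff SDE is genuinely translated. If the $z_{\mathfrak k}p$ step fails for lack of integrability, I would instead absorb $z_{\mathfrak k}$ into the control, enlarging $\mathfrak h$ by a harmless amount.
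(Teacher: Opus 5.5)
Your proposal is correct and follows the paper's own proof. Both arguments shift the noise off the reference event by a Cameron--Martin direction and apply Proposition~\ref{cal:control} to the shifted difference $y$ and its adjoint. Both then split $u(\omega_0+\mathfrak k)=u(\omega_0)+y-z_{\mathfrak k}$ and close with Borell's inequality, using $4/3<2$. The only difference is cosmetic: you bound $z_{\mathfrak k}p$ via $L^\infty H^1\subset L^\infty L^6$ against $L^2H^{r_*}\subset L^2L^3$, whereas the paper uses $L^2H^2\subset L^2L^\infty$ against $\|p\|_{CL^2}$, and both estimates work.
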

\begin{proof}
Use the event $G_{N,T}$ of probability at least $1/2$ from
\eqref{cal:reference-bounds}--\eqref{cal:reference-form}. For
$\eta\in G_{N,T}$, $\mathfrak h\in\mathcal H=L^2(0,T;E_N)$ and
$y=X_N(\eta+\mathfrak h)-X_N(\eta)$, Proposition~\ref{cal:control}
applies with forcing $\sqrt{2a}\,\mathfrak h$.
Since $u(\eta+\mathfrak h)=y+u(\eta)-Z(\mathfrak h)$ and
$\|Z(\mathfrak h)\|_{L^2L^\infty}
 \lesssim\|Z(\mathfrak h)\|_{L^2H^2}\lesssim\|\mathfrak h\|_{\mathcal H}$,
\[
 \|u(\eta+\mathfrak h)p\|_{L^2L^2}
 \le\|yp\|_{L^2L^2}
       +C_R(1+\|\mathfrak h\|_{\mathcal H})\|p\|_{CL^2}.
\]
Hence \eqref{cal:adjoint} implies
\begin{equation}\label{co:shift-bound}
 \log(1+\mathcal K_{N,T}(\eta+\mathfrak h))
                  \le C(1+\|\mathfrak h\|_{\mathcal H}^{4/3}).
\end{equation}
Choose $T_0$ below the reference and state times. For large $L$, take
$r=cL^{3/4}$ with $C(1+r^{4/3})\le L$.
Gaussian isoperimetry~\cite{Borell} gives
\begin{equation}\label{co:eq:tan-tail}
 \mathbb P\{\log(1+\mathcal K_{N,T})>L\}
 \le1-\mathbb P_*(G_{N,T}+rB_{\mathcal H})
 \le1-\Phi(r)\le Ce^{-cL^{3/2}}.
\end{equation}
Therefore
$\E\mathcal K_{N,T}^p\le C_p+Cp\int_0^\infty
 e^{pL-cL^{3/2}}\dd L<\infty$.
\end{proof}

\subsubsection{Reconstruction of the relative derivative}

Conjugation gives
\[
 -A+D_w\mathcal R
 =\mathcal T'\mathcal T^{-1}+\mathcal T L_X\mathcal T^{-1},
 \qquad
 \mathcal U(t,s)=\mathcal T_tJ_N(t,s)\mathcal T_s^{-1},
\]
For $p_s=J_N(t,s)^*\varphi$, $\varphi\in E_N$,
\begin{equation}\label{co:eq:adjointU}
 v_s=\mathcal U(t,s)^*\mathcal T_t^{-*}\varphi
           =\mathcal T_s^{-*}p_s.
\end{equation}
Duhamel and Lemma~\ref{co:source-lemma} give
\[
 |\langle\mathcal T_t^{-1}z(t),\varphi\rangle|
 =\left|\int_0^t\langle F_0-Aw,v_s\rangle\dd s\right|
 \le CM^Ct^\gamma\mathcal K_{N,t}\|\varphi\|_2.
\]
Taking the supremum yields
\begin{equation}\label{co:eq:dualbound}
 \|\mathcal T_t^{-1}z(t)\|_2
                  \le CM^C\mathcal K_{N,t}t^\gamma.
\end{equation}
Reconstruction is exact:
$\dot u=\mathcal T^{-1}(z-\dot{\mathcal T}u-\dot q_0)$.
The coordinate bounds, \eqref{ma:corrector-jet}, and
\eqref{eq:positionneeded} imply
\[
 \|\dot u(t)\|_2
 \le CM^C\bigl(\mathcal K_{N,t}t^\gamma+t^\theta\bigr).
\]
By \eqref{co:eq:backward-moments},
$\|M^C\mathcal K_{N,t}\|_{L^p(\Omega)}
 \le\|M^C\|_{L^{2p}(\Omega)}\|\mathcal K_{N,t}\|_{L^{2p}(\Omega)}\le C_p$.
Thus $\gamma>\theta$, $L^2\subset H^{-3/4}$,
$\|tB_t\|_{L^2\to L^2}\le1/2$ and \eqref{co:reduced-defect} give
\begin{equation}\label{co:defect-rate}
 \sup_N\|D_N(t)\|_{L^p(\Omega;H_A^{-3/4})}\le C_pt^\theta
 \quad(1\le p<\infty),\qquad \sup_N e_N(t)^2\le Ct^{-3/2-2\varpi}.
\end{equation}

\subsection{Conclusion}\label{se:completion}
\begin{proof}[Completion of the proof of Theorem~\ref{th:velocity}]
Equations~\eqref{eq:positionneeded} and~\eqref{co:defect-rate} give
$\|u_N(t)\|_{L^2(\Omega;H)}+\|D_N(t)\|_{L^2(\Omega;H)}\le Ct^\theta$ uniformly in $N$.
Lemma~\ref{common:ward} gives
$e_N(t)\le Ct^{-3/4-\varpi}$. In the notation of Section~\ref{common:section}, the Weyl
exponent is $3/2$, the transport weight is $3/4$ and
$C_A=(6\pi^2\nu^{3/2})^{-1}$. The action exponent is therefore $7/4$,
and the gain is $\psi=\theta-1+7/8=1/8-\varpi$.
Theorem~\ref{common:principle} and Proposition~\ref{common:gaussian} give
\eqref{eq:pointrelative}--\eqref{eq:speedcomparison} and
\eqref{eq:universal}: the rectangular cutoff contains every eigenvalue
$\lambda_k\le L_N:=\mu+4\pi^2\nu N^2$, and $N\sqrt t\to\infty$
implies $tL_N\to\infty$.
For fixed $N$, the finite Gaussian sum gives
$\mathcal A_{0,N}(t)=O_N(t^{-1})$; \eqref{eq:speedcomparison}
therefore gives the stated zero limit at fixed cutoff.

Proposition~\ref{full:identification} passes the state bound to the full law:
$W_2(\rho_{g,t},\gamma_t)\le Ct^\theta$.
For its $C^1(L^2)$ realization $Y_t$ in
Proposition~\ref{ma:full-differentiation}, the chain rule gives, at every $t$,
\[
 \left|\partial_t\E\varphi(Y_t)-\E\langle B_tY_t,\nabla\varphi(Y_t)\rangle\right|
 =\left|\E\langle\partial_tY_t-B_tY_t,\nabla\varphi(Y_t)\rangle\right|
 \le Ct^{\theta-1}\|\nabla\varphi(Y_t)\|_2.
\]
Theorem~\ref{common:principle} proves \eqref{eq:full-comparison} at every
positive time and the ordinary limit \eqref{eq:full-limit}.
The Gaussian law and the state bound tend to $\delta_0$ at zero, so
Corollary~\ref{common:integrated} gives the $AC^1$ extension and the
length asymptotic. Choose $t_0$ below the deterministic horizons of the
state and response estimates.
\end{proof}

\section{Further remarks}\label{se:outlook}

\paragraph{Other transport norms.}
The position and derivative errors are bounded in $L^2$.
The same bounds therefore hold in every weaker norm with weights
$\lambda_j^{-s}$, $s\ge0$.

\begin{remark}[A family of weights]\label{out:family}
For KPZ, the leading Gaussian action holds in $H_s$ for every
$s\in[0,1/2)$, with $\beta=3/2-s$ and normalized comparison error
$O_{G,s}(g^2t^{1/2-s})$. Indeed, the bounds
\eqref{kpz:required-defect} and \eqref{kpz:first-chaos-defect} pass from
$L^2$ to $H_s$: the remainder and defect are $O_G(|g|t^{1/2})$, and
their first-chaos components are $O_G(g^2t^{3/4})$.
Apply \eqref{out:second-bound} in $H_s$ and use
$\mathcal A_0(t)\lesssim t^{-\beta}$ to obtain
\[
 t^\beta|\mathcal A_g(t)-\mathcal A_0(t)|
 \le C_{G,s}g^2\bigl(t^{1/2-s/2}+t^{1/2-s}\bigr)
 \le C_{G,s}g^2t^{1/2-s}.
\]
For $\Phi^4_3$, Theorem~\ref{common:principle} gives the leading Gaussian action
in $H_A^{-s}$ for every $s\in(1/2,1)$ and
$0<\varpi<\min\{1/8,(1-s)/2\}$, with action exponent $5/2-s$ and
normalized comparison error $O_\varpi(t^{1/2-s/2-\varpi})$.
In both cases, \eqref{common:gaussian-limit} gives the leading constant.
Use $\|x\|_{H_s}\le C\|x\|_2$ for $s\ge0$ to transfer the error bounds.
For $\Phi^4_3$, the gain is
$\theta-1+\beta/2=(1-s)/2-\varpi>0$.
For $0<\epsilon<s-1/2$,
\[
 \|f\|_{H^{-s}}^2
 \lesssim\sum_{j\ge-1}2^{-2sj}\|\Delta_jf\|_\infty^2
 \lesssim\|f\|_{C^{-1/2-\epsilon}}^2
           \sum_{j\ge-1}2^{-2j(s-1/2-\epsilon)}.
\]
Hence $X_N\to X$ in $H^{-s}$ by Proposition~\ref{full:identification}.
For $s>1/2$, the Gaussian field also belongs to $H_s$.
\end{remark}

\paragraph{The first nonlinear correction.}
Proposition~\ref{out:second} separates the first-chaos contribution
from the quadratic error. This gives the KPZ rate in
Theorem~\ref{kpz:action-limit}. The next questions concern its leading coefficient.

\emph{KPZ.} Theorem~\ref{kpz:action-limit} gives
$t^{3/2}|\mathcal A_g-\mathcal A_0|=O_G(g^2t^{1/2})$,
but does not determine the limiting coefficient.
Together with Remark~\ref{kpz:gaussian-expansion}, this suggests asking
whether some $c_{\rm KPZ}$ satisfies
\[
 t^{3/2}\mathcal A_g(t)
 =K+(c_{\rm KPZ}g^2-\sigma^2/4)t^{1/2}+o(t^{1/2}).
\]
\emph{$\Phi^4_3$.} The rate $1/8-\varpi$ comes from $L^2$ estimates of
quantities that are measured in $H^{-3/4}$. At first order in $g$,
the Wick-cubic forcing contributes a third-chaos term, invisible to
the linear pairing in \eqref{out:second-identity}. The first-chaos
contribution at that order comes from the initial-layer mass
in \eqref{up:corrector}. After removing the cutoff, its leading
singularity is $t^{-1/2}$ and is determined by Gaussian covariances.
A sharper rate, and a first correction computable from Gaussian data
alone, remain to be established.

\appendix

\section{Gaussian data: forcing, potential and their products}\label{ga:section}

To cancel the cubic Gaussian forcing and the quadratic potential,
we use their heat primitives. Three resonant products remain.

For an even nonnegative inhomogeneous dyadic partition $(\chi_j)$
with adjacent overlaps, put $S_j=\sum_{i\le j}\Delta_i$.
Bony's paraproducts~\cite{Bony} are
\[
 f\prec h=\sum_j(S_{j-2}f)\Delta_jh,\qquad
 f\circ h=\sum_{|i-j|\le1}\Delta_if\Delta_jh,
 \qquad fh=f\prec h+h\prec f+f\circ h.
\]
Write $C^s=B^s_{\infty,\infty}$, $H_J=\sum_{j\ge J}\Delta_j$,
$L_J=I-H_J$ and $\cI f(t)=\int_0^te^{-(t-s)A}f(s)\dd s$.
All constants below are uniform in $N$; time norms are on $[0,1]$.

\subsection{The two heat primitives and the three resonances}
For $Z=Z_N$, the time-dependent Wick polynomials are
\[
 W=Z^2-c_1,\qquad W_3=Z^3-3c_1Z,\qquad c_1(t)=\E Z(t,x)^2.
\]
The potential $W$ has regularity $-1-$. Its heat primitive and that of
the cubic forcing are
\[
 Y=\cI W\in C^{1-},\qquad \Psi=\cI P_NW_3\in C^{1/2-}.
\]
The equation fixes the projection in $\Psi$; $Y$ retains all output
frequencies. Put $c_2(t)=\E[Y(t,x)W(t,x)]$, independently of $x$.

The products $Y\circ W$, $\Psi\circ W$ and $Z\circ\Psi$ have formal
orders $0-$, $-1/2-$ and $0-$. Define
\[
 R_2=Y\circ W-c_2,\qquad
 R_3=\Psi\circ W-3c_2Z,\qquad S=Z\circ\Psi.
\]
\begin{proposition}[Enhancement estimates]\label{ga:model}
For every $\epsilon>0$ and $1\le p,q<\infty$, the following norms are
bounded in $L^p(\Omega)$ uniformly in $N$:
\begin{gather}
 \|Z\|_{C_tC^{-1/2-\epsilon}}+
 \|W\|_{C_tC^{-1-\epsilon}}+
 \|Y\|_{C_tC^{1-\epsilon}}+
 \|\Psi\|_{C_tC^{1/2-\epsilon}},\label{ga:base}\\
 \|R_2\|_{L^q_tC^{-\epsilon}}+
 \|R_3\|_{L^q_tC^{-1/2-\epsilon}}+
 \|S\|_{L^q_tC^{-\epsilon}}.\label{ga:rough-integrated}
\end{gather}
For $b,\theta\ge0$ with $b+2\theta<1/2$, the norm
$\|\Psi\|_{C_t^\theta C^b}$ also has all finite moments. In particular,
since $\Psi(0)=0$,
\begin{equation}\label{ga:psi-holder}
 \left\|\sup_{t>0}t^{-\theta}\|\Psi(t)\|_{C^b}\right\|_{L^p}
 \le C_{p,b,\theta}.
\end{equation}
\end{proposition}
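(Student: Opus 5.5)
The estimates follow the standard Gaussian-chaos route of \cite{MWX}: second-moment bounds on Littlewood--Paley blocks, hypercontractivity, and Kolmogorov continuity. Every object in \eqref{ga:base}--\eqref{ga:rough-integrated} and \eqref{ga:psi-holder} lies in a fixed finite Wiener chaos of order at most four, after subtracting $c_2$ and $3c_2Z$. Nelson's hypercontractivity \cite{Nelson} therefore reduces all $L^p(\Omega)$ bounds to $L^2(\Omega)$ bounds. For each block I will show
\[
 \E|\Delta_j\mathcal X(t,x)|^2\lesssim 2^{-2j\kappa},\qquad
 \E|\Delta_j(\mathcal X(t)-\mathcal X(s))(x)|^2\lesssim 2^{-2j(\kappa-2\delta)}|t-s|^{2\delta},
\]
uniformly in $N$, $x$ and $t,s\in[0,1]$, where $\kappa$ is the target regularity plus a margin $\epsilon/2$. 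Besov embedding $B^{\kappa}_{p,p}\subset C^{\kappa-3/p}$ with $p$ large, followed by Kolmogorov in time, gives the $C_tC^{\cdot}$ norms in \eqref{ga:base}. For \eqref{ga:rough-integrated} only the pointwise-in-time bound is needed, integrated in $L^q_t$, because no time continuity is claimed. This matters near $t=0$, where the renormalization constants $c_2(t)$ vary in time.

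The base terms are computed in Fourier variables. The covariance of $\widehat Z(t,k)$ is $a\lambda_k^{-1}(e^{-\lambda_k|t-s|}-e^{-\lambda_k(t+s)})$, which is bounded by $a\lambda_k^{-1}\min(1,\lambda_k|t-s|)^{\delta}$ for increments. Wick's theorem then gives $\E|\widehat W(t,k)|^2\lesssim\sum_{p+q=k}\lambda_p^{-1}\lambda_q^{-1}\lesssim\langle k\rangle^{-1}$ in three dimensions, so $W\in C^{-1-}$. For $Y=\cI W$ and $\Psi=\cI P_NW_3$, I use the time-covariance formula of Proposition~\ref{cal:sunset}, with the zero-start factors bounded by the stationary ones. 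This gives $\E|\widehat Y(t,k)|^2\lesssim\lambda_k^{-1}\sum_{p+q=k}(\lambda_p\lambda_q d_{pq})^{-1}\lesssim\langle k\rangle^{-5}$, matching the bound used there, and similarly $\E|\widehat\Psi(k)|^2\lesssim\langle k\rangle^{-4}$. This yields $C^{1-}$ and $C^{1/2-}$ respectively. The projection $P_N$ and the cutoff on $Z$ only restrict the sums, so all constants are uniform in $N$.

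For the Hölder statement \eqref{ga:psi-holder}, I interpolate in each block between $\E|\Delta_j\Psi(t)|^2\lesssim2^{-j}$ and $\E|\Delta_j(\Psi(t)-\Psi(s))|^2\lesssim2^{-j}\min(1,2^{2j}|t-s|)$. The second bound comes from $|1-e^{-\lambda(t-s)}|\le\min(1,\lambda|t-s|)$ and from the length $|t-s|$ of the new integration interval. Trading $2\theta$ spatial derivatives for $\theta$ in time gives $\Psi\in C^\theta_tC^{b}$ whenever $b+2\theta<1/2$. The bound on $\sup_t t^{-\theta}\|\Psi(t)\|_{C^b}$ then follows from $\Psi(0)=0$.

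The main obstacle is the three resonant products. I will expand each into chaos components. For $Y\circ W$, the zeroth chaos is exactly $\E[Y\circ W]=\E[YW]=c_2$, because the nonresonant paraproducts have zero mean. The second chaos comes from a single contraction between $Y$ and $W$. Its kernel is a sunset-type sum $\sum_{\ell}\lambda_\ell^{-1}(\ldots)$ that converges logarithmically, leaving regularity $0-$. The fourth chaos is bounded by the product of the two orders, $-1+1=0$, and is finite because the resonant restriction $|i-j|\le1$ makes the product well defined.

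For $\Psi\circ W$, a single contraction produces the divergent first-chaos term. I will check by Wick's formula that it equals $3c_2(t)Z(t)$ up to a convergent remainder. The difference arises because $\Psi$ is built from $W_3$ rather than $W$, and because of the projection $P_N$ and the time-dependent initial layer; I will bound these differences by kernels of the form $\sum\lambda_p^{-1}\lambda_q^{-1}d_{pq}^{-1}$ restricted to $|p|_\infty\vee|q|_\infty>N/2$, as in \eqref{cal:sunset-identity}. The higher contractions have orders $-1/2-$ or better.

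The term $S=Z\circ\Psi$ has no divergent contraction, because its single contraction has order $-1/2+1/2-\ldots$ with a summable kernel, so it needs no renormalization. The delicate point throughout is the zero-start initial layer, where $c_2(t)$ and the subtracted terms are only integrable like $t^{-1/2}$. This is why I expect only $L^q_t$ control and will estimate the kernels with explicit time weights $\min(1,\lambda t)$ before integrating.
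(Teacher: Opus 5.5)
\textbf{The gap is the renormalized product $R_3$.} Your handling of $Z,W,Y,\Psi$ matches the paper's route in outline: dominate the zero-start covariance by the stationary one, use hypercontractivity and Bernstein, then do Kolmogorov/dyadic summation in time, with only fixed-time bounds for the $L^q_t$ norms. The same holds for the H\"older bound on $\Psi$ and for $R_2$ and $S$. The problem is $R_3=\Psi\circ W-3c_2Z$, which is the one term that genuinely needs renormalization.

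\emph{The chaos count is wrong.} $\Psi$ has three Gaussian slots and $W$ has two, so $R_3$ lives in chaoses $5,3,1$, not ``at most four''. A single contraction gives the harmless third chaos. The divergent first chaos comes from the six \emph{double} contractions.

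\emph{The cancellation is asserted, not proved.} You never say how $(\Psi\circ W)^{(1)}-3c_2Z$ is bounded. Write $K=\langle k\rangle$, $b=h+l$, $\theta_h(s)=1-e^{-2\lambda_hs}$, and let $m$ be the resonant multiplier. The double-contraction kernel is
\[
 6a^2\sum_{h,l\in\Lambda_N}\frac{1}{\lambda_h\lambda_l}\int_0^t e^{-(\lambda_h+\lambda_l)v}\theta_h(t-v)\theta_l(t-v)\,\mathbf 1_{k+b\in\Lambda_N}\,m(k+b,-b)\,e^{-\lambda_{k+b}v}Z_k(t-v)\,dv .
\]
The counterterm with the projected primitive has instead $Z_k(t)$, the rate $e^{-\lambda_bv}$, multiplier $1$ and indicator $\mathbf 1_{b\in\Lambda_N}$. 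For $K\ll N$ each expression is of order $K^{-1}\log N$, and they agree only through cancellation. So the square-and-dominate positivity argument you use everywhere else gives $\log N$ here. The paper explicitly excludes $R_3^{(1)}$ from that argument and keeps the subtraction inside the kernel.

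\emph{Your proposed fix covers only the bounded piece.} Kernels restricted to $|p|_\infty\vee|q|_\infty>N/2$ control only $\Delta_N=c_2-c_2^P$, which is bounded. They do not control the main error: the time delay, the shift in the heat rate and the multiplier defect occur at every scale of $h,l$, not just at the projection boundary.

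\textbf{What is missing is a signed, telescoped comparison of the bracket.} Successively replace $Z_k(t-v)\to Z_k(t)$, $\lambda_{k+b}\to\lambda_b$, $m(k+b,-b)\to1$, and the shifted indicator by the unshifted one. Estimate each difference on shells $\max(\langle h\rangle,\langle l\rangle)\sim R$ using:
\begin{itemize}
\item $\|Z_k(t-v)-Z_k(t)\|_{L^2(\Omega)}\lesssim K^{-1}\min(1,(K^2v)^{\eta})$ with $0<\eta<1/2$;
\item $|\lambda_{k+b}-\lambda_b|\lesssim KR+K^2$;
\item $|m(k+b,-b)-1|\lesssim\min(1,K/\langle b\rangle)$;
\item a boundary-strip count for the indicators;
\item the common heat rate $cR^2$ on the shell.
\end{itemize}
Then a shell with $R\gg K$ contributes $\lesssim K^{-1}(K/R)^{2\eta}$, and a shell with $R\lesssim K$ contributes $\lesssim K^{-1}$. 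Summing gives $\|(R_3^P)^{(1)}(t,k)\|_{L^2(\Omega)}\lesssim K^{-1}\log(e+K)$ uniformly in $N,t$. Only then does $R_3=R_3^P-3\Delta_NZ$ with $|\Delta_N|\le C$ yield the $C^{-1/2-\epsilon}$ bound.

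A minor point: with the time-dependent $c_1(t),c_2(t)$ these objects have no $t^{-1/2}$ initial-layer singularity. That singularity belongs to $C_{1,N}-c_1(t)$, which does not enter this proposition.
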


\begin{proof}
Keep the subtraction inside the first-chaos kernel of $R_3$.

\emph{Positive kernels and norms.}
The covariance is
\begin{equation}\label{ga:covariance}
 \E[Z_k(t)\overline{Z_l(s)}]
 =\mathbf1_{k=l}\frac a{\lambda_k}e^{-\lambda_k|t-s|}
                  (1-e^{-2\lambda_k\min(t,s)}).
\end{equation}
The resonant multiplier $m(b,c)=\sum_{|i-j|\le1}\chi_i(b)\chi_j(c)$
satisfies $m(b,-b)=1$ and is supported where
$\langle b\rangle\asymp\langle c\rangle$.
Wick's product formula~\cite{Janson} splits $R_2,S$ into chaoses
$4,2$, and $R_3$ into chaoses $5,3,1$; the zero chaos of
$Y\circ W$ is exactly $c_2$ because $m(b,-b)=1$.
For all but $R_3^{(1)}$, square the kernels, pair the Gaussian slots,
and replace each covariance by
$C\lambda_k^{-1}e^{-c\lambda_k|t-s|}$.
The resulting integrands are nonnegative. Extend internal times to
$(-\infty,t]$ and remove the cutoff indicators. This gives the integrals in~\cite[Section~4.2, (4.24)--(4.41), and the
fifth- and third-chaos bounds on p.~34]{MWX};
$\lambda_k\asymp\langle k\rangle^2$ changes only the constants. Hence
\[
\begin{array}{c|rrrrrrr}
F&Z&W&Y&\Psi&R_2&S&R_3^{(5)}+R_3^{(3)}\\ \hline
\E|\widehat F(t,k)|^2\lesssim\langle k\rangle^{-a_F}
 &a_F=2&1&5&4&3&3&2.
\end{array}
\]
The $Y$ entry follows by heat integration of the quadratic covariance.
The quadratic bounds hold for any covariance dominated by the same envelope.

\emph{Space--time norms.} For $b=\sum_i p_i$ and
$D=\sum_i\lambda_{p_i}$,
\begin{equation}\label{gm:heat}
 \int_0^\infty\!\int_0^\infty
 e^{-\lambda_b(x+y)-D|x-y|}\,dx\,dy
 =[\lambda_b(\lambda_b+D)]^{-1}.
\end{equation}
On $[0,h]^2$ it is at most
$2\min(h,(2\lambda_b)^{-1})/(\lambda_b+D)$.
Splitting a heat integral at $s$ therefore multiplies its variance
bound by $C_r|t-s|^{2r}\lambda_b^{2r}$, $r\le1/2$.
For $Z$, split the stochastic integral. For $W$, telescope the two
Gaussian factors and use~\cite[Lemma~4.1]{MWX}. The increment variance is bounded by
\[
 C_r|t-s|^{2r}\langle k\rangle^{-3-2\varrho_F+4r},\qquad
 \begin{array}{c|rrrr}F&Z&W&Y&\Psi\\ \hline
 \varrho_F&-1/2&-1&1&1/2\\ r&<1/2&<1/4&<1/2&<1/2.
 \end{array}
\]
Finite-chaos moments and Bernstein~\cite[Propositions~3.3 and~3.6]{MWX}
give, for $p_0\ge\max(2,p,q)$,
\[
 \left\|\max_{\ell<2^n}\|\Delta_j[F((\ell+1)2^{-n})-F(\ell2^{-n})]\|_\infty
 \right\|_{L^{p_0}}
 \lesssim2^{-n(r-1/p_0)}2^{j(-\varrho_F+2r+3/p_0)}.
\]
Since $F(0)=0$, dyadic summation yields
\[
 \|F\|_{L^p(\Omega;C_t^\theta C^b)}
 \lesssim\sum_{j,n\ge0}
 2^{-j(\varrho_F-b-2r-3/p_0)}2^{-n(r-\theta-1/p_0)}<\infty
\]
when $b+2r+3/p_0<\varrho_F$ and $\theta+1/p_0<r$.
Take $(b,\theta)=(\varrho_F-\epsilon,0)$ for \eqref{ga:base};
for $\Psi$, choose $\theta<r<(1/2-b)/2$ and then $p_0$ large.
This also proves \eqref{ga:psi-holder}. At fixed time, if
$\E|\widehat F(t,k)|^2\lesssim\langle k\rangle^{-3-2\varrho_F}$,
Minkowski gives
\[
 \|F\|_{L^p(\Omega;L_t^qC^{\varrho_F-\epsilon})}
 \lesssim\sum_{j\ge0}2^{-j(\epsilon-3/p_0)}<\infty
 \qquad(p_0>3/\epsilon).
\]

\emph{The compensated first chaos.}
For the projected primitive, put
$c_2^P=\E[(\cI P_NW)W]$ and $R_3^P=\Psi\circ W-3c_2^PZ$.
For $k\in\Lambda_N$, write $K=\langle k\rangle$, $b=h+l$,
$\theta_h(v)=1-e^{-2\lambda_hv}$,
$M_k(b)=\mathbf1_{k+b\in\Lambda_N}m(k+b,-b)$ and
$M_0(b)=\mathbf1_{b\in\Lambda_N}$. The six double contractions give exactly
\begin{align}\label{ga:R3-first}
 (R_3^P)^{(1)}(t,k)=6a^2\sum_{h,l\in\Lambda_N}
 \frac1{\lambda_h\lambda_l}\int_0^t
 &e^{-(\lambda_h+\lambda_l)v}\theta_h(t-v)\theta_l(t-v)\\[-1mm]
 &\hspace{-35mm}\times
 [M_k(b)e^{-\lambda_{k+b}v}Z_k(t-v)
       -M_0(b)e^{-\lambda_bv}Z_k(t)]\,dv.\notag
\end{align}
For $k\notin\Lambda_N$ it vanishes. On
$R\le\max(\langle h\rangle,\langle l\rangle)<2R$, telescope the bracket:
replace $Z_k(t-v)$ by $Z_k(t)$, then $\lambda_{k+b}$ by $\lambda_b$,
$m(k+b,-b)$ by one, and the shifted indicator by the unshifted one.
The shell covariance weight is at most $CR^2$, its common heat rate
at least $cR^2$, and at fixed $b$ its weight is at most $CR^{-1}$.
For $R\gg K$, their respective $L^2(\Omega)$ bounds are
\[
\begin{aligned}
 K^{-1}R^2\int_0^\infty e^{-cR^2v}(K^2v)^{\eta_{\rm sh}}\dd v
 &\lesssim K^{-1}(K/R)^{2\eta_{\rm sh}},\\
 K^{-1}R^2(KR+K^2)\int_0^\infty ve^{-cR^2v}\dd v
 &\lesssim K^{-1}K/R,\\
 K^{-1}R^{-3}\sum_{\langle b\rangle\lesssim R}\min(1,K/\langle b\rangle)
 &\lesssim K^{-1}K/R,\\
 K^{-1}R^{-3}(KN^2)\mathbf1_{R\asymp N}
 &\lesssim K^{-1}K/R,
\end{aligned}
\qquad 0<\eta_{\rm sh}<1/2.
\]
Here $\|Z_k(t-v)-Z_k(t)\|_2\lesssim K^{-1}\min(1,(K^2v)^{\eta_{\rm sh}})$,
$|\lambda_{k+b}-\lambda_b|\lesssim KR+K^2$ and
$|m(k+b,-b)-1|\lesssim\min(1,K/\langle b\rangle)$.
For $R\lesssim K$, each term is bounded by $CK^{-1}$.
Consequently the shell contribution $F_{k,R}$ satisfies
\begin{equation}\label{ga:shell}
 \|F_{k,R}\|_2\le C_{\eta_{\rm sh}} K^{-1}\min(1,(K/R)^{2\eta_{\rm sh}}),
 \qquad \|(R_3^P)^{(1)}(t,k)\|_2\le CK^{-1}\log(e+K).
\end{equation}
Indeed, $\sum_{R\le K}1+\sum_{R>K}(K/R)^{2\eta_{\rm sh}}\lesssim\log(e+K)$.
By \eqref{ga:full-correction}, $R_3=R_3^P-3\Delta_NZ$ with
$|\Delta_N|\le C$. Absorb the logarithm in the spatial loss and apply
the preceding block estimates to obtain \eqref{ga:rough-integrated}.

\end{proof}

\subsection{A correction derived from the same data}
Differentiating the coordinate produces $(P_NW_3)\prec H_JY$.
Its heat integral is
\[
 \Omega_J=\cI P_N[(P_NW_3)\prec H_JY].
\]
\begin{lemma}[Integrated paraproduct correction]
For every $\epsilon>0$ and finite $p$,
\begin{equation}\label{ga:omega-norms}
 \sup_J\|\Omega_J\|_{L^\infty_tC^{3/2-\epsilon}}
 +\sup_{J,t>0}t^{-3/4+\epsilon}\|\Omega_J(t)\|_2
\end{equation}
has bounded $L^p(\Omega)$ norm uniformly in $N$.
\end{lemma}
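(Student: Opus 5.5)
The plan is to treat $\Omega_J$ as the heat integral of a paraproduct whose factors are already controlled by Proposition~\ref{ga:model}. The spatial regularity will come from Schauder estimates applied to $\cI$, and the time decay from a Gaussian second-moment computation at fixed time.

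\emph{Step 1: the paraproduct source.} The first factor $P_NW_3$ has regularity $-3/2-\epsilon$. This follows from the same chaos computation as for $W$: the third-chaos Wick cube has $\E|\widehat{W_3}(t,k)|^2\lesssim\langle k\rangle^{0}$, which corresponds to $\varrho=-3/2$. The block estimates of Proposition~\ref{ga:model} then give $\|P_NW_3\|_{L^q_tC^{-3/2-\epsilon}}$ with all moments, uniformly in $N$, since $P_N$ is bounded on $C^s$ only after losing $\epsilon$ (or we work in $B^s_{p,p}$ with large $p$ and embed). The second factor $H_JY$ is bounded in $C_tC^{1-\epsilon}\subset L^\infty$, uniformly in $J$, because $H_J$ is a sum of dyadic blocks. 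A low--high paraproduct with a bounded low factor preserves the regularity of the high factor. Hence
\[
\|(P_NW_3)\prec H_JY\|_{C^{-3/2-2\epsilon}}\lesssim\|P_NW_3\|_{C^{-3/2-\epsilon}}\|H_JY\|_{C^{1-\epsilon}} .
\]
Note that with $f\prec h=\sum_j S_{j-2}f\,\Delta_jh$ the low factor is actually $P_NW_3$, which has negative regularity. In that case the standard bound is $C^{-3/2-\epsilon}\times C^{1-\epsilon}\to C^{-1/2-2\epsilon}$. This is stronger still and is the bound I would actually use. It is uniform in $J$, since $H_J$ only removes blocks.

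\emph{Step 2: spatial regularity.} Applying $P_N$ (again harmless in Besov spaces with large integrability) and the heat integral $\cI$ with $A\ge\mu>0$, Schauder gains two derivatives up to $\epsilon$ provided the source lies in $L^q_t$ with $q$ large. This gives $\Omega_J\in L^\infty_tC^{3/2-3\epsilon}$. Relabeling $\epsilon$ proves the first term of \eqref{ga:omega-norms}.

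\emph{Step 3: time decay in $L^2$.} The regularity gain alone gives $t^{1-\ldots}$ in $C^{-1/2}$, which is not enough, so I would use the smoothing budget differently. Since $\|e^{-(t-s)A}f\|_{L^2}\lesssim(t-s)^{-\kappa/2}\|f\|_{H^{-\kappa}}$ with $\kappa=1/2+2\epsilon$,
\[
\|\Omega_J(t)\|_2\lesssim\int_0^t(t-s)^{-1/4-\epsilon}\|F(s)\|_{C^{-1/2-2\epsilon}}\dd s .
\]
This yields only $t^{3/4-\epsilon}\sup_s\|F(s)\|$, provided the source norm is bounded uniformly in time rather than only in $L^q_t$. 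To avoid that issue, I would use $W_3\in L^q_t$ together with H\"older, which costs $t^{-1/q}$; this is absorbed by taking $q$ large. The supremum over $t>0$ in $t^{-3/4+\epsilon}\|\Omega_J(t)\|_2$ is then dominated by a random constant with all moments, uniformly in $J,N$.

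\emph{Main obstacle.} The delicate point is the uniformity of the paraproduct and projection bounds in $N$ and $J$ when $P_N$ is a rectangular Fourier cutoff, which is not bounded on $L^\infty$. I would handle this by doing everything in $B^s_{p,p}$ with finite large $p$, where $P_N$ is uniformly bounded, and then embedding with an $\epsilon$ loss.
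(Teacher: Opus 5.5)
There is a genuine gap in Step~1, and Steps~2--3 inherit it. The Wick cube $P_NW_3$ is not bounded in $C^{-3/2-\epsilon}$, nor in $L^q_tC^{-3/2-\epsilon}$, uniformly in $N$. At a fixed time $t>0$,
\[
 \E|\widehat{W_3}(t,k)|^2
 =6\sum_{k_1+k_2+k_3=k}\ \prod_{i=1}^3\frac a{\lambda_{k_i}}\bigl(1-e^{-2t\lambda_{k_i}}\bigr).
\]
Summing first over $k_1+k_2=m$ leaves roughly $\sum_{|m|\lesssim N}\langle m\rangle^{-1}\langle k-m\rangle^{-2}$. Its summand is of order $\langle m\rangle^{-3}$, so in three dimensions the sum grows like $\log(N\sqrt t)$. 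This is the familiar fact that the Wick cube of a three-dimensional free field is not a distribution at fixed time; it exists only as a space-time distribution. The divergence is already present in the lowest Littlewood--Paley blocks, so no spatial loss absorbs it. Your fixed-time paraproduct bound therefore carries a factor $\sqrt{\log N}$. Passing to $L^q_t$ does not help, since the divergence occurs at every positive time. This is exactly why Proposition~\ref{ga:model} controls $\Psi=\cI P_NW_3$ but gives no bound for $W_3$ itself.

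The divergent part comes from three large frequencies with small sum. It decorrelates on time scales much shorter than the scale $2^{-2j}$ on which $\Delta_jH_JY$ evolves. Only the time integration in $\cI$ removes it, and a product estimate at fixed time throws that averaging away. The paper therefore moves the heat operator before estimating anything. With $\mathscr L=\partial_t+A$, one has $P_NW_3=\mathscr L\Psi$ and $H_JY=\cI(H_JW)$. Since paraproducts commute with derivatives, the product rule and $H_JY(0)=0$ give
\[
 \Omega_J=P_N\Bigl[-\mathcal C(\Psi,H_JW)+\mu\,\cI(\Psi\prec H_JY)
 +2\nu\sum_i\cI(\partial_i\Psi\prec\partial_iH_JY)\Bigr],
 \qquad \mathcal C(f,h)=\cI(f\prec h)-f\prec\cI h .
\]
Now only $\Psi\in C_tC^{1/2-}\cap C_t^{1/4-}L^\infty$ and $H_JW\in C_tC^{-1-}$ appear. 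The derivative term goes as you intended: its source lies in $C_tC^{-1/2-}$, and Schauder with the $(t-s)^{-1/4-}$ smoothing gives $C^{3/2-}$ and the factor $t^{3/4-}$. The mass term is smoother. The commutator needs its own estimate, built from the time increment of $S_{j-2}\Psi$ (using its time H\"older regularity) and the spatial commutator with the localized heat kernel. Up to a random constant with all moments, this gives
\[
 \|\Delta_j\mathcal C(\Psi,H_JW)(t)\|_\infty\lesssim2^{(1/2+\epsilon)j}\min(t,2^{-2j}).
\]
This single bound yields both assertions of the lemma, uniformly in $J$. Your concern about $P_N$ on $C^s$ is only a side issue; a strict projection loss handles it.
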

\begin{proof}
Put $\mathscr L=\partial_t+A$, $Y_h=\cI h$ and
$\mathcal C(f,h)=\cI(f\prec h)-f\prec\cI h$.
The product rule and $Y_h(0)=0$ give
\begin{equation}\label{ga:Omega-identity}
 \cI P_N[(\mathscr Lf)\prec Y_h]
 =P_N\left[-\mathcal C(f,h)+\mu\cI(f\prec Y_h)
       +2\nu\sum_i\cI(\partial_i f\prec\partial_iY_h)\right].
\end{equation}
If $f\in C_tC^{\alpha_G}\cap C_t^{\alpha_G/2}L^\infty$,
$h\in C_tC^{\beta_G}$, $0<\alpha_G<1$, $-2<\beta_G<-\alpha_G$, put
$\sigma_G=\alpha_G+\beta_G+2>0$ and let $F,H$ denote these norms.
At block $j$, the time difference of $S_{j-2}f$ and the spatial
commutator are bounded respectively by
\[
 CFH\tau^{\alpha_G/2}e^{-c\tau2^{2j}}2^{-\beta_G j},\qquad
 CFHe^{-c\tau2^{2j}}2^{-(\alpha_G+\beta_G)j}.
\]
The second follows from the localized heat-kernel moment
$\int|K_{j,\tau}(x)|d(x,0)^{\alpha_G} dx
\lesssim2^{-\alpha_G j}e^{-c\tau2^{2j}}$.
Thus
\[
 \|\Delta_j\mathcal C(f,h)(t)\|_\infty
 \lesssim FH2^{-(\alpha_G+\beta_G)j}\min(t,2^{-2j}),\qquad
 \sum_{j\ge0}2^{-(\alpha_G+\beta_G)j}\min(t,2^{-2j})
 \lesssim t^{\sigma_G/2}.
\]
The derivative product has block size $CFH2^{-(\alpha_G+\beta_G)j}$
and hence the same integrated bounds; the mass term is smoother.
Finally
\begin{equation}\label{ga:projection}
 \|P_Nv\|_{C^{b-\eta_{\rm pr}}}\le C_{b,\eta_{\rm pr}}\|v\|_{C^b},\qquad
 \|P_Nv\|_2\le\|v\|_2,\qquad\eta_{\rm pr}>0,
\end{equation}
since only blocks $2^j\asymp N$ are affected, and their Dirichlet
kernel norm is at most $C(1+j)^3\lesssim2^{\eta_{\rm pr}j}$.
Take $f=\Psi$, $h=H_JW$, let $\alpha_G\uparrow1/2$ and
$\beta_G\uparrow-1$, and use a strict projection loss.
Equation~\eqref{ga:Omega-identity} gives \eqref{ga:omega-norms}, uniformly in $J$.
\end{proof}

\subsection{Projection boundaries and the prescribed counterterms}

\begin{lemma}[Covariance envelope and spectral boundary]\label{ga:quadratic-envelope}
Let $\widetilde Z$ be a jointly measurable, centered, spatially homogeneous
Gaussian field supported in $\Lambda_N$, with covariance bounded by
$C\mathbf1_{k=l}\lambda_k^{-1}e^{-c\lambda_k|t-s|}$.
Set $\widetilde W=\widetilde Z^2-\E\widetilde Z^2$,
$\widetilde Y=\cI\Pi\widetilde W$, for a fixed real-preserving
multiplier with $|\pi|\le1$, and
$\widetilde R=\widetilde Y\circ\widetilde W-\E[\widetilde Y\circ\widetilde W]$.
The variances of $(\widetilde W,\widetilde Y,\widetilde R)$ are bounded
by $C(\langle k\rangle^{-1},\langle k\rangle^{-5},\langle k\rangle^{-3})$.
For $\epsilon>0$, a coefficient $B(t)$ with all finite moments uniformly
in $N,t$ bounds
\begin{equation}\label{ga:envelope-boundary}
 \mathcal Bf=P_N\{[(I-P_N)(f\prec\widetilde Y)]\circ\widetilde W\},
 \qquad
 \|\mathcal Bf\|_{H^{-\epsilon}}\le B(t)\|f\|_2,\quad
 \|\mathcal Bf\|_{C^{-\epsilon}}\le B(t)\|f\|_\infty.
\end{equation}
For finite $p,q$, $\widetilde R\in L^p(\Omega;L^q_tC^{-\epsilon})$;
the operator norms in \eqref{ga:envelope-boundary} belong to $L^p(\Omega;L^q_t)$.
\end{lemma}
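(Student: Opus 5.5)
The plan follows the proof of Proposition~\ref{ga:model}: chaos expansion, positive kernels, then block estimates. The variance bounds for $(\widetilde W,\widetilde Y,\widetilde R)$ come first. They only need covariance domination, and the remark after the table in Proposition~\ref{ga:model} already says the quadratic bounds hold for any covariance with the same envelope.

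\emph{Variance envelopes.} $\widetilde W$ lies in the second chaos with Fourier kernel $\sum_{p+q=k}\widetilde Z_p\widetilde Z_q$. Its variance is a sum of products of two covariances, each bounded by $C\lambda^{-1}$. The convolution $\sum_{p+q=k}\langle p\rangle^{-2}\langle q\rangle^{-2}\lesssim\langle k\rangle^{-1}$ in three dimensions gives the first bound. For $\widetilde Y$, use $|\pi|\le1$ and the time decay $e^{-c(\lambda_p+\lambda_q)|t-s|}$ of the covariance of $\widetilde W$. The heat integral \eqref{gm:heat} then gives the factor $[\lambda_k(\lambda_k+\lambda_p+\lambda_q)]^{-1}$. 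Summing $\langle p\rangle^{-2}\langle q\rangle^{-2}\langle k\rangle^{-2}(\langle k\rangle^2+\langle p\rangle^2)^{-1}$ yields $\langle k\rangle^{-5}$. For $\widetilde R$, Wick's formula gives chaoses $4$ and $2$; the zero chaos is removed by subtracting the mean. Squaring the kernels and pairing slots produces nonnegative integrands dominated by the envelope. These are the same integrals as for $R_2$, so the bound is $\langle k\rangle^{-3}$. From here, the finite-chaos hypercontractivity and Bernstein argument of Proposition~\ref{ga:model}, at fixed time and then via Minkowski in $t$, gives $\widetilde R\in L^p(\Omega;L^q_tC^{-\epsilon})$.

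\emph{The boundary operator.} The main obstacle is \eqref{ga:envelope-boundary}. The operator is linear in $f$ with rough random coefficients, so it must be bounded by a random constant uniformly over deterministic $f$, and a pure chaos computation does not suffice. First I would localize frequencies. $(I-P_N)(f\prec\widetilde Y)$ only sees output frequencies $|k|_\infty>N$, while $\widetilde Y$ is supported in $|k|\lesssim N$ (it is $\cI\Pi$ of a field supported in $\Lambda_N+\Lambda_N$). Hence only blocks $2^j\asymp N$ of $\widetilde Y$ contribute, and $S_{j-2}f$ sits at frequency $\lesssim N$. Next, the resonance with $\widetilde W$ forces the output $P_N$ frequency to be small only through cancellation between frequencies of size $N$.

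I would write $\mathcal Bf=\sum_{j,i:\,|i-j|\le1,\,2^i\asymp N}P_N\Delta_j[(I-P_N)(S f\cdot\Delta\widetilde Y)]\Delta_i\widetilde W$. I would then estimate it crudely by H\"older: $\|\Delta\widetilde Y\|_\infty\lesssim N^{-1+\epsilon}$ and $\|\Delta_i\widetilde W\|_\infty\lesssim N^{1+\epsilon}$, with random constants from the envelope via \eqref{ga:base}-type bounds. I would use the $L^p$ boundedness of $I-P_N$ up to logarithms, as in \eqref{ga:projection}. This gives $\|\mathcal Bf\|_{L^2}\lesssim N^{2\epsilon}\|f\|_2$ with finitely many blocks involved. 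Since all output frequencies are $\lesssim N$, this is not enough for $H^{-\epsilon}$. So I would try a sharper route. I would expand the product $\widetilde Y\,\widetilde W$ at the relevant blocks into its chaos decomposition: the renormalized part is controlled like $\widetilde R$ at scale $N$, giving a gain $N^{-\epsilon'}$ after Bernstein. The mean part $\E[\Delta\widetilde Y\,\Delta\widetilde W]$ is deterministic, and the projection boundary turns it into a Fourier multiplier in $f$ of size $O(1)$, independent of $N$; this is the analogue of the bounded $\Delta_N$ in \eqref{ga:full-correction}. A $\log N$ loss is absorbed into $H^{-\epsilon}$ because the output is frequency-localized at scale $\lesssim N$, exactly as in the proof of \eqref{ga:shell}.

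The same decomposition with $L^\infty$ in place of $L^2$ gives the $C^{-\epsilon}$ bound. Collecting the random factors into $B(t)$, a finite sum of block suprema of Gaussian chaoses, their moments are uniform in $N,t$, and integrating in time gives $L^p(\Omega;L^q_t)$. I expect the deterministic mean multiplier's uniform boundedness to be the delicate computation.
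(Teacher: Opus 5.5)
The variance envelopes, the bound on $\widetilde R$, and the localization of the boundary term to blocks $2^j\asymp2^l\asymp N$ are fine and agree with the paper. The gap is in the bound on $\mathcal B$ itself.

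You rightly note two things. H\"older with $L^\infty$ block norms loses $N^{2\delta}$, since each supremum over $\sim N^3$ points costs $2^{3j/q}$. This loss cannot be recovered in $H^{-\epsilon}$, because the resonant output of two frequency-$N$ factors fills all frequencies $\lesssim N$, including $O(1)$ ones.

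Your repair does not close, however. Since $f$, the paraproduct and $I-P_N$ sit between $\widetilde Y$ and $\widetilde W$, no chaos decomposition of ``the product $\widetilde Y\widetilde W$'' splits off $f$. A variance bound on $\mathcal Bf-\E\mathcal Bf$ for each fixed $f$ gives no random operator-norm bound uniform in $f$; this is exactly the objection you raised yourself. The claim that a $\log N$ loss is absorbed into $H^{-\epsilon}$ because outputs lie at frequencies $\lesssim N$ is backwards. A negative Sobolev norm only gains at high output frequency; in \eqref{ga:shell} the logarithm is in the output frequency $K$, not in $N$. Nor is an $O(1)$ deterministic Fourier multiplier automatically bounded from $L^\infty$ to $C^{-\epsilon}$.

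No cancellation is needed. The missing idea is to run the same crude H\"older estimate in $L^{q_0}$ instead of $L^\infty$, and let the $\epsilon$ in the target space pay for the integrability gap. By spatial homogeneity, hypercontractivity and your envelopes,
\[
 \|\Delta_j\widetilde Y(t)\|_{L^p(\Omega;L^{q_0})}\lesssim2^{-j},\qquad
 \|\Delta_l\widetilde W(t)\|_{L^p(\Omega;L^{q_0})}\lesssim2^{l},
\]
with no loss in $N$. The cube projections $P_N$ and $I-P_N$ are bounded on every $L^r$, $1<r<\infty$, uniformly in $N$, by the tensor-product Riesz theorem. Take $q_0>4$, $1/s=1/2+2/q_0$, and
\[
 B(t)=C\sum_{2^j\asymp N}\|\Delta_j\widetilde Y(t)\|_{q_0}\sum_{2^l\asymp N}\|\Delta_l\widetilde W(t)\|_{q_0}.
\]
Its moments are $\lesssim\sum_{2^j\asymp2^l\asymp N}2^{-j}2^{l}\lesssim1$, and one gets $\|\mathcal Bf\|_{L^s}\le B\|f\|_2$ and $\|\mathcal Bf\|_{L^{q_0/2}}\le B\|f\|_\infty$. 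The $N$-independent embeddings $L^s\hookrightarrow H^{-6/q_0}$ and $L^{q_0/2}\hookrightarrow C^{-6/q_0}$, with $6/q_0<\epsilon$, then give \eqref{ga:envelope-boundary}. Minkowski gives the $L^q_t$ norms. This is the paper's argument.
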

\begin{proof}
The variance bounds follow from the positive-kernel argument above.
A boundary term has $2^j\asymp2^l\asymp N$, since the product has
frequency $2^j$ and $\widetilde Y,\widetilde W$ are supported in $\Lambda_{2N}$.
Choose $q_0>4$ with $6/q_0<\epsilon$ and take
\[
 B(t)=C\sum_{2^j\asymp N}\|\Delta_j\widetilde Y(t)\|_{q_0}
          \sum_{2^l\asymp N}\|\Delta_l\widetilde W(t)\|_{q_0}.
\]
Cube projections are uniformly bounded on each finite $L^r$,
$1<r<\infty$, by the tensor-product Riesz projection theorem~\cite{Riesz}.
For $1/s=1/2+2/q_0$, H\"older and the embeddings
$L^s\hookrightarrow H^{-\epsilon}$, $L^{q_0/2}\hookrightarrow C^{-\epsilon}$ give
\[
 \|\mathcal Bf\|_{H^{-\epsilon}}\lesssim\|\mathcal Bf\|_s\le B\|f\|_2,
 \qquad
 \|\mathcal Bf\|_{C^{-\epsilon}}\lesssim\|\mathcal Bf\|_{q_0/2}\le B\|f\|_\infty.
\]
Moreover,
\[
 \|B(t)\|_{L^p(\Omega)}
 \lesssim\sum_{2^j\asymp2^l\asymp N}
 \|\Delta_j\widetilde Y\|_{L^{2p}(\Omega;L^{q_0})}
 \|\Delta_l\widetilde W\|_{L^{2p}(\Omega;L^{q_0})}
 \lesssim\sum_{2^j\asymp2^l\asymp N}2^{-j}2^l\lesssim1.
\]
Minkowski gives the time norms.
\end{proof}

\paragraph{Full and projected primitives.}
With $d_{pq}=\lambda_p+\lambda_q+\lambda_{p+q}$ and
$I_{pq}(t)=\int_0^te^{-d_{pq}(t-s)}\theta_p(s)\theta_q(s)ds$,
Wick's rule gives
$c_2^P=2a^2\sum_{p,q,p+q\in\Lambda_N}I_{pq}/(\lambda_p\lambda_q)$.
The full primitive drops only the last restriction. Hence
\begin{equation}\label{ga:full-correction}
 \Delta_N=c_2-c_2^P
 =2a^2\sum_{\substack{p,q\in\Lambda_N\\p+q\notin\Lambda_N}}
 \frac{I_{pq}}{\lambda_p\lambda_q},
 \qquad 0\le\Delta_N\le C,\quad |t\Delta_N'|\le C.
\end{equation}
For $K=\langle k\rangle$, $I_{pq}\le d_{pq}^{-1}$ and
\[
 \sum_{p+q=k}\frac1{\lambda_p\lambda_qd_{pq}}
 \lesssim K^{-4}\sum_{|p|\le2K}\langle p\rangle^{-2}
       +\sum_{|p|>2K}\langle p\rangle^{-6}\lesssim K^{-3},
 \qquad \sum_{N<|k|_\infty\le2N}\langle k\rangle^{-3}\lesssim1.
\]
For $F=\theta_p\theta_q$, $I_{pq}'=e^{-d_{pq}\cdot}*F'$,
$F'\ge0$, $\int F'\le1$ and $sF'(s)\le C$.
Hence
\[
 tI_{pq}'=\int_0^t[(t-s)+s]e^{-d_{pq}(t-s)}F'(s)\dd s
 \le\frac C{d_{pq}}\int_0^tF'(s)\dd s
       +C\int_0^te^{-d_{pq}(t-s)}\dd s\le\frac C{d_{pq}}.
\]

The prescribed constants therefore satisfy, for $0<t\le1$,
\begin{equation}\label{ga:initial-layer}
\begin{aligned}
 &0\le C_{1,N}-c_1(t)\le Ct^{-1/2},\qquad |tc_1'(t)|\le Ct^{-1/2},\\
 &|C_{2,N}-c_2(t)|+|tc_2'(t)|\le C\log(e/t).
\end{aligned}
\end{equation}
The first line follows by summing $\lambda_k^{-1}e^{-2t\lambda_k}$
and $t e^{-2t\lambda_k}$. For the second, split at $t/2$:
\[
 \frac1{d_{pq}}-I_{pq}(t)
 =\frac{e^{-td_{pq}}}{d_{pq}}
  +\int_0^te^{-d_{pq}(t-s)}[1-\theta_p(s)\theta_q(s)]\dd s
 \lesssim\frac{e^{-ctd_{pq}}+e^{-ct\lambda_p}+e^{-ct\lambda_q}}{d_{pq}}.
\]
The same bound holds for $tI_{pq}'$. Now
\[
 \sum_q(\lambda_qd_{pq})^{-1}\lesssim\langle p\rangle^{-1},
 \qquad \sum_p\langle p\rangle^{-3}e^{-ct\lambda_p}\lesssim\log(e/t),
\]
and \eqref{ga:full-correction} give \eqref{ga:initial-layer}.

\subsection{Differentiating the renormalized products}

\begin{lemma}[Gaussian differentiation]\label{ga:mark-stability}
Use the OU realization of Lemma~\ref{gaalign:ou}.
The bounds above for the Gaussian fields and boundary operator also hold
after one $\mathfrak D_t$ derivative, in the same physical-time norms.
The constants are uniform in cutoff and horizon.
Keep inputs, multipliers and thresholds fixed.
\end{lemma}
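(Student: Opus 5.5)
Since the OU realization of Lemma~\ref{gaalign:ou} is not displayed here, I take it to be built so that $\mathfrak D_t$ acts on the Gaussian input through a deterministic operator. The relation $\mathfrak D_tZ_t(1)=tB_tZ_t(1)$ used in \eqref{co:reduced-defect} suggests the following structure: at reduced time $r$, physical time $tr$, each Fourier mode of $Z$ is a Wiener integral whose kernel depends smoothly on $t$. Differentiating gives $\mathfrak D_tZ_k=\mathcal M_kZ_k+(\text{a rotated copy})$, with multipliers bounded uniformly in $k$ and $t$. The rotated copy is $\mathscr W_{\mathsf A}Z$ for a bounded skew-adjoint $\mathsf A$. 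The plan is to show that every Gaussian object in this appendix is a finite-chaos polynomial in $Z$ with deterministic kernels. Then $\mathfrak D_t$ of such an object is a sum of terms of the same form. In each term exactly one Gaussian slot, or one deterministic factor (a heat kernel, $c_1$, $c_2$, $\theta_h$, an indicator), is replaced by its derivative.

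The first step is the covariance envelope. Write $\dot Z$ for $\mathfrak D_tZ$. Using the scaling $s=tr$, I would compute the joint covariances of $(Z,\dot Z)$ mode by mode. Because differentiating $e^{-\lambda_k t(r-r')}$ in $t$ produces $\lambda_k|s-s'|e^{-\lambda_k|s-s'|}/t$, I would verify that the mixed covariances are still bounded by $C\lambda_k^{-1}e^{-c\lambda_k|s-s'|}$. This costs a slightly smaller rate $c$ and holds in physical-time variables, uniformly in $t$ and $N$. The second step handles the Wick polynomials. By Leibniz, $\mathfrak D_tW=2Z\dot Z-\dot c_1$, and similarly for $W_3$. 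For the zero-chaos part, the derivative of $c_1$ is exactly $2\E[Z\dot Z]$, so the Wick structure is preserved. This is the analogue of $|tc_1'|\le Ct^{-1/2}$ in \eqref{ga:initial-layer}.

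The third step covers the heat primitives $Y,\Psi$ and the resonant products $R_2,R_3,S$. Their derivatives are multilinear expressions with one differentiated slot, plus terms where $\mathfrak D_t$ hits the heat kernel or an explicit time factor. I would apply the positive-kernel argument of Proposition~\ref{ga:model} verbatim. All covariances, mixed ones included, obey the same envelope, and the quadratic bounds hold for any covariance dominated by that envelope. So the variance table, the increment bounds and the Kolmogorov/Bernstein summation carry over, and so does Lemma~\ref{ga:quadratic-envelope}, which needs only the envelope. The derivatives of the counterterms $c_2$ and $\Delta_N$ are already bounded by $|tc_2'|\le C\log(e/t)$ and $|t\Delta_N'|\le C$ in \eqref{ga:initial-layer} and \eqref{ga:full-correction}. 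Since the thresholds $J$ and the multipliers are kept fixed, the paraproduct and projection operators commute with $\mathfrak D_t$. The bound on $\Omega_J$ then follows from identity \eqref{ga:Omega-identity}.

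I expect the main obstacle to be the compensated first chaos $R_3^{(1)}$. There the bound \eqref{ga:shell} relies on an exact cancellation in the telescoped bracket \eqref{ga:R3-first}, and differentiation must not break it. I would write the derivative of \eqref{ga:R3-first} as the same shell sum with one of three replacements: $Z_k$ by $\dot Z_k$, $\theta_h\theta_l$ by its derivative, or $c_2^P$ by $\dot c_2^P$. The pairing of $\dot c_2^P$ with $Z$ is exactly the derivative of the subtracted term, so the difference keeps the bracket structure. I would then redo the four shell estimates with $\|\dot Z_k(t-v)-\dot Z_k(t)\|_2\lesssim K^{-1}\min(1,(K^2v)^{\eta_{\rm sh}})$, which follows from the envelope. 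The differentiated factor $\theta_h'$ costs $\lambda_h v$ times an exponential in $v$, and this is absorbed by $e^{-cR^2v}$. The result is the same $K^{-1}\log(e+K)$ bound.
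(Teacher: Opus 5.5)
Your overall plan is the right one: write every object as a finite-chaos functional with deterministic kernels, differentiate one factor at a time, and rerun the positive-kernel argument. But the step you single out as the main obstacle does not close as written.

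In the OU realization $\mathfrak D_t$ acts at fixed normalized variables. So in \eqref{ga:R3-first}, evaluated at physical time $tr$, you must write the lag as $v=tx$ and differentiate at fixed $x$. Keeping the physical lag fixed would instead put a time derivative on the rough path $Z$. Then $t\partial_t$ also falls on the Jacobian $t\,dx$, on $e^{-t(\lambda_h+\lambda_l)x}$, and on the two exponentials $e^{-t\lambda_{k+b}x}$ and $e^{-t\lambda_bx}$ inside the bracket. Your three replacements omit the last two. Folding them into ``$c_2^P\to\dot c_2^P$'' breaks the bracket, because the derivative of $e^{-t\lambda_bx}$ in the subtracted term must be matched with the derivative of $e^{-t\lambda_{k+b}x}$ in the first term.

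This matters for uniformity in $N$. The second telescoping step rests on $|e^{-\lambda_{k+b}v}-e^{-\lambda_bv}|\lesssim(KR+K^2)v\,e^{-cR^2v}$. If you differentiate each exponential separately you only get $O(e^{-cR^2v})$. Each shell then contributes $K^{-1}$ instead of $K^{-1}K/R$, and the dyadic sum over $R\lesssim N$ gives $K^{-1}\log N$. What is needed is the paper's estimate
\[
 |\mathfrak D_t(e^{-tA_1x}-e^{-tA_0x})|
 \lesssim t|A_1-A_0|x\,[1+t(A_1+A_0)x]\,e^{-ct\min(A_1,A_0)x},
\]
with the polynomial factors absorbed by $x^me^{-cx}\lesssim e^{-cx/2}$ in the four shell bounds.

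Second, your transfer mechanism rests on the claim that all covariances of $(Z,\dot Z)$ obey the envelope $C\lambda_k^{-1}e^{-c\lambda_k|s-s'|}$, and this has two problems.
\begin{itemize}
\item You justify the envelope only for the $t\partial_t$ part. For a rotation by a merely bounded skew $\mathsf A$, $\E[(\mathscr W_{\mathsf A}Z)(s)\,Z(s')]$ need not decay in $|s-s'|$.
\item An upper envelope on covariances controls fixed-time variances but not increments. So ``$\|\dot Z_k(t-v)-\dot Z_k(t)\|_2\lesssim K^{-1}\min(1,(K^2v)^{\eta_{\rm sh}})$ follows from the envelope'' and ``the increment bounds carry over'' are not justified as stated.
\end{itemize}

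The paper uses a structural fact instead. By \eqref{gaalign:marked-factor}, differentiating a Gaussian factor replaces $S_t$ by $S_t(T_z^0-\mathsf A_t)$, with $\|T_z^0-\mathsf A_t\|\le1$. This is an operator on the noise variable, so it commutes with spatial blocks and time differences. Hence kernel norms, variances and increments of uncontracted slots are bounded exactly as before, with no covariance recomputation. For instance, at fixed horizon and mode, the increment of $\mathfrak D_tZ$ between two normalized times has $L^2(\Omega)$ norm at most that of $Z$.

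On contracted pairs, the two placements of the rotation cancel by skew-adjointness, $\langle\mathsf A_th_1,h_2\rangle+\langle h_1,\mathsf A_th_2\rangle=0$. What remains is $t\partial_tC_t$, which is bounded explicitly by $\frac a\lambda e^{-zd/2}(1-e^{-2zm})$. With these two observations, together with the exponential-difference bound above, your third step and the $\dot Z$ replacement in the shell estimates go through.
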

\begin{proof}
By \eqref{gaalign:marked-factor}, differentiating the $n$ Gaussian
factors inserts $n$ operators of norm at most one. These commute with
spatial blocks and time differences.
On a contracted pair,
\[
 \langle\mathsf A_th_1,h_2\rangle+\langle h_1,\mathsf A_th_2\rangle=0.
\]
For \eqref{ga:covariance}, put $z=t\lambda$, $m=\min(r,v)$ and
$d=|r-v|$. Then
\[
 \dot C_t(r,v)=\frac a\lambda e^{-zd}
 [-zd(1-e^{-2zm})+2zm e^{-2zm}],\qquad
 |\dot C_t(r,v)|\lesssim\frac a\lambda e^{-zd/2}(1-e^{-2zm}),
\]
since $2xe^{-2x}\le1-e^{-2x}$ and $xe^{-x}\lesssim e^{-x/2}$.
Differentiated heat kernels satisfy the same bounds.

Differentiate before estimating:
\[
 \mathfrak D_tR_3=(\mathfrak D_t\Psi)\circ W+\Psi\circ\mathfrak D_tW
       -3\dot c_2Z-3c_2\mathfrak D_tZ.
\]
In \eqref{ga:R3-first}, Gaussian derivatives preserve the increment bound.
The multipliers are fixed, and
\[
 |\mathfrak D_t(e^{-tA_1x}-e^{-tA_0x})|
 \lesssim t|A_1-A_0|x[1+t(A_1+A_0)x]e^{-ct\min(A_1,A_0)x}.
\]
Using $x^me^{-cx}\lesssim_m e^{-cx/2}$ in the four shell estimates
preserves \eqref{ga:shell} and \eqref{ga:full-correction}.
Apply the same bound to \eqref{ga:Omega-identity} and the bilinear
boundary estimate, then sum blocks and time increments as above.
\end{proof}

\section{Correct the source, then estimate the state}
\label{po:section}

The additive corrector removes the cubic Gaussian forcing from
$u=X_N-Z_N$. We estimate $r=u+q$ by entropy, then use
$w=\mathcal Tr$ for heat regularity and source differentiation.

Write $P=P_N$, $\eta=3g$ and $\mathscr H=\partial_t+A$.
Keep frequency thresholds fixed under every coefficient derivative $\mathfrak d$.

\subsection{The coordinate and its single cancellation}
Let $M_0\ge1$ bound \eqref{ga:base}, \eqref{ga:omega-norms} with loss
$\epsilon_0/4$, including their time weights and first coefficient derivatives.
Include also the backward primitive \eqref{cal:primitives}.
Then $M_0$ has all finite moments, uniformly in $N,T$.
Put $Y_h=H_JY$ and define
\begin{equation}\label{po:transform}
 Kf=\eta P(f\prec Y_h),\qquad \mathcal T=I+K,\qquad
 q_0=g\Psi+\eta g\Omega_J,\qquad w=\mathcal Tu+q_0.
\end{equation}
Then
\[
 \mathscr Hq_0=g\mathcal TPW_3.
\]
Choose $J$ as in Appendix~\ref{ac:section}, large enough also for the
weights in Appendix~\ref{ac:reference-section}. Then $2^J\lesssim M_0^C$
and $\mathcal T^{\pm1}$ are bounded on the spaces $L^p$, $C^a$, $H^a$
used below, with finite $p$ and $a<\beta$. The maps
\begin{equation}\label{po:smoothing}
 K,K^*:L^p\to H^{a,p}\ (a<\beta),\qquad
 K:L^4\to L^\infty,\qquad K:H^{-3/4}\to L^2
\end{equation}
have polynomial bounds in $M_0$. Extend $K$ by $PKP$ when taking adjoints.
Moreover,
\begin{equation}\label{po:q0}
 \|q_0(t)\|_{C^{1/2-\epsilon_0/4}}+
 t^{-\theta}\|q_0(t)\|_{C^\varpi}\lesssim M_0^C,
\end{equation}
also after one coefficient derivative. For $\Psi$ this is
$\varpi+2\theta<1/2$ in \eqref{ga:psi-holder}. For $\Omega_J$, put
$\lambda=(3/2-\epsilon_0/4-\varpi)/(3-\epsilon_0/4)$. Then
\[
 \|\Omega_J(t)\|_{C^\varpi}
 \lesssim\|\Omega_J(t)\|_{L^2}^{\lambda}
 \|\Omega_J(t)\|_{C^{3/2-\epsilon_0/4}}^{1-\lambda}
 \lesssim M_0^Ct^{\lambda/2},\qquad \lambda/2>\theta.
\]

The linear remainder is
\begin{equation}\label{up:normalform}
 \mathcal Af=[\mathscr H,K]f-\eta\mathcal TP(Wf)
 +\eta\mathcal TP((\mathcal Tf)\circ W)-\eta^2c_2\mathcal Tf.
\end{equation}
The heat commutator cancels $f\prec W$. The added resonance cancels
$f\circ W$; the last term subtracts the scalar contraction of
$(Kf)\circ W$.
Applying the same identity to the additive corrector leaves the forcing
\[
 F=\eta\mathcal TP(q_0\circ W-\eta c_2Z)
   =\eta g\mathcal TP(R_3+\eta\Omega_J\circ W).
\]
The only projection remainder in this cancellation is
\begin{equation}\label{up:boundary}
 \mathcal Bf=P\{[(P-I)(f\prec Y)]\circ W\}.
\end{equation}
Let $\mathfrak m(t)$ be one plus the norms of $(R_2,R_3,S)$ in
$(C^{-\epsilon_0/4},C^{-1/2-\epsilon_0/4},C^{-\epsilon_0/4})$ and the operator norms of
$\mathcal B:L^2\to H^{-\epsilon_0/4}$ and
$\mathcal B:L^\infty\to C^{-\epsilon_0/4}$, including their derivatives. Put
\begin{equation}\label{up:integrated-model}
 h_0(t)=CM_0^C\mathfrak m(t),\qquad
 M=M_0+\|h_0\|_{L^q(0,1)},\qquad
 \sup_N\E M^p<\infty\quad(p<\infty).
\end{equation}
These bounds hold on every shorter time interval. Increase the fixed
power of $M_0$ in $h_0$ to include further polynomial factors.
The letters $h,k$ denote the positive entropy weights.

\begin{lemma}[What remains after cancellation]\label{up:operators}
For $\epsilon_0<a<\beta$ and $E^a=C^a$ or $H^a$,
\begin{equation}\label{up:order}
 \|\mathcal Af\|_{E^{a-1-\epsilon_0/2}}\lesssim h_0\|f\|_{E^a},
 \qquad \|F\|_{C^{-\alpha}}\lesssim h_0.
\end{equation}
Set
\[
 \mathcal L=-\eta\mathcal TP(\,\cdot\,\circ W)
                  +\mathcal A\mathcal T^{-1},\qquad
 f_0=F-\mathcal A\mathcal T^{-1}q_0.
\]
Then
\begin{equation}\label{up:linear-bound}
 \|\mathcal Lw\|_{H^{-2\epsilon_0}}\lesssim h_0\|w\|_{H^{1+\epsilon_0}},
 \qquad \|f_0\|_{C^{-\alpha}}+\|f_0\|_{H^{-\alpha}}
                                                    \lesssim h_0.
\end{equation}
The bounds hold after one coefficient derivative, with the input fixed.
\end{lemma}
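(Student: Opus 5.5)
Each bound is checked term by term in the definition \eqref{up:normalform} of $\mathcal A$, using paraproduct estimates together with the enhancement bounds of Proposition~\ref{ga:model}, Lemma~\ref{ga:quadratic-envelope} and the initial-layer bounds \eqref{ga:initial-layer}. The linear bounds \eqref{up:linear-bound} then follow by composing with $\mathcal T^{-1}$ and evaluating at $q_0$.

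\emph{The operator $\mathcal A$.} We expand $\mathcal Af$ into its paraproduct pieces. First, $[\mathscr H,K]f=\eta P(f\prec\mathscr HY_h)+\eta P(\mathscr Hf\prec Y_h-\mathscr H(f\prec Y_h))$ plus commutator terms. Since $\mathscr HY=W$, the first piece is $\eta P(f\prec H_JW)$. It cancels the $f\prec W$ part of $-\eta\mathcal TP(Wf)$, up to the smooth remainder $f\prec L_JW$, which costs a power of $2^J\lesssim M_0^C$. The remaining commutator terms are of the form $\nabla f\prec\nabla Y_h$. Because $Y_h\in C^{1-\epsilon}$, these lose exactly one derivative on $f$, as the order $a-1-\epsilon_0/2$ requires.

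In $-\eta\mathcal TP(Wf)$ we are left with $W\prec f$, which maps $E^a$ to $E^{a-1-\epsilon}$ by the paraproduct bound with $W\in C^{-1-\epsilon}$, and with the resonance $f\circ W$. The resonance is cancelled by the added $+\eta\mathcal TP(f\circ W)$, leaving $\eta\mathcal TP((Kf)\circ W)$. We treat this remainder with the commutator lemma of Gubinelli--Imkeller--Perkowski~\cite{GIP}:
\[(P(f\prec Y_h))\circ W=f\,(Y_h\circ W)+\mathrm{comm}+\mathcal Bf.\]
Here the projection defect is exactly $\mathcal B$ of \eqref{up:boundary}, and the regularity sum $a+(1-\epsilon)+(-1-\epsilon)>0$ makes the commutator bounded. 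Next, $Y_h\circ W=R_2+c_2$ modulo low-frequency terms. The constant $c_2$ is removed by $-\eta^2c_2\mathcal Tf$, and $R_2\in C^{-\epsilon}$, $\mathcal B$ are controlled by $\mathfrak m\le h_0$. Since $\mathcal T$ is bounded on $E^{a-1-\epsilon_0/2}$, every piece is bounded by $h_0\|f\|_{E^a}$.

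\emph{The forcing $F$.} The identity $F=\eta g\mathcal TP(R_3+\eta\Omega_J\circ W)$ was already stated. We bound $R_3$ in $C^{-1/2-\epsilon_0/4}$ by \eqref{ga:rough-integrated}. The product $\Omega_J\circ W$ has regularity $3/2-1>0$ by \eqref{ga:omega-norms} and is harmless. Both are dominated by $h_0$, since $-1/2-\epsilon_0/4>-\alpha$.

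\emph{The linear bounds.} For $\mathcal Lw$, the term $(\mathcal T^{-1}w)\circ W$ is well defined because $1+\epsilon_0-1-\epsilon_0/4>0$, and it lies in $H^{-\epsilon_0/4}$. For $\mathcal A\mathcal T^{-1}w$, we apply \eqref{up:order} with $a=1+\epsilon_0$. This would require $a<\beta$, and $\beta=1-\epsilon_0/2$, so the bound cannot be used directly. Instead we take $a=\beta-$ and accept the loss, which still lands in $H^{-2\epsilon_0}$.

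For $f_0$, the term $\mathcal A\mathcal T^{-1}q_0$ uses $q_0\in C^{1/2-\epsilon_0/4}$ from \eqref{po:q0}. This gives regularity $-1/2-\epsilon_0/4-\epsilon_0/2>-\alpha$, in both the $C$ and $H$ scales, the latter because the torus is compact.

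\emph{Coefficient derivatives.} Differentiating a coefficient replaces $Y,W,R_2,R_3,\Omega_J,\mathcal B,c_2$ by their $\mathfrak D_t$-derivatives. These obey the same bounds by Lemma~\ref{ga:mark-stability}, and $\mathfrak m$ includes them. The cancellations are algebraic identities, so they persist.

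\emph{Main obstacle.} The delicate step is the resonant commutator with projection. $P$ does not commute with paraproducts, so the defect must be isolated exactly as the single term $\mathcal B$, without losing $N$-uniformity. We handle this by inserting $P=I-(I-P)$ inside $(P(f\prec Y))\circ W$ and applying the commutator to the unprojected product. Outer projections cost only the $\eta_{\rm pr}$-loss of \eqref{ga:projection}, and this loss is absorbed into $\epsilon_0/2$.
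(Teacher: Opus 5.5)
Your overall route is the paper's. You use the exact Leibniz rule for $\prec$ in $[\mathscr H,K]$, cancel $f\prec W$ and $f\circ W$, and apply the commutator $\mathfrak C(f,Y,W)$ with $Y\circ W=R_2+c_2$ and the boundary operator $\mathcal B$. You then take $a$ just below $\beta$ for $\mathcal L$ and $a\approx1/2$ for $f_0$.

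The first cancellation, however, is misstated and leaves a term your argument does not bound. Since $\partial_i(f\prec g)=\partial_if\prec g+f\prec\partial_ig$ and $\mathscr HY_h=H_JW$, one has $[\mathscr H,K]f=\eta P\bigl(f\prec H_JW-\mu f\prec Y_h-2\nu\sum_i\partial_if\prec\partial_iY_h\bigr)$. Your displayed decomposition has the Leibniz bracket with the wrong sign, so its main term cancels. More importantly, this operator carries no factor $\mathcal T$, whereas the term it must cancel is $-\eta\mathcal TP(f\prec W)=-\eta P(f\prec W)-\eta KP(f\prec W)$. The residual is therefore $-\eta P(f\prec W_l)-\eta KP(f\prec W)$, not only the low-frequency piece.

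The second residual term is not covered by ``$\mathcal T$ is bounded on $E^{a-1-\epsilon_0/2}$''. Indeed $P(f\prec W)$ has order only $-1-\epsilon_0/4$, strictly rougher than $a-1-\epsilon_0/2$ when $a>\epsilon_0$. What controls it is the smoothing of $K$ on negative spaces, \eqref{ac:c:eq:negative-inverse}. That places it in $E^{-3\epsilon_0/4-}$, which lies above $a-1-\epsilon_0/2<-\epsilon_0$. Similarly, write $P(f\prec Y_h)=P(f\prec Y)-P(f\prec Y_l)$ before isolating the boundary. Then the projection defect is exactly the operator $\mathcal B$ of \eqref{up:boundary} included in $\mathfrak m$, and $-[P(f\prec Y_l)]\circ W$ is a separate low-frequency term.

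In $\mathcal Lw$ the resonance is $-\eta\mathcal TP(w\circ W)$, not $(\mathcal T^{-1}w)\circ W$, and the difference matters. The paraproduct $f\prec Y_h$ is never smoother than $Y\in C^{1-}$, so $K$ smooths only up to $\beta<1$. Hence $\mathcal T^{-1}w=w-K\mathcal T^{-1}w$ lies in $H^{\beta-}$, not in $H^{1+\epsilon_0}$. The product $(\mathcal T^{-1}w)\circ W$ would then have regularity sum $\beta-1-\epsilon_0/4<0$. Indeed $(K\mathcal T^{-1}w)\circ W$ contains the logarithmically divergent contraction $\eta c_2\mathcal T^{-1}w$ that $\mathcal A$ was built to renormalize. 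The coordinate is designed so that the resonance falls on $w$ itself. For $w\circ W$ your count $1+\epsilon_0-1-\epsilon_0/4>0$ is correct and gives $w\circ W\in H^{3\epsilon_0/4}$.

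Finally, for the coefficient derivative, differentiate the expanded multilinear expressions obtained after the cancellations, rather than asserting that the cancellations persist. Note also that $\mathcal L$ and $f_0$ contain $\mathcal T^{-1}$. Its derivative $-\mathcal T^{-1}(\mathfrak dK)\mathcal T^{-1}$ is bounded without any smallness of $\mathfrak dK$. With these repairs your argument coincides with the paper's.
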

\begin{proof}
Write
$\mathcal A=\mathsf D-\eta\mathcal TP(W\prec\,\cdot\,)
+\eta^2\mathcal T\mathsf R$. Since
\[
 [\mathscr H,K]f=\eta P\Bigl(f\prec H_JW-\mu f\prec Y_h
                  -2\nu\sum_i\partial_if\prec\partial_iY_h\Bigr),
\]
\begin{equation}\label{up:heatcomm}
 \mathsf Df=-\eta P\Bigl(f\prec W_l+\mu f\prec Y_h
       +2\nu\sum_i\partial_if\prec\partial_iY_h\Bigr)
       -\eta KP(f\prec W),
\end{equation}
where $W_l=(I-H_J)W$. The resonant commutator gives
\begin{equation}\label{up:rescomm}
 \mathsf Rf=P\{fR_2+\mathfrak C(f,Y,W)
                      -[P(f\prec Y_l)]\circ W\}+\mathcal Bf,
\end{equation}
where $Y_l=Y-Y_h$ and
$\mathfrak C(f,Y,W)=(f\prec Y)\circ W-f(Y\circ W)$.
Paraproduct and commutator bounds, with projection loss $<\epsilon_0/8$, give
\[
\begin{aligned}
 \|P(\partial_if\prec\partial_iY_h)\|_{E^{a-1-3\epsilon_0/8}}
 +\|P\mathfrak C(f,Y,W)\|_{E^{a-5\epsilon_0/8}}&\lesssim h_0\|f\|_{E^a},\\
 \|KP(f\prec W)\|_{E^{-3\epsilon_0/4}}
 +\|P(fR_2)\|_{E^{-3\epsilon_0/4}}
 +\|\mathcal Bf\|_{E^{-3\epsilon_0/4}}&\lesssim h_0\|f\|_{E^a}.
\end{aligned}
\]
Each output index exceeds $a-1-\epsilon_0/2$ since $a<\beta$.
The low-frequency and mass terms are smoother. Moreover,
\[
 \|F\|_{C^{-\alpha}}\lesssim M_0^C
 \bigl(\|R_3\|_{C^{-1/2-\epsilon_0/4}}
 +\|\Omega_J\|_{C^{3/2-\epsilon_0/4}}
       \|W\|_{C^{-1-\epsilon_0/4}}\bigr)\lesssim h_0.
\]
For $\mathcal Lw$, use $a=1-3\epsilon_0/4$ in \eqref{up:order};
$\mathcal A\mathcal T^{-1}w\in H^{-5\epsilon_0/4}$, while
$w\circ W\in H^{3\epsilon_0/4}$.
For $f_0$, use $a=1/2-\epsilon_0/2$ in both the H\"older and Sobolev versions.

For every multilinear expression $B$,
$\mathfrak dB(a_1,\ldots,a_n)=\sum_iB(a_1,\ldots,\mathfrak da_i,\ldots,a_n)$, and
\begin{equation}\label{up:inverse-jet}
 \mathfrak d(\mathcal T^{-1})
       =-\mathcal T^{-1}(\mathfrak dK)\mathcal T^{-1}.
\end{equation}
The inverse is bounded. Apply Lemma~\ref{ga:mark-stability} to the
renormalized expressions; no smallness of $\mathfrak dK$ is needed.
\end{proof}

\subsection{Equations for energy and regularity}
Set
\begin{equation}\label{state:corrector}
 q=\mathcal T^{-1}q_0,\qquad r=u+q=\mathcal T^{-1}w.
\end{equation}
The identity $q-g\Psi=\eta g\Omega_J-Kq$ shows that
$q-g\Psi\in C^{1-\epsilon_0/2}$ with polynomial bounds. Consequently the
products $Zq$ and $Zq^2$ use only $S=Z\circ\Psi$ and ordinary products.
Moreover $\|q(t)\|_{C^\varpi}\lesssim M_0^Ct^\theta$.

The two scalar coefficients are
\[
 b_0=\eta(C_1-c_1)-\eta^2C_2,\qquad
 b=b_0+\eta^2c_2.
\]
Keep $C_2$ inside the potential. For $b$, \eqref{ga:initial-layer} gives
\begin{equation}\label{up:corrector}
 |b(t)|+|\mathfrak db(t)|\lesssim t^{-1/2}.
\end{equation}
Let $L_0^P=\nu\Delta-\mu-\eta P(W\,\cdot\,)+b_0$.
Then
\begin{equation}\label{state:additive-equation}
 r'=L_0^Pr-gP\bigl((r-q)^3+3Z(r-q)^2\bigr)
                 +e+b(Z-q),\qquad e=\mathcal T^{-1}f_0.
\end{equation}
Indeed, $\mathcal Tq=q_0$ implies
$\mathscr Hq=gPW_3-\mathcal T^{-1}[\mathscr H,K]q$.
Also, \eqref{up:normalform} gives
\[
 e=\eta PWq-\mathcal T^{-1}[\mathscr H,K]q
                         -\eta^2c_2(Z-q).
\]
Adding this to
$u'=L_0^Pu-gP(u^3+3Zu^2)-gPW_3+b_0Z$
gives \eqref{state:additive-equation}, with
$\|e\|_{C^{-\alpha}}\lesssim h_0$.

For the equation of $w$, write $r_0=u+q_0=w-Ku$ and put
\begin{equation}\label{up:zeta}
 \zeta=(Z,Zq_0,Zq_0^2),\qquad
 Q_\zeta(r_0)=\zeta_0r_0^2-2\zeta_1r_0+\zeta_2=Zu^2,
\end{equation}
and set $\ell=\mathcal TZ-q_0$. Then
\begin{align}
 \mathscr Hw&=\mathcal P_t(w)+b(t)(w+\ell),\qquad w(0)=0,\label{po:A8}\\
 \mathcal P_t(w)&=-g\mathcal TP(u^3+3Q_\zeta(r_0))
                                  +\mathcal Lw+f_0.\label{up:polynomial-def}
\end{align}
The identities
\[
 Z\Psi=Z\prec\Psi+\Psi\prec Z+S,\qquad
 Z\circ\Psi^2=2\Psi S+2\mathfrak C(\Psi,\Psi,Z)+Z\circ(\Psi\circ\Psi)
\]
give $\|\zeta\|_{C^{-1/2-\epsilon_0/4}}+
\|\mathfrak d\zeta\|_{C^{-1/2-\epsilon_0/4}}\lesssim h_0$;
the remaining resonances have positive regularity sum.

\subsection{Two ordinary entropies}

The quadratic entropy controls $r$; the quartic entropy gives $u^3\in L^2_{t,x}$.
Use the weights of Proposition~\ref{cal:construction}, put $v=r/h$,
and include their bounds in $M$. Fix one threshold satisfying both
the coordinate and weight estimates.

\begin{lemma}[Quadratic entropy]\label{state:quadratic}
There is a deterministic $T_g>0$ such that, for $T\le T_g$,
\begin{equation}\label{state:quadratic-bound}
 \sup_{t\le T}\|v(t)\|_2^2+
 \int_0^T\bigl(\|v\|_{H^1}^2+\|v\|_4^4\bigr)\dd t
 \le CM^CT^{1-\sigma}.
\end{equation}
Consequently $\|r(t)\|_2+\|w(t)\|_2
\le CM^Ct^{1/4-\epsilon_0}$ and $\|u(t)\|_2\le CM^Ct^\theta$.
\end{lemma}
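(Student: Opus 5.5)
The plan is to apply the projected entropy identity \eqref{cal:projected-energy} of Lemma~\ref{cal:energy} to $f=r$, solving \eqref{state:additive-equation}, with the weights $h,k$ of Proposition~\ref{cal:construction} and $m=k/h$. I first write \eqref{state:additive-equation} in the form $r'=\nu\Delta r-P(Vr)+P\mathcal F$. Here $V=\mu+\eta W-b_0$, and the weights cancel this rough potential up to the defects $a_\pm$. The remaining source is
\[
 \mathcal F=-g\bigl((r-q)^3+3Z(r-q)^2\bigr)+e+b(Z-q).
\]
Expanding the cubic gives the coercive term. By the identity $\|r\|_4^4=\langle P(r^3),r\rangle$ on $E_N$, together with $\|k/h-1\|_\infty\le\delta_0$, the pairing $g\langle P(mr),r^3\rangle$ is bounded below by $c\|v\|_4^4$, as in the proof of Proposition~\ref{cal:control}. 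The defect and boundary terms are absorbed by \eqref{cal:reference-form}: they cost $\varepsilon\|v\|_{H^1}^2$ plus $B_\varepsilon\|v\|_2^2$ with $\int B_\varepsilon\lesssim M^C$.

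Next I bound the remaining pairings, in this order.
\begin{enumerate}
\item \emph{Cubic and quadratic cross terms with $q$.} These are $r^2q$, $rq^2$, $q^3$. Since $\|q\|_{C^\varpi}\lesssim M^Ct^\theta$, they are handled by H\"older and Young against $\|v\|_4^4$, leaving integrable remainders. The remainder from $q^3$ is of size $t^{3\theta}\|v\|_4$, which is harmless.
\item \emph{The rough quadratic $3\langle P(mr),Z r^2\rangle$.} This is the hardest pairing. As in the absorption computation of Subsection~\ref{co:state-roadmap}, I estimate it by $\|Z\|_{C^{-\alpha}}\|v^2P(kv)\|_{B^\alpha_{1,1}}\lesssim h_0\|v\|_4^{3-\sigma}\|v\|_{H^1}^\sigma$. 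Young with exponent degree $(3+\sigma)/4<1$ absorbs it into $\varepsilon(\|v\|_{H^1}^2+\|v\|_4^4)$, plus $h_0^{c}$, which is integrable in time because $h_0\in L^q$. The terms $Zrq$ and $Zq^2$ use the products $Z\circ\Psi$ and $S$ recorded below \eqref{state:corrector}. Since $q-g\Psi$ is smooth, each of these is bounded by $h_0M^C$ times $\|v\|_{H^\sigma}$ or $\|v\|_2$.
\item \emph{The linear source $e$.} Since $\|e\|_{C^{-\alpha}}\lesssim h_0$, I get $|\langle P(mr),e\rangle|\lesssim h_0\|v\|_{H^\sigma}$.
\item \emph{The initial layer $b(Z-q)$.} By \eqref{up:corrector}, $|b|\lesssim s^{-1/2}$, and $Z\in C^{-\alpha}$, so this term costs $s^{-1/2}M^C\|v\|_{H^\sigma}$.
\end{enumerate}

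Integrating in time and using Gronwall for the $B_\varepsilon\|v\|_2^2$ terms, the main contribution is
\[
 \int_0^Ts^{-1/2}\|v\|_{H^\sigma}\,ds\le\|s^{-1/2}\|_{L^{2/(2-\sigma)}}\|v\|_{L^{2/\sigma}H^\sigma}\lesssim T^{(1-\sigma)/2}E_2(T)^{1/2},
\]
by interpolating $L^\infty L^2$ with $L^2H^1$. The $h_0$-weighted terms give $T^{1-1/q}$-type powers times $E_2^{1/2}$, which is even better. A final Young step then yields $E_2(T)\le CM^CT^{1-\sigma}+\tfrac12E_2(T)$. The deterministic horizon $T_g$ is chosen below the weight horizon $T_*$, so that the Gronwall factor and the constants in \eqref{cal:reference-bounds} are valid. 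I expect the main obstacle to be making the weight bounds hold without a restriction to the probability-$1/2$ event. To do this I will include the weight norms in $M$ and check that every constant is polynomial in $M$.

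For the consequences, $\|r\|_2\le\|h\|_\infty\|v\|_2\lesssim M^Ct^{(1-\sigma)/2}=M^Ct^{1/4-\epsilon_0}$. Then $w=\mathcal Tr$ satisfies the same bound by the boundedness of $\mathcal T$. Finally, $u=r-q$ with $\|q(t)\|_2\lesssim M^Ct^\theta$ and $\theta<1/4-\epsilon_0$ gives $\|u(t)\|_2\le CM^Ct^\theta$.
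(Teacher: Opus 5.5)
\textbf{There is a genuine gap: your Gronwall step makes the bound exponential in $M$, not polynomial.} Your list of pairings is essentially the paper's, and the statement requires $CM^CT^{1-\sigma}$. You route the weight defects and the Galerkin error through \eqref{cal:reference-form}, that is, as $\varepsilon\|v\|_{H^1}^2+B_\varepsilon\|v\|_2^2$, and then apply Gronwall.

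The coefficient $B_\varepsilon$ is random. It contains the rough parts $r_\pm$ of the defects and, via Lemma~\ref{cal:projection-form}, the Wick-square factor $\mathcal G_N^4$. Gronwall therefore multiplies your right side by $\exp(\int_0^TB_\varepsilon\,ds)$. Once you remove the event restriction by enlarging $M$, this factor is $\exp(CM^C)$, not $CM^C$. Such a factor has no finite moments in general. The polynomial dependence is used downstream, e.g.\ $\|M^C\mathcal K_{N,t}\|_{L^p(\Omega)}\le C_p$ in the reconstruction.

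A second problem: \eqref{cal:reference-form} and the constants behind $T_*$ hold only on an event of probability $1/2$ (the projection part on $\Omega_{\rm proj}$). The lemma is a pathwise statement, so choosing $T_g\le T_*$ does not help off that event. The exponential loss is what the paper accepts for the response \eqref{cal:response}, and then undoes with Gaussian isoperimetry. The state estimate is built to avoid it.

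\textbf{The missing idea is to use the quartic dissipation instead of Gronwall for every random coefficient.} The state has this dissipation; the response does not.
\begin{itemize}
\item A coefficient $a(t)$ bounded in $L^q_tC^{-\alpha}$ and paired with $v^2$ costs $a\|v\|_4^{2-\sigma}\|v\|_{H^1}^\sigma$. Its degree relative to $\|v\|_4^4+\|v\|_{H^1}^2$ is $(2+\sigma)/4<1$.
\item Young therefore absorbs it and leaves $C_\varepsilon a^m$, with $\int_0^Ta^m\le T^{1-m/q}\|a\|_{L^q}^m\le CM^CT^{1-\sigma}$ once $q\ge m/\sigma$.
\item This covers the rough defects. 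It also covers your $Zqr$ term, which is quadratic in $v$, not linear as you wrote.
\item For the Galerkin error, use the pathwise bound \eqref{cal:quadratic-quotient-projection}, namely $\Gamma_N(\|v\|_4\|v\|_{H^1}+\|v\|_4^2)$. It is absorbed the same way, with remainder $\Gamma_N^4$ and $\int_0^T\Gamma_N^4\le T^{1/2}\|\Gamma_N\|_{L^8}^4$.
\end{itemize}

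The one term that cannot be absorbed this way is the deterministic scalar defect of size $t^{-1/2}$. Young against $\|v\|_4^4$ would leave $t^{-1}$, which is not integrable. Instead it costs $C\sqrt T\,E_2(T)$, and absorbing it is what fixes the deterministic $T_g$.

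With these changes, the rest of your argument goes through as written. This includes the rough cubic pairing, the initial-layer bound $\int_0^Ts^{-1/2}\|v\|_{H^\sigma}\,ds\lesssim T^{(1-\sigma)/2}E_2(T)^{1/2}$, and the reconstruction of $r,w,u$.
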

\begin{proof}
\emph{The energy and its dissipation.}
Let $E_T$ denote the left side of \eqref{state:quadratic-bound}.
Apply \eqref{cal:general-entropy} with $\Phi(v)=v^2/2$ to
\eqref{state:additive-equation}. Since $r\in E_N$ and $k/h$ is close
to one, the leading cube gives
\[
 \langle P((k/h)r),r^3\rangle
 \ge\bigl(1-\|P\|_{L^4\to L^4}\|k/h-1\|_\infty\bigr)\|r\|_4^4
 \ge\tfrac12\|r\|_4^4.
\]
The constant $\mu>0$ in the weight defects supplies the $L^2$ term.
The available dissipation is therefore $\|v\|_{H^1}^2+\|v\|_4^4$.

\emph{One degree comparison absorbs the rough terms.}
Expand the lower powers:
\[
 (r-q)^3+3Z(r-q)^2
 =r^3+3(Z-q)r^2+3(q^2-2Zq)r+3Zq^2-q^3.
\]
By the product bounds after \eqref{state:corrector}, the coefficients
belong to $L^q_tC^{-\alpha}$. Bound their norms and the rough weight errors
by $a(t)\ge1$, with $\|a\|_{L^q(0,T)}\le CM^C$.
Choose $\alpha<\sigma<\beta$, with $\sigma$ just above $1/2$.
Fractional differences with spatial exponents $4,4,2$ give
\[
 \|h^2v^2P(kv)\|_{B^\alpha_{1,1}}
 \lesssim M^C\|v\|_4^2\|v\|_{H^\sigma}
 \lesssim M^C\|v\|_4^{3-\sigma}\|v\|_{H^1}^{\sigma}.
\]
Thus the quadratic pairing is bounded by
$Ca(t)\|v\|_4^{3-\sigma}\|v\|_{H^1}^\sigma$.
Its degree relative to the two dissipations is
\[
 \frac{3-\sigma}{4}+\frac\sigma2
 =\frac{3+\sigma}{4}<1.
\]
Consequently Young's inequality gives, for every $\varepsilon>0$,
\[
 Ca(t)\|v\|_4^{3-\sigma}\|v\|_{H^1}^{\sigma}
 \le\varepsilon(\|v\|_4^4+\|v\|_{H^1}^2)
       +C_\varepsilon a(t)^{4/(1-\sigma)}.
\]
The linear and additive pairings cost respectively
$a\|v\|_4^{2-\sigma}\|v\|_{H^1}^\sigma$ and
$a\|v\|_4^{1-\sigma}\|v\|_{H^1}^\sigma$.
Their degrees are $(2+\sigma)/4$ and $(1+\sigma)/4$.
The weight errors pair with $v^2$ and have the first of these degrees.
They are therefore absorbed by the same bound. Set $m=4/(1-\sigma)$
and take $q\ge m/\sigma$. Then
\[
 \int_0^T a(t)^m\dd t
 \le T^{1-m/q}\|a\|_{L^q}^m\le CM^CT^{1-\sigma}.
\]

\emph{The projection error.}
Equation~\eqref{cal:quadratic-quotient-projection} bounds its error by
$\Gamma_{N,T}(t)(\|v\|_4\|v\|_{H^1}+\|v\|_4^2)$.
Here $T$ is the horizon of the backward weight. The degrees are $3/4$
and $1/2$, so Young leaves $C_\varepsilon(1+\Gamma_{N,T}^4)$.
The pointwise moments proved in Appendix~\ref{cal:projection-section}
give, for every finite $p$,
\[
 \bigl\|\|\Gamma_{N,T}\|_{L^8(0,T)}\bigr\|_{L^p(\Omega)}
 \le C_pT^{1/8},\qquad 0<T\le1,
\]
uniformly in $N$. For each $T$, include this norm in $M$.
Its moment bounds are uniform in $N,T$; no supremum over $T$ is used. Thus
\[
 \int_0^T\Gamma_{N,T}^4\dd t
 \le T^{1/2}\|\Gamma_{N,T}\|_{L^8(0,T)}^4
 \le CM^CT^{1-\sigma},\qquad \sigma>1/2.
\]
\emph{The initial layer determines the rate.}
The spatially constant $t^{-1/2}$ weight error costs $C\sqrt T E_T$.
For $bZ$, interpolation and time H\"older give
\[
 \int_0^T t^{-1/2}\|v(t)\|_{H^\sigma}\dd t
 \le C T^{(1-\sigma)/2}
 \left(\sup_{t\le T}\|v(t)\|_2^2+
                  \int_0^T\|v\|_{H^1}^2\dd t\right)^{1/2}.
\]
Indeed $\|v\|_{H^\sigma}\le
\|v\|_2^{1-\sigma}\|v\|_{H^1}^\sigma$; the remaining integral is
$\int_0^Tt^{-1/(2-\sigma)}\dd t$, finite because $\sigma<1$.
Also,
$\int_0^T|b|\|q\|_2\|v\|_2\dd t
\lesssim M^CT^{1/2+\theta}E_T^{1/2}$.
The logarithmic errors in \eqref{cal:defect-bounds} belong to every finite $L^q_t$. We obtain
\[
 E_T\le C\sqrt T E_T+\tfrac14 E_T+CM^CT^{1-\sigma}.
\]
Choose a deterministic $T_g$ with $C\sqrt{T_g}\le1/4$ and absorb.
The same weights give the bound on $[0,t]$ for every $t\le T$:
the logarithmic term has bounded time norms, and no terminal condition
on $k(t)$ is needed.
Finally $r=hv$, $w=\mathcal Tr$ and $u=r-q$, with
$\|q(t)\|_2\le CM^Ct^\theta$, give the stated pointwise bounds.
\end{proof}

\begin{lemma}[Quartic entropy]\label{state:quartic}
On the same deterministic interval,
\begin{equation}\label{state:quartic-bound}
 \sup_{t\le T}\|v(t)\|_4^4+
 \int_0^T\bigl(\|v\nabla v\|_2^2+
                         \|P(r^3)\|_2^2\bigr)\dd t
 \le CM^C.
\end{equation}
\end{lemma}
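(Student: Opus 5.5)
We apply the general entropy identity \eqref{cal:general-entropy} with $\Phi(v)=v^4/4$ to the corrected equation \eqref{state:additive-equation}, with the same weights $h,k$ and $v=r/h$ as in Lemma~\ref{state:quadratic}. By Lemma~\ref{cal:energy}, the left side is
\[
 \frac d{dt}\tfrac14\int hk\,v^4+3\nu\int hk\,v^2|\nabla v|^2 .
\]
Since $R^{-1}\le h,k\le R$, this controls $\|v\|_4^4$ and $\|v\nabla v\|_2^2$, which is equivalent to $\|\nabla(v^2)\|_2^2$. The defect term $-\tfrac14\langle3a_++a_-,hk\,v^4\rangle$ is handled as in \eqref{cal:reference-form}, with $v^2$ in place of $z$. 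This gives a bound
\[
 \varepsilon\|v^2\|_{H^1}^2+B_\varepsilon(t)\|v^2\|_2^2,\qquad \int_0^T B_\varepsilon\dd t\le CM^C,
\]
and the $\|v^2\|_2^2$ part of $\|v^2\|_{H^1}^2$ is the quantity being estimated, so Gronwall absorbs it. The spatially constant $t^{-1/2}$ defect is integrable and is absorbed on $[0,T_g]$ exactly as in Lemma~\ref{state:quadratic}.

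The source pairing is $\langle\mathcal F,k v^3\rangle$, where $\mathcal F$ is the right side of \eqref{state:additive-equation} minus the linear part. Its principal piece is $-g\langle P(r^3),kv^3\rangle=-g\langle P(r^3),(k/h^3)\,r^3\rangle$. Since $r\in E_N$, we write $kv^3=h^{-3}k\,r^3$ and split $k/h^3=c+(k/h^3-c)$. For the leading coercive term we use $\langle P(r^3),r^3\rangle=\|P(r^3)\|_2^2$. The remainder is controlled by $\|k/h-1\|_\infty\le\delta_0$ together with the smallness of the weight oscillation, which is arranged by choosing $J$ large. Where the weights are not nearly constant, we instead commute $P$ and pay a projection error of the type \eqref{cal:quadratic-quotient-projection}, bounded by $\Gamma_{N,T}$ as before. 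The exact identity $\|r\|_4^4=\langle P(r^3),r\rangle\le\|P(r^3)\|_2\|r\|_2$ from the introduction, combined with the bound on $\sup\|r\|_2$ from Lemma~\ref{state:quadratic}, lets the dissipation $\|P(r^3)\|_2^2$ absorb the lower-order quartic contributions.

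The lower-order terms are $3(Z-q)r^2$, $3(q^2-2Zq)r$, $3Zq^2-q^3$, $e$ and $b(Z-q)$. Their coefficients lie in $L^q_tC^{-\alpha}$, so each pairing with $kv^3$ needs $v^3$-type products in $B^\alpha_{1,1}$. For example, $\|h^2v^2\cdot v^3\|_{B^\alpha_{1,1}}$ is estimated by fractional Leibniz as $\|v^2\|_{H^\sigma}\|v^3\|$, then interpolated through $\|v^2\|_2^{1-\sigma}\|v^2\|_{H^1}^\sigma$ and $\|v\|_6^3\lesssim\|P(r^3)\|_2$ up to weight factors. The last step uses the coercivity above and Sobolev embedding of $v^2\in H^1$. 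The degree count mirrors the quadratic case: the hardest term has total degree $<1$ relative to the dissipations $\|v^2\|_{H^1}^2$ and $\|P(r^3)\|_2^2$, so Young leaves $a(t)^m$ with $\int a^m\lesssim M^C$. The initial-layer term $b(Z-q)$ paired with $v^3$ costs $t^{-1/2}\|v^3\|_{H^{\sigma}}$-type quantities. We handle it by time Hölder using the quadratic bound $E_2(T)\le CM^CT^{1-\sigma}$ to supply the needed integrability; since only an $O(M^C)$ bound is claimed, no rate is needed.

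The main obstacle is the projection: the entropy produces $\langle P(r^3),k v^3\rangle$ rather than $\|r\|_6^6$, and the weights do not commute with $P$. We would first try to control the commutator $[P,k/h^3]$ by the finite-$p$ boundedness of $P_N$ and the $C^\beta$ regularity of the weights, measured against $\|P(r^3)\|_2\|r^3\|$. The difficulty is that $r^3\notin L^2$ a priori. If this fails, we would fall back on testing with $P(kv^3)$ and using the identity for $\|r\|_4^4$ plus the quartic dissipation to close.
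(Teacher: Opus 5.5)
There is a genuine gap at exactly the step you flag as the main obstacle, and it is the heart of the lemma. Write $H=\|P(r^3)\|_2$, $U=\|v\|_4$, $D=\|v\nabla v\|_2$, $R=\|r\|_2$. The leading cube gives $H^2+\langle P(r^3),(k/h^3-1)r^3\rangle$. The part $\langle P(r^3),(k/h^3-1)Q(r^3)\rangle$ is not controlled by $H$, however close $k/h^3$ is to one: smallness of the weights only yields $\delta H\|Q(r^3)\|_2$. A commutator estimate for $[P,k/h^3]$, or testing with $P(kv^3)$, does not change this. The latter is the same pairing, since the equation is $E_N$-valued, and every such route still needs a uniform bound on $\|Q(r^3)\|_2$.

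The missing idea is a strip estimate. The function $r^3$ has frequencies in $\Lambda_{3N}$ while $Q$ removes $\Lambda_N$. A high-frequency Poincar\'e inequality and Bernstein ($\|r\|_\infty\lesssim N^{3/4}\|r\|_4$) then give $\|Q(r^3)\|_2^2\le CN^{-2}\|\nabla(r^3)\|_2^2\le CN^{-1/2}\|r\|_4^2\|r\nabla r\|_2^2$. With $\|\nabla h\|_\infty\lesssim M^CN^{1-\beta}$, this is at most $CN^{-1/2}U^2D^2+CM^CN^{3/2-2\beta}U^6$.
\begin{itemize}
\item The second term is absorbed through $aU^{4p}\le\varepsilon H^2+C_\varepsilon a^{2/(2-p)}R^{2p/(2-p)}$ with $p=3/2$, using $\beta>3/4$.
\item The first term is absorbed by the dissipation $D^2$ only while $U^4\le cN$.
\end{itemize}
The proof therefore closes only with a continuity argument in the cutoff. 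The resulting bound $CM^A$ does not depend on $N$, so for $N>2CM^A/c$ the threshold $U^4=cN$ is never reached. For $N\le2CM^A/c$, every coefficient norm is polynomial in $M$ by Bernstein, and a direct test with $r^3$ closes. Your proposal contains neither the strip estimate nor this bootstrap.

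Several secondary steps also fail as written.
\begin{itemize}
\item The inequality $\|v\|_6^3\lesssim\|P(r^3)\|_2$ is backwards. Only $\|P(r^3)\|_2\le\|r\|_6^3$ holds, and the coercivity $\|r\|_4^4\le HR$ controls $L^4$, not $L^6$. The bound on $\|v\|_6$ must come from the gradient dissipation: $\|v\|_6\lesssim U^{1/2}D^{1/4}+U$.
\item In the rough quadratic pairing the test is $P(kv^3)$, not $v^3$. You need boundedness of $P$ on $L^2$ and $L^{3/2}$, together with the cubic difference inequality for $F(v)=v|v|$, to reach $\|h^2v^2P(kv^3)\|_{B^\alpha_{1,1}}\lesssim\|v\|_6^3\|F(v)\|_{H^\sigma}$. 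The resulting degree relative to $H^2+D^2$ is $\frac{7/2-2\sigma}{8}+\frac{3/4+\sigma}{2}<1$, which requires $\sigma<3/4$.
\item Handling the rough defect by Gronwall with $\int_0^TB_\varepsilon\le CM^C$ gives $e^{CM^C}$, not the polynomial bound $CM^C$ on which the later moment estimates rely. The paper avoids Gronwall here altogether. It converts every quartic term via $U^4\le HR$ into $\varepsilon H^2$ plus a multiple of a power of $R$, which the quadratic entropy already bounds. For instance, the scalar defect becomes $\varepsilon H^2+C(d_+^2+d_-^2)R^2$, which is integrable since $R^2\le CM^Ct^{1-\sigma}$.
\end{itemize}
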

\begin{proof}
\emph{The quartic test.}
Apply
\eqref{cal:general-entropy} with $\Phi(v)=v^4/4$. Its weight error is
$-\frac14\langle 3a_++a_-,hk v^4\rangle$.
The gradient dissipation is $3\nu\int hk v^2|\nabla v|^2$.
At a fixed time write
\[
 U=\|v\|_4,\qquad D=\|v\nabla v\|_2,\qquad
 H=\|P(r^3)\|_2,\qquad R=\|r\|_2.
\]
Since $r\in E_N$,
\begin{equation}\label{state:projected-coercivity}
 \|r\|_4^4=\langle P(r^3),r\rangle\le HR.
\end{equation}
Hence, for $0<p<2$,
\begin{equation}\label{state:quartic-absorption}
 aU^{4p}\le\varepsilon H^2+
       C_\varepsilon a^{2/(2-p)}R^{2p/(2-p)}.
\end{equation}

\emph{The leading cube.}
The ratio $k/h^3$ is close to one by \eqref{cal:gauge-bounds}.
Writing $r^3=P(r^3)+Q(r^3)$ gives
\[
\begin{aligned}
 \langle P((k/h^3)r^3),r^3\rangle
 &=H^2+\langle (k/h^3-1)r^3,P(r^3)\rangle\\
 &\ge\tfrac12H^2-C\|k/h^3-1\|_\infty^2\|Qr^3\|_2^2.
\end{aligned}
\]
Since $r$ has frequencies at most $N$, Bernstein and a high-frequency
Poincar\'e inequality imply
\[
 \|Qr^3\|_2^2
 \le CN^{-2}\|\nabla(r^3)\|_2^2
 \le CN^{-1/2}\|r\|_4^2\|r\nabla r\|_2^2.
\]
The exponential formula for $h$ gives
$\|\nabla h\|_\infty\le CM^CN^{1-\beta}$. Hence the last bound is
\begin{equation}\label{state:cubic-tail}
 CN^{-1/2}U^2D^2+CM^CN^{3/2-2\beta}U^6.
\end{equation}
The second term is absorbed by \eqref{state:quartic-absorption}, since
$6<8$ and $\beta>3/4$. The first is absorbed in $D^2$ while $U^4\le cN$,
for a fixed small $c$.

\emph{The rough lower powers.}
The potential projection error is controlled by
\eqref{cal:quartic-projection}. Treat the rough linear coefficient
$q^2-2Zq$ as another potential: its local term pairs with $hk v^4$,
and its projection term is covered by the same estimate.
For the local term use
\[
 \|v^4\|_{B^\alpha_{1,1}}
 \le C\bigl(U^{4-2\alpha}D^\alpha+U^4\bigr).
\]
Indeed, $\|v|v|\|_2=U^2$, $\|\nabla(v|v|)\|_2=2D$, and
\[
 aU^{4-2\alpha}D^\alpha
 \le\varepsilon D^2+C_\varepsilon a^{2/(2-\alpha)}U^4.
\]
Apply \eqref{state:quartic-absorption}; the weight errors have the same test.

For the rough quadratic coefficient put $F(v)=v|v|$.
The inequality
$|a^3-b^3|\le C(|a|+|b|)|F(a)-F(b)|$
and the boundedness of $P$ on $L^2$ and $L^{3/2}$ give
\[
 \|h^2v^2P(kv^3)\|_{B^\alpha_{1,1}}
       \le C\|v\|_6^3\|F(v)\|_{H^\sigma}.
\]
For a spatial increment $y$,
\[
 \|\delta_y[h^2v^2P(kv^3)]\|_1
 \lesssim M^C\|v\|_6^3
 \bigl(\|\delta_yF(v)\|_2+|y|^\beta\|F(v)\|_2\bigr).
\]
Integrate against $|y|^{-3-\alpha}\dd y$, using $\alpha<\sigma<\beta$. Also,
\[
 \|v\|_6\le C(U^{1/2}D^{1/4}+U),\qquad
 \|F(v)\|_{H^\sigma}\le C(U^{2-2\sigma}D^\sigma+U^2).
\]
The largest mixed term is $a(t)U^{7/2-2\sigma}D^{3/4+\sigma}$.
Use $U^4\lesssim H R$ from \eqref{state:projected-coercivity}.
Its degree relative to $H^2+D^2$ is
\[
 \frac{7/2-2\sigma}{8}+\frac{3/4+\sigma}{2}
 =\frac{13}{16}+\frac\sigma4<1,\qquad \sigma<3/4.
\]
Young leaves a polynomial in $a(t)$ and $R$. The other monomials have
smaller degree. The quadratic entropy bounds $R$; the assumed time norms
of $a$ are sufficient.

\emph{The additive source and the initial layer.}
Every additive source in \eqref{state:additive-equation} lies in
$E_N$, so its pairing with $P(kv^3)$ equals its pairing with $kv^3$.
The same cubic difference inequality gives
\[
 \|v^3\|_{B^\alpha_{1,1}}
 \le C\|v\|_2\bigl(U^{2-2\alpha}D^\alpha+U^2\bigr).
\]
By \eqref{state:projected-coercivity}, a source of norm $K(t)$ gives
\[
 KRU^{2-2\alpha}D^\alpha
 \lesssim KR^{(3-\alpha)/2}H^{(1-\alpha)/2}D^\alpha
 \le\varepsilon(D^2+H^2)+
 C_\varepsilon K^{4/(3-\alpha)}R^2.
\]
The term $KRU^2$ satisfies the same bound with $K$ replaced by $1+K$.
For $bZ$ and $bq$, $K(t)\le CM^Ct^{-1/2}$; this is integrable since
$2/(3-\alpha)<1$. The other additive norms belong to $L^q_t$.
The scalar weight error is bounded by
$\varepsilon H^2+C_\varepsilon(d_+^2+d_-^2)R^2$.
Its initial singularity is integrable because
$R^2(t)\le CM^Ct^{1-\sigma}$ by
\eqref{state:quadratic-bound}; its terminal logarithm is integrable.

\emph{Closing the cutoff restriction.}
The preceding estimates prove \eqref{state:quartic-bound}, with a bound
$CM^A$ independent of $N$, up to the first time $U^4=cN$.
If $N>2CM^A/c$, continuity prevents that first time.
For the remaining cutoffs $N\le 2CM^A/c$, all spatial coefficient
norms are polynomial in $M$ by Bernstein. Testing
\eqref{state:additive-equation} with $r^3$ gives, with $a\le CM^C$,
\[
\begin{aligned}
 \tfrac14\partial_t\|r\|_4^4+3\nu\|r\nabla r\|_2^2+gH^2
 &\le aH(\|r\|_4^2+R+1)\\
 &\le aH^{3/2}R^{1/2}+aH(R+1)\\
 &\le\tfrac g2H^2+C_g(1+a)^4(1+R^2).
\end{aligned}
\]
Here \eqref{eq:constants} bounds the scalar coefficients. Since
$D^2\lesssim\|r\nabla r\|_2^2+M^C\|r\|_4^4$,
integration proves \eqref{state:quartic-bound} also for these cutoffs.
\end{proof}

\subsection{The heat equation supplies the missing derivatives}

\begin{proposition}[State estimate]\label{po:position}
For a deterministic $T_g\in(0,1]$ and $T\le T_g$,
\begin{equation}\label{po:energy}
 \sup_{t\le T}\|w(t)\|_{H^{\epsilon_0}}^2+
 \int_0^T\bigl(\|w\|_{H^{1+\epsilon_0}}^2+\|w\|_4^4+\|u\|_4^4\bigr)\dd t
 \le CM^CT^{2\gamma},\qquad
 \|u(t)\|_2\le CM^Ct^\theta.
\end{equation}
\end{proposition}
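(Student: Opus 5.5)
We will treat \eqref{po:A8} as a linear heat equation for $w$ whose source has already been controlled by the two entropies, and apply parabolic maximal regularity in the scale $H^{\epsilon_0}\to H^{1+\epsilon_0}$. Since $w(0)=0$, the energy inequality for $\mathscr H w=G$ gives
\[
 \sup_{t\le T}\|w(t)\|_{H^{\epsilon_0}}^2+\int_0^T\|w\|_{H^{1+\epsilon_0}}^2\dd t
 \lesssim\int_0^T\|G\|_{H^{-1+\epsilon_0}}^2\dd t .
\]
Because $\alpha=\tfrac12+\epsilon_0<1-\epsilon_0$, it suffices to bound $G$ in $L^2(0,T;H^{-\alpha})$ for the rough parts and to absorb the rest. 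We split $G=\mathcal P_t(w)+b(w+\ell)$ according to \eqref{up:polynomial-def} and estimate the terms as follows.

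\emph{The cubic term.} Write $u^3=(r-q)^3$. Lemma~\ref{state:quartic} controls $v^2$ in $L^\infty L^2\cap L^2H^1$, so parabolic Sobolev embedding gives $v\in L^{20/3}_{t,x}$ and hence $r^3\in L^2_{t,x}$. The contributions of $q$ are lower order, since $\|q\|_{C^\varpi}\lesssim M^Ct^\theta$. Together with the boundedness of $\mathcal T$ this gives $\|\mathcal TP(u^3)\|_{L^2L^2}\le CM^C$. To obtain a positive power of $T$ rather than an $O(1)$ bound, we interpolate against Lemma~\ref{state:quadratic}, which gives $\|v\|_{L^4_{t,x}}^4\le CM^CT^{1-\sigma}$, and use time Hölder. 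We aim for $\|u^3\|_{L^2(0,T;L^2)}^2\le CM^CT^{2\gamma}$, with $2\gamma=\tfrac12-4\epsilon_0<1-\sigma$ leaving room for the interpolation losses.

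\emph{The quadratic and linear terms.} For $Q_\zeta(r_0)$ we pair $\zeta\in C^{-1/2-\epsilon_0/4}$ against $r_0^2$ in $B^{\alpha}_{1,1}$-type norms. This requires $r_0=w-Ku$ in $H^\sigma$, which we interpolate between the $L^\infty H^{\epsilon_0}$ and $L^2H^{1+\epsilon_0}$ norms of $w$; the cost $h_0\,\|w\|_4\,\|w\|_{H^{\sigma}}$ is absorbed by Young into the left side plus an integrable remainder. For $\mathcal Lw$, \eqref{up:linear-bound} gives
\[
 \|\mathcal Lw\|_{H^{-2\epsilon_0}}\lesssim h_0\|w\|_{H^{1+\epsilon_0}}.
\]
Pairing this with $w$ in the energy (that is, testing against $A^{\epsilon_0}w$) costs $h_0\|w\|_{H^{1+\epsilon_0}}\|w\|_{H^{2\epsilon_0}}$. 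Interpolation plus Young gives $\varepsilon\|w\|_{H^{1+\epsilon_0}}^2+Ch_0^{m}\|w\|_{H^{\epsilon_0}}^2$, which Gronwall handles because $h_0\in L^q_t$ with $q$ large. The forcing $f_0\in C^{-\alpha}\cap H^{-\alpha}$ with norm at most $h_0$ contributes $\int_0^T h_0^2\le M^CT^{1-2/q}$.

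\emph{The initial layer.} The term $b(w+\ell)$ with $|b|\lesssim t^{-1/2}$ and $\ell\in H^{-\sigma}$ is the one that sets the rate. Its pairing is handled exactly as in Lemma~\ref{co:source-lemma}:
\[
 \int_0^T t^{-1/2}\|w\|_{H^{\sigma}}\dd t\lesssim T^{1/2-\sigma/(2(1+\epsilon_0))}(\cdots)^{1/2}.
\]
After squaring this yields a power at least $T^{2\gamma}$, by the same arithmetic as the computation showing $\tfrac12-\sigma/(2r_*)-\gamma>0$. The $bw$ part is a scalar multiple of $w$ and costs $\sqrt T$ times the energy, which we absorb by shrinking the deterministic $T_g$.

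Collecting these bounds gives $E(T)\le(C\sqrt T+\tfrac14)E(T)+CM^CT^{2\gamma}$ and hence the first bound in \eqref{po:energy}. The bound on $\|w\|_4^4$ then follows from $H^{3/4}\subset L^4$ and interpolation. The bound on $\|u\|_4^4$ follows from $u=r-q$, the estimate $\|r\|_{L^4L^4}^4\lesssim M^CT^{1-\sigma}\le M^CT^{2\gamma}$, and $\|q\|_\infty\lesssim M^Ct^\theta$. The pointwise statement $\|u(t)\|_2\le CM^Ct^\theta$ is already contained in Lemma~\ref{state:quadratic}. The main obstacle is the cubic step: extracting the factor $T^{2\gamma}$ from the quartic entropy, which is only $O(M^C)$, requires a careful interpolation with the quadratic entropy's $T^{1-\sigma}$.
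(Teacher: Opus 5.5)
There is a genuine gap, and it is the step you flag yourself as the main obstacle. You run the energy estimate directly in $C_TH^{\epsilon_0}\cap L^2_TH^{1+\epsilon_0}$ with $w(0)=0$. In that scheme every source in \eqref{po:A8} must itself carry the factor $T^{2\gamma}$. For the cubic you plan to get $\|u^3\|_{L^2_{t,x}}^2\le CM^CT^{2\gamma}$ by interpolating Lemma~\ref{state:quartic} against Lemma~\ref{state:quadratic}, and this cannot work, because the quartic entropy carries no power of $T$. The natural interpolations give only $T^{1/4}$ (from $\|v\|_6^6\le\|v\|_4^3\|v^2\|_6^{3/2}$ and H\"older in time) or $T^{(1-\sigma)/4}$ (from $L^6_{t,x}$ between $L^4_{t,x}$ and $L^{20/3}_{t,x}$). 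In fact a bump of spatial width $T^{1/2}$, height $T^{-3/8}$ and duration $T$ satisfies every bound in the two lemmas, yet has $\int_0^T\|v\|_6^6\,dt\approx T^{1/4}$. So no combination of the two lemmas yields $T^{1/2-4\epsilon_0}$. The quadratic source $\zeta_0r_0^2$ has the same requirement, and your sketch does not show where its factor $T^{2\gamma}$ comes from; the paper controls it only through $r_0,r_0^2\in L^{2/\sigma}_TH^\sigma$, which rests on the quartic entropy. Separately, closing the $\mathcal Lw$ term by Gronwall with the random coefficient $h_0^m$ produces $e^{CM^C}$, not a polynomial in $M$. Use the a priori bound on $\|w\|_2$ from Lemma~\ref{state:quadratic} instead.

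The missing idea is that the heat estimate need not produce the rate. The paper bounds the whole right side of \eqref{po:A8} in $L^2_TH^{-\alpha}$ only by $CM^C$:
the cubic via $v^2\in L^{10/3}_{t,x}$;
$\zeta_jr_0^{2-j}$ via $r_0,r_0^2\in L^{2/\sigma}_TH^\sigma$;
$bZ$ via $\|Z(t)\|_{H^{-\alpha}}\le CMt^{\epsilon_0/4}$.
It then applies heat regularity one level higher, to get $X=\|w\|_{C_TH^{1-\alpha}}+\|w\|_{L^2_TH^{2-\alpha}}\le CM^C$. There $\mathcal Lw$ is absorbed by Young, using $W_0=\|w\|_{C_TL^2}$ and an interpolation exponent $\vartheta<1$. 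Finally it extracts $T^\gamma$ purely by interpolating $X$ against the small low norms $\|w\|_{C_TL^2}\le CM^CT^{1/4-\epsilon_0}$ and $\|w\|_{L^2_TL^2}\le CM^CT^{3/4-\epsilon_0}$ from the quadratic entropy, with a strictly positive margin over $\gamma$.

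If you want to keep your single-level energy estimate, you can rescue the cubic only by measuring it in the dual norm. Since $L^{6/(5-2\epsilon_0)}\subset H^{-1+\epsilon_0}$, you need $\int_0^T\|u\|_{3p}^6\,dt$ with $3p<4$. Then $\sup_t\|v\|_2^{2/3}\sup_t\|v\|_4^{4/3}\int_0^T\|v\|_4^4\,dt$, up to $\epsilon_0$-corrections of the exponents, gives about $T^{4(1-\sigma)/3}$, which beats $T^{2\gamma}$. The quadratic source would need a comparable reworking. As written, the proposal does not close.
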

\begin{proof}
\emph{The nonlinear source is now square integrable.}
The quartic entropy gives
\[
 \|v^2\|_{L^{10/3}_{t,x}}
 \lesssim\|v^2\|_{L^\infty_tL^2}^{2/5}
          \|v^2\|_{L^2_tH^1}^{3/5}\le CM^C,
 \qquad \|u^3\|_{L^2_{t,x}}\le CM^C,
\]
since $u=hv-q$ and $h,q$ are bounded.

Write $r_0=hv+(q_0-q)$. The entropy bounds and $q_0-q\in C_TC^\beta$
give $r_0,r_0^2\in L^{2/\sigma}_TH^\sigma$.
Since $\sigma>\alpha$ and $-1/2-\epsilon_0/4>-\alpha$,
for $j=0,1,2$,
\[
 \|\zeta_jr_0^{2-j}\|_{L^2_TH^{-\alpha}}
 \lesssim T^{1/2-1/q-\sigma/2}
 \|\zeta_j\|_{L^q_TC^{-1/2-\epsilon_0/4}}
 \|r_0^{2-j}\|_{L^{2/\sigma}_TH^\sigma}\le CM^C.
\]

The covariance gives $\E\|Z(t)\|_{H^{-\alpha}}^2\le Ct^{\epsilon_0}$.
Gaussian increments give $\|Z(t)\|_{H^{-\alpha}}\le CMt^{\epsilon_0/4}$
for all $t$, with $M$ having all finite moments. Thus
\[
 \int_0^T\|bZ\|_{H^{-\alpha}}^2\dd t
 \lesssim M^2\int_0^Tt^{-1+\epsilon_0/2}\dd t\lesssim M^2,
 \qquad \int_0^T|b|^2\|w\|_2^2\dd t
 \lesssim M^C\int_0^Tt^{-\sigma}\dd t\lesssim M^C.
\]
The bounds on $\mathcal T,q_0$ likewise control $b(\mathcal TZ-q_0)$.

\emph{The linear term.} Set
\[
 X=\|w\|_{C_TH^{1-\alpha}}+\|w\|_{L^2_TH^{2-\alpha}},
 \qquad W_0=\|w\|_{C_TL^2},\qquad
 \vartheta=\frac{1+\epsilon_0}{2-\alpha}<1-\frac2q.
\]
Linear heat regularity, \eqref{up:linear-bound}, time H\"older and
Sobolev interpolation give
\[
 X\le CM^C+
 CM^CT^{1/2-1/q-\vartheta/2}X^\vartheta W_0^{1-\vartheta}.
\]
Young gives $X\le X/2+CM^C$, hence $X\le CM^C$.

\emph{Interpolation.}
The quadratic entropy gives
$W_0\le CM^CT^{1/4-\epsilon_0}$ and
$\|w\|_{L^2_TL^2}\le CM^CT^{3/4-\epsilon_0}$.
With $a=(1+\epsilon_0)/(3/2-\epsilon_0)$ and
$c=\epsilon_0/(1/2-\epsilon_0)$,
\[
\begin{aligned}
 \|w\|_{L^2_TH^{1+\epsilon_0}}
 &\le\|w\|_{L^2_TL^2}^{1-a}X^a
 \le CM^CT^{\gamma+\frac{5\epsilon_0}{4(3/2-\epsilon_0)}},\\
 \|w\|_{C_TH^{\epsilon_0}}
 &\le W_0^{1-c}X^c
 \le CM^CT^{\gamma+\frac{\epsilon_0}{4(1/2-\epsilon_0)}}.
\end{aligned}
\]
Both corrections to $\gamma$ are positive.
The $L^4_{t,x}$ bounds follow from the quadratic entropy and bounded
reconstruction, since $1-\sigma>2\gamma$.
Together with Lemma~\ref{state:quadratic}, this proves \eqref{po:energy}.
\end{proof}

\subsection{Local stability of the corrected equation}

\begin{lemma}[Weighted heat gain]\label{up:heat-gain}
Suppose $\|F(t)\|_{C^r}\le H(t)t^{-d}$, with
$H\in L^p(0,T)$, $1<p\le\infty$, and $T\le1$.
Set $a=(\zeta-r)_+/2$. If
\[
 d,a<1-1/p,\qquad \chi=\lambda+1-1/p-d-a>0,
\]
then
\[
 \sup_{t\le T}t^\lambda\|\cI F(t)\|_{C^\zeta}
 \lesssim T^\chi\|H\|_{L^p(0,T)}.
\]
\end{lemma}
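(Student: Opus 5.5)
We bound $\cI F(t)=\int_0^te^{-(t-s)A}F(s)\dd s$ directly from the standard heat smoothing estimate on Hölder--Besov spaces,
\[
 \|e^{-\tau A}f\|_{C^\zeta}\lesssim(1\wedge\tau)^{-(\zeta-r)_+/2}\|f\|_{C^r},
\]
which holds uniformly for $\tau\le1$ because $A=\mu-\nu\Delta$ has a semigroup bounded on $C^r$ and its blockwise action $\|\Delta_je^{-\tau A}f\|_\infty\lesssim e^{-c\tau2^{2j}}\|\Delta_jf\|_\infty$ gives the gain $\tau^{-a}$ with $a=(\zeta-r)_+/2$. When $\zeta\le r$ there is no gain and $a=0$. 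Using this inside the Duhamel integral gives, for $t\le T$,
\[
 \|\cI F(t)\|_{C^\zeta}\lesssim\int_0^t(t-s)^{-a}s^{-d}H(s)\dd s .
\]

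Next we apply Hölder in time with conjugate exponent $p'=p/(p-1)$:
\[
 \int_0^t(t-s)^{-a}s^{-d}H(s)\dd s
 \le\|H\|_{L^p(0,t)}\Bigl(\int_0^t(t-s)^{-ap'}s^{-dp'}\dd s\Bigr)^{1/p'} .
\]
The hypotheses $d,a<1-1/p$ mean exactly $ap',dp'<1$, so the Beta integral converges. By scaling $s=tx$ it equals $t^{1-(a+d)p'}B(1-ap',1-dp')$. Raising to the power $1/p'$ gives $Ct^{1-1/p-a-d}$. When $p=\infty$ the same computation is done with $p'=1$, and the conditions become $a,d<1$.

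Multiplying by $t^\lambda$ gives $t^\lambda\|\cI F(t)\|_{C^\zeta}\lesssim t^{\chi}\|H\|_{L^p(0,T)}$ with $\chi=\lambda+1-1/p-d-a$. Since $\chi>0$ and $t\le T\le1$, we have $t^\chi\le T^\chi$, and the supremum over $t\le T$ yields the claim.

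The only point needing care is the heat estimate near $\tau\to0$ together with the restriction to $\tau\le1$, which the assumption $T\le1$ provides. The zero frequency is also harmless, since $\mu>0$ makes the semigroup contractive there. The constants depend only on $r,\zeta,d,p,\lambda$, and not on the cutoff $N$, because $P_N$ does not enter this estimate.
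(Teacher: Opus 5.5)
Your proof is correct and follows the paper's argument exactly: the blockwise heat smoothing bound $\|e^{-\tau A}f\|_{C^\zeta}\lesssim\tau^{-a}\|f\|_{C^r}$ inside the Duhamel integral, then H\"older in time with exponent $p'$. This reduces everything to the Beta integral $t^{1-(a+d)p'}B(1-dp',1-ap')$, which is finite precisely because $ap',dp'<1$. You also spell out the heat estimate and the case $p=\infty$, which the paper leaves implicit.
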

\begin{proof}
The heat bound and H\"older reduce the estimate to
\[
 \int_0^t(t-r)^{-ap'}r^{-dp'}\dd r
 =t^{1-(a+d)p'}B(1-dp',1-ap').
\]
It is finite because $ap',dp'<1$.
\end{proof}

\begin{lemma}[A common local solution map]\label{up:local}\label{po:local}
In the regime \eqref{eq:indices}, the mild map of \eqref{po:A8} and
its state derivative have norm at most $1/2$ on the unit ball of
\[
 \mathcal X_T=C_TC^\varpi\cap
 \{w:\sup_{t>0}t^\omega\|w(t)\|_{C^{1+\epsilon_0}}<\infty\}
\]
whenever $T\le cM^{-q_{\rm loc}}$, for deterministic
$c,q_{\rm loc}>0$. The source estimates also hold after one coefficient
derivative.
\end{lemma}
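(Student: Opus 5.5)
We treat the mild map $\Phi(w)(t)=\cI[\mathcal P_\cdot(w)+b(\cdot)(w+\ell)](t)$ of \eqref{po:A8}, with $\mathcal P$ from \eqref{up:polynomial-def}, as a sum of heat integrals of explicit sources. Each source is estimated in a Hölder norm $C^r$ with a time weight, and Lemma~\ref{up:heat-gain} converts it into the two seminorms of $\mathcal X_T$. For the $C_TC^\varpi$ part we take $\zeta=\varpi$ and $\lambda=0$. For the weighted part we take $\zeta=1+\epsilon_0$ and $\lambda=\omega$. In every case the goal is a bound $CM^CT^{\chi}$ with $\chi>0$ deterministic. Choosing $T\le cM^{-q_{\rm loc}}$ then makes the map, and its state derivative, of norm at most $1/2$ on the unit ball. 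The derivative is handled the same way: $D_w\Phi$ is a heat integral of the linearized sources. These are multilinear in $w$ with one slot replaced by the increment, so the same estimates apply with one factor of $w$ replaced by the direction.

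The sources are taken in order. First, interpolation between the two norms of $\mathcal X_T$ gives $\|w(t)\|_{C^{a}}\lesssim t^{-\omega(a-\varpi)/(1+\epsilon_0-\varpi)}$ for $\varpi\le a\le 1+\epsilon_0$. The same applies to $u=\mathcal T^{-1}(w-q_0)$ and $r_0=w-Ku$, using \eqref{po:q0} and \eqref{po:smoothing}.

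\emph{Cubic term.} $u^3$ lies in $C^\varpi$, hence in $C^0$, with $d=0$, $H\lesssim M^C$ and $r=0$. Since $\zeta=1+\epsilon_0<2$, the loss is $a=(1+\epsilon_0)/2<1$. Thus $\chi=\omega+1-1/p-(1+\epsilon_0)/2>0$ for $p$ large.

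\emph{Rough quadratic $Q_\zeta(r_0)$.} The coefficients $\zeta_j\in C^{-1/2-\epsilon_0/4}$ are multiplied by $r_0^{2-j}$. To make the product well defined we need $r_0\in C^{1/2+\epsilon_0}$, which costs $t^{-d}$ with $d=\omega(1/2+\epsilon_0-\varpi)/(1+\epsilon_0-\varpi)\approx 1/8$. The output regularity is $r=-1/2-\epsilon_0/4$, so $a=(3/2+5\epsilon_0/4)/2\approx 3/4$. This gives $\chi\approx 1/4+\varpi+1-1/p-1/8-3/4>0$, using $\omega=1/4+\varpi$ and $q^{-1}<\varpi/4$. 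The coefficient $h_0$ sits in $L^q_t$ through \eqref{up:integrated-model}, so we take $p=q$.

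\emph{Linear term $\mathcal Lw$.} By the Hölder version of \eqref{up:order} with $a=1-3\epsilon_0/4$, we have $\|\mathcal Lw\|_{C^{-2\epsilon_0}}\lesssim h_0\|w\|_{C^{1-3\epsilon_0/4}}$. This requires $d<\omega$, so $\chi=\lambda-d+1-1/q-a$ with $a\approx 1/2+3\epsilon_0/2$ stays positive.

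\emph{Additive forcing $f_0\in C^{-\alpha}$.} Here $d=0$ and $a\approx 3/4+\epsilon_0$ when $\zeta=1+\epsilon_0$. The weight $t^\omega$ with $\omega>1/4$ gives $\chi=\omega+1-1/q-3/4-\epsilon_0>0$. This case is what forces the choice $\omega=1/4+\varpi$.

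\emph{Initial layer $b(w+\ell)$.} From \eqref{up:corrector}, $|b|\lesssim t^{-1/2}$. The function $\ell=\mathcal TZ-q_0$ has regularity only $C^{-1/2-\epsilon_0/4}$. So the term $b\ell$ has $d=1/2$ and $r=-1/2-\epsilon_0/4$, giving $a\approx 3/4$ for the top norm. Then $d+a\approx 5/4$ exceeds $1$, and Lemma~\ref{up:heat-gain} does not apply directly. This is the main obstacle. First I would exploit that $b\ell$ does not depend on $w$. It is therefore a fixed inhomogeneity and does not enter the contraction or the derivative bound. On the unit ball one only needs $\Phi$ to map the ball into itself, and the affine part can be absorbed by shifting the ball's centre. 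If instead the statement is meant to include it, one uses the Gaussian smallness of $Z$ at small times, $\|Z(t)\|_{C^{-1/2-\epsilon_0}}\lesssim Mt^{\epsilon_0/8}$ as in Proposition~\ref{po:position}, combined with the weaker requirement at the $C^\varpi$ level. The term $bw$ is harmless: $\zeta-r=0$ in the weighted norm and $d=1/2<1$.

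Finally, the coefficient-derivative versions follow by applying the same bounds to $\mathfrak d$ of each ingredient, using Lemma~\ref{ga:mark-stability}, \eqref{up:inverse-jet} and the stated derivative bounds on $h_0$, $b$, $\zeta$ and $q_0$.
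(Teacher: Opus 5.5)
Your architecture matches the paper's. You tabulate each source by $(p,d,r)$, apply Lemma~\ref{up:heat-gain} with $(\zeta,\lambda)=(\varpi,0)$ and $(1+\epsilon_0,\omega)$, and get the state derivative from the affine dependence of $u=\mathcal T^{-1}(w-q_0)$ and $r_0=w-Ku$ on $w$.

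\textbf{The gap: the initial layer $b\ell$.} You declare this term out of reach of Lemma~\ref{up:heat-gain} and then dispose of it by arguments that do not prove the statement. That lemma does not require $d+a<1$. It requires $d<1-1/p$ and $a<1-1/p$ separately, together with $\chi=\lambda+1-1/p-d-a>0$. In this row $p=\infty$, $d=1/2$ and $r=-\alpha$.
\begin{itemize}
\item In the weighted seminorm, $a=\tfrac12(1+\epsilon_0+\alpha)=\tfrac34+\epsilon_0<1$ and $\chi=\omega+\tfrac12-\tfrac12(1+\epsilon_0+\alpha)=\varpi-\epsilon_0>0$.
\item In the $C^\varpi$ seminorm, $\chi=\tfrac14-\tfrac12(\varpi+\epsilon_0)>0$.
\end{itemize}
You dropped $\lambda=\omega$ in this one row, although you used it in the others. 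This is the smallest gain in the whole table. It is this row, not $f_0$, that forces $\omega>\tfrac14+\epsilon_0$ and hence $\omega=\tfrac14+\varpi$: the source $f_0$ has $d=0$ and gain $\tfrac14-\epsilon_0-1/q>0$ even with $\lambda=0$.

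Your workarounds would not repair a genuine obstruction either.
\begin{itemize}
\item The lemma bounds the full mild map on the unit ball, so the $w$-independent term $\cI[b\ell]$ must itself be small in $\mathcal X_T$. Recentring the ball needs exactly that bound. The weighted $C^{1+\epsilon_0}$ control of the fixed point is also used later, e.g.\ in \eqref{cal:reference-input}.
\item The factor $t^{\epsilon_0/8}$ from $Z$ lowers $d$ only by $\epsilon_0/8$. By your own accounting, $d+a$ would stay near $5/4$.
\end{itemize}

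\textbf{A slip in the linear term.} The H\"older form of \eqref{up:order} with $a=1-3\epsilon_0/4$ controls only $\mathcal A\mathcal T^{-1}w$. The other part of $\mathcal Lw$, namely $-\eta\mathcal TP(w\circ W)$, is a resonant product with $W\in C^{-1-\epsilon_0/4}$. It is defined only for $w\in C^{1+\epsilon_0/4+}$, so it cannot be bounded by $\|w\|_{C^{1-3\epsilon_0/4}}$. It must be charged to the full weighted norm, with $p=\infty$, $d=\omega$, $r=\epsilon_0/2$, as in the paper's table. It still closes, with gain $\tfrac12-\epsilon_0/4$ in the weighted seminorm.

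\textbf{A minor point.} Your interpolation claim holds only below regularity $\tfrac12-\epsilon_0/4$ for $u$, because of $q_0$, and below $\beta$ for $r_0$. You only use $u\in C^\varpi$ and $r_0\in C^\sigma$, so this is harmless.
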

\begin{proof}
Set
$\mathcal F(w)=\cI[\mathcal P_\cdot(w)+b(w+\ell)]$ and
$\|w\|_{\mathcal X_T}=\|w\|_{C_TC^\varpi}
+\sup_{t>0}t^\omega\|w(t)\|_{C^{1+\epsilon_0}}$.
On its unit ball, reconstruction and $\alpha<\sigma<\beta$ give
\begin{equation*}
 \|u\|_{C^\varpi}+\|r_0\|_{C^\varpi}
 +t^\omega\|r_0\|_{C^\sigma}\lesssim M^C.
\end{equation*}
The four heat inputs are
\begin{equation}\label{up:local-source}
\begin{array}{c|ccc}
 \text{source, including outer operators}&p&d&r\\\hline
 \mathcal Au+F-g\mathcal TPu^3&q&0&\varpi-1-\epsilon_0/2\\
 \mathcal TPQ_\zeta(r_0)&q&\omega&-\alpha\\
 \mathcal TP(w\circ W)&\infty&\omega&\epsilon_0/2\\
 b(w+\ell)&\infty&1/2&-\alpha
\end{array}
\end{equation}
A row means $\|\text{source}(t)\|_{C^r}\le H(t)t^{-d}$ with
$\|H\|_{L^p}\lesssim M^C$. Apply Lemma~\ref{up:heat-gain} to
$(\zeta,\lambda)=(\varpi,0)$ and $(1+\epsilon_0,\omega)$.
The strongest kernel restriction is
$1+3\epsilon_0/4-\varpi/2<1-1/q$. The smallest time gain is
\[
 \omega+\tfrac12-\tfrac12(1+\epsilon_0+\alpha)=\varpi-\epsilon_0>0.
\]
Both follow from \eqref{eq:indices}. Moreover,
\[
 \delta u=\mathcal T^{-1}\delta w,\quad
 \delta r_0=(I-K\mathcal T^{-1})\delta w,\quad
 \delta Q_\zeta(r_0)=2(\zeta_0r_0-\zeta_1)\delta r_0.
\]
These obey the same table; coefficient derivatives use
\eqref{up:inverse-jet} and Lemma~\ref{ga:mark-stability}. Hence
\[
 \sup_{\|w\|_{\mathcal X_T}\le1}
 \max\{\|\mathcal F(w)\|_{\mathcal X_T},
       \|D\mathcal F(w)\|_{\mathcal L(\mathcal X_T)}\}
 \le CM^CT^{\varpi-\epsilon_0}\le\tfrac12.
\]
Banach's theorem and finite-dimensional uniqueness identify the fixed point.
\end{proof}

\subsection{The boundary contraction and the continuum field}
\label{full:identification-section}
The scalar $\Delta_N=c_2-c_2^P$ cancels the projection boundary.
We keep both terms until this cancellation is made.

\begin{lemma}[Convergence of the compensated boundary operator]
\label{full:boundary-limit}
Let $\mathcal B_N$ and $\Delta_N$ be as in \eqref{up:boundary}
and \eqref{ga:full-correction}.
For $0<T\le1$, $\alpha_0,\eta_0,\epsilon>0$ and finite $p,q\ge1$,
\[
 \mathcal B_N+\Delta_N I\longrightarrow0
 \quad\text{in }L^p\!\left(\Omega;L^q\!\left(0,T;
 \mathcal L(C^{\alpha_0},C^{-\epsilon})
 \cap\mathcal L(H^{\eta_0},H^{-\epsilon})\right)\right).
\]
\end{lemma}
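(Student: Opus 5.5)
We decompose $\mathcal B_Nf$ into a Gaussian second-chaos part, which tends to zero in $L^p(\Omega)$ by the boundary block structure, and a zero-chaos (contracted) part. The contracted part should equal $-\Delta_N f$ up to an error that vanishes as $N\to\infty$. Write
\[
 \mathcal B_Nf=P_N\{[(P_N-I)(f\prec Y)]\circ W\}.
\]
Only the blocks $2^j\asymp N$ contribute, since $(P_N-I)$ kills frequencies inside $\Lambda_N$ and $Y,W$ are supported in $\Lambda_{2N}$. So $f\prec Y$ reduces to $(S_{j-2}f)\Delta_jY$ with $2^j\asymp N$, and $f$ enters only through its low-frequency blocks $S_{j-2}f$. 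Freezing the low-frequency factor, we set
\[
 \mathcal B_Nf=\sum_{2^j\asymp N}P_N\bigl\{[(P_N-I)(S_{j-2}f\,\Delta_jY)]\circ W\bigr\}.
\]
We then expand the product $\Delta_jY\cdot W$ by Wick's formula into its chaos-four, chaos-two and chaos-zero components. The chaos-zero component is the deterministic kernel $\E[\Delta_jY(x)W(y)]$, which acts as a convolution-type operator on the frozen factor.

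\emph{The deterministic part.} At $f\equiv1$, the contraction equals $\E\{[(P_N-I)Y]\circ W\}$. Since $P_N$ restricts output frequencies $p+q\in\Lambda_N$, this is exactly $-(c_2-c_2^P)=-\Delta_N$ by \eqref{ga:full-correction}; the resonant multiplier equals one on the diagonal $m(b,-b)=1$. For general $f$, the contracted kernel is
\[
 \sum_{k}\widehat{S f}(k)\,e_k\sum_{p,q}\frac{2a^2I_{pq}}{\lambda_p\lambda_q}\,[\text{indicators and } m(k+b,-b)].
\]
Its difference from $-\Delta_N\widehat f(k)$ comes from three sources: shifting the indicator $p+q\notin\Lambda_N$ by $k$, replacing $m(k+b,-b)$ by one, and the low-frequency cutoff $S_{j-2}$. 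We bound each by the same telescoping as in the shell estimate \eqref{ga:shell}. With $|k|\lesssim N^{\delta}$ fixed, the indicator shift affects only a set of $p$ at distance $|k|$ from the cube boundary, which gives a gain of $\langle k\rangle/N$. The multiplier error is at most $\min(1,|k|/N)$. Frequencies of $f$ with $|k|\gtrsim N^{\delta}$ are handled by the regularity $\alpha_0$ (resp.\ $\eta_0$) of $f$, which produces a factor $N^{-\delta\alpha_0}$ against the uniform bound on $\Delta_N$. Together these give an operator-norm error $O(N^{-c})$ from $C^{\alpha_0}$ to $C^{-\epsilon}$ and from $H^{\eta_0}$ to $H^{-\epsilon}$, uniformly in $t\le T$.

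\emph{The random parts.} For the chaos-two and chaos-four components we reuse the proof of Lemma~\ref{ga:quadratic-envelope}. This time we keep the variance gains: each boundary block pair satisfies $\E|\Delta_j Y\Delta_lW|^2$-type bounds with $2^{-2j}2^{2l}$ times a factor reflecting the non-contracted structure. For chaos $\ge2$, the variance of the resonant block at output frequency $K\ll N$ behaves like $N^{-1}$ (or $\langle K\rangle^{-3}$ restricted to the boundary shell) rather than $O(1)$. Summing output blocks with the loss $2^{-\epsilon K}$ and using hypercontractivity gives an $L^p(\Omega;L^q_t)$ bound of order $N^{-c\epsilon}$ for the operator norm with input $L^\infty$ or $L^2$, hence a fortiori with inputs $C^{\alpha_0}$ and $H^{\eta_0}$. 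The Riesz projection bounds are used as before.

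\textbf{Main obstacle.} The hardest step is the deterministic identification: showing that the contracted kernel matches $-\Delta_N$ uniformly up to a vanishing error, despite the non-smooth cube projection $P_N$. The plan is to work frequency by frequency in Fourier variables, where the operator is diagonal in $k$, and to use the explicit $I_{pq}\le d_{pq}^{-1}$ together with the $K^{-3}$ summation after \eqref{ga:full-correction}. We would dominate the sum over $p,q$ near the boundary by $\sum_{N<|b|\le2N}\langle b\rangle^{-3}\min(1,|k|/N+|k|/\langle b\rangle)$.
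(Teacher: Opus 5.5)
Your treatment of the contracted part matches the paper's. It is diagonal in frequency and equals $-\Delta_N$ at zero input frequency. At input frequency $\ell$ it differs from $-\Delta_N$ by $O(\langle\ell\rangle/N)$, because the translated cube changes only $O(|\ell|N^2)$ shell frequencies of weight $\langle b\rangle^{-3}$ and $|m(b+\ell,-b)-1|\lesssim|\ell|/N$.

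The gap is in the random part. You assert that the chaos-two and chaos-four components tend to zero in $\mathcal L(L^2,H^{-\epsilon})$ and $\mathcal L(L^\infty,C^{-\epsilon})$, and then obtain the lemma ``a fortiori''. That assertion is false, and fixed-input variance bounds could not prove it. The operator norm is a supremum over inputs chosen after the noise, from a space of dimension $\asymp N^3$. Look at the output zero mode. For $|\ell|\le cN$, let $\mu_\ell$ be the coefficient of $e_0$ in $\mathcal B_Ne_\ell-\E\mathcal B_Ne_\ell$. It is a sum over $\asymp N^3$ shell frequencies $m$ of $\widehat Y(m-\ell)\widehat W(-m)$. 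The paired kernels are nonnegative, as in Proposition~\ref{ga:model}, so the diagonal pairing alone gives $\E|\mu_\ell|^2\gtrsim N^3\cdot N^{-5}\cdot N^{-1}=N^{-3}$. Hence $\sum_{|\ell|\le cN}|\mu_\ell|^2$ is of order one. The noise-dependent input $\widehat f(\ell)\propto\overline{\mu_\ell}$ with $\|f\|_2=1$ then produces an output zero mode of order one. Thus $\|\mathcal B_N+\Delta_NI\|_{L^2\to H^{-\epsilon}}$ does not tend to zero, and the $L^\infty\to C^{-\epsilon}$ case is similar. This is exactly why the lemma requires $\eta_0,\alpha_0>0$: with the input weight $\langle\ell\rangle^{-2\eta_0}$, the same sum is $O(N^{-2\eta_0})$.

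So the input regularity must be applied to the whole operator before you separate the chaoses, not only to its mean. The paper's proof does this in four steps.
\begin{enumerate}
\item For a fixed input mode $e_\ell$, the mean tends to $\Delta_N$ as above.
\item Every chaos kernel of the fluctuation carries $\mathbf 1_{b+\ell\notin\Lambda_N}$ and is dominated by the $R_2$ tensors. Dominated convergence then makes the variance at each fixed output frequency tend to zero.
\item An output cutoff $S_L$, together with the uniform $E^{-\epsilon/2}$ bound of Lemma~\ref{ga:quadratic-envelope}, controls the sum over output frequencies.
\item An input cutoff, together with that lemma's bound $C(1+B(t))L^{-\alpha_0}$ (resp.\ $L^{-\eta_0}$), controls the high input frequencies.
\end{enumerate}
You can repair your argument with the same input split, applied at $L$ or at $N^\delta$ to $\mathcal B_N+\Delta_NI$ itself. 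If $\delta$ is small compared with $\epsilon$, summing your per-mode variance bounds over the $O(N^{3\delta})$ low modes could even retain a rate, which the paper's soft argument does not provide.
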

\begin{proof}
Put $B_N=-\mathcal B_N$. For a fixed Fourier mode $e_\ell$ and large
$N$, the primitive frequency $b$ in $B_Ne_\ell$ has size $N$.
The low--high multiplier is then one. Wick's rule gives
$\E B_Ne_\ell=d_{N,\ell}e_\ell$, where
\[
 d_{N,\ell}=2a^2\sum_{p,q\in\Lambda_N}
 \mathbf1_{p+q+\ell\notin\Lambda_N}
 m(p+q+\ell,-p-q)\frac{I_{pq}}{\lambda_p\lambda_q}.
\]
The translated cube differs from the untranslated cube at at most
$C_\ell N^2$ frequencies. On $|b|\asymp N$,
\[
 |m(b+\ell,-b)-1|\le C_\ell/N,\qquad
 \sum_{p+q=b}(\lambda_p\lambda_qd_{pq})^{-1}
       \le C\langle b\rangle^{-3}.
\]
Since $I_{pq}\le d_{pq}^{-1}$,
\[
 \sup_{t\le T}|d_{N,\ell}-\Delta_N|
 \le C_\ell N^2N^{-3}
 +\frac{C_\ell}{N}\sum_{|b|\asymp N}\langle b\rangle^{-3}
 \le C_\ell/N.
\]

At fixed output $k$, write
$\widehat{B_Ne_\ell-\E B_Ne_\ell}=I_2(f_{N,2})+I_4(f_{N,4})$.
Each kernel contains $\mathbf1_{b+\ell\notin\Lambda_N}$.
For $|b|\gg|k|+|\ell|$, smoothness and $m(b,-b)=1$ bound the shifted
multiplier by twice the unshifted one. The $R_2$ tensors of
Proposition~\ref{ga:model} therefore give, also after contraction,
\[
 |f_{N,n}|\le CF_n\in L^2,\quad f_{N,n}\to0\quad(n=2,4),
 \qquad
 \E|\widehat{B_Ne_\ell-\E B_Ne_\ell}|^2
 =2!\|f_{N,2}\|_2^2+4!\|f_{N,4}\|_2^2\longrightarrow0.
\]
Hypercontractivity gives every finite moment. Put $T_N=B_N-\Delta_NI$.
For $E^a=C^a$ or $H^a$ and a smooth cutoff $S_L$,
\[
 \|T_Ne_\ell\|_{E^{-\epsilon}}
 \lesssim\|S_LT_Ne_\ell\|_{E^{-\epsilon}}
       +L^{-\epsilon/2}\|T_Ne_\ell\|_{E^{-\epsilon/2}}.
\]
Lemma~\ref{ga:quadratic-envelope} bounds the second norm uniformly in
$L^p(\Omega;L^q_t)$; the finite-dimensional first term tends to zero.
On the input side, the same lemma gives
\[
\begin{aligned}
 \|T_N\|_{\mathcal L(C^{\alpha_0},C^{-\epsilon})}
 &\le\|T_NS_L\|_{\mathcal L(C^{\alpha_0},C^{-\epsilon})}
       +C(1+B(t))L^{-\alpha_0},\\
 \|T_N\|_{\mathcal L(H^{\eta_0},H^{-\epsilon})}
 &\le\|T_NS_L\|_{\mathcal L(H^{\eta_0},H^{-\epsilon})}
       +C(1+B(t))L^{-\eta_0}.
\end{aligned}
\]
Take $L^p(\Omega;L^q_t)$ norms, then $N\to\infty$ and $L\to\infty$.
Finally $\mathcal B_N=-B_N$.
\end{proof}

The limiting scalar coefficients are
\begin{equation}\label{full:finite-part-coefficients}
 \kappa_1(t)=a\sum_k\lambda_k^{-1}e^{-2t\lambda_k},\qquad
 \kappa_2(t)=2a^2\sum_{p,q}
       \frac{d_{pq}^{-1}-I_{pq}(t)}{\lambda_p\lambda_q},\qquad
 \overline b=\eta\kappa_1-\eta^2\kappa_2.
\end{equation}
The sums have no output restriction; $d_{pq},I_{pq}$ are defined in
\eqref{ga:full-correction}. With these coefficients and zero initial data,
the paracontrolled equation has a unique local solution \cite{CC},
global for $g>0$ \cite{MW}. Its corrected remainder belongs locally to
$\mathcal X_T$ of Lemma~\ref{po:local} and to $C^{1+\epsilon_0}$ on
compact positive-time intervals.

\begin{proposition}[Identification with the canonical field]
\label{full:identification}
Let $X$ be this unprojected solution. For $0<\tau<T\le T_g$ and every finite $p$,
\[
 X_N\longrightarrow X
 \quad\text{in }L^p\bigl(\Omega;C([\tau,T];H^{-3/4})\bigr).
\]
Consequently $\sup_{t\in[\tau,T]}W_2(\rho_{g,N,t},\Law(X(t)))\to0$.
\end{proposition}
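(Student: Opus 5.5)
The plan is to compare $X_N$ and $X$ through their corrected remainders rather than directly. I would write $X_N=Z_N+u_N$ and $X=Z+u$, with $Z$ the unprojected Ornstein--Uhlenbeck field. The Gaussian part is the easy step. Since $Z_N=P_NZ$ on a common noise, the covariance \eqref{ga:covariance} and the Gaussian increment argument of Proposition~\ref{ga:model} give
\[
 \E\sup_{t\le T}\|Z-Z_N\|_{H^{-3/4}}^p\to0 .
\]
Indeed, the modes with $|k|_\infty>N$ have variance $\lambda_k^{-1}$, and the weight $\lambda_k^{-3/4}$ leaves the summable tail $\sum_{|k|>N}\langle k\rangle^{-7/2}$. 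The same chaos arguments show that the enhanced data converge in $L^p(\Omega)$, in the norms defining $M$. These data are $(Z,W,Y,\Psi,R_2,R_3,S,\Omega_J)$, with one exception: the projection boundary $\mathcal B_N$ has no limit by itself. The point is to carry it only through the compensated combination $\mathcal B_N+\Delta_N I$. This combination tends to zero by Lemma~\ref{full:boundary-limit}. Meanwhile $c_2^P+\Delta_N=c_2$, and the prescribed constants $C_{1,N},C_{2,N}$ differ from the unprojected finite parts by quantities matching $\kappa_1,\kappa_2$ in \eqref{full:finite-part-coefficients}. The one bounded discrepancy $C_{2,N}^{\rm full}-C_{2,N}$ from Proposition~\ref{cal:sunset} is absorbed into $\Delta_N$.

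Next I would write the equation \eqref{po:A8} for $w_N$ with the boundary term regrouped. In the resonant remainder $\mathsf R$ of \eqref{up:rescomm}, the term $\mathcal Bf$ combines with $-\eta^2 c_2^P$ to produce $(\mathcal B_N+\Delta_N)f-\eta^2 c_2 f$. The finite-$N$ equation is then the limiting paracontrolled equation, with coefficients that converge in $L^p(\Omega;L^q_t)$, plus projections $P_N$ on the outputs. The limit equation is that of \cite{CC,MW} in the coordinate $w=\mathcal T r$. Its remainder lies in $\mathcal X_T$ by the remark preceding the proposition.

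On the stability step, Lemma~\ref{po:local} is a contraction with constant $\tfrac12$ on the unit ball of $\mathcal X_T$ for $T\le cM^{-q_{\rm loc}}$. The same table of heat inputs bounds the difference of the two mild maps by the distance between the data, plus the cost of $I-P_N$ on the outputs. On smooth enough arguments that cost is small by \eqref{ga:projection}, at a loss of $\eta_{\rm pr}$ regularity. This gives convergence $w_N\to w$ in $\mathcal X_{T_1}$ on a random short interval $T_1$. To extend the convergence to $[\tau,T]$, I would restart. The state bounds \eqref{po:energy} hold uniformly in $N$ on the deterministic interval $[0,T_g]$, and the global theory of \cite{MW} holds for $X$. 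Together these give random polynomial bounds on the positive-time $C^{1+\epsilon_0}$ norms, so finitely many restarts of random length $cM^{-q_{\rm loc}}$ cover $[\tau,T]$. The number of restarts has all moments. Reconstruction then gives $u_N=\mathcal T_N^{-1}(w_N-q_{0,N})\to u$, since $\mathcal T_N^{-1}$ and $q_{0,N}$ converge.

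I expect the main obstacle to be turning almost-sure or in-probability convergence into $L^p(\Omega)$ convergence that is uniform in time. I would first prove convergence in probability via the restart argument, which is valid on the events $\{M\le m\}$. I would then combine this with the uniform moment bounds $\sup_N\E\sup_{[\tau,T]}\|u_N\|_{H^{-3/4}}^{p'}<\infty$. These follow from \eqref{po:energy} and Lemma~\ref{state:quartic}, since $L^2\subset H^{-3/4}$. Vitali's theorem then yields the $L^p$ statement. Finally, the $W_2$ claim follows because the identity coupling gives $W_2(\rho_{g,N,t},\Law X(t))\le\|X_N(t)-X(t)\|_{L^2(\Omega;H^{-3/4})}$ for every $t\in[\tau,T]$.
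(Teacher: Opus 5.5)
Your outline follows the paper's own route: compensate $\mathcal B_N$ by $\Delta_N I$, converge the enhanced data, compare the two mild maps of Lemma~\ref{po:local} with a small spatial loss absorbing $I-P_N$, restart, reconstruct, and upgrade convergence in probability using uniform moments. The comparison step, however, fails as you state it.

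For the scalar row $b(w+\ell)$ of \eqref{up:local-source}, the table uses $p=\infty$ and $d=1/2$. So the relevant ``distance between the data'' is $\sup_t t^{1/2}|\overline b_N(t)-\overline b(t)|$, and this does not tend to zero. Its $\kappa_1$ part is $\eta a\,t^{1/2}\sum_{k\notin\Lambda_N}\lambda_k^{-1}e^{-2t\lambda_k}$, which is of order one at $t\asymp N^{-2}$: there are about $N^3$ modes of size $N^{-2}$, multiplied by $t^{1/2}=N^{-1}$. The finite parts converge only locally uniformly on $t\ge\tau>0$. Your estimate therefore gives nothing near the origin, which is exactly where the contraction interval sits.

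The paper's proof is built around this point. It splits at $t_0$ and controls the coefficient differences on $[t_0,T]$ by $\varepsilon_N(t_0)\to0$. On $[0,t_0]$ it bounds each row, for each map separately, by $C_Rt_0^\chi$ via Lemma~\ref{up:heat-gain}, using $(t-r)^{-a}\lesssim t^{-a}$ for $t>2t_0$. This gives $\|\mathcal F_N-\mathcal F\|_{C^1(B_R;\mathcal X_T^\delta)}\le C_R\{\varepsilon_N(t_0)+t_0^\chi\}$; then let $N\to\infty$ and afterwards $t_0\downarrow0$. Measuring that row with a large finite time exponent would also work: there $t^{1/2}(\overline b_N-\overline b)\to0$ in $L^p_t$, and the heat gains stay positive.

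Please also make the bookkeeping precise. Adding $\eta^2\Delta_N$ to the boundary and subtracting it from $b_N$ leaves the scalar $\eta(C_{1,N}-c_{1,N})-\eta^2(C_{2,N}-c_{2,N}^P)$, which converges to $\overline b$. The constant $C_{2,N}^{\rm full}$ never enters.

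Second, your justification for the restarts is not available. The bound \eqref{po:energy} controls $w_N$ only in $C_tH^{\epsilon_0}$ and $L^2_tH^{1+\epsilon_0}$. On $\mathbb T^3$ these control neither $C^\varpi$ nor $C^{1+\epsilon_0}$, so they give no uniform-in-$N$ restart bound and no moments for the number of restarts. None of this is needed.

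As in the paper, pass to an almost surely convergent coefficient subsequence. This also lets you freeze one threshold $J$, which your reconstruction step silently needs for $\mathcal T_N^{-1}\to\mathcal T^{-1}$ and $q_{0,N}\to q_0$. Take $R$ larger than one plus the $C^{1+\epsilon_0}$ norm of the limit trajectory $w$ on a compact positive-time interval. Convergence already obtained on the previous interval makes the restart data of $w_N$ eventually close to those of $w$ in the lossy norm. This yields a common restart length $c_RM^{-q_{\rm loc}}$ and almost surely finitely many restarts. That suffices for convergence in probability by the subsequence criterion. Your final upgrade to $L^p(\Omega)$ and the coupling bound for $W_2$ are then fine.
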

\begin{proof}
\emph{Cancel the scalar correction.}
Define
\[
 \overline{\mathcal A}_N=\mathcal A_N+\eta^2\Delta_N\mathcal T_N,
 \quad \overline F_N=F_N+\eta^2\Delta_N\mathcal T_NZ_N,
 \quad \overline b_N=b_N-\eta^2\Delta_N.
\]
These use $\mathcal B_N+\Delta_N I$, $R_{3,N}^P$, and
$\eta(C_{1,N}-c_{1,N})-\eta^2(C_{2,N}-c_{2,N}^P)$, respectively.
With $\ell_N=\mathcal T_NZ_N-q_{0,N}$,
\[
 \overline{\mathcal L}_N-\mathcal L_N=\eta^2\Delta_NI,
 \qquad \overline f_{0,N}-f_{0,N}=\eta^2\Delta_N\ell_N.
\]
Thus
\begin{equation}\label{full:source-cancellation}
 \overline{\mathcal P}_N(w)+\overline b_N(w+\ell_N)
 =\mathcal P_N(w)+b_N(w+\ell_N),\qquad
 \overline{\mathcal P}_N=\mathcal P_N+\eta^2\Delta_N(w+\ell_N).
\end{equation}
The drift is unchanged. The centered boundary tends to zero by
Lemma~\ref{full:boundary-limit}.

\emph{Convergence of the remaining coefficients.}
For each Fourier chaos kernel, Proposition~\ref{ga:model} and
\eqref{ga:R3-first}--\eqref{ga:shell} give
$|f_N|+|f|\le F\in L^2$ and $f_N\to f$. Hence
\[
 \|I_n(f_N)-I_n(f)\|_{L^p(\Omega)}
 \le C_{n,p}\|f_N-f\|_2\longrightarrow0.
\]
The preceding smooth-cutoff argument, with the time-increment bounds
of Proposition~\ref{ga:model}, gives convergence in the norms
\eqref{ga:base}, \eqref{ga:rough-integrated}, with any extra spatial loss.
For the integrated norms, Minkowski gives the same conclusion in every
finite $L^p(\Omega;L^q_t)$.
Equation~\eqref{ga:Omega-identity} gives convergence of $\Omega_J$
and its initial weight in \eqref{po:q0}.

For the scalars in \eqref{full:finite-part-coefficients},
\[
 C_{1,N}-c_{1,N}(t)\longrightarrow\kappa_1(t),\qquad
 C_{2,N}-c_{2,N}^P(t)\longrightarrow\kappa_2(t).
\]
The summable deficit bound in \eqref{ga:initial-layer} is uniform for
$t\ge\tau>0$; it gives locally uniform convergence and
$|\kappa_1(t)|\lesssim t^{-1/2}$,
$|\kappa_2(t)|\lesssim\log(e/t)$.
With \eqref{full:source-cancellation}, these are the coefficients of the limiting equation.

\emph{Stability, including the initial layer.}
Choose $0<\delta<\epsilon_0/8$ and work in
\[
 \mathcal X_T^\delta=C([0,T];C^{\varpi-\delta})\cap
 \{w:\sup_{0<t\le T}t^\omega\|w(t)\|_{C^{1+\epsilon_0-\delta}}<\infty\}.
\]
Pass to an almost surely convergent coefficient subsequence.
Its norms are bounded; choose a common threshold $J$. Then
\[
 \mathcal T_N^{-1}-\mathcal T^{-1}
 =\mathcal T_N^{-1}(K-K_N)\mathcal T^{-1}\longrightarrow0.
\]
The product and heat inequalities remain strict:
$1+\epsilon_0-\delta-(1+\epsilon_0/4)>\epsilon_0/2$
after a projection loss $<\epsilon_0/8$; equal input and output losses
leave the affine heat exponent unchanged.
On $[t_0,T]$, let $\varepsilon_N(t_0)$ be the sum of the coefficient differences
in the norms of \eqref{up:local-source}, including operator norms.
Then $\varepsilon_N(t_0)\to0$.

For the initial interval $[0,t_0]$, a row of
\eqref{up:local-source} contributes at most
\[
 C_Rt_0^{\min\{1-1/p-d,\,\lambda+1-1/p-d-a\}},
 \qquad a=(\zeta-r)_+/2,
\]
also after a state derivative. For $t\le2t_0$ this is
Lemma~\ref{up:heat-gain}; for $t>2t_0$, use
$(t-r)^{-a}\lesssim t^{-a}$ before integrating over $[0,t_0]$.
Both powers are positive. Hence, for some $\chi>0$,
\[
 \|\mathcal F_N-\mathcal F\|_{C^1(B_R;\mathcal X_T^\delta)}
 \le C_R\{\varepsilon_N(t_0)+t_0^\chi\},\qquad \varepsilon_N(t_0)\longrightarrow0.
\]
Let $N\to\infty$, then $t_0\downarrow0$.

On the common contraction interval,
\[
 \|w_N-w\|_{\mathcal X_T^\delta}
 \le2\|\mathcal F_N(w)-\mathcal F(w)\|_{\mathcal X_T^\delta}
 \longrightarrow0.
\]
On a compact positive-time interval, choose
$R>1+\sup_t\|w(t)\|_{C^{1+\epsilon_0}}$.
The same estimates give a common restart length $c_RM^{-q_{\rm loc}}$;
finitely many restarts cover $[\tau,T]$.
Reconstruction and the subsequence criterion give
$X_N\to X$ in probability in $C([\tau,T];H^{-3/4})$.

Finally, \eqref{po:energy}, \eqref{po:q0} and the Gaussian bounds give
\[
 \sup_N\E\sup_{t\le T_g}\|X_N(t)\|_{H^{-3/4}}^m<\infty
 \qquad(m<\infty).
\]
Put $D_N=\sup_{t\in[\tau,T]}\|X_N(t)-X(t)\|_{H^{-3/4}}$.
For $m>p$ and $\delta>0$,
\[
 \E D_N^p\le\delta^p+(\E D_N^m)^{p/m}
                   \mathbb P(D_N>\delta)^{1-p/m}\longrightarrow\delta^p,
 \qquad
 \sup_{t\in[\tau,T]}W_2(\rho_{g,N,t},\Law(X(t)))^2\le\E D_N^2.
\]
Let $\delta\downarrow0$.
\end{proof}

\begin{remark}[Which finite part]\label{full:finite-part}
The prescribed $C_{2,N}$ restricts $p+q$ to $\Lambda_N$.
The coefficients $\kappa_1,\kappa_2$ have no output restriction.
The difference $C_{2,N}^{\rm full}-C_{2,N}$ sums over
$p,q\in\Lambda_N$ with $p+q\notin\Lambda_N$.
This difference and $\Delta_N(t)$ both converge to
\[
 \Delta_\infty=\frac{2a^2}{(4\pi^2\nu)^3}
 \int_{\substack{x,y\in[-1,1]^3\\|x+y|_\infty>1}}
 \frac{dx\,dy}{|x|^2|y|^2(|x|^2+|y|^2+|x+y|^2)}\in(0,\infty).
\]
The second convergence is uniform on each $[\tau,T]$ with $\tau>0$.
To justify the singular Riemann sum, fix $0<\delta<1/4$.
If $0<|p|\le\delta N$ and $p+q\notin\Lambda_N$, then $|q|\asymp N$
and $q$ lies in a boundary strip containing at most $C|p|N^2$ points.
Thus
\[
 \sum_{\substack{p,q\in\Lambda_N,\ p+q\notin\Lambda_N\\
                         0<|p|\le\delta N}}
 \frac1{\lambda_p\lambda_qd_{pq}}
 \le \frac C{N^2}\sum_{0<|p|\le\delta N}
               \frac{|p|}{\langle p\rangle^2}
 \le C\delta^2.
\]
The term $p=0$ is absent. The same bound holds near $q=0$ and for the
corresponding regions of the integral. Away from these axes, scaling
$p=Nx$, $q=Ny$ gives an ordinary Riemann sum; its boundary has measure
zero and $\mu/N^2\to0$. Let $N\to\infty$, then $\delta\downarrow0$.
The integral is positive because its domain contains an open set.
Finally, the deficit bound used in \eqref{ga:initial-layer} gives
\[
 \sup_{t\in[\tau,T]}
 \bigl|(C_{2,N}^{\rm full}-C_{2,N})-\Delta_N(t)\bigr|
 \le C\delta^2+C e^{-c\tau\delta^2N^2}.
\]
The same two limits prove the asserted uniform convergence.

By Lemma~\ref{full:boundary-limit}, the projection boundary
$\mathcal B_N$ converges to $-\Delta_\infty I$. It therefore contributes
a mass term of size $\eta^2\Delta_\infty$, and
\eqref{full:source-cancellation} shows that this term cancels the
difference of the two constants. The projection in $P_N(X_N^3)$ thus
cancels the output restriction in \eqref{eq:constants}.
The limit $X$ uses the finite part of $C_{2,N}^{\rm full}$.
Another regularization may change the finite mass, but the leading
constant $K(a,\nu)$ is independent of $\mu$.
\end{remark}

\section{Gaussian endpoint alignment}\label{ma:section}

Let $Z=\sqrt q\,W(u)$, with $q>0$ and $\|u\|=1$. Then
\[
 \dot Z=\frac{\dot q}{2q}Z+\sqrt q\,W(\dot u).
\]
The first term changes the variance; the second changes the realization. Since $\langle u,\dot u\rangle=0$, the skew operator
$\mathsf A=u\otimes\dot u-\dot u\otimes u$ satisfies
\begin{equation}\label{gaalign:row-principle}
 \mathsf Au=-\dot u,\qquad \|\mathsf A\|=\|\dot u\|,
 \qquad \dot Z+\mathscr W_{\mathsf A}Z=\frac{\dot q}{2q}Z.
\end{equation}
The rotation removes the second term without changing the law.
If $\dot S=ST$, differentiating the endpoint row $\sqrt q\,u^*$ gives
$T^*u=(\dot q/2q)u+\dot u$; orthogonal projection yields
$\|\dot u\|\le\|T\|$.
Since $\Gamma(O)W(v)=W(Ov)$, the rotated factor is $SO^*$
and its derivative is $\dot S-S\mathsf A$.

Dots initially mean $t\partial_t$ at fixed normalized time $r\in[0,1]$.
On $\mathcal K=L^2(0,1)$ put, for $z=t\lambda\ge0$,
\[
 (V_zf)(r)=\int_0^r e^{-z(r-v)}f(v)\,dv,\qquad
 u_z(r)=\sqrt{\frac{2z}{1-e^{-2z}}}e^{-z(1-r)},\quad u_0=1.
\]
The zero-start OU path is realized by $S_t=\sqrt{2at}\,V_z$.
Its endpoint row is $\sqrt{q_t}u_z$,
where $q_t=a\lambda^{-1}(1-e^{-2t\lambda})$, with $q_t=2at$ at $\lambda=0$.

\begin{lemma}[OU differentiation]\label{gaalign:ou}\label{ma:factor}
Define
\begin{equation}\label{gaalign:connection}
 \mathsf A_t=u_z\otimes\dot u_z-\dot u_z\otimes u_z,
 \qquad \mathfrak D_t=t\partial_t+\mathscr W_{\mathsf A_t}.
\end{equation}
Then $\|\mathsf A_t\|\le1/2$, and
\begin{align}
 \dot S_t&=S_tT_z^0,\quad T_z^0=\tfrac12I-zV_z,\quad
                \|T_z^0\|\le\tfrac12,\label{gaalign:cayley}\\
 \mathfrak D_tS_t&:=\dot S_t-S_t\mathsf A_t
       =S_t(T_z^0-\mathsf A_t),\quad
       \|T_z^0-\mathsf A_t\|\le1,\label{gaalign:marked-factor}\\
 \mathfrak D_t Z_t(1)&=tB_tZ_t(1).\label{ma:endpoint}
\end{align}
Set $tB_t=1/2$ when $\lambda=0$. These operator bounds are uniform over spatial modes.
At finite spatial cutoff, the paths are locally $C^1$ in $t$ and in the
rotation parameter, in every $L^p(C^\alpha[0,1])$ with
$p<\infty$, $\alpha<1/2$.
\end{lemma}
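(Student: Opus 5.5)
The plan is to work one spatial mode at a time. For a fixed $\lambda$ and $z=t\lambda$, everything reduces to explicit computations on $\mathcal K=L^2(0,1)$. Since $\dot{}=t\partial_t$ at fixed $r$, we have $t\partial_t(zV_z)$-type terms. Differentiating $(V_zf)(r)=\int_0^re^{-z(r-v)}f(v)\,dv$ in $z$ gives $\partial_zV_z=-\mathcal M V_z+V_z\mathcal M$ with $\mathcal M$ multiplication by $r$; but the cleaner route is to write $t\partial_t$ at fixed $r$ as $z\partial_z$ and note that
\[
 t\partial_t S_t=\tfrac12S_t+\sqrt{2at}\,z\partial_zV_z,\qquad z\partial_zV_z=-zV_z^2 .
\]
The identity $\partial_zV_z=-V_z^2$ follows from the kernel $-(r-v)e^{-z(r-v)}$, which is exactly the convolution of two exponential kernels. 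This gives $\dot S_t=S_t(\tfrac12I-zV_z)$, i.e.\ \eqref{gaalign:cayley}. For the norm bound, compute $\operatorname{Re}\langle V_zf,f\rangle=\tfrac12\int_0^1(\ldots)\ge\tfrac12 z\|V_zf\|^2+\tfrac12|(V_zf)(1)|^2$ by differentiating $\tfrac12|V_zf|^2$. This yields $\|(I-2zV_z)f\|^2=\|f\|^2-4z\operatorname{Re}\langle V_zf,f\rangle+4z^2\|V_zf\|^2\le\|f\|^2$, so $\|T_z^0\|\le\tfrac12$.

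Next come the endpoint row and the rotation. The endpoint row of $S_t$ is $\sqrt{q_t}\,u_z$, and $\|u_z\|=1$ by direct integration. Because $\dot S=ST$ with $T=T_z^0$, the preliminary discussion before the lemma gives $T^*u_z=(\dot q/2q)u_z+\dot u_z$ with $\dot u_z\perp u_z$, hence $\|\dot u_z\|\le\|T_z^0\|\le\tfrac12$. Since $\mathsf A_t$ is the rank-two skew operator built from orthogonal vectors $u_z$ and $\dot u_z$, we get $\|\mathsf A_t\|=\|\dot u_z\|\le\tfrac12$. The relation $\mathfrak D_tS_t=\dot S_t-S_t\mathsf A_t$ is the stated rule that the rotated factor is $SO^*$ with $O=e^{\varepsilon\mathsf A}$; the bound $\|T_z^0-\mathsf A_t\|\le1$ then follows from the triangle inequality.

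For \eqref{ma:endpoint}, apply \eqref{gaalign:row-principle} to $Z_t(1)=\sqrt{q_t}W(u_z)$. This gives $\mathfrak D_tZ_t(1)=(t\dot q_t/2q_t)Z_t(1)$ in the $t\partial_t$ convention. Then check $t\partial_tq/(2q)=t\lambda e^{-2t\lambda}/(1-e^{-2t\lambda})=tB_t$, with the limit $\tfrac12$ at $\lambda=0$.

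For regularity at finite cutoff, the paths are finite sums of modes. Each $S_t$ is smooth in $t>0$ as an operator into $C^\alpha[0,1]$ (the kernel is smooth in $z$ with $r$-H\"older bounds). The rotation parameter enters through $\Gamma(e^{\varepsilon\mathsf A})$ acting on the first chaos, which is analytic in $\varepsilon$. Kolmogorov's criterion and Gaussian hypercontractivity then give the $L^p(C^\alpha)$ statements for $\alpha<1/2$. The main obstacle is the uniform contraction $\|I-2zV_z\|\le1$, and in particular keeping the boundary term's sign right. Everything else is bookkeeping with the uniformity over $z\ge0$, including the degenerate mode $z=0$ handled by continuity.
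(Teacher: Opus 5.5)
Your route is essentially the paper's. You use $\partial_zV_z=-V_z^2$ for \eqref{gaalign:cayley}, an integration by parts with $v'+zv=f$, $v(0)=0$ for the contraction, and the row principle \eqref{gaalign:row-principle} with $\|\dot u_z\|\le\|T_z^0\|$ for $\|\mathsf A_t\|\le\frac12$ and \eqref{ma:endpoint}. Kolmogorov-type increment bounds then give the regularity.

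One step is wrong as written. The integration by parts gives an \emph{equality},
\[
 \operatorname{Re}\langle V_zf,f\rangle=z\|V_zf\|_2^2+\tfrac12|(V_zf)(1)|^2 .
\]
The full coefficient $z$ is needed, because it makes the $4z^2\|V_zf\|_2^2$ terms cancel exactly. That leaves the paper's identity $\|(I-2zV_z)f\|_2^2=\|f\|_2^2-2z|(V_zf)(1)|^2$. With your lower bound $\tfrac12 z\|V_zf\|_2^2$ you only get $\|f\|_2^2+2z^2\|V_zf\|_2^2-2z|(V_zf)(1)|^2$, which does not yield $\|T_z^0\|\le\frac12$. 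So the delicate point is the exact cancellation of the interior term, not only the sign of the boundary term. The computation you describe produces the correct identity; only the stated constant must be fixed.

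The regularity part is thinner than it should be. The realization applies $S_t$ to white noise, not to an $L^2$ function, so smoothness of $S_t$ as an operator into $C^\alpha$ is not the relevant statement. What you need are increment bounds $\|\partial_t^j(g_{t,r}-g_{t,v})\|_2^2\le C_I|r-v|$, for $j\le2$, on the row kernels $g_{t,r}$ of $S_t$. The paper gets these at once from $t\partial_tg_{t,r}=(T_z^0)^*g_{t,r}$ and the analogous formula for the second derivative. It then uses $W^{2,p}(I;C^\alpha)\hookrightarrow C^1(I;C^\alpha)$ to obtain $C^1$ in $t$.

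Your ``kernel smooth in $z$ with $r$-H\"older bounds'' can do the same job. Continuity of the derivative, however, needs the second $t$-derivative or a joint Kolmogorov argument in $(t,r)$. Likewise, joint regularity in $(t,\varepsilon)$ uses the smoothness of $\mathsf A_t$ in $t$, not only analyticity in $\varepsilon$.
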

\begin{proof}
The identity $\partial_zV_z=-V_z^2$ gives \eqref{gaalign:cayley}. For $v=V_zf$,
$v'+zv=f$, $v(0)=0$, and integration by parts gives
\[
 \|(I-2zV_z)f\|_2^2=\|f\|_2^2-2z|v(1)|^2.
\]
Thus $\|T_z^0\|\le1/2$. Apply \eqref{gaalign:row-principle} to obtain
$\|\mathsf A_t\|\le1/2$ and the factor correction $-S_t\mathsf A_t$.
Since $\dot q_t/(2q_t)=z/(e^{2z}-1)$, the same identity proves
\eqref{ma:endpoint}.

For $g_{t,r}(v)=\sqrt{2at}\mathbf1_{v\le r}e^{-t\lambda(r-v)}$,
\[
 \partial_tg=t^{-1}(T_z^0)^*g,\qquad
 \partial_t^2g=t^{-2}\bigl[((T_z^0)^*)^2-(T_z^0)^*
                  +\dot T_z^{0*}\bigr]g,
 \qquad \dot T_z^0=-zV_z+z^2V_z^2.
\]
Thus, for $I\Subset(0,\infty)$ and $j\le2$,
\begin{equation}\label{gaalign:increments}
 \|\partial_t^j(g_{t,r}-g_{t,v})\|_2^2
 \le C_I\|g_{t,r}-g_{t,v}\|_2^2\le C_Ia|r-v|.
\end{equation}
For $p$ large enough, the dyadic increments give
\[
 \|\partial_t^jZ_t\|_{L^p(\Omega;C^\alpha)}
 \le C_{I,p}\sum_{n\ge0}2^{-n(1/2-\alpha-1/p)}<\infty,
 \qquad
 Z\in L^p\bigl(\Omega;W^{2,p}(I;C^\alpha)\bigr)
 \hookrightarrow L^p\bigl(\Omega;C^1(I;C^\alpha)\bigr).
\]
Use a larger H\"older index below $1/2$ to obtain the separable
little-H\"older space. The same bounds hold for the first two derivatives
of $e^{\varepsilon\mathsf A_t}g_{t,r}$ and for finite spatial sums.
\end{proof}

\begin{lemma}[Marked coefficients]\label{ma:model}
The coefficient bounds in Appendix~\ref{po:section} hold on $[0,t]$,
also after one $\mathfrak D_t$ derivative, with $M=M_{N,t}\ge1$ satisfying
\begin{align}
 \sup_{N,\,0<t\le1}\mathbb E M_{N,t}^p&<\infty\quad(p<\infty),
       \label{ma:moments}\\
 \|q_0(t)\|_2+\|\mathfrak D_tq_0(t)\|_2
       &\le CM_{N,t}^Ct^\theta.\label{ma:corrector-jet}
\end{align}
Differentiate at fixed thresholds, then choose them.
\end{lemma}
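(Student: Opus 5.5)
The plan is to view every coefficient in Appendix~\ref{po:section} as a finite-chaos functional of the OU factor $S_t$. The derivative $\mathfrak D_t$ then acts as a derivation that inserts one bounded operator per Gaussian slot. The moment bounds \eqref{ma:moments} are inherited from Proposition~\ref{ga:model} and Lemma~\ref{ga:mark-stability}, and \eqref{ma:corrector-jet} is read off from the structure of $q_0$.

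\emph{Step 1: the coefficients on $[0,t]$.} Realize $Z$ on $[0,t]$ through $S_t$ in normalized time $r\in[0,1]$, so that physical time is $tr$. Each Gaussian input ($Z,W,Y,\Psi,R_2,R_3,S,\Omega_J$, the boundary operator $\mathcal B$, and the weight primitives) is a Wick polynomial in $S_t$, composed with fixed multipliers and heat kernels. By \eqref{gaalign:marked-factor}, applying $\mathfrak D_t$ replaces one factor $S_t$ by $S_t(T_z^0-\mathsf A_t)$, an operator of norm at most one that commutes with spatial blocks. On contracted pairs, skew-adjointness of $\mathsf A_t$ leaves only $\dot C_t$. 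This is exactly the covariance-envelope bound of Lemma~\ref{ga:mark-stability}. Heat kernels evaluated at physical times $tr$ give factors $t\lambda e^{-ct\lambda}$, which the same lemma controls. Hence every norm entering $M_0$ and $h_0$ in \eqref{up:integrated-model} has all finite moments after one derivative, uniformly in $N$ and $t\le1$.

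\emph{Step 2: the thresholds.} The threshold $J$ must be held fixed while differentiating, since otherwise $\mathfrak D_t$ would hit an indicator. I would differentiate at fixed $J$ and only afterwards choose $J$ with $2^J\lesssim M^C$. Since the bounds are uniform in $J$ (as in \eqref{ga:omega-norms}), the moments survive. Define $M_{N,t}=M_0+\|h_0\|_{L^q(0,t)}$ plus the same norms of the differentiated coefficients. This gives \eqref{ma:moments}. The scalar coefficients $b,c_1,c_2,\Delta_N$ are deterministic, and their $t\partial_t$-derivatives obey \eqref{ga:initial-layer} and \eqref{ga:full-correction}, so \eqref{up:corrector} holds.

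\emph{Step 3: the corrector jet.} Recall $q_0=g\Psi+\eta g\Omega_J$. The H\"older-in-time bound \eqref{ga:psi-holder} with $b=0$ and $\theta=1/4-\varpi$ (admissible since $2\theta<1/2$) gives $\|\Psi(t)\|_2\le Mt^\theta$. The bound \eqref{ga:omega-norms} gives $\|\Omega_J(t)\|_2\le Mt^{3/4-\epsilon}\le Mt^\theta$. For $\mathfrak D_tq_0$, the dyadic time-increment argument in the proof of Proposition~\ref{ga:model} is applied to the differentiated field $\mathfrak D_t\Psi$. This still vanishes at normalized time zero, and by Step~1 its increment variances satisfy the same bounds; the same holds for $\Omega_J$ through \eqref{ga:Omega-identity}.

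\emph{Main obstacle.} The hard part is the uniformity of the differentiated initial-layer quantities, namely the compensated first chaos \eqref{ga:R3-first} and the deficit bounds. Here $t\partial_t$ also acts on the heat rates, and one has to check that the telescoping shell estimates \eqref{ga:shell} tolerate the extra polynomial factors. I would use $x^me^{-cx}\lesssim e^{-cx/2}$ exactly as in Lemma~\ref{ga:mark-stability}.
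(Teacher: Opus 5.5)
Your proposal follows the paper's route: the paper's proof applies Lemma~\ref{ga:mark-stability} through the marked factor \eqref{gaalign:marked-factor} at fixed thresholds and then reads the corrector jet off \eqref{po:q0}, which you re-derive for the $L^2$ norm. One small slip: taking $b=0$ in \eqref{ga:psi-holder} only gives a $C^0=B^0_{\infty,\infty}$ bound, which does not control $L^2$. Take $b=\varpi$ instead, as the paper does; this is admissible since $\varpi+2\theta<1/2$.
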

\begin{proof}
Apply Lemma~\ref{ga:mark-stability} using \eqref{gaalign:marked-factor},
then use \eqref{po:q0}. The expectation is taken at each fixed horizon.
\end{proof}
For the deterministic scalar, \eqref{ga:initial-layer} gives
\begin{equation}\label{ma:initial}
 |b(s)|+|sb'(s)|\le C_gs^{-1/2},\qquad
 \mathfrak D_tb(tr)=tr\,b'(tr).
\end{equation}

At fixed $N,t$, use noise factor $S_te^{-\varepsilon\mathsf A_t}$ and
solve $\partial_ru=-t[Au+F_N(u+Z)]$, $u(0)=0$.
The endpoint law is independent of $\varepsilon$. Testing against $u$ gives
\[
 \tfrac12\partial_r\|u\|_2^2+t\langle u,Au\rangle
       +c_gt\|u\|_4^4
 \le C_Nt\bigl(1+\|Z\|_4^4+\|u\|_2^2\bigr).
\]
For $c_N=\max(0,3gC_{1,N}-9g^2C_{2,N})$, each parameter derivative satisfies
\[
 \partial_rh=-t(A+DF_N(U))h+f,\quad h(0)=0,
 \qquad \|h\|_{C_rL^2}\le e^{c_Nt}\|f\|_{L^1_rL^2}.
\]
Here $f$ is polynomial in $u,Z$ and the corresponding derivative of $Z$.
Gronwall gives finite $L^p$ norms for these derivatives. In physical time,
\[
 \partial_s(\mathfrak D_tu)=-(A+DF_N(U))\mathfrak D_tu
 -Au-F_N(U)-DF_N(U)\mathfrak D_tZ,\qquad \mathfrak D_tu(0)=0.
\]
Thus Lemma~\ref{common:ward} applies. Equation~\eqref{ma:endpoint}
cancels the Gaussian term; Section~\ref{co:section} bounds the remainder uniformly in $N$.

\begin{proposition}[A differentiable realization of the full law]
\label{ma:full-differentiation}
For some $t_0>0$, the laws $\rho_{g,t}$ admit a realization
$Y\in C^1_{\rm loc}((0,t_0];L^2(\Omega;H^{-3/4}))$ such that
\[
 \|t\partial_tY_t-tB_tY_t\|_{L^2(\Omega;H^{-3/4})}
 \le Ct^\theta.
\]
\end{proposition}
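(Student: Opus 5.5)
The idea is to integrate the rotation. At finite cutoff, the rotated OU realization gives a path whose Ward derivative $\mathfrak D_t=t\partial_t+\mathscr W_{\mathsf A_t}$ of the endpoint $X_N(t)$ satisfies $\|\mathfrak D_tX_N(t)-tB_tX_N(t)\|_{L^2(\Omega;H)}\le Ct^\theta$ uniformly in $N$. This is \eqref{co:defect-rate} together with \eqref{ma:endpoint}. The rotation term $\mathscr W_{\mathsf A_t}$ is a law-preserving direction but not a time derivative of a single realization. To convert it into one, I would define an orthogonal evolution on the noise space. Let $O_t$ solve $t\partial_tO_t=\mathsf A_t O_t$, suitably composed with the identification of $\mathcal K=L^2(0,1)$ at different times, starting from a reference time $t_1\in(0,t_0]$. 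Then set
\[
Y^N_t=\Gamma(O_t^{-1})X_N(t),
\]
read in the appropriate direction, so that the chain rule gives $t\partial_tY^N_t=\Gamma(O_t^{-1})\mathfrak D_tX_N(t)$. Since $\|\mathsf A_t\|\le1/2$ and the $\mathsf A_t$ are deterministic, $O_t$ is a norm-continuous, and locally Lipschitz in $\log t$, family of orthogonal operators on $(0,t_0]$. Because $\Gamma(O)$ preserves joint laws and commutes with the deterministic operator $B_t$, $Y^N_t$ has law $\rho_{g,N,t}$, and
\[
\|t\partial_tY^N_t-tB_tY^N_t\|_2=\|\mathfrak D_tX_N(t)-tB_tX_N(t)\|_2\le Ct^\theta.
\]
To justify the chain rule at finite $N$, I would use that $X_N(t)$ has all chaos moments and is locally $C^1$ in $t$ and in the rotation parameter (Lemma~\ref{gaalign:ou} and the Gronwall bounds after Lemma~\ref{ma:model}). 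Then \eqref{common:ward-number}, applied to a truncation in chaos, or directly the strong differentiability of $\varepsilon\mapsto\Gamma(e^{\varepsilon\mathsf A})X_N$, gives the product rule for $\Gamma(O_t^{-1})X_N(t)$.

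The next step is passing to $N\to\infty$. On a compact interval $I=[\tau,t_0]$, the family $Y^N$ is bounded in $C^1(I;L^2(\Omega;H))$. The derivative bound is $\|\partial_tY^N_t\|_2\le\|B_tY^N_t\|_2+Ct^{\theta-1}$, uniform in $N$ by the moment bounds. I also need convergence of $Y^N_t$ itself. Proposition~\ref{full:identification} gives $X_N\to X$ in $L^p(\Omega;C(I;H))$, and $\Gamma(O_t^{-1})$ is an isometry on $L^2(\Omega;H)$ that is continuous in $t$. Hence $Y^N\to Y:=\Gamma(O_\cdot^{-1})X$ in $C(I;L^2(\Omega;H))$, and $Y_t$ has law $\rho_{g,t}$. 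To get $C^1$ of the limit, I would show that $\partial_tY^N_t=\Gamma(O_t^{-1})t^{-1}\mathfrak D_tX_N(t)$ converges in $C(I;L^2)$. This requires convergence of $\mathfrak D_tX_N$. That should follow by running the stability argument of Proposition~\ref{full:identification} on the differentiated system \eqref{co:source-equation}. Lemma~\ref{up:local} already controls the state derivative $D\mathcal F$ and coefficient derivatives uniformly, and the coefficients converge together with their $\mathfrak D_t$ derivatives (Lemma~\ref{ga:mark-stability} and the chaos-kernel convergence). Granting this, $Y\in C^1(I;L^2(\Omega;H))$ with $\partial_tY=\lim\partial_tY^N$. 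The bound passes to the limit because $B_t$ is bounded and $\|t\partial_tY^N_t-tB_tY^N_t\|_2\le Ct^\theta$ holds uniformly. Letting $\tau\downarrow0$ along a nested exhaustion, with one fixed $O_t$ normalized at $t_1$, gives $C^1_{\rm loc}((0,t_0])$.

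I expect the main obstacle to be the convergence of $\mathfrak D_tX_N$ on positive-time intervals, that is, a differentiated version of the identification proposition. If that stability argument is too heavy, I would use a fallback that only needs uniform bounds. Take $Y^N$ bounded in $C^{1}(I;L^2)$ with a uniform modulus for $\partial_tY^N$, from the $W^{2,p}$ bounds of Lemma~\ref{gaalign:ou} extended through the equation. Then extract a weakly convergent subsequence of derivatives and identify $\partial_tY$ as the weak limit, with the bound preserved by weak lower semicontinuity of the norm. Upgrading this to continuity of $\partial_tY$ would still need equicontinuity of $\partial_tY^N$ in $t$. I would obtain that from second $\mathfrak D_t$ derivatives of the finite-cutoff system, bounded uniformly in $N$ on $[\tau,t_0]$ where the initial layer is harmless.
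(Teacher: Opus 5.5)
Your plan follows the paper's proof. The paper solves $tO_t'=O_t\mathsf A_t$, $O_{t_*}=\Id$, on the direct sum of the mode noise spaces and sets $Y_{N,t}=\Gamma(O_t)X_{N,t}(1)$, so that $t\partial_tY_{N,t}-tB_tY_{N,t}=\Gamma(O_t)D_N(t)$. This fixes the orientation you left open: with $t\partial_tO_t=\mathsf A_tO_t$ and $\Gamma(O_t^{-1})$, the rotation term enters with the wrong sign.

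The step you grant is where the paper's proof does its work, by the differentiated stability argument you describe. It uses the compensated coefficients of \eqref{full:source-cancellation}. It proves convergence of the coefficients in $L^p(\Omega;C^1(I))$ via horizon derivatives $(t\partial_t)^j$, $j\le2$, of the kernels and $W^{2,p}(I)\hookrightarrow C^1(I)$. This convergence holds only for normalized times $r\in[r_0,1]$; the interval $[0,r_0]$ is controlled by the heat gain $r_0^\chi$. The layer $|b(tr)|\lesssim(tr)^{-1/2}$ is present at every horizon, so restricting to $t\ge\tau$ does not remove it. The paper then differentiates the fixed point using $\|(I-D\mathcal F_N(w_N))^{-1}\|\le2$. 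It concludes by uniform integrability and by passing to the limit in $Y_{N,t}-Y_{N,s}=\int_s^t\partial_rY_{N,r}\,dr$.

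Two smaller points. First, Proposition~\ref{full:identification} applies here only horizon by horizon, because the rescaled noise driving $X_{N,t}(1)$ changes with $t$. Your $C(I;L^2)$ convergence of $Y^N$ should therefore be deduced from this pointwise convergence plus the uniform Lipschitz bound, not from the path convergence in that proposition. Second, no second derivatives of the state are needed, so your fallback is unnecessary.
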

\begin{proof}
Choose $t_0$ below the state and response horizons and fix $t_*\in(0,t_0)$.
On the direct sum of the mode noise spaces, solve
$tO_t'=O_t\mathsf A_t$, $O_{t_*}=\Id$ on $(0,t_0]$.
Work on any $I\Subset(0,t_0]$.
For the normalized solution $X_{N,t}(r)$, put
$Y_{N,t}=\Gamma(O_t)X_{N,t}(1)$.
The operators $O_t$ are orthogonal and
\begin{equation}\label{ma:integrated-alignment}
 t\partial_tY_{N,t}
 =\Gamma(O_t)\mathfrak D_tX_{N,t}(1),\qquad
 t\partial_tY_{N,t}-tB_tY_{N,t}=\Gamma(O_t)D_N(t).
\end{equation}
Indeed, for $z=t\lambda$ and $\xi=z(1-r)$,
\[
 t\partial_tu_z=\left(\tfrac12-\frac z{e^{2z}-1}-\xi\right)u_z,
 \qquad \int_0^1\xi^m|u_z|^2\,dr\le C_m,
 \qquad \|\mathsf A_t\|+\|t\partial_t\mathsf A_t\|\le C.
\]

Use the compensated coefficients of \eqref{full:source-cancellation}.
For $j=0,1,2$, differentiation of the OU and heat factors gives
\[
 (t\partial_t)^jg_{t,r}(v)
   =P_j(t\lambda(r-v))g_{t,r}(v),\qquad
 |(t\partial_t)^j(te^{-t\lambda a})|
   \le C_jt e^{-t\lambda a/2}.
\]
Here $P_0=1$, $P_1(x)=1/2-x$, $P_2(x)=(1/2-x)^2-x$.
Keep each compensated contraction together. For $a,b,x\ge0$,
\[
 |(t\partial_t)^j(e^{-tax}-e^{-tbx})|
 \le C_j\min\{t|a-b|x,1\}e^{-t\min(a,b)x/2}.
\]
This preserves the four sums in \eqref{ga:shell}.
Thus, on each fixed Fourier block, the differentiated kernels converge
in $L^2$ by dominated convergence, also for
$\mathcal B_N+\Delta_N I$. For its mean, the counting in
Lemma~\ref{full:boundary-limit} gives
\[
 |(t\partial_t)^jI_{pq}(tr)|\le C_jd_{pq}^{-1}
 \quad\Longrightarrow\quad
 |(t\partial_t)^j[d_{N,\ell}(tr)-\Delta_N(tr)]|\le C_\ell N^{-1}.
\]
Rotation adds bounded operators on the tensor slots and preserves these
convergences. A spatial loss $0<\delta<\epsilon_0/8$ makes the remaining
block tails summable; the input truncation in that lemma handles the
operator norm.

More precisely, for each $r_0>0$ this gives convergence in
$L^p(\Omega;C^1(I))$ in the norms \eqref{ga:base},
\eqref{ga:rough-integrated}, \eqref{ga:omega-norms} and \eqref{po:q0},
with every spatial index decreased by $\delta$, and in both operator
norms of Lemma~\ref{full:boundary-limit}, on $r\in[r_0,1]$.
Use $W^{2,p}(I)\hookrightarrow C^1(I)$, $p>1$.
The stochastic norms and their first horizon derivatives remain uniformly
bounded in $L^p(\Omega)$ on $0\le r\le1$.
The scalar coefficients converge there in $C^1(I;C_r)$; the same
exponential bounds give
\[
 |(t\partial_t)^j\overline b_N(tr)|\le C_Ir^{-1/2},
 \qquad j=0,1,2,
\]
for the scalar in \eqref{full:source-cancellation}.

At a fixed horizon, take an almost surely convergent coefficient
subsequence. Freeze the thresholds and use the contraction intervals
of Proposition~\ref{full:identification}. Localize to norms at most $R$
and a horizon neighborhood $U$ of radius at least $R^{-1}$.
On a contraction interval $[0,\ell]$, let $\mathcal X_\ell^\delta$ be
that proposition's weighted space in normalized time.
The positive heat gains of Lemma~\ref{po:local} give
\[
 \|\mathcal F_N-\mathcal F\|_{C^1(U;C^1(B_R;\mathcal X_\ell^\delta))}
 \le C_R\{\varepsilon_N(r_0)+r_0^\chi\},
 \qquad \varepsilon_N(r_0)\longrightarrow0,\quad\chi>0.
\]
The term $r_0^\chi$ bounds the initial interval, including its parameter
derivative; no uniform convergence of the weighted scalar layer is used.
Let $N\to\infty$, then $r_0\downarrow0$.
For $h_N=\partial_tw_N$, $h=\partial_tw$, differentiation of the fixed point gives
\[
 h_N-h=(I-D\mathcal F_N(w_N))^{-1}
 \bigl[\partial_t\mathcal F_N(w_N)-\partial_t\mathcal F(w)
 +(D\mathcal F_N(w_N)-D\mathcal F(w))h\bigr]\longrightarrow0,
\]
with inverse norm at most two. Finite restarts propagate this identity,
including the derivative of the restart data. The limiting trajectory
at the fixed horizon supplies such a neighborhood almost surely.
Let $R\to\infty$ and use the subsequence criterion. Endpoints and
derivatives converge in probability at each horizon; the limiting
derivative $V_t$ is continuous in probability.

By \eqref{ma:integrated-alignment}, \eqref{co:defect-rate},
\eqref{po:energy} and the Gaussian moments,
$\sup_{N,t\in I}\|\partial_tY_{N,t}\|_{L^p}<\infty$ for every finite $p$.
Uniform integrability gives $\partial_tY_{N,t}\to V_t$ in $L^2$ and
$V\in C(I;L^2)$. Passing to the limit in
$Y_{N,t}-Y_{N,s}=\int_s^t\partial_rY_{N,r}\,dr$ gives
$Y_t-Y_s=\int_s^tV_r\,dr$. Hence $Y\in C^1(I;L^2)$ and
\eqref{ma:integrated-alignment} yields the asserted defect bound.
Finally $\Law(Y_t)=\rho_{g,t}$ by Proposition~\ref{full:identification}.
\end{proof}

\section{The coordinate and its adjoint}\label{ac:section}

Let $P=P_N$ be the cube projection. Its $L^p$ norm is bounded uniformly
in $N$ for $1<p<\infty$.

Let $Y\in C^\beta$, $0<\beta<1$, and put
\[
 K_Jf=P\sum_{j\ge J}(S_{j-2}Pf)\Delta_jY,
 \qquad \mathcal T=I+K_J.
\]
\begin{lemma}[Smoothing and bounded inversion]
Write $B=\|Y\|_{C^\beta}$. For $1<p<\infty$ and $0\le a<\beta$,
\begin{equation}\label{ac:c:eq:K-smoothing}
 \|K_J\|_{L^p\to H^{a,p}}+
 \|K_J^*\|_{L^p\to H^{a,p}}
 \le CB2^{-(\beta-a)J}.
\end{equation}
For $b\ge0$ and $a<\beta-b$,
\begin{equation}\label{ac:c:eq:negative-inverse}
 \|K_Jf\|_{H^a}
 \le CB2^{-(\beta-b-a)J}\|f\|_{H^{-b}}.
\end{equation}
The corresponding Besov estimates hold with a strict regularity loss.
A single choice $2^J\le C(1+B)^C$ makes $\mathcal T$ and $\mathcal T^*$
invertible on the finitely many spaces used below.
In particular, when $\beta>3/4$, the maps
$K_J:L^4\to L^\infty$ and $K_J:H^{-3/4}\to L^2$ are bounded.
\end{lemma}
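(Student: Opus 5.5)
The plan is to treat $K_J$ as a truncated paraproduct, $K_Jf=P\,(Pf\prec H_JY)$ up to the overlap convention, and to obtain every bound from one block estimate. The frequency-$2^j$ piece $(S_{j-2}Pf)\Delta_jY$ is Fourier supported in an annulus of size $2^j$. It satisfies
\[
 \|(S_{j-2}Pf)\Delta_jY\|_{L^p}\le\|S_{j-2}Pf\|_{L^p}\|\Delta_jY\|_\infty
 \lesssim\|f\|_{L^p}B2^{-\beta j},
\]
using the uniform $L^p$-boundedness of $P$ and of $S_{j-2}$ for $1<p<\infty$. Littlewood--Paley on annuli then gives
\[
 \|K_Jf\|_{H^{a,p}}\lesssim\Bigl\|\Bigl(\sum_{j\ge J}2^{2aj}|(S_{j-2}Pf)\Delta_jY|^2\Bigr)^{1/2}\Bigr\|_p .
\]
Bounding pointwise by the maximal function of $Pf$ times $\sup_j2^{(a-\beta)j}\ldots$ gives the factor $B\,2^{-(\beta-a)J}$, because $\sum_{j\ge J}2^{-(\beta-a)j}$ is geometric. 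An outer $P$ costs only a constant on $H^{a,p}$.

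For the adjoint, duality gives
\[
 K_J^*g=P\sum_{j\ge J}S_{j-2}\bigl(\Delta_jY\cdot Pg\bigr).
\]
Here the output is at frequency at most $2^{j-2}$, so its smoothness is not automatic. I would write $\Delta_jY\cdot Pg=\Delta_jY\cdot\widetilde\Delta_{\approx j}Pg+\ldots$. Since $S_{j-2}$ kills the output unless $Pg$ has frequency comparable to $2^j$, only the resonant part survives. That piece is bounded in $L^p$ by $B2^{-\beta j}\|\widetilde\Delta_jPg\|_p$ and is localized below frequency $2^{j}$. Summing, with $2^{aj}$ as the weight, gives $\|K_J^*\|_{L^p\to H^{a,p}}\lesssim B2^{-(\beta-a)J}$. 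I expect this adjoint bound to be the main obstacle: the output is low–high and not annular, so the sum over $j$ has to be controlled by the geometric decay $2^{-(\beta-a)j}$ rather than by orthogonality. The Besov versions follow the same computation with $\ell^\infty$ or $\ell^1$ sums, and the strict loss absorbs the $\ell^q$ mismatch and the $P$-boundary logarithms from \eqref{ga:projection}.

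For \eqref{ac:c:eq:negative-inverse}, with $f\in H^{-b}$ I use $\|S_{j-2}Pf\|_{L^\infty}$ in the $L^2$ version instead. Concretely,
\[
 \|S_{j-2}Pf\|_2\lesssim2^{bj}\|f\|_{H^{-b}}\quad(b>0),
\]
with a harmless $\sqrt j$ when $b=0$, absorbed by the strict inequality. Then each block is bounded in $L^2$ by $B2^{(b-\beta)j}\|f\|_{H^{-b}}$, and weighting by $2^{aj}$ and summing the geometric series gives $CB2^{-(\beta-b-a)J}$.

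Invertibility follows by Neumann series. On each of the finitely many spaces $E$ (finite $L^p$, $H^a$, $C^a$ with $a<\beta$), the estimates above give $\|K_J\|_{E\to E}\le CB2^{-\epsilon J}$ for some fixed $\epsilon>0$, and likewise for $K_J^*$. Choosing $2^J\ge(2C(1+B))^{1/\epsilon}$ makes these norms at most $1/2$, so $\mathcal T^{\pm1}$ and $(\mathcal T^*)^{\pm1}$ are bounded by $2$. Taking the largest such $J$ over the finite list gives one choice with $2^J\le C(1+B)^C$.

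The final claims are embeddings. For $\beta>3/4$, pick $a\in(3/4,\beta)$. Then $K_J:L^4\to H^{a,4}\subset L^\infty$, since $a>3/4=3/4$ in three dimensions. For the second map, take $b=3/4$ and $a=0<\beta-3/4$ in \eqref{ac:c:eq:negative-inverse}, which gives $K_J:H^{-3/4}\to L^2$.
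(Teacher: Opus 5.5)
Your proposal is correct and follows essentially the paper's route: the block bound $\|\Delta_jY\|_\infty\le B2^{-\beta j}$, geometric summation over $j\ge J$, the estimate $\|S_{j-2}Pf\|_2\lesssim2^{bj}\|f\|_{H^{-b}}$ for negative-index input, a Neumann series with minimal $J$, and the embeddings $H^{a,4}\hookrightarrow L^\infty$ for $a>3/4$ and ($b=3/4$, $a=0$). The only difference is that the paper uses the plain triangle inequality over blocks, with Bernstein supplying the factor $2^{aj}$ both for the annular blocks of $K_J$ and, since $a\ge0$, for the low-frequency blocks $S_{j-2}(\Delta_jY\,Pg)$ of $K_J^*$; so your square-function/maximal-function step and the resonant localization of the adjoint are unnecessary, and the adjoint is not actually an obstacle.
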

\begin{proof}
The frequency support of $(S_{j-2}Pf)\Delta_jY$ gives
\[
 \|K_Jf\|_{H^{a,p}}
 \le C\sum_{j\ge J}2^{aj}\|S_{j-2}Pf\|_p\|\Delta_jY\|_\infty
 \le CB\|f\|_p\sum_{j\ge J}2^{-(\beta-a)j}.
\]
For negative input,
\[
 \|S_{j-2}Pf\|_2\le C2^{bj}\|f\|_{H^{-b}},\qquad
 \|K_Jf\|_{H^a}
 \le CB\|f\|_{H^{-b}}\sum_{j\ge J}2^{-(\beta-b-a)j}.
\]
The adjoint satisfies
\[
 K_J^*f=P\sum_{j\ge J}S_{j-2}(\Delta_jY\,Pf),\qquad
 \|K_J^*f\|_{H^{a,p}}
 \le CB\|f\|_p\sum_{j\ge J}2^{-(\beta-a)j}.
\]
Take $a=-b$ for $K_J$ on negative Sobolev spaces, then use duality
for $K_J^*$. The first estimate treats positive indices below $\beta$.
For the finitely many spaces $E$ used below, choose $J$ so that
\[
\begin{gathered}
 \max_E\bigl(\|K_J\|_{E\to E}+\|K_J^*\|_{E\to E}\bigr)
 \le CB2^{-\delta J}\le\tfrac12,\\
 (I+K_J)^{-1}=\sum_{n\ge0}(-K_J)^n,\qquad
 \|(I+K_J)^{-1}\|_{E\to E}\le2,
\end{gathered}
\]
Here $\delta>0$ is the smallest exponent in these bounds.
The adjoint has the same Neumann series.
Finally choose $3/4<a<\beta$ in \eqref{ac:c:eq:K-smoothing} and use
$H^{a,4}\hookrightarrow L^\infty$; take $b=3/4$, $a=0$ in
\eqref{ac:c:eq:negative-inverse} for the other assertion.
\end{proof}

\section{Explicit positive weights}\label{ac:reference-section}

Exponentials of the heat primitives cancel the rough potential.
Differentiating them produces a gradient square. The second counterterm
cancels the divergent part of its mean. We use $\eta=3g$ and the exponents in
\eqref{eq:indices}.

\subsection{The gradient square and its scalar part}
For a horizon $T\le1$, use the full-output heat primitives
\begin{equation}\label{cal:primitives}
 Y_+(t)=\int_0^t e^{-(t-r)A}W(r)\dd r,
 \qquad Y_-(t)=\int_t^T e^{-(r-t)A}W(r)\dd r.
\end{equation}
Both primitives use the same $W$ and the covariance bound
\eqref{ga:covariance}.

\begin{lemma}[Centered gradient square]\label{cal:gradient-square}
For every $\varepsilon>0$ and finite $p,q$, uniformly in $N,T$ and each
fixed dyadic threshold $J$,
\begin{equation}\label{cal:gradient-moments}
 \left\|\nu|\nabla H_JY_\pm|^2
       -\nu\E|\nabla H_JY_\pm|^2\right\|_{L^p(\Omega;L^q_TC^{-\varepsilon})}
 \le C_{p,q,\varepsilon}T^{1/q}.
\end{equation}
With $H_J$ omitted, the scalar parts satisfy
\begin{align}
 |C_{2,N}-\nu\E|\nabla Y_+(t)|^2|
 &\le C[1+\log(e/t)],\label{cal:scalar-forward}\\
 |C_{2,N}-\nu\E|\nabla Y_-(t)|^2|
 &\le C[1+\log(e/t)+\log(e/(T-t))].\label{cal:scalar-backward}
\end{align}
Inserting $H_J$ adds at most $C(1+J)$ to these bounds.
\end{lemma}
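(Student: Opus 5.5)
The plan is to treat $|\nabla H_JY_\pm|^2$ as an element of the second Wiener chaos plus a fourth-chaos part, because $Y_\pm$ is linear in the Wick square $W$, which lies in the second chaos. Wick's product formula then splits $\nu|\nabla H_JY_\pm|^2$ into chaoses $4,2,0$. The zero chaos is exactly $\nu\E|\nabla H_JY_\pm|^2$, so the centered quantity in \eqref{cal:gradient-moments} is $I_4+I_2$. I will estimate both components with the positive-kernel argument of Proposition~\ref{ga:model}, in parallel with the $R_2=Y\circ W-c_2$ entry there.

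\emph{Fluctuation bound.} Write $\nabla Y_\pm$ in Fourier variables. For each output frequency $k$, square the kernels, pair the Gaussian slots, and dominate every covariance \eqref{ga:covariance} by $C\lambda_p^{-1}e^{-c\lambda_p|t-s|}$. The backward primitive integrates over $[t,T]$, and reflecting time gives the same envelope. The integrands are then nonnegative, and extending internal times to the whole line gives a stationary-type bound. By \eqref{gm:heat}, $\nabla Y$ has variance $\lesssim\langle k\rangle^{-3}$ per mode, since $Y$ has variance $\langle k\rangle^{-5}$. The product $\nabla Y\cdot\nabla Y$ is formally of regularity $0-$. Its non-resonant pieces are controlled by paraproduct counting, and the resonant second-chaos piece is a one-loop convolution. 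I expect $\E|\widehat{F}(t,k)|^2\lesssim\langle k\rangle^{-3+\varepsilon'}$ for the centered field, which is the same exponent as $R_2$ in the table of Proposition~\ref{ga:model}. The projection $H_J$ multiplies the kernels by bounded multipliers, so the bound is uniform in $J$. Hypercontractivity and the fixed-time Minkowski argument then give $L^p(\Omega;C^{-\varepsilon})$ bounds uniform in $t$. Integrating in $t$ over $[0,T]$ yields the factor $T^{1/q}$.

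\emph{Scalar parts.} For the forward primitive I compute $\nu\E|\nabla Y_+(t)|^2$ exactly, as in Proposition~\ref{cal:sunset}, but with the zero-start covariance. For $k=p+q$ and without output restriction, the result is
\[
 2a^2\sum_{p,q}\frac{4\pi^2\nu|k|^2}{\lambda_p\lambda_q}\,J_{pq}(t).
\]
Here $J_{pq}$ is the double heat integral over $[0,t]^2$ with the factors $\theta_p,\theta_q$. Compare it with the stationary expression $1/(\lambda_kd_{pq})$. The initial-layer deficit is bounded as in the proof of \eqref{ga:initial-layer}, splitting at $t/2$ with weights $e^{-ct\lambda_p}$ and the like, and it contributes $C\log(e/t)$. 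Proposition~\ref{cal:sunset} relates the stationary sum to $C_{2,N}^{\rm full}$ up to $\mu\E|Y_{\rm stat}|^2$, which is bounded, and $|C^{\rm full}_{2,N}-C_{2,N}|\le C$. This gives \eqref{cal:scalar-forward}. For $Y_-$ the time window is $[t,T]$, so there are two boundary layers. They are at distance $T-t$ from the terminal time and $t$ from zero, the latter through $\theta$ in the covariance of $W(r)$ for $r\ge t$. Each layer produces its own logarithm, which gives \eqref{cal:scalar-backward}.

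\emph{Inserting $H_J$.} The removed low part $(I-H_J)Y$ involves frequencies $\lesssim2^J$. The difference of means is $\nu\E|\nabla L_JY|^2$ plus the cross terms. By the per-mode bound $\langle k\rangle^{-3}$ weighted by $|k|^2$, it is bounded by $\sum_{\langle k\rangle\lesssim2^J}\langle k\rangle^{-3}\lesssim 1+J$.

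The main obstacle is the logarithmic accounting in the scalar part near both ends for $Y_-$. There the deficit sums $\sum_p\langle p\rangle^{-3}e^{-c\tau\lambda_p}$ must be shown to produce only $\log(e/\tau)$, uniformly in $N$, despite the extra gradient weight $|k|^2$. I would first try to use $|k|^2\le\lambda_k/(4\pi^2\nu)$ to cancel one $\lambda_k$ exactly, which reduces the computation to the $I_{pq}$-type deficits already estimated in \eqref{ga:initial-layer}.
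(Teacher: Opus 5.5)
Your plan follows the paper's proof. For the fluctuation, you split the centered square by Wick's formula into chaoses $4$ and $2$, bound each per-mode variance by $\langle k\rangle^{-3}$ up to logarithms, and use hypercontractivity, Bernstein and time integration to get $T^{1/q}$. For the scalar part, you compare with the stationary identity of Proposition~\ref{cal:sunset}, use $4\pi^2\nu|k|^2\le\lambda_k$ to cancel $\lambda_k$, and sum the deficits $e^{-c|D_\pm|\lambda_k}+e^{-ct\lambda_p}+e^{-ct\lambda_q}$ into logarithms. The one step you only anticipate is the second-chaos (one-loop) variance, and citing the $R_2$ entry of Proposition~\ref{ga:model} does not cover this object. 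The paper computes it directly: with contracted frequency $\ell$, $u_\ell=\langle a+\ell\rangle$, $v_\ell=\langle b-\ell\rangle$ and $K=\max(\langle a\rangle,\langle b\rangle)$, the time-integrated kernel is $\lesssim S(a,b)/(\langle a\rangle\langle b\rangle)$, where $S(a,b)=\sum_\ell\langle\ell\rangle^{-2}\frac{u_\ell v_\ell}{u_\ell^2+v_\ell^2}\bigl((u_\ell^2+\langle\ell\rangle^2)^{-1}+(v_\ell^2+\langle\ell\rangle^2)^{-1}\bigr)\lesssim K^{-1}\log(e+K)$, which yields $\langle k\rangle^{-3}\log^2(e+\langle k\rangle)$.
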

\begin{proof}
\emph{The fluctuating part.}
Put $F=\nu|\nabla Y_\pm|^2-\nu\E|\nabla Y_\pm|^2=F^{(4)}+F^{(2)}$.
By \eqref{gm:heat},
$\E|\widehat{\nabla Y_\pm}(k)|^2\lesssim\langle k\rangle^{-3}$.
The product formula and Cauchy--Schwarz give
\[
 \E|\widehat F^{(4)}(k)|^2
 \lesssim\sum_b\E|\widehat{\nabla Y_\pm}(b)|^2
                   \E|\widehat{\nabla Y_\pm}(k-b)|^2
 \lesssim\langle k\rangle^{-3}\log(e+\langle k\rangle).
\]
For the second chaos, let $a+b=k$ be the unpaired frequencies
and $\ell$ the contracted frequency.
Put $u_\ell=\langle a+\ell\rangle$, $v_\ell=\langle b-\ell\rangle$.
Its kernel norm is bounded by $C S(a,b)/(\langle a\rangle\langle b\rangle)$, where
\[
\begin{aligned}
 S(a,b)&=\sum_\ell\frac{u_\ell v_\ell}{\langle\ell\rangle^2}
 \int_0^\infty\!\int_0^\infty
 e^{-u_\ell^2x-v_\ell^2y-\langle\ell\rangle^2|x-y|}\dd x\dd y\\
 &=\sum_\ell\frac1{\langle\ell\rangle^2}
 \frac{u_\ell v_\ell}{u_\ell^2+v_\ell^2}
 \left(\frac1{u_\ell^2+\langle\ell\rangle^2}
       +\frac1{v_\ell^2+\langle\ell\rangle^2}\right).
\end{aligned}
\]
For $K=\max(\langle a\rangle,\langle b\rangle)=\langle b\rangle$,
\[
 S(a,b)\lesssim
 \sum_{|\ell|>2K}\langle\ell\rangle^{-4}+K^3K^{-4}
 +K^{-1}\!\sum_{|\ell|<K/2}\langle\ell\rangle^{-3}
 +K^{-2}\!\sum_{|\ell|<K/2}\langle\ell\rangle^{-2}
 \lesssim K^{-1}\log(e+K).
\]
Consequently,
\[
 \E|\widehat{(\nu|\nabla Y_\pm|^2)^{(2)}}(k)|^2
 \le C\sum_{a+b=k}\frac{\log^2(e+K)}{
              \langle a\rangle^2\langle b\rangle^2K^2}
 \le C\langle k\rangle^{-3}\log^2(e+\langle k\rangle).
\]
For $r\ge\max(p,q)$ and $3/r<\varepsilon$, hypercontractivity and Bernstein give
\[
 \|\Delta_jF(t)\|_{L^r(\Omega;L^\infty)}\lesssim_r(1+j)2^{3j/r},
 \qquad
 \|F\|_{L^p(\Omega;L^q_TC^{-\varepsilon})}
 \lesssim_r T^{1/q}\sum_{j\ge0}(1+j)2^{-j(\varepsilon-3/r)}
 \lesssim T^{1/q}.
\]
The same bounds hold with $H_J$, in either time orientation.

\emph{The mean cancels the counterterm.}
For $k=p+q$, write $d_{pq}=\lambda_k+\lambda_p+\lambda_q$ and
$\theta_\ell(r)=1-e^{-2\lambda_\ell r}$. Then
\[
 \E|\widehat Y_\pm(t,k)|^2
 =2a^2\!\sum_{\substack{p+q=k\\p,q\in\Lambda_N}}
 \frac{\displaystyle\int_{D_\pm^2}
 e^{-\lambda_k(x+y)-(\lambda_p+\lambda_q)|x-y|}
 \theta_p(r_\pm)\theta_q(r_\pm)\dd x\dd y}{\lambda_p\lambda_q},
\]
where $D_+=[0,t]$, $D_-=[0,T-t]$,
$r_+=t-\max(x,y)$ and $r_-=t+\min(x,y)$.
The stationary integral is $(\lambda_kd_{pq})^{-1}$, obtained by
replacing $D_\pm$ by $[0,\infty)$ and $\theta_p\theta_q$ by $1$. Hence
\[
 0\le\E|\widehat Y_{\rm stat}(k)|^2-\E|\widehat Y_\pm(t,k)|^2
 \lesssim\sum_{p+q=k}
 \frac{e^{-c|D_\pm|\lambda_k}+e^{-ct\lambda_p}+e^{-ct\lambda_q}}
      {\lambda_p\lambda_q\lambda_kd_{pq}}.
\]
Multiply by $4\pi^2\nu|k|^2$, sum, and use
\eqref{cal:sunset-identity}, \eqref{ga:full-correction} and
$\sum_p\langle p\rangle^{-3}e^{-ct\lambda_p}\lesssim\log(e/t)$.
This proves \eqref{cal:scalar-forward}--\eqref{cal:scalar-backward}.
Finally, removing modes below $2^J$ changes the scalar by
$C\sum_{|k|\lesssim2^J}\langle k\rangle^{-3}\le C(1+J)$.
\end{proof}

\subsection{What the weights cancel}
Put $b_0=\eta(C_{1,N}-c_1)-\eta^2C_{2,N}$ and
$L_0=\nu\Delta-\mu-\eta W+b_0$.

\begin{proposition}[Explicit weights and equation errors]
\label{cal:construction}\label{cal:reference}
For every fixed $\delta>0$ sufficiently small, a threshold $J$ can be
chosen with $2^J\le CM^C$, where $M$ has all finite moments uniformly
in $N,T$, so that
\begin{equation}\label{cal:equations}
 h=e^{-\eta H_JY_+},\qquad k=e^{-\eta H_JY_-},\qquad
 h(0)=k(T)=1
\end{equation}
and
\begin{equation}\label{cal:gauge-bounds}
 \tfrac12\le h,k\le2,\qquad
 \|h-1\|_{C_TC^\beta}+\|k-1\|_{C_TC^\beta}\le\delta.
\end{equation}
Writing $Y_{\pm,h}=H_JY_\pm$ and $W_l=(I-H_J)W$, their equation
errors are exactly
\begin{align}
 h'&=L_0h+a_+h,&-k'&=L_0k+a_-k,\notag\\
 a_\pm&=\mu+\eta\mu Y_{\pm,h}+\eta W_l-\eta(C_{1,N}-c_1)
       +\eta^2(C_{2,N}-\nu|\nabla Y_{\pm,h}|^2).
       \label{cal:explicit-defects}
\end{align}
Each error is the sum of a scalar $d_\pm$ and a distribution $r_\pm$,
where for every finite $q$ and $\varepsilon>0$,
\begin{equation}\label{cal:defect-bounds}
 \|r_\pm\|_{L^q_TC^{-\varepsilon}}\le CM^C,
 \qquad
 |d_\pm(t)|\le C[1+J+t^{-1/2}+\log(e/t)+\log(e/(T-t))].
\end{equation}
Both ratios $k/h,h/k$, and also $k/h^3$, are uniformly close to
one when $\delta$ is small.
\end{proposition}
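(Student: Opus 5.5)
The plan has four steps: derive the defect formula by the chain rule, choose $J$ for smallness, split $a_\pm$ into scalar and fluctuating parts, and read off the ratio bounds.

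\emph{Step 1: the defect formula.} I would first verify \eqref{cal:explicit-defects} as an exact chain-rule identity. For $h=e^{-\eta Y_{+,h}}$,
\[
 (\partial_t-\nu\Delta)h/h=-\eta(\partial_t-\nu\Delta)Y_{+,h}-\eta^2\nu|\nabla Y_{+,h}|^2 .
\]
Since $(\partial_t+A)Y_+=W$ and $H_J$ commutes with $A$, we have $(\partial_t-\nu\Delta)Y_{+,h}=H_JW-\mu Y_{+,h}$. Writing $H_JW=W-W_l$ and subtracting $L_0h/h=-\mu-\eta W+b_0$ gives $a_+$ in the stated form; the rough $\eta W$ cancels exactly. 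The backward weight is handled the same way, using $(-\partial_t+A)Y_-=W$. The boundary values $h(0)=k(T)=1$ follow from $Y_+(0)=Y_-(T)=0$.

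\emph{Step 2: choice of $J$.} Next I would choose $J$. By Proposition~\ref{ga:model}, $\|Y_\pm\|_{C_TC^{1-\epsilon}}\le M_0$; the backward primitive obeys the same bound by time reversal of the covariance envelope. The high-pass gains $\|H_JY_\pm\|_{C^\beta}\lesssim 2^{-(1-\epsilon-\beta)J}M_0$, since $\beta<1-\epsilon$ for small $\epsilon$. Taking $2^J\approx (M_0/\delta)^{C}$ makes $\eta\|H_JY_\pm\|_{C_TC^\beta}$ as small as we like. The exponential is Lipschitz on $C^\beta$ balls, so \eqref{cal:gauge-bounds} follows. Uniform closeness of $k/h$, $h/k$, $k/h^3$ to one is then immediate from the $L^\infty$ smallness of $H_JY_\pm$.

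\emph{Step 3: the splitting of $a_\pm$.} The scalar part $d_\pm$ collects $\mu-\eta(C_{1,N}-c_1)+\eta^2(C_{2,N}-\nu\E|\nabla Y_{\pm,h}|^2)$. The remainder is
\[
 r_\pm=\eta\mu Y_{\pm,h}+\eta W_l-\eta^2\bigl(\nu|\nabla Y_{\pm,h}|^2-\nu\E|\nabla Y_{\pm,h}|^2\bigr).
\]
The scalar bound in \eqref{cal:defect-bounds} is then obtained in two pieces:
\begin{itemize}
\item \eqref{ga:initial-layer} gives $|C_{1,N}-c_1|\lesssim t^{-1/2}$;
\item Lemma~\ref{cal:gradient-square}, \eqref{cal:scalar-forward}--\eqref{cal:scalar-backward}, with the $C(1+J)$ loss for $H_J$, gives the logarithmic and $J$ terms.
\end{itemize}
For $r_\pm$, the centered gradient square lies in $L^p(\Omega;L^q_TC^{-\varepsilon})$ by \eqref{cal:gradient-moments}, and $Y_{\pm,h}$ is bounded by $M_0$. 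The low-pass $W_l$ satisfies $\|W_l\|_{C^{-\varepsilon}}\lesssim 2^{(1+\epsilon)J}\|W\|_{C^{-1-\epsilon}}\le CM^C$ since $2^J\le CM^C$. Finally I would define $M$ to include these random norms; by hypercontractivity they have all moments uniformly in $N,T$.

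\emph{Main obstacle.} The main obstacle is uniformity in $J$ while $J$ is random. Lemma~\ref{cal:gradient-square} is proved at fixed $J$, but here $J$ depends on $M_0$. I would handle this by taking a supremum over $J$ of $2^{-\kappa J}$ times the fixed-$J$ norms, for small $\kappa>0$. Summing moments over $J$ gives a single random constant, so substituting $J=J(M_0)$ costs only a polynomial factor in $M$.
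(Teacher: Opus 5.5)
Your proposal is correct and follows essentially the same route as the paper: the chain-rule identity for the defects, a choice of $J$ from the high-pass gain of $Y_\pm$ in $C^\beta$, and a split into a scalar $d_\pm$ (bounded by \eqref{ga:initial-layer} and Lemma~\ref{cal:gradient-square} with the $C(1+J)$ loss) plus a centered remainder, with the random threshold handled by enlarging $M$ by a weighted sum over $j$ of the fixed-$j$ centered norms. The differences are cosmetic: the paper uses weights $(1+j)^{-2}$ where you use $2^{-\kappa j}$, it puts $\mu$ in $r_\pm$ rather than in $d_\pm$, and it states explicitly what your argument implicitly requires, namely that each expectation in $d_\pm$ is evaluated at a deterministic $j$ before the random $J$ is substituted.
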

\begin{proof}
\emph{Choice of the weights.}
The covariance bounds in either orientation give
\[
 \|H_JY_\pm\|_{C_TC^\beta}\lesssim M2^{-\epsilon_0J/4},
 \qquad
 J=\min\{j\ge0:CM2^{-\epsilon_0j/4}\le\delta\}.
\]
Composition by the exponential gives \eqref{cal:gauge-bounds} and $2^J\le CM^C$.

\emph{Equation errors.}
For $h=e^{-\eta Y_{+,h}}$, the chain rule gives
\[
 \frac{h'-\nu\Delta h}{h}
 =-\eta(\partial_t-\nu\Delta)Y_{+,h}
       -\eta^2\nu|\nabla Y_{+,h}|^2.
\]
Substitute $(\partial_t+A)Y_{+,h}=H_JW$; for $k$, substitute
$(-\partial_t+A)Y_{-,h}=H_JW$. This gives \eqref{cal:explicit-defects}.

\emph{Scalar and centered terms.}
For deterministic $J$, put
$d_\pm=-\eta(C_{1,N}-c_1)
 +\eta^2(C_{2,N}-\nu\E|\nabla H_JY_\pm|^2)$.
Set $r_\pm=a_\pm-d_\pm$. Since
$\|L_JW\|_{C^{-\varepsilon}}\lesssim2^{CJ}\|W\|_{C^{-1-\varepsilon}}$,
Lemma~\ref{cal:gradient-square} and \eqref{ga:initial-layer} give
\eqref{cal:defect-bounds}. Enlarge $M$ by
\[
 \sum_{j\ge0}(1+j)^{-2}\sum_\pm
 \left\|\nu|\nabla H_jY_\pm|^2-\nu\E|\nabla H_jY_\pm|^2
 \right\|_{L^q_TC^{-\varepsilon}}.
\]
At the chosen $J$, the centered norm is at most $(1+J)^2M\le CM^C$.
Each expectation above is evaluated before choosing $J$.

\end{proof}

\begin{lemma}[Weight tails]\label{cal:weight-tail-lemma}
Let $a_{\ge cN}$ denote a smooth high-frequency cutoff. For finite $p>1$,
there is $\mathcal G_{N,p}(t)$ such that
\begin{equation}\label{cal:weight-tail}
 \|W(t)\|_p\le N\mathcal G_{N,p}(t),\qquad
 \|a_{\ge cN}(t)\|_p
 \le N^{-1}\mathcal G_{N,p}(t)+CM^CN^{-2\beta},
 \quad a\in\{h,k,k/h,h/k\}.
\end{equation}
For every finite $q,r$, the norms
$\|\mathcal G_{N,p}\|_{L^r(\Omega;L^q(0,T))}$ are bounded uniformly in $N,T$.
\end{lemma}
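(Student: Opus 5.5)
We prove both bounds by taking $\mathcal G_{N,p}$ to be a sum of normalized $L^p$ norms of Gaussian fields, each in a finite chaos. The $N^{-2\beta}$ term is then the only place where the random constant $M$ enters. Concretely we set
\[
 \mathcal G_{N,p}(t)=N^{-1}\|W(t)\|_p+C_0N^{-1}\sum_\pm\|\Delta Y_\pm(t)\|_p ,
\]
with $Y_\pm$ the primitives \eqref{cal:primitives} and $C_0$ depending on $\eta,\nu$ and the constants in \eqref{cal:gauge-bounds}. The first bound in \eqref{cal:weight-tail} then holds by definition.

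\emph{Moments of $\mathcal G_{N,p}$.} At each $(t,x)$, $W$ and $\Delta Y_\pm$ lie in the second chaos. By \eqref{ga:covariance},
\[
 \E W(t,x)^2\lesssim\sum_{p,q\in\Lambda_N}\langle p\rangle^{-2}\langle q\rangle^{-2}\lesssim N^2 .
\]
The bound $\E|\widehat Y_\pm(k)|^2\lesssim\langle k\rangle^{-5}$, from \eqref{gm:heat} and used in Lemma~\ref{cal:gradient-square}, gives
\[
 \E|\Delta Y_\pm(t,x)|^2\lesssim\sum_{|k|\lesssim N}|k|^4\langle k\rangle^{-5}\lesssim N^2 .
\]
These bounds are uniform in $t\le T$ and in the orientation. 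Hypercontractivity bounds $\E|F(t,x)|^r$ by $C_r(\E F(t,x)^2)^{r/2}$ for these fields. Fubini and Minkowski, taking $r\ge\max(p,q)$, then bound $\|\mathcal G_{N,p}\|_{L^r(\Omega;L^q(0,T))}$ uniformly in $N$ and $T\le1$.

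\emph{The high-frequency tail of the weights.} Take $a=h=e^{-\eta H_JY_+}$; the cases $k$, $k/h=e^{-\eta H_J(Y_--Y_+)}$ and $h/k$ are identical. The cutoff $a_{\ge cN}$ removes frequencies below $cN$, so a Bernstein/Poincaré bound on $L^p$, $1<p<\infty$, gives $\|a_{\ge cN}\|_p\lesssim N^{-2}\|\Delta a\|_p$. The chain rule gives
\[
 \Delta h=h\bigl(-\eta\Delta H_JY_++\eta^2|\nabla H_JY_+|^2\bigr).
\]
For the first term we use $\|h\|_\infty\le2$ and the $L^p$ boundedness of $H_J$, uniform in $J$. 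This gives $N^{-2}\|\Delta H_JY_+\|_p\lesssim N^{-1}\mathcal G_{N,p}$.

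For the gradient square, Proposition~\ref{ga:model} and Lemma~\ref{ga:quadratic-envelope} give $Y_\pm\in C^{1-\epsilon}$ for every $\epsilon>0$, with moments uniform in $N$; this norm is included in $M$. Since $Y_\pm$ is supported in frequencies $\lesssim N$, Bernstein gives $\|\nabla H_JY_\pm\|_\infty\lesssim M N^{\epsilon}$. Hence
\[
 N^{-2}\|h|\nabla H_JY_+|^2\|_p\lesssim M^2N^{-2+2\epsilon}.
\]
Choosing $2\epsilon\le\epsilon_0=2-2\beta$ makes this at most $CM^CN^{-2\beta}$, which is the remaining term in \eqref{cal:weight-tail}.

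\emph{Main obstacle.} The delicate point is to avoid the loss from the rough second derivative. Estimating $h$ only through its $C^\beta$ norm would give a tail of size $N^{-\beta}$, not $N^{-1}$. The second derivative $\Delta Y$ has no pathwise bound better than $N^{1+}$. We therefore keep it as a Gaussian field, whose $L^p$ norm grows only like $N$, and put it into $\mathcal G_{N,p}$ rather than into $M$. The only pathwise input is then the first-derivative bound, which loses just $N^{\epsilon}$.
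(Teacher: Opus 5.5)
Your argument is correct, but it takes a different route from the paper.

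\textbf{The paper's route.} The paper paralinearizes the weight,
$a=\sum_\pm\partial_\pm F(Y_{+,h},Y_{-,h})\prec Y_{\pm,h}+R_a$, with $\|R_a\|_{C^{2\beta}}\le CM^C$. The high blocks of the paraproduct are bounded by neighbouring blocks $\|\Delta_{j'}Y_\pm\|_p$ with $2^{j'}\gtrsim N$. These blocks enter $\mathcal G_{N,p}$ with weight $N$. The $N^{-2\beta}$ term is simply the tail of the $C^{2\beta}$ remainder. A separate case $2^J>c'N$, where $N\le CM^C$, handles the random threshold.

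\textbf{Your route.} You invert the Laplacian on high frequencies, $\|a_{\ge cN}\|_p\lesssim N^{-2}\|\Delta a\|_p$. You then use the exact chain rule $\Delta a=a(\eta^2|\nabla H_JY|^2-\eta\Delta H_JY)$. This is legitimate because at finite cutoff $Y_\pm$ are trigonometric polynomials supported in $\Lambda_{2N}$. The linear term enters $\mathcal G_{N,p}$ as $N^{-1}\|\Delta Y_\pm\|_p$. Its moments follow from $\E|\widehat Y_\pm(k)|^2\lesssim\langle k\rangle^{-5}$ and hypercontractivity, exactly as the paper bounds its blocks. The gradient square is bounded pathwise by Bernstein: $\|\nabla H_JY_\pm\|_\infty\lesssim N^{\epsilon_0/4}\|Y_\pm\|_{C^{1-\epsilon_0/4}}\le MN^{\epsilon_0/4}$. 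This gives $M^2N^{-2+\epsilon_0/2}\le M^2N^{-2\beta}$.

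\textbf{What each buys.} Your route needs no paralinearization. It also needs no case split on the random threshold, since the $L^p$ bound for $H_J$ and the gradient bound are uniform in $J$. It even yields a slightly better deterministic tail. The paper's route is more structural. It only uses H\"older regularity of $Y_\pm$ and the decomposition ``paraproduct plus smoother remainder''. It never differentiates $Y$ twice, and it only involves the top few blocks of $Y_\pm$.

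\textbf{Two small points to tidy.}
\begin{enumerate}
\item Proposition~\ref{ga:model} covers only the forward primitive. The $C^{1-\epsilon_0/4}$ bound for $Y_-$ comes from the same covariance bounds in reversed time; the paper includes it in $M_0$ by convention.
\item The bound $\|a\|_\infty\le C$ that you use for $a\in\{h,k,k/h,h/k\}$ relies on the threshold chosen in Proposition~\ref{cal:construction}. Without that choice, one only has $e^{CM}$.
\end{enumerate}
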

\begin{proof}
The Fourier bounds and hypercontractivity give
\[
 \|W\|_{L^r(\Omega;L^p)}
 \lesssim_{p,r}\Bigl(\sum_{n\in\Lambda_{2N}}\langle n\rangle^{-1}\Bigr)^{1/2}
 \lesssim N,
 \quad
 \|\Delta_jY_\pm\|_{L^r(\Omega;L^p)}
 \lesssim_{p,r}\Bigl(\sum_{|n|\asymp2^j}\langle n\rangle^{-5}\Bigr)^{1/2}
 \lesssim2^{-j}.
\]

\emph{Paralinearization.} Write $a=F(Y_{+,h},Y_{-,h})$.
The arguments of this smooth exponential are bounded by
\eqref{cal:gauge-bounds}. Bony's formula gives
\[
 a=\sum_\pm\partial_\pm F(Y_{+,h},Y_{-,h})\prec Y_{\pm,h}+R_a,
 \qquad \|R_a\|_{C^{2\beta}}\le CM^C.
\]
The remainder satisfies
$\|(R_a)_{\ge cN}\|_\infty\le CM^CN^{-2\beta}$. A block of the
paraproduct of frequency $2^j\ge cN$ has $L^p$ norm at most
$C\|\Delta_{j'}Y_{\pm,h}\|_p$ with $|j-j'|\le2$. Put
\[
 \mathcal G_{N,p}=N^{-1}\|W\|_p
 +N\sum_\pm\sum_{2^j\ge c'N}\|\Delta_jY_\pm\|_p .
\]
Its moments are bounded, since $N\sum_{2^j\ge c'N}2^{-j}\le C$.
Minkowski gives the time norms.

\emph{The random threshold.} If $2^J\le c'N$, the blocks above $c'N$
of $H_JY_\pm$ are bounded by a fixed finite sum of neighboring blocks of $Y_\pm$, uniformly in $J$, and the bound above applies. If
$2^J>c'N$, then $N\le CM^C$ and the trivial bound
$\|a_{\ge cN}\|_p\le C\le CM^{C}N^{-2\beta}$ holds after enlarging
the power of $M$.
\end{proof}

For the response, $L=L_0-(6gZu+3gu^2)$, so the errors become
$a_\pm+6gZu+3gu^2$. On $\{M\le R\}$, Lemma~\ref{po:local} gives
$\|u\|_{C_TC^\varpi}\le C_R$ for deterministic $T$ small enough, and
\begin{equation}\label{cal:reference-input}
 \|6gZu(t)+3gu(t)^2\|_{C^{-\alpha}}
 \le C_R(1+t^{-d})+\kappa(t),\qquad
 d=\omega\frac{\sigma-\varpi}{1+\epsilon_0-\varpi}<\tfrac14,
 \qquad \|\kappa\|_{L^q_T}\le C_R.
\end{equation}
Write $u=(w-Ku)-q_0$ and $\chi=(\sigma-\varpi)/(1+\epsilon_0-\varpi)$. Then
\[
\begin{aligned}
 \|w-Ku\|_{C^\sigma}
 &\lesssim\|w\|_{C^\varpi}^{1-\chi}\|w\|_{C^{1+\epsilon_0}}^\chi
          +M^C\|u\|_{C^\varpi}
 \le C_R(1+t^{-\omega\chi}),\\
 \|Z(w-Ku)\|_{C^{-\alpha}}
 &\lesssim\|Z\|_{C^{-\alpha}}\|w-Ku\|_{C^\sigma}.
\end{aligned}
\]
Use $Z\Psi=Z\prec\Psi+\Psi\prec Z+S$ in $Zq_0$;
Proposition~\ref{ga:model} controls this term and the smoother $Z\Omega_J$.
Also $\|u^2\|_{C^{-\alpha}}\lesssim\|u\|_{C^\varpi}^2$.

Since $d<1/4$, the rough errors belong to $L^4_TC^{-\alpha}$;
the scalar errors belong to $L^1_T$. For a rough error $a$,
\[
 |\langle a,hkv^2\rangle|
 \le C\|a\|_{C^{-\alpha}}\|v\|_2^{2-\alpha}\|v\|_{H^1}^{\alpha}
 \le\varepsilon\|v\|_{H^1}^2
   +C_\varepsilon\|a\|_{C^{-\alpha}}^{2/(2-\alpha)}\|v\|_2^2.
\]
Since $2/(2-\alpha)<4$, the growth coefficient is integrable.
Choose $R$ with $\mathbb P(M>R)\le R^{-p}\E M^p\le1/4$ and then
$T_*=cR^{-C}$ in Lemma~\ref{po:local}. Intersecting with the projection
event $\Omega_{\rm proj}$ of Appendix~\ref{cal:projection-section} gives
\[
 \mathbb P(\{M\le R\}\cap\Omega_{\rm proj})
 \ge1-\mathbb P(M>R)-\mathbb P(\Omega_{\rm proj}^c)\ge\tfrac12.
\]

\section{Errors from the Galerkin projection}\label{cal:projection-section}

A weighted entropy test need not belong to $E_N$.
For the potential error, one derivative supplies $N^{-1}$ and cancels
the size $N$ of the Wick square. The cubic error is confined to a
narrow frequency strip.

\begin{lemma}[The potential error]\label{cal:projection-form}\label{cal:projection}
Use the weights and reference event of Proposition~\ref{cal:construction}.
Write $V=\eta W+a+c$, where $\eta=3g$,
$a=6gZu+3gu^2$ and $c$ is spatially constant.
For every $\varepsilon>0$ there is a nonnegative coefficient
$B_\varepsilon$, with
$\int_0^T B_\varepsilon(s)\,ds\le C_{R,\varepsilon}$ on an event of
probability at least $1/2$, such that
\begin{equation}\label{cal:projection-Hr}
 |\langle Q(mf),Vf\rangle|
 \le\varepsilon\|f/h\|_{H^1}^2
       +B_\varepsilon(s)\|f/h\|_2^2,
 \qquad m=k/h,\quad f\in E_N.
\end{equation}
The constants are uniform in $N,T$. The estimate also holds with $h,k$ interchanged.
\end{lemma}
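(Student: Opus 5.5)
We split $V=\eta W+a+c$ and treat the three parts separately, always writing $f=hz$ with $z=f/h$.

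\emph{Constant part.} Since $f\in E_N$, we have $\langle Q(mf),cf\rangle=c\langle Q(mf),f\rangle=0$, so $c$ does not contribute.

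\emph{Rough part $a$.} Because $Q$ is self-adjoint, $|\langle Q(mf),af\rangle|=|\langle mf,Q(af)\rangle|$. Both factors sit at frequencies of order $N$ or higher. We bound the pairing by $\|a\|_{C^{-\alpha}}$ times a $B^{\alpha}_{1,1}$ norm of $f\,Q(mf)$. Then:
\begin{itemize}
\item Only the high-frequency tail of $m=k/h$ survives after $Q$. By Lemma~\ref{cal:weight-tail-lemma}, this tail is at most $N^{-1}\mathcal G+CM^CN^{-2\beta}$.
\item Alternatively, one factor $N^{-\sigma}\|f\|_{H^\sigma}$ can be moved onto the test.
\end{itemize}
Since $f=hz$ and $h\in C^\beta$, we get $|\langle Q(mf),af\rangle|\lesssim\|a\|_{C^{-\alpha}}\|z\|_2^{2-\sigma}\|z\|_{H^1}^{\sigma}$, up to $C_R$. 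Young's inequality then gives $\varepsilon\|z\|_{H^1}^2+C_\varepsilon\|a\|_{C^{-\alpha}}^{2/(2-\sigma)}\|z\|_2^2$. By \eqref{cal:reference-input}, $\|a\|_{C^{-\alpha}}\in L^4_T$ on the reference event, and $2/(2-\sigma)<4$, so this part of $B_\varepsilon$ is integrable.

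\emph{Wick square $\eta W$: the main term and main obstacle.} Here $\|W\|_{C^{-1-}}$ is not good enough against an $H^1$ test, and naively $\|W\|_p\sim N$. We use the commutator structure instead:
\[
 Q(mf)=Q(m_{\ge cN}f),
\]
because $f\in E_N$, so $Q$ only sees the high-frequency part of $m$ times $f$. Combining this with Lemma~\ref{cal:weight-tail-lemma} yields
\[
 |\langle Q(mf),Wf\rangle|\le\|W\|_{p}\,\|m_{\ge cN}\|_{p'}\,\|f\|_{r}^2 ,
\]
with H\"older exponents chosen so that $\tfrac1p+\tfrac1{p'}+\tfrac2r=1$ and $r<6$, so that $H^1\subset L^r$.

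The factor $N$ in $\|W\|_p\le N\mathcal G_{N,p}$ is compensated by $N^{-1}\mathcal G_{N,p'}$ from the weight tail. The remaining term $CM^CN^{1-2\beta}$ is bounded because $\beta>1/2$.

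This leaves $(\mathcal G_{N,p}\mathcal G_{N,p'}+M^C\mathcal G_{N,p})\|z\|_{L^r}^2$. Interpolating $\|z\|_r\le\|z\|_2^{1-\vartheta}\|z\|_{H^1}^{\vartheta}$ with $\vartheta<1$ and applying Young gives $\varepsilon\|z\|_{H^1}^2+C_\varepsilon G(s)^{1/(1-\vartheta)}\|z\|_2^2$, where $G$ is the bracketed coefficient.

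Lemma~\ref{cal:weight-tail-lemma} gives every $L^r(\Omega;L^q_T)$ moment of $\mathcal G$, uniformly in $N,T$. Chebyshev's inequality therefore places $\int_0^TG^{1/(1-\vartheta)}\,ds\le C_{R,\varepsilon}$ on an event of probability at least $3/4$; intersected with the rest, this is the event $\Omega_{\rm proj}$.

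The delicate point is the wide-frequency interaction: $Q$ only restricts the output frequency, so the high tail of $m$ can combine with low modes of $f$. We would check that the frequency support of $m_{<cN}f$ lies inside $\Lambda_N$ for a suitable $c<1$. Any leftover boundary strip is handled by the uniform $L^p$ boundedness of the cube projection.

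\emph{Summing the three parts} gives \eqref{cal:projection-Hr} with $B_\varepsilon$ the sum of the coefficients above. Interchanging $h$ and $k$ changes nothing, since Lemma~\ref{cal:weight-tail-lemma} covers $h/k$ as well.
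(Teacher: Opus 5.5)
\paragraph{Verdict.} Your proposal has a genuine gap in the main term, the Wick square $\eta W$: the low frequencies of $m$ against the top frequencies of $f$ are left uncontrolled.

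\paragraph{What already works.} The constant part, the tail term $Q(m_{\ge cN}f)$ against $\eta W$, and the final Chebyshev event match the paper. In the tail term, the factor $N$ in $\|W\|_p\le N\mathcal G_{N,p}$ is cancelled by the $N^{-1}$ of Lemma~\ref{cal:weight-tail-lemma}, and $N^{1-2\beta}$ is harmless. Your bound for the $a$-term is also correct. However, it follows directly from $\|Q(mf)\|_{H^\sigma}\le\|kz\|_{H^\sigma}\lesssim_R\|z\|_{H^\sigma}$ and a product estimate. It does not come from the frequency localisation of $m$ that you invoke, which is the same false identity discussed next.

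\paragraph{The gap.} You use the identity $Q(mf)=Q(m_{\ge cN}f)$ for $f\in E_N$, and it is false. Take $m=1+\delta\cos(2\pi x_1)$ and $f=\cos(2\pi Nx_1)$. Then $m_{\ge cN}=0$, but $Q(mf)=\tfrac\delta2\cos(2\pi(N+1)x_1)\neq0$.

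No $c<1$ puts $m_{<cN}f$ inside $\Lambda_N$, because $f$ may sit at the edge of the cube. Nor is the leftover $Q(m_{<cN}f)$ a harmless boundary strip. Uniform $L^s$ bounds for $Q$ give only $\|Q(m_{<cN}f)\|_s\lesssim\|f\|_s$. Paired with $\|W\|_p\sim N$, this leaves an uncompensated factor $N$. Your estimate uses $z$ only through $\|z\|_{L^r}$, so nothing in it can produce the missing $N^{-1}$.

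\paragraph{The missing idea.} You need a derivative gain on the part of the test near the cube boundary. The paper proceeds as follows.
\begin{enumerate}
\item Write $mf=kv$ with $v=f/h$.
\item Split both $k=k_l+k_h$ and $v=v_l+v_h$ at a small fixed multiple of $N$.
\item Then $k_lv_l\in E_N$, so $Q(kv)=Q(k_lv_h)+Q(k_hv)$. The second term is your tail term.
\item The first term is controlled by the high-frequency Poincar\'e inequality $\|v_h\|_2\le CN^{-1}\|v\|_{H^1}$, together with $\|W\|_{12}\le NG_N$:
\[
 |\langle Q(k_lv_h),Whv\rangle|
 \lesssim_R\|v_h\|_2\,\|W\|_{12}\,\|v\|_{12/5}
 \lesssim_R G_N\|v\|_2^{3/4}\|v\|_{H^1}^{5/4}.
\]
\item Young absorbs this bound because the power $5/4$ of $\|v\|_{H^1}$ is below $2$.
\end{enumerate}
Splitting $v$ rather than $f$ keeps the high frequencies of $h$ out of this term. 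Since $h$ is only $C^\beta$ with $\beta<1$, bounding $\|f_{\ge cN}\|_2$ through $\|\nabla f\|_2$, or through $\|h_{\ge cN}\|_\infty\lesssim N^{-\beta}$, would leave a divergent factor $N^{1-\beta}$.
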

\begin{proof}
Put $v=f/h$, $F=\|v\|_2$ and $Z=\|v\|_{H^1}$.
Since $Q(mf)=Q(kv)$, smoothly split $k=k_l+k_h$ and $v=v_l+v_h$
at a sufficiently small fixed multiple of $N$.
The low--low product lies in $E_N$, so
\[
 Q(kv)=Q(k_lv_h)+Q(k_hv),\qquad
 \|v_h\|_2\le CN^{-1}Z.
\]
For the Wick square, covariance and hypercontractivity give
\[
 \|W(s)\|_{L^{12}}\le N G_N(s),\qquad
 \sup_{N,T,s}\|G_N(s)\|_{L^p(\Omega)}<\infty
 \quad(p<\infty).
\]
Indeed $\sum_{n\in\Lambda_{2N}}\E|\widehat W(n)|^2\lesssim N^2$.
By \eqref{cal:weight-tail} and $2\beta>1$,
\[
 \|k_h\|_{12}\le N^{-1}\mathcal G_{N,12}+C_RN^{-2\beta}
 \le C_RN^{-1}\widetilde G_N.
\]
Since $\|v\|_{12/5}\lesssim F^{3/4}Z^{1/4}$,
\begin{align*}
 |\langle Q(k_lv_h),Whv\rangle|
 &\lesssim_R\|v_h\|_2\|W\|_{12}\|v\|_{12/5}
 \lesssim_R G_NF^{3/4}Z^{5/4},\\
 |\langle Q(k_hv),Whv\rangle|
 &\lesssim_R\|k_h\|_{12}\|W\|_{12}\|v\|_{12/5}^2
 \lesssim_R G_N\widetilde G_NF^{3/2}Z^{1/2}.
\end{align*}
Young gives
\[
 G_NF^{3/4}Z^{5/4}+G_N\widetilde G_NF^{3/2}Z^{1/2}
 \le\varepsilon Z^2+C_\varepsilon
       \bigl(G_N^{8/3}+(G_N\widetilde G_N)^{4/3}\bigr)F^2
 \le\varepsilon Z^2+C_\varepsilon(1+\mathcal G_N^4)F^2,
\]
where $\mathcal G_N$ has all finite moments.

For $a$, frequency support in $\Lambda_{2N}$ gives
$\|a\|_\infty\le CN^\alpha\|a\|_{C^{-\alpha}}$.
Since $\|k_h\|_\infty\le C_RN^{-\beta}$ and
$\alpha<\beta<1$, the same decomposition yields
\[
 |\langle Q(kv),ahv\rangle|
 \le C_R\|a\|_{C^{-\alpha}}
       \bigl(N^{\alpha-1}FZ+N^{\alpha-\beta}F^2\bigr)
 \le\varepsilon Z^2+
       C_{R,\varepsilon}(1+\|a\|_{C^{-\alpha}}^2)F^2.
\]
The constant term vanishes exactly:
$\langle Q(mf),cf\rangle=c\langle mf,Qf\rangle=0$.
Thus we may take
\[
 B_\varepsilon=C_{R,\varepsilon}
       (1+\mathcal G_N^4+\|a\|_{C^{-\alpha}}^2).
\]
By \eqref{cal:reference-input}, the last term is integrable on $\{M\le R\}$.
For both orientations, let $L=\sum_\pm\int_0^T\mathcal G_{N,\pm}^4\dd s$.
Then
\[
 \E L\le C T\le C,\qquad
 \mathbb P(L>4C)\le\tfrac14,\qquad
 \mathbb P(\{M\le R\}\cap\{L\le4C\})\ge\tfrac12.
\]
Take $\Omega_{\rm proj}=\{L\le4C\}$ and interchange $h,k$ for the backward bound.
\end{proof}

\paragraph{The two state entropies.}
For the explicit weights, replace $C_R$ by $CM^C$ in the preceding
estimates. Let $r=hv\in E_N$.
For the test $kv$, the two terms satisfy
\[
 |\langle Q(k_lv_h),Wr\rangle|
 \lesssim M^C\|v_h\|_2\|W\|_4\|v\|_4,
 \qquad
 |\langle Q(k_hv),Wr\rangle|
 \lesssim M^C\|k_h\|_4\|W\|_4\|v\|_4^2.
\]
Hence
\begin{equation}\label{cal:quadratic-quotient-projection}
 |\langle Q(kv),Wr\rangle|
 \le\Gamma_N(t)\bigl(\|v\|_4\|v\|_{H^1}+\|v\|_4^2\bigr),
\end{equation}
All moments of $\Gamma_N(t)$ are bounded uniformly in $N,T,t$.
For $p\ge8$, Minkowski gives
\[
 \bigl\|\|\Gamma_N\|_{L^8(0,T)}\bigr\|_{L^p(\Omega)}
 \le\left(\int_0^T\|\Gamma_N(t)\|_{L^p(\Omega)}^8\dd t\right)^{1/8}
 \le C_pT^{1/8}.
\]
For $p<8$, use the estimate with $p=8$. Any other finite time exponent
is obtained in the same way.

Put $U=\|v\|_4$ and $Y=\|v\|_{H^1}$. The two products in
\eqref{cal:quadratic-quotient-projection} have degrees
$1/4+1/2=3/4$ and $2/4=1/2$ relative to $U^4+Y^2$. Hence
\[
 \Gamma_N(UY+U^2)
 \le\varepsilon(U^4+Y^2)+C_\varepsilon(\Gamma_N^4+\Gamma_N^2).
\]
The initial rate is unchanged: $\int_0^T\Gamma_N^4
\le T^{1/2}\|\Gamma_N\|_{L^8}^4$ and $1/2>1-\sigma$.

For the test $kv^3$, put $U=\|v\|_4$ and $G=\|v\nabla v\|_2$.
Let $c$ be supported in $\Lambda_{2N}$ with values in $C^{-\sigma}$,
where $\sigma<\beta$. The same decomposition gives
\begin{equation}\label{cal:quartic-projection}
 |\langle Q(kv^3),Wr\rangle|
 +|\langle Q(kv^3),cr\rangle|
 \le\varepsilon G^2+A_\varepsilon(t)U^4.
\end{equation}
Here $A_\varepsilon$ is polynomial in the Gaussian coefficients and
$\|c\|_{C^{-\sigma}}$; choose $q$ large enough for its time norm.
The low--low product vanishes:
\[
 Q(kv^3)=Q\bigl(k_l(v^3)_{\ge cN}\bigr)+Q(k_hv^3),
 \qquad
 \|(v^3)_{\ge cN}\|_{3/2}
 \le CN^{-1}\|v\|_6G.
\]
Since $\|\nabla(v^3)\|_{3/2}\le3\|v\|_6G$,
\begin{align*}
 |\langle Q(k_l(v^3)_{\ge cN}),Wr\rangle|
 &\lesssim M^C\|(v^3)_{\ge cN}\|_{3/2}\|W\|_{12}U
 \le\Gamma_NU\|v\|_6G,\\
 |\langle Q(k_hv^3),Wr\rangle|
 &\lesssim M^C\|k_h\|_{24}\|W\|_{24}\|v^3\|_{3/2}U
 \le\Gamma_NU\|v\|_{9/2}^3.
\end{align*}
Sobolev and interpolation give
\[
 \|v^2\|_6\lesssim G+U^2,\qquad
 \|v\|_6\lesssim U^{1/2}G^{1/4}+U,\qquad
 \|v\|_{9/2}\lesssim U^{5/6}G^{1/12}+U.
\]
Thus the sum is bounded by
$C\Gamma_N(U^{3/2}G^{5/4}+U^2G+U^{7/2}G^{1/4}+U^4)$.
For each monomial, $a+2b=4$ and $b<2$, so
\[
 \Gamma_NU^aG^b\le\varepsilon G^2
       +C_\varepsilon\Gamma_N^{2/(2-b)}U^4.
\]
For $c$, the same decomposition gives directly
\[
 |\langle Q(kv^3),cr\rangle|
 \lesssim M^C\|c\|_{C^{-\sigma}}
 \bigl(N^{\sigma-1}U\|v\|_6G+N^{\sigma-\beta}U^4\bigr).
\]
Use $\sigma<\beta<1$ and the preceding Young bound.

\begin{lemma}[The cubic error]\label{cal:boundary}
Under \eqref{cal:reference-bounds}, let $f\in E_N$, $v=f/h$ and $m=k/h$.
Then
\begin{equation}\label{cal:boundary-four}
 \|Q(mf)\|_4
 \le C_R\|v\|_2^{1/2}\|v\|_{H^1}^{1/2}.
\end{equation}
The same statement holds with $h,k$ interchanged.
\end{lemma}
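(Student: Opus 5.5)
Since $f\in E_N$, write $f=hv$ and $mf=kv$. The plan follows the splitting already used in the proof of Lemma~\ref{cal:projection-form}. Split the weight $k=k_l+k_h$ and the quotient $v=v_l+v_h$ smoothly at a small fixed multiple $cN$ of the cutoff. The low--low product $k_lv_l$ has frequencies in $\Lambda_N$, so it is annihilated by $Q$. This leaves
\[
 Q(mf)=Q(k_lv_h)+Q(k_hv).
\]
The cube projection is uniformly bounded on $L^4$, hence so is $Q=I-P$. It therefore suffices to bound $\|k_lv_h\|_4$ and $\|k_hv\|_4$ by $C_R\|v\|_2^{1/2}\|v\|_{H^1}^{1/2}$.

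For the first term, use $\|k_l\|_\infty\le C_R$ from \eqref{cal:reference-bounds} and estimate $\|v_h\|_4$. Gagliardo--Nirenberg in three dimensions, applied to the high-frequency part, gives $\|v_h\|_4\lesssim\|v_h\|_2^{1/4}\|\nabla v_h\|_2^{3/4}$. Bernstein for frequencies $\gtrsim N$ gives $\|v_h\|_2\lesssim N^{-1}\|v\|_{H^1}$, so $\|v_h\|_4\lesssim N^{-1/4}\|v\|_{H^1}$. This has the wrong homogeneity, and the remedy is to interpolate differently. Use $\|v_h\|_4\lesssim\|v_h\|_{H^{3/4}}\lesssim N^{-1/4}\|v_h\|_{H^1}$, and combine it with the trivial $\|v_h\|_{H^{3/4}}\lesssim\|v\|_2^{1/4}\|v\|_{H^1}^{3/4}$. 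Neither gives exactly $\|v\|_2^{1/2}\|v\|_{H^1}^{1/2}$.

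The useful fact is that $v=f/h$ is nearly band-limited: $f\in E_N$ and $h^{-1}$ is a $C^\beta$ weight. So $v_h=(f\,h^{-1})_{\ge cN}$ comes only from the high-frequency tail of $h^{-1}$ against $f$, plus paraproduct terms. With $\|(h^{-1})_{\ge cN}\|_\infty\lesssim C_RN^{-\beta}$, one gets $\|v_h\|_4\lesssim C_R N^{-\beta}\|f\|_4$ up to neighbouring blocks. Using the Bernstein bound $\|f\|_4\lesssim N^{3/4}\|f\|_2$ loses too much. The plan is instead to use only $\|f\|_4\lesssim\|f\|_2^{1/4}\|f\|_{H^1}^{3/4}$ and then absorb the remaining powers of $N$ via Bernstein on $f\in E_N$. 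Concretely, $\|f\|_{H^1}\le CN\|f\|_2$ allows trading $\|f\|_{H^1}^{1/4}$ for $N^{1/4}\|f\|_2^{1/4}$ when the power of $\|v\|_{H^1}$ is too large, and $\|f\|_2\le \|f\|_{H^1}$ handles the opposite direction. Since $\beta>3/4$, the net power of $N$ is nonpositive. The relations $\|f\|_{H^s}\simeq_R\|v\|_{H^s}$ for $s\le1<\beta$-range hold because multiplication by $h^{\pm1}\in C^\beta$ is bounded on $H^s$ for $|s|<\beta$; for $s=1$ the paraproduct $v\prec h$ is used.

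For the second term, Hölder gives $\|k_hv\|_4\le\|k_h\|_\infty\|v\|_4\le C_RN^{-\beta}\|v\|_2^{1/4}\|v\|_{H^1}^{3/4}$. To reach the target exponents, use $v=f/h$ and Bernstein on $f$, so that $\|v\|_{H^1}^{1/4}\lesssim_R N^{1/4}\|v\|_2^{1/4}$ up to the smooth part. The resulting power $N^{1/4-\beta}\le1$ finishes this term. The interchange of $h,k$ is symmetric.

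The main obstacle is the first term. There the high-frequency part of $v$ must be traced back to the weight's tail rather than to Bernstein on $v$ itself, and the exponent bookkeeping must close using $\beta>3/4$. If direct control fails, I will instead write $v_h$ via $f\,(h^{-1}-S_{cN}h^{-1})$ and paraproduct commutators, each carrying a factor $N^{-\beta}$.
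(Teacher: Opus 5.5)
\textbf{There is a genuine gap, in your first term, and it begins with the reduction itself.} After splitting at $cN$ you discard $Q$ using its $L^4$ bound. You then try to prove $\|k_lv_h\|_4\le C_R\|v\|_2^{1/2}\|v\|_{H^1}^{1/2}$, but this inequality is false. Take $h=k=1$, which is allowed by \eqref{cal:reference-bounds}, and let $f$ be the real part of $\sum e^{2\pi i n\cdot x}$ over $N/2<|n|_\infty\le N$. Then $Q(mf)=Q(f)=0$. For $c$ small, however, $k_lv_h=f_{\ge cN}=f$, with $\|f\|_4\sim N^{3/4}\|f\|_2$, whereas $\|v\|_2^{1/2}\|v\|_{H^1}^{1/2}\sim N^{1/2}\|f\|_2$.

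Your remedy rests on a false premise. The high part $v_h=(f/h)_{\ge cN}$ does not come only from the tail of $h^{-1}$, because $f$ itself carries all frequencies up to $N\gg cN$. So $\|v_h\|_4\lesssim N^{-\beta}\|f\|_4$ fails. Interpolating $\|f\|_4$ between $L^2$ and $H^1$ gives at best $F^{1/4}Z^{3/4}$ (with $F=\|v\|_2$, $Z=\|v\|_{H^1}$), which misses the target by $(Z/F)^{1/4}$, up to $N^{1/4}$. Separately, multiplication by a $C^\beta$ weight with $\beta<1$ is not bounded on $H^1$, so $\|f\|_{H^1}\simeq_R\|v\|_{H^1}$ is not available either. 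Any correct argument must use that $Q$ only sees a thin shell outside $\Lambda_N$. The $L^4$ bound for $Q$ discards exactly this.

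\textbf{The missing idea.} Keep the band-limited factor $f\in E_N$ intact and decompose $m=k/h$ into dyadic blocks $m_j$ of frequency $L=2^j$.
\begin{itemize}
\item For $L\le cN$, the Fourier support of $Q(m_jf)$ lies in the exterior strip $\Lambda_{N+CL}\setminus\Lambda_N$, which has $O(N^2L)$ points, and only $f_{\ge c'N}$ contributes. The bound $\|z\|_4\le\|z\|_2^{1/2}\|z\|_\infty^{1/2}\le(\#\operatorname{supp}\widehat z)^{1/4}\|z\|_2$ then gives $\|Q(m_jf)\|_4\lesssim_R N^{1/2}L^{1/4-\beta}\|f_{\ge c'N}\|_2$. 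The strip factor $N^{1/2}$, in place of the Bernstein factor $N^{3/4}$, is exactly the needed gain.
\item Near band-limitedness is then used in the direction opposite to yours. Writing $f=hv$ and splitting into $h_lv_h+h_hv$ at high frequency gives $\|f_{\ge c'N}\|_2\lesssim_R\min\{F,N^{-1}Z+N^{-\beta}F\}$. Combined with $N^{1/2}\min\{F,N^{-1}Z\}\le F^{1/2}Z^{1/2}$ and $\sum_L L^{1/4-\beta}<\infty$, this yields the correct homogeneity.
\item The blocks $L\ge cN$ correspond to your second term. They are handled more simply by $\|m_{\ge cN}\|_\infty\lesssim_R N^{-\beta}$, Bernstein on $f$ itself ($\|f\|_4\lesssim N^{3/4}\|f\|_2$), $\beta>3/4$ and $F\le Z$. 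This also avoids your step $\|v\|_{H^1}\lesssim_R N\|v\|_2$, which needs a gradient bound on $h$ not contained in \eqref{cal:reference-bounds}.
\end{itemize}
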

\begin{proof}
Put $F=\|v\|_2$, $Z=\|v\|_{H^1}$ and decompose $m$ into smooth dyadic
blocks $m_j$ of frequency $L=2^j$.
For $L\le cN$, $Q(m_jf)$ lies in an exterior strip of $\Lambda_N$
with at most $CN^2L$ frequencies. Only the part of $f$ with
$|n|_\infty\ge c'N$ contributes. For $z=Q(m_jf)$,
\[
 \|z\|_4\le\|z\|_2^{1/2}\|z\|_\infty^{1/2}
 \le(\#\operatorname{supp}\widehat z)^{1/4}\|z\|_2
 \le CN^{1/2}L^{1/4}\|m_j\|_\infty\|f_{\ge c'N}\|_2.
\]
For $f=hv$, smooth low--high decomposition gives
\[
 (hv)_{\ge c'N}=(h_lv_h+h_hv)_{\ge c'N},\qquad
 \|f_{\ge c'N}\|_2\lesssim_R
 \min\{F,\|v_h\|_2+\|h_h\|_\infty F\}
 \lesssim_R\min\{F,N^{-1}Z+N^{-\beta}F\}.
\]
Summing $L^{1/4-\beta}$ over dyadic $L\le cN$ gives
\[
 C_RN^{1/2}\min\{F,N^{-1}Z+N^{-\beta}F\}
 \le C_R\bigl(F^{1/2}Z^{1/2}+N^{1/2-\beta}F\bigr)
 \le C_RF^{1/2}Z^{1/2}.
\]
For $L\ge cN$, use the uniform $L^4$ bound for $Q$ and Bernstein on
$f\in E_N$:
\[
 \|Q(m_{\ge cN}f)\|_4
 \le C_RN^{-\beta}\|f\|_4
 \le C_RN^{3/4-\beta}F\le C_RF.
\]
Use $\beta>3/4$ and $Z\ge F$.
\end{proof}

\section{The KPZ realization and its derivative}\label{kpz:analytic}

We differentiate the logarithm of the stochastic heat solution.
Inverse moments control the denominators. Two coupling derivatives
bound the remainder; a third bounds its first chaos. Set
$P_t=e^{\nu t\partial_x^2}$ and $\mathcal K=L^2((0,1)\times\mathbb T)$.

\subsection{Inverse moments of the stochastic heat solution}

\begin{lemma}\label{kpz:inverse}
If $dZ_b=\nu\partial_x^2Z_b\,dt+bZ_b\,dW$,
$Z_b(0)=1$, then, for every finite $T,K$ and $r>0$,
\[
 \sup_{|b|\le K,\,0<t\le T,\,x\in\mathbb T}
       \mathbb E Z_b(t,x)^{-r}<\infty.
\]
\end{lemma}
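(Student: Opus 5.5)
The plan is to reduce the statement to a lower-tail estimate for $\inf_x Z_b(t,x)$, of the kind proved by Mueller and Nualart~\cite{MuellerNualart}, while tracking the dependence on $b$ and $t$. Concretely, I would aim for constants $C,c>0$, depending only on $K,T,\nu$, such that
\[
 \mathbb P\Bigl\{\inf_{x\in\mathbb T}Z_b(t,x)<e^{-L}\Bigr\}\le Ce^{-cL^{\kappa}},
 \qquad |b|\le K,\ 0<t\le T,\ L\ge1,
\]
for some $\kappa>1$. The choice $\kappa=3/2$ is typical, but any $\kappa>1$ suffices. Indeed, then $\mathbb E Z_b(t,x)^{-r}\le e^{r}+\int_1^\infty re^{rL}Ce^{-cL^\kappa}\,dL$, which is finite and uniform in $(b,t,x)$. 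Positivity of $Z_b$~\cite{Mueller,ChenKim} makes $Z_b^{-r}$ well defined. Replacing $W$ by $-W$ shows that only $b\ge0$ needs treatment; the case $b=0$ is trivial since then $Z\equiv1$.

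\emph{Step 1: a one-step persistence estimate.} Fix a small time $\tau$ and a level $\delta\in(0,1)$. Suppose $Z_b(s,\cdot)\ge m$ on $\mathbb T$. Write $Z_b(s+\cdot)=mP_{\cdot}1+$ (nonnegative heat part of the excess) $+$ the stochastic convolution $b\int P_{\cdot-r}(Z_b\,dW)$. The aim is to show
\[
\mathbb P\bigl\{\inf_{x}Z_b(s+\tau',x)\le\delta m\ \big|\ \mathcal F_s\bigr\}\le Ce^{-c/\tau^{a}}
\]
for some $a>0$. By linearity and the comparison principle, it is enough to treat the solution started from the constant $m$, and by scaling we may take $m=1$. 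For that solution I would stop at the first time $Z$ leaves $[\delta,2]$. Up to this time the stochastic convolution has bounded integrand. The factorization method together with Burkholder's inequality and Garsia's lemma then give a sub-Gaussian sup bound with variance proxy $\lesssim K^2\tau^{1/2}$ over the window $[0,\tau]\times\mathbb T$. This yields the stated tail with $a=1/2$, uniformly for $b\le K$.

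\emph{Step 2: iteration.} Because the torus is compact and the start is flat, no spatial spreading is needed; only the time iteration is required. Apply Step 1 repeatedly on consecutive windows of length $\tau$, using the Markov property. Reaching level $e^{-L}$ by time $t\le T$ requires, within $T/\tau$ windows, roughly $L/|\log\delta|$ windows in which the infimum fails to persist. To optimize, choose $\tau$ as a function of $L$, for instance $\tau\asymp L^{-1}$. A union or large-deviation bound over windows then gives a tail like $e^{-cL^{1+a'}}$ for some $a'>0$, which supplies the needed $\kappa>1$.

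\emph{Main obstacle.} The delicate part is Step 1 with $\tau$ depending on $L$. The sup of the stochastic convolution must be controlled uniformly in space by a sub-Gaussian bound whose variance gains a positive power of $\tau$, and this has to dominate the entropy cost of the $T/\tau$ windows. If the bookkeeping does not close, my fallback is to quote~\cite{MuellerNualart} directly. Their argument is stated for fixed coefficient, so I would check that its constants depend on $b$ only through the Lipschitz bound $|b|\le K$, and on $t$ only monotonically up to $T$. The flat initial condition then gives uniformity in $x$ by spatial stationarity.
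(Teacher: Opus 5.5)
Your architecture matches the paper's. You dominate $Z_b$ by solutions restarted from constants (comparison plus linearity), stop at the exit from a band so that the stochastic convolution has integrand bounded by $2K$, and prove the small-time tail $Ce^{-c\tau^{-1/2}}$ for its supremum. Your factorization/BDG route is an acceptable substitute for the paper's chaining, provided you keep the $\sqrt p$ growth of the BDG constants. You then aim for an $e^{-cL^{3/2}}$ lower tail.

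The gap is in Step~2. Step~1 controls only the probability that the infimum falls below $\delta m$ during a window, not how far it falls. So the claim that reaching $e^{-L}$ requires about $L/|\log\delta|$ failed windows is unjustified: a single failed window can carry the infimum from $m$ straight to $e^{-L}$. From Step~1 alone, the window count gives only $\mathbb P\{\inf_xZ_b(t,\cdot)<e^{-L}\}\lesssim(T/\tau)e^{-c\tau^{-1/2}}$. For $\tau\asymp L^{-1}$ this is $e^{-c\sqrt L}$ up to a factor $L$, and for fixed $\tau$ it does not decay in $L$ at all. Neither tail makes $\mathbb E Z_b(t,x)^{-r}$ finite for any $r>0$. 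You must also treat an exit through the upper level $2m$, after which Step~1 no longer applies.

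The missing idea is to restart at the exit stopping times themselves rather than at deterministic times. In the paper, $v_n$ starts at $\tau_{n-1}$ from the constant $2^{-n}$ with the same future noise, and $\tau_n$ is its first exit from $[2^{-n-1},2^{1-n}]$. At an upper or a lower exit, $\inf_xv_n(\tau_n,\cdot)\ge2^{-n-1}$. Comparison then gives $v_n\le v_{n-1}\le Z_b$ and $\{\inf_xZ_b(t,\cdot)<2^{-n-1}\}\subset\{\tau_n<t\}$. Every loss of a factor $2$ is now a separate exit, with $\mathbb P(\tau_n-\tau_{n-1}<u\mid\mathcal F_{\tau_{n-1}})\le1\wedge Ce^{-cu^{-1/2}}$. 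If $\tau_n<T$, at least $(n+1)/2$ of the $n+1$ durations lie below $2T/(n+1)$. Successive conditioning, with a factor $2^{n+1}$ for the choice of those indices, gives $Ce^{-cn^{3/2}}$.

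In your window language, with $m_k=\inf_xZ_b(k\tau,\cdot)$, you would need $\mathbb P(m_{k+1}\le\delta^{j+1}m_k\mid\mathcal F_{k\tau})\le(Ce^{-c\tau^{-1/2}})^j$. This is proved by the same restarts at crossing times inside each window. With it, your choice $\tau\asymp L^{-1}$ and a count over the ways of distributing the drops among the windows do close the argument.

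Your fallback of quoting Mueller and Nualart is not a proof until uniformity in $b$ and $t$ is verified, and that verification is exactly this crossing argument.
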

\begin{proof}
Use continuous Walsh solutions, comparison, and restart at stopping
times \cite{Walsh,MuellerNualart}.

\emph{One crossing.} For predictable
$|w|\le B$, the convolution $I_w(s,x)=\int_0^sP_{s-u}(x-y)w(u,y)W(du,dy)$
satisfies
\begin{equation}\label{kpz:stopped-tail}
 \mathbb P\{\sup_{s\le\tau,x}|I_w(s,x)|>1/4\}
 \le C e^{-c\tau^{-1/2}},\qquad 0<\tau\le T.
\end{equation}
For $\tau\le1$, put $\delta=|s-s'|$, $d=d(x,x')$ and
$\Delta I_w=I_w(s,x)-I_w(s',x')$. Its increment martingale satisfies
\[
 \begin{aligned}
 \langle\Delta I_w\rangle
 &\lesssim B^2\left(\delta+\sum_{k\ne0}\min\{\delta,k^{-2}\}
       +\min\left\{\sqrt\tau,\sum_{k\ne0}\min\{d^2,k^{-2}\}\right\}\right)
 \\&\lesssim B^2(\sqrt\delta+\min\{\sqrt\tau,d\}).
 \end{aligned}
\]
On $(\tau2^{-4j},\sqrt\tau2^{-2j})$, choose
$a_j=(1-2^{-1/2})2^{-j/2}/4$, so that $\sum_ja_j=1/4$. Then
\[
 \mathbb P\!\left(\|I_w\|_\infty>\tfrac14\right)
 \le C\tau^{-1/2}\sum_{j\ge0}2^{6j}
       \exp\!\left(-\frac{a_j^2}{CB^2\sqrt\tau\,2^{-2j}}\right)
 \le Ce^{-c/\sqrt\tau}.
\]
Continuity extends the bound from the grids; increasing $C$ covers $1<\tau\le T$.

\emph{Successive crossings.} Set $\tau_{-1}=0$.
At $\tau_{n-1}<\infty$, start a solution $v_n$ from
the constant $2^{-n}$ using the same future noise, and let $\tau_n$ be its
first exit from $[2^{-n-1},2^{1-n}]$ in space. For $n=0$ take $v_0=Z_b$;
after an infinite stopping time all subsequent times are infinite.
At either an upper or lower exit, $\inf_xv_n(\tau_n,x)\ge2^{-n-1}$.
Comparison therefore gives $v_n\le v_{n-1}\le Z_b$ after each restart.
Until its exit, $2^nv_n\in[1/2,2]$, so its stopped noise coefficient is
bounded by $2K$. By \eqref{kpz:stopped-tail}, conditionally on the past,
\[
 \mathbb P\{\tau_n-\tau_{n-1}<u\mid\mathcal F_{\tau_{n-1}}\}
 \le 1\wedge Ce^{-cu^{-1/2}}
 \quad\text{on }\{\tau_{n-1}<\infty\}.
\]
At the exit the stopped convolution equals $2^nv_n-1$.
On $\{\tau_n<T\}$, at least $(n+1)/2$ durations are below $2T/(n+1)$;
successive conditioning gives
\[
 \mathbb P(\tau_n<T)
 \le 2^{n+1}\left(1\wedge Ce^{-c\sqrt{(n+1)/(2T)}}\right)^{(n+1)/2}
 \le Ce^{-cn^{3/2}}.
\]
Since $\{\inf_xZ_b(t,x)<2^{-n-1}\}\subset\{\tau_n<t\}$,
\[
 \mathbb E Z_b(t,x)^{-r}
 \le C_r\left(1+\sum_{n\ge0}2^{r(n+1)}\mathbb P(\tau_n<t)\right)
 \le C_r\left(1+\sum_{n\ge0}2^{r(n+1)}Ce^{-cn^{3/2}}\right)<\infty.
\]
\end{proof}

\subsection{Coupling derivatives and the number operator}

Put $\mathsf T=t\partial_t$ and let
\[
 d\mathcal Z_r^{t,\kappa}=\nu t\partial_x^2\mathcal Z_r^{t,\kappa}\,dr
       +\kappa\sigma\sqrt t\,\mathcal Z_r^{t,\kappa}\,dW_r,
 \qquad \mathcal Z_0^{t,\kappa}=1,\qquad
 F_\kappa(t)=\log\mathcal Z_1^{t,\kappa}.
\]
Then $\mathcal Z_1^{t,\kappa}$ has the law of the heat solution at time $t$.
For bounded $\kappa$ and $0<t\le t_0$,
\begin{equation}\label{kpz:second-jet}
 \begin{aligned}
 \sup_x\|\mathsf T^\ell\partial_\kappa^mF_\kappa(t,x)\|_p
 &\le C_p t^{m/4},\qquad m=2,3,\quad \ell=0,1,\\
 \sup_x\|DF_\kappa(t,x)\|_{L^p(\Omega;\mathcal K)}
 &\le C_p|\kappa|t^{1/4}.
 \end{aligned}
\end{equation}
These derivatives are locally continuous for $t>0$.

\emph{Heat expansion.}
The ordered $n$th chaos kernel is
\[
 g_{n;t,x}=t^{n/2}\mathbf1_{0<r_1<\cdots<r_n<1}
       \prod_{i=1}^nP_{t(r_{i+1}-r_i)}(y_{i+1}-y_i),
 \qquad (r_{n+1},y_{n+1})=(1,x).
\]
Set $f_n=\operatorname{Sym}g_n$. For $0<s\le t_0$, Parseval gives
\[
 \|P_s\|_2^2+\|s\partial_sP_s\|_2^2
 =\sum_{k\in\mathbb Z}\bigl[1+(4\pi^2\nu sk^2)^2\bigr]
       e^{-8\pi^2\nu sk^2}\le Cs^{-1/2}.
\]
Since $n!\|f_n\|^2=\|g_n\|^2$, the product rule, spatial integration and
\[
 \int_{0<r_1<\cdots<r_n<1}
       \prod_{i=1}^n(r_{i+1}-r_i)^{-1/2}\,dr
 =\frac{\Gamma(1/2)^n}{\Gamma(1+n/2)}
\]
give
\begin{equation}\label{kpz:kernel-estimate}
 n!\|\mathsf T^\ell f_{n;t,x}\|^2
 \le\frac{C^n(1+n)^3t^{n/2}}{\Gamma(1+n/2)},\qquad \ell=0,1,
\end{equation}
In $\mathcal Z=1+\sum_{n\ge1}(\kappa\sigma)^nI_n(f_n)$,
hypercontractivity and the Malliavin formula \cite{Janson,Nualart} therefore give
\[
 \|D^j\mathsf T^\ell\partial_\kappa^m[(\kappa\sigma)^nI_n(f_n)]\|_p
 \le C_p^n(1+n)^C\mathbf1_{n\ge m}(1+|\kappa|)^{n-m}
       \frac{t^{n/4}}{\sqrt{\Gamma(1+n/2)}},
 \quad j\le1,\ \ell\le1,\ m\le3.
\]
Summing gives, locally uniformly,
\[
 \begin{gathered}
 \|\mathsf T^\ell\partial_\kappa^m\mathcal Z\|_p\le C_pt^{m/4}
 \quad(1\le m\le3,\ \ell\le1),\\
 \|\mathsf T\mathcal Z\|_p\le C_pt^{1/4},\quad
 \|D\mathcal Z\|_p\le C_p|\kappa|t^{1/4}.
 \end{gathered}
\]
\emph{Derivatives of the logarithm.}
The formulas
\[
 \partial_\kappa^2\log\mathcal Z
 =\frac{\mathcal Z_{\kappa\kappa}}{\mathcal Z}
       -\frac{\mathcal Z_\kappa^2}{\mathcal Z^2},\qquad
 D\log\mathcal Z=\frac{D\mathcal Z}{\mathcal Z},
\]
and
\[
 \partial_\kappa^3\log\mathcal Z
 =\frac{\mathcal Z_{\kappa\kappa\kappa}}{\mathcal Z}
 -3\frac{\mathcal Z_{\kappa\kappa}\mathcal Z_\kappa}{\mathcal Z^2}
 +2\frac{\mathcal Z_\kappa^3}{\mathcal Z^3},
\]
together with Lemma~\ref{kpz:inverse} and
\[
 \mathsf T(\mathcal Z^{-b}A)
 =\mathcal Z^{-b}\mathsf TA-b\mathcal Z^{-b-1}(\mathsf T\mathcal Z)A,
 \qquad
 \|\mathcal Z^{-b}A\|_p
 \le\|\mathcal Z^{-1}\|_{2bp}^{b}\|A\|_{2p},
\]
prove \eqref{kpz:second-jet}. Apply these formulas first to
$\log(\mathcal Z+\varepsilon)$; for $0<\theta\le1$,
\[
 \|(\mathcal Z+\varepsilon)^{-b}-\mathcal Z^{-b}\|_p
 \le C_{b,\theta}\varepsilon^\theta
       \|\mathcal Z^{-1}\|_{(b+\theta)p}^{b+\theta}\longrightarrow0.
\]
Closedness of $D$ and the integral identities for $\partial_\kappa$ and
$\partial_t$ identify the limits. Kernel continuity and the summable
bound \eqref{kpz:kernel-estimate} give local continuity.

\emph{The number operator.}
The chaos bound gives
\[
 \sum_{n\ge1}n^2(\kappa\sigma)^{2n}n!\|f_n\|^2<\infty,
 \qquad \mathcal N\mathcal Z=\kappa\partial_\kappa\mathcal Z.
\]
The generator chain rule, first for smooth extensions of
$\log(z+\varepsilon)$ below zero, gives in $L^p$
\[
 \mathcal N\log(\mathcal Z+\varepsilon)
 =\frac{\kappa\partial_\kappa\mathcal Z}{\mathcal Z+\varepsilon}
       +\frac{\|D\mathcal Z\|_{\mathcal K}^2}{(\mathcal Z+\varepsilon)^2}
 \longrightarrow \kappa\partial_\kappa F_\kappa
       +\|DF_\kappa\|_{\mathcal K}^2.
\]
Closedness of $\mathcal N$ proves $F_\kappa\in\operatorname{Dom}(\mathcal N)$ and
\begin{equation}\label{kpz:number-identity}
 \mathcal NF_\kappa
 =\kappa\partial_\kappa F_\kappa+\|DF_\kappa\|_{\mathcal K}^2.
\end{equation}

\subsection{The remainder and its derivative}

The norms below are in $L^2(\Omega;H)$.
Let $X_t^0=\Pi_\circ\partial_\kappa F_\kappa(t)|_{\kappa=0}$ and
$Y_{g,t}=\Pi_\circ\kappa^{-1}F_\kappa(t)$, with the removable value at zero.
Taylor's formula gives $Y_{g,t}=X_t^0+R_{g,t}$ and
\[
 R_{g,t}=\kappa\Pi_\circ\int_0^1(1-\theta)
              \left.\partial_b^2F_b(t)\right|_{b=\theta\kappa}\,d\theta.
\]
By \eqref{kpz:second-jet},
\[
 \|R_{g,t}\|_2+\|\mathsf TR_{g,t}\|_2\le C_G|g|t^{1/2}.
\]
Since $X_t^0$ is in the first chaos, \eqref{kpz:number-identity} also gives
\[
 \mathcal NR_{g,t}=\Pi_\circ\left[
 \partial_\kappa F_\kappa-\partial_\kappa F_\kappa|_{\kappa=0}
                  +\kappa^{-1}\|DF_\kappa\|_{\mathcal K}^2\right].
\]
Hence
\[
 \|\mathcal NR_{g,t}\|_2
 \le |\kappa|\sup_{|b|\le|\kappa|}\|\partial_b^2F_b\|_2
       +|\kappa|^{-1}\|DF_\kappa\|_{L^4(\Omega;\mathcal K)}^2
 \le C_G|g|t^{1/2}.
\]

For $k\ne0$, Lemma~\ref{gaalign:ou} gives
\[
 \begin{gathered}
 \widehat X_t^0(k)=\sigma\sqrt t\int_0^1
       e^{-4\pi^2\nu tk^2(1-r)}\,d\widehat W_r(k),\\
 (\mathsf T+d\Gamma(\mathsf A_t))X_t^0=tB_tX_t^0.
 \end{gathered}
\]
Put
$D_{g,t}=\mathsf TY_{g,t}-tB_tY_{g,t}+d\Gamma(\mathsf A_t)Y_{g,t}$.
By \eqref{common:ward-number},
\[
 \begin{gathered}
 D_{g,t}=(\mathsf T-tB_t+d\Gamma(\mathsf A_t))R_{g,t},\\
 \|D_{g,t}\|_2\le\|\mathsf TR_{g,t}\|_2
       +\tfrac12\|R_{g,t}\|_2+\tfrac12\|\mathcal NR_{g,t}\|_2.
 \end{gathered}
\]
Thus
\begin{equation}\label{kpz:required-defect}
 \|R_{g,t}\|_2+\|D_{g,t}\|_2\le C_G|g|t^{1/2}.
\end{equation}
The realization $Y_{g,t}$ has law $\mu_{g,t}$ and is locally $C^1$
in $L^2(\Omega;H)$. Lemma~\ref{common:ward} gives
$e(t)\le t^{-1}\|D_{g,t}\|_2$ for every $t>0$.

\emph{The first chaos starts one order later.}
Let $P_1$ be the orthogonal projection onto the first chaos of the fixed
Gaussian space and write $F^{(1)}=P_1F$. At zero coupling,
\[
 \left.\partial_\kappa^2F_\kappa(t,x)\right|_{\kappa=0}
 =2\sigma^2I_2(f_{2;t,x})-\sigma^2I_1(f_{1;t,x})^2.
\]
Since $I_1(f)^2=I_2(f\otimes f)+\|f\|^2$,
$P_1\partial_\kappa^2F_\kappa|_{\kappa=0}=0$. Taylor's formula gives
\[
 R_{g,t}^{(1)}
 =\frac{\kappa^2}{2}\Pi_\circ P_1\int_0^1(1-\theta)^2
       \left.\partial_b^3F_b(t)\right|_{b=\theta\kappa}\,d\theta.
\]
By \eqref{kpz:second-jet},
\[
 \|R_{g,t}^{(1)}\|_2+\|\mathsf TR_{g,t}^{(1)}\|_2
 \le C_Gg^2t^{3/4}.
\]
Since $[P_1,B_t]=[P_1,d\Gamma(\mathsf A_t)]=0$ and
$\|\mathsf A_t\|,\|tB_t\|\le1/2$,
\[
 \|D_{g,t}^{(1)}\|_2
 \le\|\mathsf TR_{g,t}^{(1)}\|_2+\|R_{g,t}^{(1)}\|_2,
\]
whence
\begin{equation}\label{kpz:first-chaos-defect}
 \|R_{g,t}^{(1)}\|_2+\|D_{g,t}^{(1)}\|_2\le C_Gg^2t^{3/4}.
\end{equation}

\clearpage
\section*{Notation and proof dependencies}

For Section~\ref{app:phi4} and
Appendices~\ref{ga:section}--\ref{cal:projection-section};
regularity indices are fixed in \eqref{eq:indices}.
\begin{center}\small
\begin{tabular}{@{}ll@{}}
\toprule
Symbol & Meaning\\\midrule
$\varpi,\ \epsilon_0,\ q$ & loss in the rate; small index; large time exponent\\
$\alpha,\ \sigma,\ \beta,\ r_*$ & coefficient order, test order, weight regularity, response regularity\\
$\theta,\ \gamma,\ \omega$ & state rate, state gain, local heat weight\\
$Z,\ W,\ W_3$ & Gaussian field, Wick square, Wick cube\\
$Y,\ \Psi,\ \Omega_J$ & heat primitives of $W$, $P_NW_3$ and of the paraproduct correction\\
$R_2,\ R_3,\ S$ & renormalized resonant products (Proposition~\ref{ga:model})\\
$K,\ \mathcal T=I+K$ & smoothing operator and change of variables\\
$u=X_N-Z_N,\ q,\ q_0$ & nonlinear remainder; additive corrector and its coordinate form\\
$r=u+q,\ w=\mathcal Tr,\ v=r/h$ & corrected state, its coordinate, its weighted quotient\\
$h,\ k,\ a_\pm$ & forward and backward weights, their equation defects\\
$M,\ h_0$ & random coefficient bound; time-dependent coefficient bound\\
$\mathcal K_{N,T}$ & norm of the backward flow \eqref{co:eq:backward-K}\\
$\eta=3g,\ P=P_N,\ Q=I-P$ & cubic coupling; spectral projection and its complement\\
\bottomrule
\end{tabular}
\end{center}
Elsewhere $\sigma$ is the KPZ noise amplitude, $r_t$ the residual
current of Theorem~\ref{common:principle}, and $K$ the constant
\eqref{common:gaussian-limit}.

\paragraph{Order of the argument.}
The estimates are used in this order:
\begin{enumerate}[label=(\roman*)]
\item Gaussian fields and their derivatives
 (Appendix~\ref{ga:section}, Lemma~\ref{ga:mark-stability});
\item the coordinate $\mathcal T$ and its adjoint
 (Appendix~\ref{ac:section});
\item the cancellation identities and the local solution map
 (Appendix~\ref{po:section}, Lemmas~\ref{up:operators} and~\ref{po:local});
\item the explicit weights and the reference event
 (Appendix~\ref{ac:reference-section});
\item the projection errors (Appendix~\ref{cal:projection-section});
\item the two entropies and the heat estimate
 (Lemmas~\ref{state:quadratic}, \ref{state:quartic},
 Proposition~\ref{po:position});
\item the controlled state and the response
 (Proposition~\ref{cal:control});
\item the source bound, response moments and reconstruction
 (Section~\ref{co:section});
\item identification of the continuum field
 (Proposition~\ref{full:identification}), its differentiable realization
 (Proposition~\ref{ma:full-differentiation}) and completion
 (Section~\ref{se:completion}).
\end{enumerate}

\section*{Use of AI tools}
The author set the research goals and developed the main ideas, concepts, and proof strategies. ChatGPT (OpenAI) assisted the author with some technical calculations and developing certain conceptual arguments into technical proofs. The author takes responsibility for the final manuscript and its mathematical claims.

\end{document}